\numberwithin{equation}{section}
\let\al=\alpha
\let\d=\delta
\let\f=\frac
\let\om=\omega
\let\ep=\epsilon
\let\Om=\Omega
\let\wt=\widetilde
\let\th=\theta
\let\pa=\partial
\let\va=\varphi
\let\Th=\Theta
\def\R{\mathbf R}
\def\no{\noindent}
\def\eqdef{\buildrel\hbox{\footnotesize def}\over =}
\newcommand{\beq}{\begin{equation}}
\newcommand{\eeq}{\end{equation}}
\newcommand{\beqno}{\begin{equation*}}
\newcommand{\eeqno}{\end{equation*}}
\newcommand{\ben}{\begin{eqnarray}}
\newcommand{\een}{\end{eqnarray}}
\newcommand{\beno}{\begin{eqnarray*}}
\newcommand{\eeno}{\end{eqnarray*}}
\newtheorem{theorem}{Theorem}[section]
\newtheorem{definition}[theorem]{Definition}
\newtheorem{lemma}[theorem]{Lemma}
\newtheorem{proposition}[theorem]{Proposition}
\newtheorem{remark}[theorem]{Remark}
\begin{document}
\begin{CJK*}{GBK}{song}

\title[the generation of the magnetic island]{Long time behavior of Alfv\'en waves in a flowing plasma: mathematical analysis on the generation of the magnetic island}

\author{Cuili Zhai}
\address{School of Mathematical Science, Peking University, 100871, Beijing, P. R. China}
\email{zhaicuili@math.pku.edu.cn}

\author{Zhifei Zhang}
\address{School of Mathematical Science, Peking University, 100871, Beijing, P. R. China}
\email{zfzhang@math.pku.edu.cn}

\author{Weiren Zhao}
\address{Department of Mathematics, New York University in Abu Dhabi, Saadiyat Island, P.O. Box 129188, Abu Dhabi, United Arab Emirates.}
\email{zjzjzwr@126.com, wz19@nyu.edu}
\maketitle

\begin{abstract}
\par In this paper, we consider the generation of magnetic island for the linearized MHD equations around the steady flowing plasma with velocity field $U_s=(u(y),0)$
and magnetic field $H_s=(b(y),0)$ in the finite channel.
\end{abstract}
\section{Introduction}
In this paper, we consider the Alfv\'en waves governed by the two-dimensional incompressible MHD equations in a finite channel $\Omega=\big\{(x,y)|x\in\mathbb{T}, y\in[-1,1]\big\}$:
\begin{equation}\label{eq: ideal MHD}
 \left\{\begin{array}{l}
\partial_tU+U\cdot\nabla U-H\cdot\nabla H+\nabla P=0,\\
\partial_tH+U\cdot\nabla H-H\cdot\nabla U=0,\\
\nabla\cdot U=0,\ \ \nabla\cdot H=0,\\
U_2(t,x,y)|_{y=-1,1}=0, \ \ H_2(t,x,y)|_{y=-1,1}=0.\\
\end{array}\right.
\end{equation}
with initial data $U(0,x,y)$ and $H(0,x,y)$. Here $U=(U_1,U_2)$, $H=(H_1,H_2)$
and $P$ denote the velocity field, magnetic field, and the total  pressure(kinetic plus magnetic) of the magnetic fluid, respectively.

This system has an equilibrium $U_s=(u(y),0)$, $H_s=(b(y),0)$, $P_s=\mathrm{const}$.
 We focus on the secular behavior of the 2D linearized MHD equations around this equilibrium, which take the form
 \begin{equation}\label{eq: linearized MHD}
 \left\{\begin{array}{l}
\partial_tV_1+u\partial_xV_1+\partial_xp+u'V_2-b\partial_xB_1-b'B_2=0,\\
\partial_tV_2+u\partial_xV_2+\partial_yp-b\partial_xB_2=0,\\
\partial_tB_1+u\partial_xB_1+b'V_2-b\partial_xV_1-u'B_2=0,\\
\partial_tB_2+u\partial_xB_2-b\partial_xV_2=0,\\
\nabla\cdot V=0,\ \ \nabla\cdot B=0,\\
V_2(t,x,y)|_{y=-1,1}=0, \ \ B_2(t,x,y)|_{y=-1,1}=0.\\
\end{array}\right.
\end{equation}
with initial data $V(0,x,y)=(v_1(x,y), v_2(x,y))$
and $B(0,x,y)=(b_1(x,y), b_2(x,y))$. Let $w_0=\partial_xv_2-\partial_yv_1$
and $j_0=\partial_xb_2-\partial_yb_1$ be the initial vorticity and current density.

It is easy to deduce form \eqref{eq: linearized MHD} that the vorticity $w=\partial_xV_2-\partial_yV_1$
and the current density $j=\partial_xB_2-\partial_yB_1$ satisfy the following equations:
\begin{align}\label{eq: vorticity and current density}
 \left\{\begin{array}{l}
\partial_tw+u\partial_xw-b\partial_xj=u''V_2-b''B_2,\\
\partial_tj+u\partial_xj-b\partial_xw=b''V_2-u''B_2+u'\partial_xB_1
-u'\partial_yB_2+b'\partial_yV_2-b'\partial_xV_1,
\end{array}\right.
\end{align}
here and in what follows, we use the notions $u, u',u''$ and $b, b',b''$ in stead by $u(y), \pa_yu(y), \pa_{yy}u(y)$ and $b(y), \pa_yb(y), \pa_{yy}b(y)$ for brevity.

In terms of  the stream functions $\psi$: $V=(\partial_y\psi, -\partial_x\psi)$,
then we have $w=-\Delta\psi$. Similarly, there is a scalar function $\phi$
 such that $B=(\partial_y\phi, -\partial_x\phi)$ and $j=-\Delta\phi$.
 Then we can deduce the following system on $(\psi, \phi)$:
\begin{equation}\label{eq: psi phi}
 \left\{\begin{array}{l}
\partial_t(\Delta\psi)+u\partial_x(\Delta\psi)-b\partial_x(\Delta\phi)=u''\partial_x\psi
-b''\partial_x\phi,\\
\partial_t(\Delta\phi)+u\partial_x(\Delta\phi)-b\partial_x(\Delta\psi)=b''\partial_x\psi
-u''\partial_x\phi-2u'\partial_x\partial_y\phi
+2b'\partial_x\partial_y\psi,
\end{array}\right.
\end{equation}
with boundary condition $\psi(t,x,\pm1)=\phi(t,x,\pm1)=0$.
And then taking the Fourier transform in $x$ of the above equations and inverting the operator $(\partial_y^2-\alpha^2)$
we get for $\alpha\neq0$,
\begin{equation}\label{eq: Fourier equations2}
 \left\{\begin{array}{l}
\partial_t\widehat{\psi}+i\alpha u\widehat{\psi}-i\alpha b\widehat{\phi}=2i\alpha(\partial_y^2-\alpha^2)^{-1}\big(u''\widehat{\psi}
-b''\widehat{\phi}
+u'\partial_y\widehat{\psi}-b'\partial_y\widehat{\phi}\big),\\
\partial_t\widehat{\phi}+i\alpha u\widehat{\phi}-i\alpha b\widehat{\psi}=0.
\end{array}\right.
\end{equation}
Let
\beq\label{eq: matrix}
M_{\al}=-\Delta_{\al}^{-1}
\left[
\begin{matrix}
& u''-u\Delta_{\al} & -b''+b\Delta_{\al}\\
& b\Delta_{\al}+b''+2b'\pa_y & -u\Delta_{\al}-u''-2u'\pa_y
\end{matrix}
\right]
\eeq
where $\Delta_{\al}=\pa_y^2-\al^2$ and its inverse $\Delta_{\al}^{-1}$ satisfies $(\pa_y^2-\al^2)\Delta_{\al}^{-1}\psi(\al,y)=\psi(\al,y)$ with the boundary value
$\Delta_{\al}^{-1}\psi(\al,y)|_{y=\pm 1}=0$. Then
\begin{equation}\label{eq: matrix-form}
\partial_t\Big(
  \begin{array}{ccc}
    \widehat{\psi}\\
    \widehat{\phi}\\
  \end{array}
\Big)(t,\al,y)=-i\alpha M_{\al}\Big(
  \begin{array}{ccc}
     \widehat{\psi}\\
    \widehat{\phi}\\
  \end{array}
\Big)(t,\al,y).
\end{equation}

%Let $\Omega$ be a simple connected domain including the spectrum $\sigma(M_{\al})$ of $M_{\al}$.
The study of the (in)stability of Alfv\'{e}n waves for MHD equations is a very active field in physics and mathematics and there is a number of works in this field \cite{Biskamp, PAD01, HP83, AL89,  CU72, HZ15}.

If the equilibrium is not flowing ($U_s\equiv 0$), then for the Alfv\'{e}n waves in homogeneous magnetic fields (the case $b(y)=1$), it is easy to obtain the linear stability by using the fact that the current density $j$ and the vorticity  $w$ satisfy the wave equation
\beqno
\pa_{tt}j-\pa_{xx}j=0,\ \ \pa_{tt}w-\pa_{xx}w=0,
\eeqno
with initial data $(j,w)(0,x,y)=(j_0,w_0)$ and $(\pa_tj, \pa_tw)(0,x,y)=(\pa_xw_0, \pa_xj_0)$. The above fact also implies that there is no hope of obtaining a decay estimate of the velocity. For nonlinear global stability in the homogeneous case, we refer the reader to \cite{BSS88}. For the nonresistive MHD equation in which the termm $-\mu\Delta U$ appears in (\ref{eq: ideal MHD}), we refer the reader to \cite{CMRR16, FMRR14, RW16, WZ17} for the local well-posedness results. The global well-posedness and stability results may be found in \cite{RWXZ14, TZ16} for the 2D case and \cite{AZ17,DZ17} for the 3D case. For the fully diffusive MHD equation in which both the terms $-\mu\Delta U$ and $-\nu\Delta H$ appear in (\ref{eq: ideal MHD}), we refer the reader to \cite{CL18, HXY18, WZ17}. For Alfv\'{e}n waves in inhomogeneous magnetic fields, there are few rigorous mathematical results. Grossmann and Tataronis \cite{GT72, TG72, TG73} predicted that the decay rate of the velocity is $O(t^{-1})$. Recently Ren and Zhao \cite{RZ17} gave a rigorous proof for the strict monotone positive magnetic field case in a finite channel. The mechanism leading to the damping is the phase mixing phenomenon, which is similar to the well-known Landau damping found by Landau in 1946 \cite{LL46} and proved by Mouhot and Villani in their remarkable work \cite{MV11}. This common phenomenon also appears in the Euler equation, which is called the inviscid damping. One may refer to \cite{BM10,KMC60, WO07, RS66, SAS95, WZZ18, WZZ1, WZZ2, CZ17} for the linear inviscid damping results and to \cite{ BM15, LZ11} for the nonlinear inviscid damping results.

If the plasma is flowing, the long-time behavior of the solution to linearized equation is not easy as before. One of the reasons is that $M_{\al}$ is not a self-adjoint operator when $u\neq 0$. Another reason is from the physical observation: the reconnection phenomenon. Reconnection of field lines is the process by which the topology of a flux surface structure in a plasma can change. For more details about the stability of Alfv\'en waves in a flowing plasma, we refer to \cite{Hameiri1983, ShiTok2010}. By using Fourier-Laplace analysis, Hirota, Tatsuno and Yoshida \cite{HTY05} studied a special case ($(u(y),b(y))=(k_1y,k_2y)$) and gave a formal analysis about the asymptotic behavior and predicted the existence of the magnetic island by assuming linear profiles of the ambient magnetic field and flow $(u(y),b(y))=(k_1y,k_2y)$ with $k_1,k_2=const$. The linear system is spectral stable, when $0\leq k_1<k_2$, however because of the existence of the magnetic island, the topology of the magnetic field lines in the final state may be different from the initial magnetic field lines. This instability, which tears and reconnects field lines, is called a tearing mode \cite{HZ15}.

In this paper, we study a more general case and provide a justification for Hirota, Tatsuno and Yoshida's prediction about the generation of magnetic island. We also carefully study the behavior of the magnetic island, which may be used as a modifying factor, when we linearized the MHD system and study the nonlinear instability of the Alfv\'en waves in a flowing plasma in the future.

Now we introduce some conditions on the background  magnetic and velocity fields.
\begin{itemize}
\item[1.] Regularity, $(\bf R)$ : $u(y), b(y)\in C^5\left([-1,1]\right)$,
\item[2.] Stern stability, $(\bf SS)$: $|u(y)|\leq |b(y)|$,
\item[3.] Island, $(\bf I)$: $b(0)=u(0)=0$,
\item[4.] Monotone, $(\bf M)$: $b'(y)-|u'(y)|\geq c_0>0$
for some positive constant $c_0$.
%\item[5.] Boundary, $(\bf B)$: $u(1)=u(-1), b(1)=-b(-1)$.
\end{itemize}
One may regard $u(y)=ky, b(y)=k_0y$ with $k_0>|k|>0$ as an example. It is easy to check that $(\bf I)$
and $(\bf M)$ imply $(\bf SS)$.
\begin{theorem}\label{main thm}
Assume that $u(y), b(y)$ satisfy $(\bf R), (\bf I)$ and $(\bf M)$ and let $\big(\psi(t,x,y),\phi(t,x,y)\big)$ be the solution of (\ref{eq: psi phi}) with initial data $(\psi_0,\phi_0)\in H^{3}(-1,1)\times H^{4}(-1,1)$.
There holds that, \\
1. for $y=0$,  as $t\rightarrow+\infty$ and $\al\neq0$
\begin{align*}
&\widehat{\psi}(t,\al,0)\rightarrow\frac{u'(0)}
{b'(0)}\widehat{\phi}_0(\al,0),\\
&\widehat{\phi}(t,\al,0)\equiv\widehat{\phi}_0(\al,0);
\end{align*}
2. for $0<y\leq 1$, as $t\rightarrow+\infty$, there exists $\Gamma^{+}(\al,y)$ such that
\begin{align*}
\widehat{\psi}(t,\al, y)&\rightarrow -\frac{u(y)}{b(y)}\big(b(y)\Gamma^{+}(\al,y)\big)\widehat{\phi}_0(\al,0),
\\
\widehat{\phi}(t,\al, y)&\rightarrow -\big(b(y)\Gamma^{+}(\al,y)\big)\widehat{\phi}_0(\al,0);
\end{align*}
3. for $-1\leq y<0$, as $t\rightarrow+\infty$, there exists $\Gamma^{-}(\al,y)$ such that
\begin{align*}
\widehat{\psi}(t,\al, y)&\rightarrow -\frac{u(y)}{b(y)}\big(b(y)\Gamma^{-}(\al,y)\big)\widehat{\phi}_0(\al,0),\\
\widehat{\phi}(t,\al, y)&\rightarrow -\big(b(y)\Gamma^{-}(\al,y)\big)\widehat{\phi}_0(\al,0).
\end{align*}
Moreover we have
\beno
\Gamma^{\pm}(\al,y)=\f{\va_{\pm}(\al,y)(u'(0)^2-b'(0)^2)}{b'(0)}
\int_{\pm1}^y\frac{1}{\big(u(y')^2-b(y')^2\big)\va_{\pm}(\al,y')^2}dy',
\eeno
where $\va_{\pm}$ solves $\pa_y\left((u^2-b^2)\pa_y\va_{\pm}\right)-\al^2(u^2-b^2)\va_{\pm}=0$ with boundary conditions $\va_{\pm}(\al,0)=1$ and $\pa_y\va_{\pm}(\al,0)=0$.
\end{theorem}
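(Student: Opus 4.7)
My plan is to analyze the semigroup generated by $-i\alpha M_\alpha$ in \eqref{eq: matrix-form} through the Dunford/inverse-Laplace representation
\begin{align*}
\begin{pmatrix}\widehat\psi\\ \widehat\phi\end{pmatrix}(t,\alpha,\cdot)=\frac{1}{2\pi i}\int_{\Gamma}e^{-i\alpha c t}\,(M_\alpha-c)^{-1}\begin{pmatrix}\psi_0\\ \phi_0\end{pmatrix}\,dc,
\end{align*}
where $\Gamma$ is a contour enclosing the spectrum of $M_\alpha$. The first step is to write the resolvent equation $(M_\alpha-c)(\Psi,\Phi)=(f,g)$ explicitly from \eqref{eq: matrix} and, after eliminating $\Psi$ in favour of $\Phi$, reduce it to a single second-order ODE of Alfv\'en--Rayleigh type whose leading coefficient is $(u-c)^2-b^2=(u-c-b)(u-c+b)$, together with the Dirichlet boundary conditions $\Phi(\pm1)=0$. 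By $(\mathbf{I})$ and $(\mathbf{M})$ each of $u\pm b$ is strictly monotone with a unique simple zero at $y=0$, so for $c$ outside the joint range of $u\pm b$ this is a Fredholm problem whose Green's function is built from homogeneous solutions started at $y=\pm 1$ via their Wronskian.

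Second, I would implement a limiting-absorption principle and deform $\Gamma$ onto the real axis. For each real $c\ne 0$ in the range of $u\pm b$, the leading coefficient has a simple zero at a unique Alfv\'en critical layer $y=y_\pm(c)$, so the resolvent extends to either side of the cut with at most logarithmic singularities and the jump across the cut can be computed by a Plemelj-type analysis. Substituting into the Dunford integral and integrating by parts in $c$---using the regularity $(\psi_0,\phi_0)\in H^3\times H^4$ to absorb boundary terms---yields quantitative decay in $t$ of every piece supported away from a fixed neighbourhood of $c=0$. The only surviving contribution as $t\to+\infty$ comes from the \emph{collision point} $c=0$, where the two critical layers merge into the double zero of $u^2-b^2$ at $y=0$; this is precisely the spectral mechanism underlying the magnetic island.

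Third, I would identify the surviving $c=0$ contribution. At $c=0$ the homogeneous ODE is exactly $\partial_y((u^2-b^2)\partial_y\varphi)-\alpha^2(u^2-b^2)\varphi=0$, so a Frobenius analysis at $y=0$ identifies $\varphi_\pm$ (normalised by $\varphi_\pm(0)=1$, $\varphi_\pm'(0)=0$) as the regular solution, while reduction of order produces the companion $\varphi_\pm(y)\int_{\pm1}^y\big((u^2-b^2)\varphi_\pm^2\big)^{-1}dy'$; this is exactly the origin of the explicit form of $\Gamma^\pm$, and the prefactor $(u'(0)^2-b'(0)^2)/b'(0)$ comes from the Wronskian normalisation together with the coupling to $\widehat\phi_0(\alpha,0)$ through the second row of $M_\alpha$. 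Assembling the Green's function at $c=0$ and applying it to $(\psi_0,\phi_0)$ yields the stated limits for $y\ne 0$. The value at $y=0$ is treated separately: evaluating the second equation of \eqref{eq: Fourier equations2} at $y=0$ gives, thanks to $u(0)=b(0)=0$, the exact conservation $\partial_t\widehat\phi(t,\alpha,0)=0$, hence $\widehat\phi(t,\alpha,0)\equiv\widehat\phi_0(\alpha,0)$; for $\widehat\psi$ one passes to the limit $y\to 0$ in $-\tfrac{u(y)}{b(y)}\big(b(y)\Gamma^\pm(\alpha,y)\big)\widehat\phi_0(\alpha,0)$ and uses $(u^2-b^2)\varphi_\pm^2\sim(u'(0)^2-b'(0)^2)y^2$ near $y=0$ to deduce $b(y)\Gamma^\pm(\alpha,y)\to -1$, which combined with $u(y)/b(y)\to u'(0)/b'(0)$ delivers $\widehat\psi(t,\alpha,0)\to\tfrac{u'(0)}{b'(0)}\widehat\phi_0(\alpha,0)$.

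The hardest step will be uniform control of the resolvent in a joint $(c,y)$-neighbourhood of the collision point $(0,0)$, where the leading coefficient and the singular member of the fundamental pair degenerate simultaneously. A matched asymptotic expansion of the Green's function there, combined with a van der Corput/stationary-phase bound on the oscillatory $c$-integral, will be required both to rigorously separate the non-decaying $c=0$ limit from an integrable decaying remainder and to justify the number of integrations by parts that fixes the regularity threshold $H^3\times H^4$ on the initial data.
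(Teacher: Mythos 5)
Your overall strategy --- Dunford representation of the semigroup, reduction of the resolvent equation to a Sturmian ODE with leading coefficient $(u-c)^2-b^2$, limiting absorption onto the spectral segment, decay of the contribution of the cut, and identification of the non-decaying part with the degeneracy at $c=0$ --- is exactly the route the paper takes. However, the step where you extract the limit profile has a genuine gap. You propose to ``assemble the Green's function at $c=0$ and apply it to $(\psi_0,\phi_0)$''; but the resolvent is singular at $c=0$ (in the paper $|\mathcal{D}(c)|\geq C^{-1}/|c|$ and the solution carries an explicit $1/c$ factor), so there is no Green's function at $c=0$ to apply, and a naive evaluation would produce a limit depending on all of the initial data, contradicting the theorem, whose limit involves only $\widehat{\phi}_0(\alpha,0)$. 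What is actually needed --- and what the paper does --- is to isolate the exact $1/c$-singular part of the resolvent before the contour analysis: the substitution $\Phi_2=\Phi+\widehat{\phi}_0(\alpha,0)\chi(y)/(c\,b(y))$ turns the forcing into $F=cG$ with $G$ bounded (this is where the $H^{3}\times H^{4}$ hypothesis enters, through $\|\widehat{\omega}_0\|_{H^1}+\|\widehat{j}_0\|_{H^3}$, not through integrations by parts in $c$), shows that every part of the data except $\widehat{\phi}_0(\alpha,0)$ comes with a factor of $c$ and therefore dies at $c=0$, and then the surviving profile is the residue at $c=0$ of $e^{-i\alpha tc}\frac{u(y)-c}{c}\Theta(y,c)$, computed by a small contour around the origin; an integration by parts in $y$ of that residue produces precisely $-\frac{u}{b}\,(b\Gamma^{\pm})\,\widehat{\phi}_0(\alpha,0)$. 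Without this subtraction-plus-residue structure your third step does not close.

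Two further points. First, you cannot obtain the $y=0$ limit of $\widehat{\psi}$ by letting $y\to0$ in the $y\neq0$ profile: that interchanges $t\to+\infty$ and $y\to0$ without any uniformity in $y$. The paper treats $y=0$ directly, writing $\widehat{\psi}(t,\alpha,0)=\frac{u'(0)}{b'(0)}\widehat{\phi}_0(\alpha,0)-\lim\frac{1}{2\pi i}\int e^{-i\alpha tc}\nu_+(F)(c)\varphi_+(0,c)\,dc$ and showing the integral vanishes as $t\to+\infty$. Second, your decay mechanism (repeated integration by parts in $c$, ``absorbing boundary terms'' with the data regularity) would require smoothness in $c$ of the boundary values of the resolvent, which is not available: these boundary values have logarithmic singularities at $c=0$ and at $W_{\pm}(\pm1)$ and only limited regularity at the critical layers. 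The paper instead proves $L^1_c$ bounds on the jump across the cut (via the Wronskian lower bounds and the estimates on $\mu_{\pm}^{\pm},\nu_{\pm}^{\pm}$) and concludes with the Riemann--Lebesgue lemma, which is why the convergence obtained is qualitative, with rates deferred to a separate work.
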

\begin{remark}\label{rmk:t revers}
Of course, by time-reversibility, Theorem \ref{main thm} is also true $t\to -\infty$. The limiting profile is independent of the initial stream function. As $V_2=-\pa_x\psi$ and $B_2=-\pa_x\phi$, the limiting profile of $\widehat{V_2}$ and $\widehat{B_2}$ are the same as $\widehat{\psi}$ and $\widehat{\phi}$.
\end{remark}

\begin{remark}\label{rmk: ident phi(t,0)}
The result $\widehat{\phi}(t,\al,0)\equiv \widehat{\phi}_0(\al,0)$ can be obtained by the equation \eqref{eq: Fourier equations2} and assumption ({\textbf{I}}).
\end{remark}

\begin{remark}\label{rmk: discontinuous}
By the fact that $\lim\limits_{y\to 0}b(y)\Gamma^{\pm}(\al,y)=-1$, we get that the limiting profile is continuous at $y=0$, precisely $\lim\limits_{y\to 0}\lim\limits_{t\to +\infty}\widehat{\psi}(t,\al,y)=\lim\limits_{t\to +\infty}\widehat{\psi}(t,\al,0)$. As for the its derivative, in \cite{HTY05}, the authors predict that the limiting profile has a derivative jump at $y=0$. Actually, we can get that if
\beno
-5u'(0)u''(0)+u''(0)b'(0)-u'(0)b''(0)+5b'(0)b''(0)\neq0,
\eeno
then there exists a positive constant $C$ such that
\beno
\big|\pa_y\big(b(y)\Gamma^{\pm}(\al,y)\big)\big|\geq C^{-1}\big(1+\big|\ln|y|\big|\big).
\eeno
That means in this case the final state of $\phi$ and $\psi$ does not decay to $W^{1,\infty}$, which may be useful in the study of the nonlinear instability.
\end{remark}

\begin{remark}\label{rmk: special case}
If $u(y)=ky$, $b(y)=k_0y$ for some constant $k_0>|k|\geq 0$, then $b(y)\Gamma^{\pm}(\al,y)$ are harmonic functions on $\mathbb{T}\times[0,\pm1]$ with boundary condition $b(0)\Gamma^{\pm}(\al,0)=-1$ and $b(\pm1)\Gamma^{\pm}(\al,\pm1)=0$. Then the final state is in $W^{1,\infty}$ and its profile is as follows.\\

\begin{tikzpicture}[thick, scale=1.6]
\draw[->](-1.5,0)--(1.5,0) node[below]{$y$};
\draw[->](0,-0.5)--(0,1.5);
\draw[color=red] plot[domain=0:1](\x, {-exp(-2)/(1-exp(-2))*exp(\x)+1/(1-exp(-2))*exp(-\x)});
\draw[color=blue] plot[domain=-1:0](\x, {-exp(2)/(1-exp(2))*exp(\x)+1/(1-exp(2))*exp(-\x)});
\draw (1,0.5) node{$-b\Gamma^+$};
\draw (-1,0.5) node {$-b\Gamma^-$};
\draw (-1,-0.15) node {$-1$};
\draw (1,-0.15) node {$1$};
\draw (0.25,1) node {$1$};
\end{tikzpicture}

\end{remark}

\begin{remark}\label{rmk: open problem}
The necessary and sufficient condition of the generation of the magnetic island is an open problem. Roughly speaking, according to \eqref{eq: matrix-form}, assuming there is a magnetic island at the finial state, then there is non-trivial solution of $M_{\al}\left(\begin{aligned}\widehat{\psi}\\\widehat{\phi}\end{aligned}\right)=0$. Formally we can get that the necessary condition of the generation of the magnetic island may be $0\in \sigma(M_{\al})$.
\end{remark}

\begin{remark}\label{rmk: add}
The convergence rate will be discussed in a separated work.
\end{remark}
\section{reduce the problem and the Sturmian equations}
Let $\Om$ be the domain that contains $\sigma(M_{\al})$.
Then we have the following representation formula of the solution to (\ref{eq: matrix-form}):
\begin{equation}\label{eq: psi and phi}
\Big(
  \begin{array}{ccc}
    \widehat{\psi}\\
    \widehat{\phi}\\
  \end{array}
\Big)(t,\alpha,y)=\frac{1}{2\pi i}\int_{\partial\Omega}e^{-i\alpha tc}(cI-M_{\al})^{-1}\Big(
  \begin{array}{ccc}
    \widehat{\psi}\\
    \widehat{\phi}\\
  \end{array}
\Big)(0,\alpha,y)dc.
\end{equation}
Then the large time behavior of the solution $\Big(
  \begin{array}{ccc}
    \widehat{\psi}\\
    \widehat{\phi}\\
  \end{array}
\Big)(t,\alpha,y)$ is reduced to the study of the resolvent $(cI-M_{\al})^{-1}$.

Suppose $\big(cI-M_{\al}\big)^{-1}\Big(\begin{array}{l} \widehat{\psi}_0\\ \widehat{\phi}_0\end{array}\Big)(\al,y)=\Big(\begin{array}{l} \Psi_1\\ \Phi_1\end{array}\Big)(\al,y,c)$, then
\begin{align*}
&\big(cI-M_{\al}\big)\Big(\begin{array}{l} \Psi_1\\ \Phi_1\end{array}\Big)(\al,y,c)=\Big(\begin{array}{l} \widehat{\psi}_0\\ \widehat{\phi}_0\end{array}\Big)(\al,y)\\
&\Leftrightarrow \left\{
\begin{array}{l}
-c\Delta_{\al}\Psi_1+u\Delta_{\al}\Psi_1
-u''\Psi_1-b\Delta_{\al}\Phi_1+b''\Phi_1
=\widehat{\om}_0,\\
-c\Delta_{\al}\Phi_1-b\Delta_{\al}\Psi_1-b''\Psi_1
+2u'\pa_y\Phi_1
+u\Delta_{\al}\Phi_1+u''\Phi_1-2b'\pa_y\Psi_1
=\widehat{j}_0.
\end{array}
\right.\\
&\Leftrightarrow \left\{
\begin{array}{l}
(u-c)\Delta_{\al}\Psi_1-u''\Psi_1-b\Delta_{\al}\Phi_1
+b''\Phi_1
=\widehat{\om}_0,\\
(u-c)\Phi_1-b\Psi_1=-\widehat{\phi}_0.
\end{array}
\right.
\end{align*}
Here $\Delta_{\al}=\partial_y^2-\al^2$.
Let $\Phi_1(\al,y,c)=b(y)\Phi_2(\al,y,c)$ then $\Psi_1(\al,y,c)=(u(y)-c)\Phi_2(\al,y,c)+\widehat{\phi}_0(\al,y)/b(y)$, then we get
\beq\label{eq:ODE1}
\begin{split}
&\pa_y\Big[\Big(\big(u(y)-c\big)^2-b(y)^2\Big)\pa_y\Phi_2(\al, y,c)\Big]
-\al^2\Big(\big(u(y)-c\big)^2-b(y)^2\Big)\Phi_2(\al,y,c)\\
&
=\widehat{\om}_0(\al,y)-\big(u(y)-c\big)\Delta_{\al}\Big(\frac{\widehat{\phi}_0(\al,y)}{b(y)}\Big)
+u''(y)\frac{\widehat{\phi}_0(\al,y)}{b(y)}
\end{split}
\eeq
For this equation, by denoting  $\Phi_2(\al,y,c)=\Phi(\al,y,c)+\widehat{\phi}_0(0)\chi(y)/(cb(y))$
 whereas $0 \leq\chi(y)\in C_0^{\infty}(\R):
 \chi(y)=1$ for $|y|\leq\frac{1}{2}$ and $\chi(y)=0$ for $|y|\geq 3/4$, we can get that
\begin{align*}
&\pa_y\Big[\Big(\big(u(y)-c\big)^2-b(y)^2\Big)\pa_y\Phi(\al,y,c)\Big]
-\al^2\Big(\big(u(y)-c\big)^2-b(y)^2\Big)\Phi(\al,y,c)\\ &=\widehat{w}_0(\al,y)-\big(u(y)-c\big)\Delta_{\al}\Big(\frac{\widehat{\phi}_0(\al,y)
-\widehat{\phi}_0(\al,0)}{b(y)}\Big)
+u''(y)\frac{\widehat{\phi}_0(y)-\widehat{\phi}_0(\al,0)}{b(y)}\\
&\ \
-\frac{\widehat{\phi}_0(\al,0)}{c}\frac{1}{b(y)^3}
\Big\{2\big(u(y)-c\big)\big[c+\big(u(y)-c\big)\chi(y)\big]b'(y)^2\\
&\ \
-b(y)\big\{\big(u(y)-c\big)\big[c+\big(u(y)-c\big)\chi(y)\big]b''(y)
+2\big(u(y)-c\big)\big[\big(u(y)-c\big)\chi'(y)+u'(y)\chi(y)\big]b'(y)\big\}\\
&\ \ -b(y)^2\big\{\al^2\big(u(y)-c\big)\big[c+\big(u(y)-c\big)\chi(y)\big]
+cu''(y)-\big(u(y)-c\big)^2\chi''(y)\\
&\ \ -2\big(u(y)-c\big)u'(y)\chi'(y)\big\}
+b(y)^3b''(y)\chi(y)+b(y)^4(\al^2\chi(y)-\chi''(y))\Big\}\\
&\stackrel{def}{=}G(\al,y,c)
=G_1(\al,y,c)-\frac{\widehat{\phi}_0(\al,0)}{c}\frac{f(\al,y,c)}{b(y)^3},
\end{align*}
whereas
\beq\label{eq: G_1}
G_1(\al,y,c)=\widehat{\om}_0(\al,y)-(u(y)-c)\Delta_{\al}
\Big(\frac{\widehat{\phi}_0(y)-\widehat{\phi}_0(0)}{b(y)}\Big)
+u''(y)\frac{\widehat{\phi}_0(y)-\widehat{\phi}_0(0)}{b(y)}
\eeq
and
\beq\label{eq: f}
\begin{split}
f(\al,y,c)&=2\big(u(y)-c\big)\big[c+\big(u(y)-c\big)\chi(y)\big]b'(y)^2\\
& \ \
-b(y)\big\{\big(u(y)-c\big)\big[c+\big(u(y)-c\big)\chi(y)\big]b''(y)\\
& \ \
+2(u(y)-c)\big[\big(u(y)-c\big)\chi'(y)+u'(y)\chi(y)\big]b'(y)\big\}\\
&\ \
-b(y)^2\big\{\al^2\big(u(y)-c\big)\big[c+\big(u(y)-c\big)\chi(y)\big]
-\big(u(y)-c\big)^2\chi''(y)+cu''(y)\\
&\ \ -2\big(u(y)-c\big)u'(y)\chi'(y)\big\}
+b(y)^3b''(y)\chi(y)+b(y)^4\big(\al^2\chi(y)-\chi''(y)\big).
\end{split}
\eeq
On the one hand, it is easy to show that $f(\al,0,c)=0, (\pa_yf)(\al,0,c)=0$ and $(\pa_y^2f)(\al,0,c)=0$, Thus we can deduce that $f(\al,y,c)$ is $C^2$ function when $u(y), b(y)$ is $C^5$ function. Then we obtain that, due to the fact that $b(0)=0$,
\beqno
\frac{f(\al,y,c)}{b(y)^3}=\frac{y^3}{b(y)^3}\frac{f(\al,y,c)}{y^3}
=\frac{1}{\big(\int_0^1b'(sy)ds\big)^3}
\int_0^1t\int_0^t\int_0^s\pa_y^3f(\al,\tau y,c)d\tau ds dt
\eeqno
and
\beqno
\Big\|\frac{f(\al,y,c)}{b(y)^3}\Big\|_{L^{\infty}}\leq C.
\eeqno
On the other hand, thanks to the fact that
$\frac{\widehat{\phi}_0(y)-\widehat{\phi}_0(0)}{b(y)}
=\frac{\int_0^1(\pa_y\widehat{\phi}_0)(sy)ds}{\int_0^1b'(sy)ds},$ we have,
\beqno
\|G_1(y,c)\|_{H_y^1}\leq C\|\widehat{\om}_0\|_{H_y^{1}}+C\|\widehat{j}_0\|_{H_y^{3}}.
\eeqno

\section{the homogeneous Sturmian equations}
To solve the inhomogeneous Sturmian equation, we first construct two regular solutions of the homogeneous Sturmian equation:
\begin{align*}
& \pa_y\big[\big(u(y)+b(y)-c\big)\big(u(y)-b(y)-c\big)\pa_y\va(\al, y, c)\big]\nonumber\\
&\quad\quad
-\al^2\big(u(y)+b(y)-c\big)\big(u(y)-b(y)-c\big)\va(\al, y, c)=0.
\end{align*}
We will suppress the variable $\al$ for simplicity and write the function $\va(y,c)=\va(\al,y,c)$.

Let $W_+(y)=u(y)+b(y)$, $W_-(y)=u(y)-b(y)$, then
%then from $(\bf B)$,  we have $W_+(1)=W_-(-1)$, $W_-(1)=W_+(-1)$.
from $(\bf M)$, we get $W'_+(y)>0$, $W'_-(y)<0$. And we define $\mathcal{H}(y,c)=(W_+(y)-c)(W_-(y)-c)$,
where the constant coefficient $c$ will be taken in domain: 
$c\in \Omega_{\ep_0}=\{z\in \mathbb{C}, dist(z,Ran\, W_+ \cup Ran\, W_-)\leq \ep_0\}$(the $\ep_0$ neighborhood of $Ran\, W_+ \cup Ran\, W_-$). 
According to the relationship between $W_+(1), W_-(1), W_+(-1)$ and $W_-(-1)$, the domain have the following nine cases:

\begin{itemize}
\item[1.] $W_+(1)> W_-(-1)>0> W_-(1)>W_+(-1)$,
\item[2.] $W_+(1)= W_-(-1)>0> W_-(1)>W_+(-1)$,
\item[3.] $W_-(-1)> W_+(1)>0> W_-(1)>W_+(-1)$,
\item[4.] $W_+(1)> W_-(-1)>0> W_-(1)=W_+(-1)$,
\item[5.] $W_+(1)> W_-(-1)>0> W_+(-1)>W_-(1)$,
\item[6.] $W_+(1)=W_-(-1)>0> W_-(1)=W_+(-1)$,
\item[7.] $W_+(1)= W_-(-1)>0> W_+(-1)>W_-(1)$,
\item[8.] $W_-(-1)> W_+(1)>0> W_-(1)=W_+(-1)$,
\item[9.] $W_-(-1)> W_+(1)>0> W_+(-1)>W_-(1)$.
\end{itemize}
Before we introduce all these cases, we first introduce the extension lemma. 
\begin{lemma}\label{lem: extend}
Let $f$ be a $C^k$ function defined on $[a,b]$ with $f'(b)>0$. Assume $M>f(b)$. Then for any $d>b$, there exists $F\in C^{k}([a,d])$ such that $F'(x)>0$ for $x\in [b,d]$ and $F(x)=f(x)$ for $x\in [a,b]$ and $F^{(m)}(b)=f^{(m)}(b)$ for $m=1,...,k$. Moreover, it holds that $\|F\|_{C^k}\leq C\|f\|_{C^k}$. 
\end{lemma}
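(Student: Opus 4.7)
The plan is to extend $f$ past $b$ as the antiderivative of a strictly positive $C^{k-1}$ function with prescribed Taylor data at $b$. Concretely, I would set
\begin{equation*}
F(x) = \begin{cases} f(x), & x \in [a,b],\\[2pt] f(b) + \int_b^x g(s)\,ds, & x \in [b,d], \end{cases}
\end{equation*}
which reduces the lemma to producing $g \in C^{k-1}([b,d])$ with $g>0$ on $[b,d]$, $g^{(m)}(b) = f^{(m+1)}(b)$ for $0 \le m \le k-1$, and $\|g\|_{C^{k-1}} \le C\|f\|_{C^k}$. What makes the reduction viable is that $g(b) = f'(b) > 0$ by hypothesis. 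The condition $M > f(b)$ does not enter the monotone-extension construction itself; it is relevant only if one additionally wishes to hit a prescribed value $F(d) = M$, which can be arranged by choosing the free constant $A$ below so that $\int_b^d g(s)\,ds = M - f(b)$.

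For $g$ I would take the Taylor polynomial $Q(x) := \sum_{m=0}^{k-1}\frac{f^{(m+1)}(b)}{m!}(x-b)^m$ of degree $k-1$, which carries exactly the required derivatives at $b$ and satisfies $Q(b) = f'(b) > 0$. By continuity there is $\delta \in (0, d-b]$ with $Q \ge f'(b)/2$ on $[b, b+\delta]$. Then I would fix a bump $\eta \in C_c^\infty(\mathbb{R})$ with $\eta \equiv 1$ in a neighborhood of $b$ and $\mathrm{supp}\,\eta \subset (b-\delta/2, b+\delta/2)$, and define
\begin{equation*}
g(x) := \eta(x)\, Q(x) + (1-\eta(x))\, A, \qquad x \in [b,d],
\end{equation*}
for a positive constant $A$. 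Positivity of $g$ is immediate: on $\mathrm{supp}\,\eta \cap [b,d]$ both summands are nonnegative with the first strictly positive, while off the support $g \equiv A > 0$. Since $\eta \equiv 1$ near $b$, all derivatives of $g$ at $b$ coincide with those of $Q$, hence match the required Taylor data.

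The estimate $\|Q\|_{C^{k-1}} \le C\|f\|_{C^k}$ is a direct polynomial bound, and combined with $\|\eta\|_{C^{k-1}} \le C(\delta)$ this yields $\|g\|_{C^{k-1}} \le C\|f\|_{C^k}$ and hence the asserted bound on $\|F\|_{C^k}$. The main point requiring care—and what I expect to be the only real obstacle—is that the length $\delta$, and therefore the $C^{k-1}$ norm of $\eta$, can shrink if the higher Taylor coefficients $f^{(m+1)}(b)$ are large compared with $f'(b)$; this dependence is absorbed into the constant $C$, which is allowed to depend on the fixed data $a, b, d, k$ and on $f'(b)$. In the intended applications to the monotone building blocks $W_\pm = u \pm b$ associated with the nine configurations listed above, the hypothesis $(\mathbf{M})$ provides a uniform lower bound on $W_\pm'$, while $(\mathbf{R})$ gives uniform $C^5$ bounds on $u, b$, so $\delta$ and hence $C$ can be chosen uniformly in all the cases considered.
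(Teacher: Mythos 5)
Your construction is correct and essentially the same as the paper's: both truncate the Taylor data at $b$ with a cutoff supported on a small interval (you do it at the level of $F'$, the paper at the level of $F$ itself, keeping $f(b)+f'(b)(x-b)$ and cutting off the terms of order $\geq 2$) so that strict monotonicity survives, and then continue with a freely chosen positive-slope piece on the rest of $[b,d]$. One small caution: the hypothesis $M>f(b)$ is there because the applications require the extension to reach the prescribed value $M$ at $d$, and for that choosing the constant $A$ alone does not suffice when $M-f(b)$ is small — you must also shrink the cutoff length $\delta$ so that $\int_b^{d}\eta\,Q\,ds< M-f(b)$, which is exactly the role of the paper's condition $5f'(b)\delta_1<M-f(b)$.
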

\begin{proof}
Let $\delta_1<\f{d-b}{4}$ be small enough which will be determined later and $0\leq \chi(x)\leq 1$ be a smooth non-negative function with compact support satisfying $|\chi'(x)|\leq C\delta_1^{-1}$,  $\chi(x)=1$ for $0\leq x\leq \f14\delta_1$ and $\chi(x)=0$ for $x\geq \f34\delta_1$. Define for $x\in [b, b+\delta_1]$,
\beno
F(x)=f(b)+f'(b)(x-b)+\sum_{n=2}^{k}\f{f^{(n)}(b)}{n!}(x-b)^{n}\chi(x)
\eeno
Then $F'(x)=f'(b)+\sum_{n=2}^{k}\f{f^{(n)}(b)}{(n-1)!}(x-b)^{n-1}\chi(x)+\sum_{n=2}^{k}\f{f^{(n)}(b)}{n!}(x-b)^{n}\chi'(x-b)$, by the fact that $|\chi'(x)|\leq 1/\delta_1$, we have for $|x-b|\leq \delta_1$
\beno
F'(x)\geq f'(b)-C\max_{n=2,...,k}|f^{(n)}(b)|\delta_1.
\eeno
and $F(b+\delta_1)=f(b)+f'(b)\delta_1$, $F'(b+\delta_1)=f'(b)$ and $F^{(n)}(b+\delta_1)=0$ for $n\geq 2$.
By taking $\delta_1$ small enough such that $C\max_{n=2,...,k}|f^{(n)}(b)|\delta_1\leq \f12f'(b)$ and $5f'(b)\delta_1<M-f(b)$, then we have for $|x-b|\leq \delta_1$, $F'(x)\geq \f12f'(b)$ and $F(b+\delta_1)<M$.

Let $g(x)>0$ be a smooth function, such that $g(x)=f'(b)$ for $x\in [b+\delta_1,b+2\delta_1]$ and
\beno
\int_{b+\delta_1}^dg(x)dx=M-F(b+\delta_1).
\eeno
Let $F(x)=F(b+\delta_1)+\int_{b+\delta_1}^xg(x')dx'$ for $x\in [b+\delta_1,d]$, then $F(x)$ is the extension function.
\end{proof}
For {\bf Case 1}, we let 
\beno
&\ & D_0\stackrel{def}{=}\big\{c\in[W_+(-1), W_+(1)]\big\},\\
&\ & D_{\ep_0}\stackrel{def}{=}\big\{c=c_r+i\ep, \ c_r\in[W_+(-1), W_+(1)], \ 0<|\ep|<\ep_0\big\},\\
&\ & B_{\ep_0}^{l}\stackrel{def}{=}\big\{c=W_+(-1)+\ep e^{i\theta},\  0<\ep<\ep_0, \ \frac{\pi}{2}\leq\theta\leq\frac{3\pi}{2}\big\},\\
&\ & B_{\ep_0}^{r}\stackrel{def}{=}\big\{c=W_+(1)-\ep e^{i\theta}, \ 0<\ep<\ep_0, \ \frac{\pi}{2}\leq\theta\leq\frac{3\pi}{2}\big\},
\eeno
for some $\ep_0\in(0,1).$ We denote
$
\Omega_{\ep_0}\stackrel{def}{=}D_0\cup D_{\ep_0}\cup B_{\ep_0}^{l}\cup B_{\ep_0}^{r}.
$
We also define
\beqno
c_r=Re\, c\ for\  c\in D_0\cup D_{\ep_0}, \ c_r=W_+(-1)
\ for \ c\in B_{\ep_0}^{l}, \ c_r=W_+(1) \ for \ c\in B_{\ep_0}^{r}.
\eeqno
By Lemma \ref{lem: extend}, we can take a $C^5$ extension of $W_-$ to be $\wt{W}_-$ for $y\in[\rm{a}_-,\rm{a}_+]$ such that $\wt{W}_-(\rm{a}_-)=W_+(1)$, $\wt{W}_-(\rm{a}_+)=W_+(-1)$ and $\wt{W}_-'(y)<0$. 
\begin{itemize}
\item For $c\in D_0\cup D_{\ep_0}$ and $c_r\geq 0$, we denote $y_{c_+}\in [0,1]$ with $y_{c_+}=(W_+)^{-1}(c_r)$, so that $W_+(y_{c_+})-c_r=0$ and $y_{c_-}\in [\rm{a}_-,0]$ with $y_{c_-}=(\wt{W}_-)^{-1}(c_r)$, so that $\wt{W}_-(y_{c_-})-c_r=0$. 
\item For $c\in D_0\cup D_{\ep_0}$ and $c_r\leq 0$, we denote $y_{c_+}\in [0,\rm{a}_+]$ with $y_{c_+}=(\wt{W}_-)^{-1}(c_r)$, so that $\wt{W}_-(y_{c_+})=c_r$ and $y_{c_-}\in [-1,0]$ with $y_{c_-}=(W_+)^{-1}(c_r)$, so that $W_+(y_{c_-})-c_r=0$. 
\item For $c\in B_{\ep_0}^{l}$ then $c_r=W_+(-1)=\wt{W}_-(\rm{a}_+)$, we denote $y_{c_+}=\rm{a}_+$ and $y_{c_-}=-1$. 
\item For $c\in B_{\ep_0}^r$ then $c_r=W_+(1)=\wt{W}_-(a_-)$, we denote $y_{c_+}=1$ and $y_{c_-}=\rm{a}_-$.
\end{itemize}

We show the relationship between $y_{c_+}$, $y_{c_-}$ and $c_r$ by the following picture for the above case. 

\begin{tikzpicture}[thick, scale=0.45]
    \draw[very thin,color=gray];
    \draw[->] (-7,0) -- (7,0) node[right] {$y$};
    \draw	(0.1,-0.01) node[anchor=north] {0}
		(5,0) node[anchor=north] {1}
        (-4.8,0) node[anchor=north] {-1};
        
    \draw[dotted] (-6,4) -- (5,4);
    \draw[dotted] (-5,-4) -- (6,-4);
    \draw[dotted] (-5,-4) -- (-5,3);
    \draw[dotted] (5,-3) -- (5,4);
    
    \draw[red,thick] (-5,1.8) -- (5,1.8);
    \draw[red] (0.3,2.2) node {$c_r$};
    \draw[->] (0,-5) -- (0,5) node[above] {$c_r$};
    \draw[thick] (0,0) parabola[bend at end] (1,1.2);
    \draw[thick] (1.8,2) parabola[bend at end] (1,1.2);
    \draw[thick] (1.8,2) parabola[bend at end] (2.6,2.8);
     \draw[thick] (3.8,3.2) parabola[bend at end] (2.6,2.8);
     \draw[thick] (3.8,3.2) parabola[bend at end] (5,4);
     
      \draw[thick] (0,0) parabola[bend at end] (-1,-1.2);
    \draw[thick] (-1.8,-2) parabola[bend at end] (-1,-1.2);
    \draw[thick] (-1.8,-2) parabola[bend at end] (-2.6,-2.8);
     \draw[thick] (-3.8,-3.2) parabola[bend at end] (-2.6,-2.8);
     \draw[thick] (-3.8,-3.2) parabola[bend at end] (-5,-4);

     \draw[thick] (0,0) parabola[bend at end] (-1.5,0.7);
    \draw[thick] (-2.3,1.2) parabola[bend at end] (-1.5,0.7);
    \draw[thick] (-2.3,1.2) parabola[bend at end] (-3.1,2.0);
     \draw[thick] (-4,2.5) parabola[bend at end] (-3.1,2.0);
     \draw[thick] (-4,2.5) parabola[bend at end] (-5,3);
    \draw[red,dotted] (-6,4) parabola[bend at end] (-5,3);
    
    \draw[thick] (0,0) parabola[bend at end] (1.5,-1.0);
    \draw[thick] (2.5,-1.8) parabola[bend at end] (1.5,-1.0);
    \draw[thick] (2.5,-1.8) parabola[bend at end] (3.1,-2.2);
     \draw[thick] (4,-2.7) parabola[bend at end] (3.1,-2.2);
     \draw[thick] (4,-2.7) parabola[bend at end] (5,-3);
     \draw[red, dotted] (6,-4) parabola[bend at end] (5,-3);
\draw (5.1,4.4) node {\small $W_+(y)$};
\draw (4.1,-3.3) node {\small $W_-(y)$};
 \draw[dotted] (-2.7,0) -- (-2.7,1.8);
 \draw (-2.55,-0.5) node {\small $y_{c_-}$};
  \draw[dotted] (1.7,0) -- (1.7,1.8);
  \draw (1.8,-0.5) node {\small $y_{c_+}$};
  \draw[dotted] (6,-4)-- (6,0); \draw (6,0.5) node {$\rm{a}_+$};
  \draw[dotted] (-6,4)-- (-6,0); \draw (-6,-0.5) node {$\rm{a}_-$};
  \draw (-3.5,4.6) node {\small $\wt{W}_-({\rm{a}}_-)=W_+(1)$};
\draw (3.5,-4.6) node {\small $\wt{W}_-({\rm{a}}_+)=W_+(-1)$};
\end{tikzpicture}

We also take a $C^5$ extension of $W_+$ to be $\wt{W}_+$ for $y\in [\rm{a}_-,{\rm{a}}_+]$, so that $\wt{W}_+'(y)>0$.

For {\bf{Case 2}}, we let
\beno
&\ & D_0\stackrel{def}{=}\big\{c\in[W_+(-1), W_+(1)]\big\},\\
&\ & D_{\ep_0}\stackrel{def}{=}\big\{c=c_r+i\ep, \ c_r\in[W_+(-1), W_+(1)], \ 0<|\ep|<\ep_0\big\},\\
&\ & B_{\ep_0}^{l}\stackrel{def}{=}\big\{c=W_+(-1)+\ep e^{i\theta},\  0<\ep<\ep_0, \ \frac{\pi}{2}\leq\theta\leq\frac{3\pi}{2}\big\},\\
&\ & B_{\ep_0}^{r}\stackrel{def}{=}\big\{c=W_+(1)-\ep e^{i\theta}, \ 0<\ep<\ep_0, \ \frac{\pi}{2}\leq\theta\leq\frac{3\pi}{2}\big\},
\eeno
for some $\ep_0\in(0,1).$ We denote
$
\Omega_{\ep_0}\stackrel{def}{=}D_0\cup D_{\ep_0}\cup B_{\ep_0}^{l}\cup B_{\ep_0}^{r}.
$
We also define
\beqno
c_r=Re c\ for\  c\in D_0\cup D_{\ep_0}, \ c_r=W_+(-1)
\ for \ c\in B_{\ep_0}^{l}, \ c_r=W_+(1) \ for \ c\in B_{\ep_0}^{r}.
\eeqno
By Lemma \ref{lem: extend}, we can take a $C^5$ extension of $W_-$ to be $\wt{W}_-$ for $y\in[-1,{\rm{a}}_+]$ such that $\wt{W}_-({\rm{a}}_+)=W_+(-1)$ and $\wt{W}'_-(y)<0$.
\begin{itemize}
\item For $c\in D_0\cup D_{\ep_0}$ and $c_r\geq0$, we denote $y_{c_+}\in[0,1]$ with $y_{c_+}=(W_+)^{-1}(c_r)$, so that $W_+(y_{c_+})-c_r=0$ and $y_{c_-}\in[-1,0]$ with $y_{c_-}=(W_-)^{-1}(c_r)$, so that $W_-(y_{c_-})-c_r=0$.
\item For $c\in D_0\cup D_{\ep_0}$ and $c_r\leq0$, we denote $y_{c_+}\in[0,{\rm{a}}_+]$ with $y_{c_+}=(\wt{W}_-)^{-1}(c_r)$, so that $\wt{W}_-(y_{c_+})-c_r=0$ and $y_{c_-}\in[-1,0]$ with $y_{c_-}=(W_+)^{-1}(c_r)$, so that $W_+(y_{c_-})-c_r=0$.
\item For $c\in B_{\ep_0}^l$, then $c_r=W_+(-1)=\wt{W}_-({\rm{a}}_+)$, we denote $y_{c_+}={\rm{a}}_+$ and $y_{c_-}=-1$.
\item For $c\in B_{\ep_0}^r$, then $c_r=W_+(1)=W_-(-1)$, we denote $y_{c_+}=1$ and $y_{c_-}=-1$.
\end{itemize}

We show the relationship between $y_{c_+}, y_{c_-}$ and $c_r$ by the following picture for the above case.

\begin{tikzpicture}[thick, scale=0.5]
    \draw[very thin,color=gray];
    \draw[->] (-7,0) -- (7,0) node[right] {$y$};
    \draw	(0.1,-0.01) node[anchor=north] {0}
		(5,0) node[anchor=north] {1}
        (-4.8,0) node[anchor=north] {-1};

    \draw[dotted] (-5,4) -- (5,4);
    \draw[dotted] (-5,-4) -- (6,-4);
    \draw[dotted] (-5,-4) -- (-5,4);
    \draw[dotted] (5,-3) -- (5,4);

    \draw[red,thick] (-5,1.8) -- (5,1.8);
    \draw[red] (0.3,2.2) node {$c_r$};
    \draw[->] (0,-5) -- (0,5) node[above] {$c_r$};
    \draw[thick] (0,0) parabola[bend at end] (1,1.2);
    \draw[thick] (1.8,2) parabola[bend at end] (1,1.2);
    \draw[thick] (1.8,2) parabola[bend at end] (2.6,2.8);
     \draw[thick] (3.8,3.2) parabola[bend at end] (2.6,2.8);
     \draw[thick] (3.8,3.2) parabola[bend at end] (5,4);

      \draw[thick] (0,0) parabola[bend at end] (-1,-1.2);
    \draw[thick] (-1.8,-2) parabola[bend at end] (-1,-1.2);
    \draw[thick] (-1.8,-2) parabola[bend at end] (-2.6,-2.8);
     \draw[thick] (-3.8,-3.2) parabola[bend at end] (-2.6,-2.8);
     \draw[thick] (-3.8,-3.2) parabola[bend at end] (-5,-4);
     
\draw[thick] (0,0) parabola[bend at end] (-1.5,1.1);
    \draw[thick] (-2.5,1.9) parabola[bend at end] (-1.5,1.1);
    \draw[thick] (-2.5,1.9) parabola[bend at end] (-3.1,2.6);
     \draw[thick] (-4,3.1) parabola[bend at end] (-3.1,2.6);
     \draw[thick] (-4,3.1) parabola[bend at end] (-5,4);

    \draw[thick] (0,0) parabola[bend at end] (1.5,-1.0);
    \draw[thick] (2.5,-1.8) parabola[bend at end] (1.5,-1.0);
    \draw[thick] (2.5,-1.8) parabola[bend at end] (3.1,-2.2);
     \draw[thick] (4,-2.7) parabola[bend at end] (3.1,-2.2);
     \draw[thick] (4,-2.7) parabola[bend at end] (5,-3);
     \draw[red, dotted] (6,-4) parabola[bend at end] (5,-3);
\draw (5.1,4.4) node {\small $W_+(y)$};
\draw (4.1,-3.3) node {\small $W_-(y)$};
 \draw[dotted] (-2.5,0) -- (-2.5,1.8);
 \draw (-2.3,-0.5) node {$y_{c_-}$};
  \draw[dotted] (1.7,0) -- (1.7,1.8);
  \draw (1.8,-0.5) node {$y_{c_+}$};
  \draw[dotted] (6,-4)-- (6,0); \draw (6,0.5) node {${\rm{a}}_+$};
  \draw (-3.5,4.5) node {\small $W_-(-1)=W_+(1)$};
\draw (3.5,-4.6) node {\small $\wt{W}_-({\rm{a}}_+)=W_+(-1)$};
\end{tikzpicture}

We also take a $C^5$ extension of $W_+$ to be $\wt{W}_+$ for $y\in [-1,{\rm{a}}_+]$, so that $\wt{W}_+'(y)>0$.

For {\bf{Case 3}}, we let
\beno
&\ & D_0\stackrel{def}{=}\big\{c\in[W_+(-1), W_-(-1)]\big\},\\
&\ & D_{\ep_0}\stackrel{def}{=}\big\{c=c_r+i\ep, \ c_r\in[W_+(-1), W_-(-1)], \ 0<|\ep|<\ep_0\big\},\\
&\ & B_{\ep_0}^{l}\stackrel{def}{=}\big\{c=W_+(-1)+\ep e^{i\theta},\  0<\ep<\ep_0, \ \frac{\pi}{2}\leq\theta\leq\frac{3\pi}{2}\big\},\\
&\ & B_{\ep_0}^{r}\stackrel{def}{=}\big\{c=W_-(-1)-\ep e^{i\theta}, \ 0<\ep<\ep_0, \ \frac{\pi}{2}\leq\theta\leq\frac{3\pi}{2}\big\},
\eeno
for some $\ep_0\in(0,1).$ We denote
$
\Omega_{\ep_0}\stackrel{def}{=}D_0\cup D_{\ep_0}\cup B_{\ep_0}^{l}\cup B_{\ep_0}^{r}.
$
We also define
\beqno
c_r=Re c\ for\  c\in D_0\cup D_{\ep_0}, \ c_r=W_+(-1)
\ for \ c\in B_{\ep_0}^{l}, \ c_r=W_-(-1) \ for \ c\in B_{\ep_0}^{r}.
\eeqno
By Lemma \ref{lem: extend}, we can take a $C^5$ extension of $W_-$ to be $\wt{W}_-$  and $W_+$ to be $\wt{W}_+$ for $y\in[-1,{\rm{a}}_+]$ such that $\wt{W}_-({\rm{a}}_+)=W_+(-1)$, $\wt{W}'_-(y)<0$ and  $\wt{W}_+({\rm{a}}_+)=W_-(-1)$, $\wt{W}'_+(y)>0$.
\begin{itemize}
\item For $c\in D_0\cup D_{\ep_0}$ and $c_r\geq0$, we denote $y_{c_+}\in[0,{\rm{a}}_+]$ with $y_{c_+}=(\wt{W}_+)^{-1}(c_r)$, so that $\wt{W}_+(y_{c_+})-c_r=0$ and $y_{c_-}\in[-1,0]$ with $y_{c_-}=(W_-)^{-1}(c_r)$, so that $W_-(y_{c_-})-c_r=0$.
\item For $c\in D_0\cup D_{\ep_0}$ and $c_r\leq0$, we denote $y_{c_+}\in[0,{\rm{a}}_+]$ with $y_{c_+}=(\wt{W}_-)^{-1}(c_r)$, so that $\wt{W}_-(y_{c_+})-c_r=0$ and $y_{c_-}\in[-1,0]$ with $y_{c_-}=(W_+)^{-1}(c_r)$, so that $W_+(y_{c_-})-c_r=0$.
\item For $c\in B_{\ep_0}^l$, then $c_r=W_+(-1)=\wt{W}_-({\rm{a}}_+)$, we denote $y_{c_+}={\rm{a}}_+$ and $y_{c_-}=-1$.
\item For $c\in B_{\ep_0}^r$, then $c_r=W_-(-1)=\wt{W}_+({\rm{a}}_+)$, we denote $y_{c_+}={\rm{a}}_+$ and $y_{c_-}=-1$.
\end{itemize}

We show the relationship between $y_{c_+}, y_{c_-}$ and $c_r$ by the following picture for the above case.

\begin{tikzpicture}[thick, scale=0.5]
    \draw[very thin,color=gray];
    \draw[->] (-7,0) -- (7,0) node[right] {$y$};
    \draw	(0.1,-0.01) node[anchor=north] {0}
		(5,0) node[anchor=north] {1}
        (-4.8,0) node[anchor=north] {-1};
    \draw[dotted] (-5,4) -- (5.9,4);
    \draw[dotted] (-5,-4) -- (5.9,-4);
    \draw[dotted] (-5,-4) -- (-5,4);
    \draw[dotted] (5,-3) -- (5,3);

    \draw[red,thick] (-5,2) -- (5,2);
    \draw[red] (0.3,2.2) node {$c_r$};
    \draw[->] (0,-5) -- (0,5) node[above] {$c_r$};

    \draw[thick] (0,0) parabola[bend at end] (1.5,1.0);
    \draw[thick] (2.5,1.8) parabola[bend at end] (1.5,1.0);
    \draw[thick] (2.5,1.8) parabola[bend at end] (3.9,2.3);
     \draw[thick] (5,3) parabola[bend at end] (3.9,2.3);
      \draw[red,dotted] (5,3) parabola[bend at end] (5.9,4);

      \draw[thick] (0,0) parabola[bend at end] (-1,-1.2);
    \draw[thick] (-1.8,-2) parabola[bend at end] (-1,-1.2);
    \draw[thick] (-1.8,-2) parabola[bend at end] (-2.6,-2.8);
     \draw[thick] (-3.8,-3.2) parabola[bend at end] (-2.6,-2.8);
     \draw[thick] (-3.8,-3.2) parabola[bend at end] (-5,-4);

     \draw[thick] (0,0) parabola[bend at end] (-1.5,1.1);
    \draw[thick] (-2.5,1.9) parabola[bend at end] (-1.5,1.1);
    \draw[thick] (-2.5,1.9) parabola[bend at end] (-3.1,2.6);
     \draw[thick] (-4,3.1) parabola[bend at end] (-3.1,2.6);
     \draw[thick] (-4,3.1) parabola[bend at end] (-5,4);

     \draw[thick] (0,0) parabola[bend at end] (1.5,-1.0);
    \draw[thick] (2.5,-1.8) parabola[bend at end] (1.5,-1.0);
    \draw[thick] (2.5,-1.8) parabola[bend at end] (3.1,-2.2);
     \draw[thick] (4,-2.7) parabola[bend at end] (3.1,-2.2);
     \draw[thick] (4,-2.7) parabola[bend at end] (5,-3);
     \draw[red, dotted] (5.9,-4) parabola[bend at end] (5,-3);

\draw (4.0,3.0) node {\small $W_+(y)$};
\draw (4.2,-3.2) node {\small $W_-(y)$};
 \draw[dotted] (-2.55,0) -- (-2.55,2);
 \draw (-2.55,-0.2) node {$y_{c_-}$};
  \draw[dotted] (2.8,0) -- (2.8,2);
  \draw (2.75,-0.2) node {$y_{c_+}$};
  \draw[dotted] (5.9,-4)-- (5.9,4); \draw (5.9,0.5) node {$a_+$};
  \draw (3.5,4.5) node {\small $\wt{W}_+({\rm{a}}_+)=W_-(-1)$};
\draw (3.5,-4.6) node {\small $\wt{W}_-({\rm{a}}_+)=W_+(-1)$};
\end{tikzpicture}

For {\bf{Case 4}}, we let
\beno
&\ & D_0\stackrel{def}{=}\big\{c\in[W_+(-1), W_+(1)]\big\},\\
&\ & D_{\ep_0}\stackrel{def}{=}\big\{c=c_r+i\ep, \ c_r\in[W_+(-1), W_+(1)], \ 0<|\ep|<\ep_0\big\},\\
&\ & B_{\ep_0}^{l}\stackrel{def}{=}\big\{c=W_+(-1)+\ep e^{i\theta},\  0<\ep<\ep_0, \ \frac{\pi}{2}\leq\theta\leq\frac{3\pi}{2}\big\},\\
&\ & B_{\ep_0}^{r}\stackrel{def}{=}\big\{c=W_+(1)-\ep e^{i\theta}, \ 0<\ep<\ep_0, \ \frac{\pi}{2}\leq\theta\leq\frac{3\pi}{2}\big\},
\eeno
for some $\ep_0\in(0,1).$ We denote
$
\Omega_{\ep_0}\stackrel{def}{=}D_0\cup D_{\ep_0}\cup B_{\ep_0}^{l}\cup B_{\ep_0}^{r}.
$
We also define
\beqno
c_r=Re c\ for\  c\in D_0\cup D_{\ep_0}, \ c_r=W_+(-1)
\ for \ c\in B_{\ep_0}^{l}, \ c_r=W_+(1) \ for \ c\in B_{\ep_0}^{r}.
\eeqno
By Lemma \ref{lem: extend}, we can take a $C^5$ extension of $W_-$ to be $\wt{W}_-$ for $y\in[{\rm{a}}_-,1]$ such that $\wt{W}_-({\rm{a}}_-)=W_+(1)$ and $\wt{W}'_-(y)<0$.
\begin{itemize}
\item For $c\in D_0\cup D_{\ep_0}$ and $c_r\geq0$, we denote $y_{c_+}\in[0,1]$ with $y_{c_+}=(W_+)^{-1}(c_r)$, so that $W_+(y_{c_+})-c_r=0$ and $y_{c_-}\in[{\rm{a}}_-,0]$ with $y_{c_-}=(\wt{W}_-)^{-1}(c_r)$, so that $\wt{W}_-(y_{c_-})-c_r=0$.
\item For $c\in D_0\cup D_{\ep_0}$ and $c_r\leq0$, we denote $y_{c_+}\in[0,1]$ with $y_{c_+}=(W_-)^{-1}(c_r)$, so that $W_-(y_{c_+})-c_r=0$ and $y_{c_-}\in[-1,0]$ with $y_{c_-}=(W_+)^{-1}(c_r)$, so that $W_+(y_{c_-})-c_r=0$.
\item For $c\in B_{\ep_0}^l$, then $c_r=W_+(-1)=W_-(1)$, we denote $y_{c_+}=1$ and $y_{c_-}=-1$.
\item For $c\in B_{\ep_0}^r$, then $c_r=W_+(1)=\wt{W}_-({\rm{a}}_-)$, we denote $y_{c_+}=1$ and $y_{c_-}={\rm{a}}_-$.
\end{itemize}

We show the relationship between $y_{c_+}, y_{c_-}$ and $c_r$ by the following picture for the above case.

\begin{tikzpicture}[thick, scale=0.5]
    \draw[very thin,color=gray];
    \draw[->] (-7,0) -- (7,0) node[right] {$y$};
    \draw	(0.1,-0.01) node[anchor=north] {0}
		(5,0) node[anchor=north] {1}
        (-4.8,0) node[anchor=north] {-1};

    \draw[dotted] (-6,4) -- (5,4);
    \draw[dotted] (-5,-4) -- (5,-4);
    \draw[dotted] (-5,-4) -- (-5,3);
    \draw[dotted] (5,-4) -- (5,4);

    \draw[red,thick] (-5,1.8) -- (5,1.8);
    \draw[red] (0.3,2.2) node {$c_r$};
    \draw[->] (0,-5) -- (0,5) node[above] {$c_r$};
    \draw[thick] (0,0) parabola[bend at end] (1,1.2);
    \draw[thick] (1.8,2) parabola[bend at end] (1,1.2);
    \draw[thick] (1.8,2) parabola[bend at end] (2.6,2.8);
     \draw[thick] (3.8,3.2) parabola[bend at end] (2.6,2.8);
     \draw[thick] (3.8,3.2) parabola[bend at end] (5,4);

      \draw[thick] (0,0) parabola[bend at end] (-1,-1.2);
    \draw[thick] (-1.8,-2) parabola[bend at end] (-1,-1.2);
    \draw[thick] (-1.8,-2) parabola[bend at end] (-2.6,-2.8);
     \draw[thick] (-3.8,-3.2) parabola[bend at end] (-2.6,-2.8);
     \draw[thick] (-3.8,-3.2) parabola[bend at end] (-5,-4);

     \draw[thick] (0,0) parabola[bend at end] (-1.5,0.7);
    \draw[thick] (-2.3,1.2) parabola[bend at end] (-1.5,0.7);
    \draw[thick] (-2.3,1.2) parabola[bend at end] (-3.1,2.0);
     \draw[thick] (-4,2.5) parabola[bend at end] (-3.1,2.0);
     \draw[thick] (-4,2.5) parabola[bend at end] (-5,3);
    \draw[red,dotted] (-6,4) parabola[bend at end] (-5,3);

    \draw[thick] (0,0) parabola[bend at end] (1.5,-1.1);
    \draw[thick] (2.5,-1.9) parabola[bend at end] (1.5,-1.1);
    \draw[thick] (2.5,-1.9) parabola[bend at end] (3.1,-2.6);
     \draw[thick] (4,-3.1) parabola[bend at end] (3.1,-2.6);
     \draw[thick] (4,-3.1) parabola[bend at end] (5,-4);
\draw (5.1,4.4) node {\small $W_+(y)$};
\draw (3.1,-3.3) node {\small $W_-(y)$};
 \draw[dotted] (-2.7,0) -- (-2.7,1.8);
 \draw (-2.55,-0.5) node {$y_{c_-}$};
  \draw[dotted] (1.7,0) -- (1.7,1.8);
  \draw (1.8,-0.5) node {$y_{c_+}$};
  \draw[dotted] (-6,4)-- (-6,0); \draw (-6,-0.5) node {$a_-$};
  \draw (-3.5,4.5) node {\small $\wt{W}_-({\rm{a}}_-)=W_+(1)$};
\draw (3.5,-4.6) node {\small $W_-(1)=W_+(-1)$};
\end{tikzpicture}

We also take a $C^5$ extension of $W_+$ to be $\wt{W}_+$ for $y\in [{\rm{a}}_-,1]$, so that $\wt{W}_+'(y)>0$.

For {\bf{Case 5}}, we let
\beno
&\ & D_0\stackrel{def}{=}\big\{c\in[W_-(1), W_+(1)]\big\},\\
&\ & D_{\ep_0}\stackrel{def}{=}\big\{c=c_r+i\ep, \ c_r\in[W_-(1), W_+(1)], \ 0<|\ep|<\ep_0\big\},\\
&\ & B_{\ep_0}^{l}\stackrel{def}{=}\big\{c=W_-(1)+\ep e^{i\theta},\  0<\ep<\ep_0, \ \frac{\pi}{2}\leq\theta\leq\frac{3\pi}{2}\big\},\\
&\ & B_{\ep_0}^{r}\stackrel{def}{=}\big\{c=W_+(1)-\ep e^{i\theta}, \ 0<\ep<\ep_0, \ \frac{\pi}{2}\leq\theta\leq\frac{3\pi}{2}\big\},
\eeno
for some $\ep_0\in(0,1).$ We denote
$
\Omega_{\ep_0}\stackrel{def}{=}D_0\cup D_{\ep_0}\cup B_{\ep_0}^{l}\cup B_{\ep_0}^{r}.
$
We also define
\beqno
c_r=Re\, c\ for\  c\in D_0\cup D_{\ep_0}, \ c_r=W_-(1)
\ for \ c\in B_{\ep_0}^{l}, \ c_r=W_+(1) \ for \ c\in B_{\ep_0}^{r}.
\eeqno
By Lemma \ref{lem: extend}, we can take a $C^5$ extension of $W_-$ to be $\wt{W}_-$  and $W_+$ to be $\wt{W}_+$ for $y\in[{\rm{a}}_-,1]$ such that $\wt{W}_-({\rm{a}}_-)=W_+(1)$, $\wt{W}'_-(y)<0$ and  $\wt{W}_+({\rm{a}}_-)=W_-(1)$, $\wt{W}'_+(y)>0$.
\begin{itemize}
\item For $c\in D_0\cup D_{\ep_0}$ and $c_r\geq0$, we denote $y_{c_+}\in[0,1]$ with $y_{c_+}=(W_+)^{-1}(c_r)$, so that $W_+(y_{c_+})-c_r=0$ and $y_{c_-}\in[{\rm{a}}_-,0]$ with $y_{c_-}=(\wt{W}_-)^{-1}(c_r)$, so that $\wt{W}_-(y_{c_-})-c_r=0$.
\item For $c\in D_0\cup D_{\ep_0}$ and $c_r\leq0$, we denote $y_{c_+}\in[0,1]$ with $y_{c_+}=(W_-)^{-1}(c_r)$, so that $W_-(y_{c_+})-c_r=0$ and $y_{c_-}\in[{\rm{a}}_-,0]$ with $y_{c_-}=(\wt{W}_+)^{-1}(c_r)$, so that $\wt{W}_+(y_{c_-})-c_r=0$.
\item For $c\in B_{\ep_0}^l$, then $c_r=W_-(1)=\wt{W}_+({\rm{a}}_-)$, we denote $y_{c_+}=1$ and $y_{c_-}={\rm{a}}_-$.
\item For $c\in B_{\ep_0}^r$, then $c_r=W_+(1)=\wt{W}_-({\rm{a}}_-)$, we denote $y_{c_+}=1$ and $y_{c_-}={\rm{a}}_-$.
\end{itemize}

We show the relationship between $y_{c_+}, y_{c_-}$ and $c_r$ by the following picture for the above case.

\begin{tikzpicture}[thick, scale=0.5]
    \draw[very thin,color=gray];
    \draw[->] (-7,0) -- (7,0) node[right] {$y$};
    \draw(0.1,-0.01) node[anchor=north] {0}
		(5,0) node[anchor=north] {1}
        (-4.8,0) node[anchor=north] {-1};

    \draw[dotted] (-5.8,4) -- (5,4);
    \draw[dotted] (-5.8,-4) -- (5,-4);
    \draw[dotted] (-5,-3) -- (-5,3);
    \draw[dotted] (5,-4) -- (5,4);

    \draw[red,thick] (-5,1.8) -- (5,1.8);
    \draw[red] (0.3,2.2) node {$c_r$};
    \draw[->] (0,-5) -- (0,5) node[above] {$c_r$};
    \draw[thick] (0,0) parabola[bend at end] (1,1.2);
    \draw[thick] (1.8,2) parabola[bend at end] (1,1.2);
    \draw[thick] (1.8,2) parabola[bend at end] (2.6,2.8);
     \draw[thick] (3.8,3.2) parabola[bend at end] (2.6,2.8);
     \draw[thick] (3.8,3.2) parabola[bend at end] (5,4);

      \draw[thick] (0,0) parabola[bend at end] (-1.5,-1.0);
    \draw[thick] (-2.5,-1.8) parabola[bend at end] (-1.5,-1.0);
    \draw[thick] (-2.5,-1.8) parabola[bend at end] (-3.9,-2.3);
     \draw[thick] (-5,-3) parabola[bend at end] (-3.9,-2.3);
      \draw[red,dotted] (-5,-3) parabola[bend at end] (-5.8,-4);

     \draw[thick] (0,0) parabola[bend at end] (-1.5,0.7);
    \draw[thick] (-2.3,1.2) parabola[bend at end] (-1.5,0.7);
    \draw[thick] (-2.3,1.2) parabola[bend at end] (-3.1,2.0);
     \draw[thick] (-4,2.5) parabola[bend at end] (-3.1,2.0);
     \draw[thick] (-4,2.5) parabola[bend at end] (-5,3);
    \draw[red,dotted] (-5.8,4) parabola[bend at end] (-5,3);

     \draw[thick] (0,0) parabola[bend at end] (1.5,-1.1);
    \draw[thick] (2.5,-1.9) parabola[bend at end] (1.5,-1.1);
    \draw[thick] (2.5,-1.9) parabola[bend at end] (3.1,-2.6);
     \draw[thick] (4,-3.1) parabola[bend at end] (3.1,-2.6);
     \draw[thick] (4,-3.1) parabola[bend at end] (5,-4);

\draw (5.1,4.4) node {\small $W_+(y)$};
\draw (6,-4) node {\small $W_-(y)$};
 \draw[dotted] (-2.7,0) -- (-2.7,1.8);
 \draw (-2.7,-0.5) node {$y_{c_-}$};
  \draw[dotted] (1.7,0) -- (1.7,1.9);
  \draw (1.8,-0.5) node {$y_{c_+}$};
  \draw[dotted] (-5.8,-4)-- (-5.8,4); \draw (-6.2,-0.5) node {${\rm{a}}_-$};
   \draw (-3.5,4.5) node {\small $\wt{W}_-({\rm{a}}_-)=W_+(1)$};
\draw (-3.5,-4.6) node {\small $\wt{W}_+({\rm{a}}_-)=W_-(1)$};
\end{tikzpicture}

For {\bf{Case 6}}, we let
\beno
&\ & D_0\stackrel{def}{=}\big\{c\in[W_-(1), W_+(1)]\big\},\\
&\ & D_{\ep_0}\stackrel{def}{=}\big\{c=c_r+i\ep, \ c_r\in[W_-(1), W_+(1)], \ 0<|\ep|<\ep_0\big\},\\
&\ & B_{\ep_0}^l\stackrel{def}{=}\big\{c=W_-(1)+\ep e^{i\theta},\  0<\ep<\ep_0, \ \frac{\pi}{2}\leq\theta\leq\frac{3\pi}{2}\big\},\\
&\ & B_{\ep_0}^r\stackrel{def}{=}\big\{c=W_+(1)-\ep e^{i\theta}, \ 0<\ep<\ep_0, \ \frac{\pi}{2}\leq\theta\leq\frac{3\pi}{2}\big\},
\eeno
for some $\ep_0\in(0,1).$ We denote
$
\Omega_{\ep_0}\stackrel{def}{=}D_0\cup D_{\ep_0}\cup B_{\ep_0}^l\cup B_{\ep_0}^r.
$
We also define
\beqno
c_r=Re c\ for\  c\in D_0\cup D_{\ep_0}, \ c_r=W_-(1)
\ for \ c\in B_{\ep_0}^l, \ c_r=W_+(1) \ for \ c\in B_{\ep_0}^r.
\eeqno
\begin{itemize}
\item For $c\in D_0\cup D_{\ep_0}$ and $c_r\geq0$, we denote $y_{c_+}\in[0,1]$ with $y_{c_+}=(W_+)^{-1}(c_r)$, so that $W_+(y_{c_+})-c_r=0$ and $y_{c_-}\in[-1,0]$ with $y_{c_-}=(W_-)^{-1}(c_r)$, so that $W_-(y_{c_-})-c_r=0$.
\item For $c\in D_0\cup D_{\ep_0}$ and $c_r\leq0$, we denote $y_{c_+}\in[0,1]$ with $y_{c_+}=(W_-)^{-1}(c_r)$, so that $W_-(y_{c_+})-c_r=0$ and $y_{c_-}\in[-1,0]$ with $y_{c_-}=(W_+)^{-1}(c_r)$, so that $W_+(y_{c_-})-c_r=0$.
\item For $c\in B_{\ep_0}^l$, then $c_r=W_-(1)=W_+(-1)$, we denote $y_{c_+}=1$ and $y_{c_-}=-1$.
\item For $c\in B_{\ep_0}^r$, then $c_r=W_+(1)=W_-(-1)$, we denote $y_{c_+}=1$ and $y_{c_-}=-1$.
\end{itemize}

 We show the relationship between $y_{c_+}$, $y_{c_-}$ and $c_r$ by the following picture.

\begin{tikzpicture}[thick, scale=0.5]
    \draw[very thin,color=gray];
    \draw[->] (-6,0) -- (6,0) node[right] {$y$};
    \draw	(0.1,-0.01) node[anchor=north] {0}
		(5,0) node[anchor=north] {1}
        (-5,0) node[anchor=north] {-1};
    \draw[dotted] (-5,4) -- (5,4);
    \draw[red,thick] (-5,2) -- (5,2);
    \draw[red] (0.3,2.2) node {$c_r$};
    \draw[dotted] (-5,-4) -- (5,-4);
    \draw[dotted] (-5,-4) -- (-5,4);
    \draw[dotted] (5,-4) -- (5,4);
    \draw[->] (0,-5) -- (0,5) node[above] {$c_r$};
    \draw[thick] (0,0) parabola[bend at end] (1,1.2);
    \draw[thick] (1.8,2) parabola[bend at end] (1,1.2);
    \draw[thick] (1.8,2) parabola[bend at end] (2.6,2.8);
     \draw[thick] (3.8,3.2) parabola[bend at end] (2.6,2.8);
     \draw[thick] (3.8,3.2) parabola[bend at end] (5,4);
      \draw[thick] (0,0) parabola[bend at end] (-1,-1.2);
    \draw[thick] (-1.8,-2) parabola[bend at end] (-1,-1.2);
    \draw[thick] (-1.8,-2) parabola[bend at end] (-2.6,-2.8);
     \draw[thick] (-3.8,-3.2) parabola[bend at end] (-2.6,-2.8);
     \draw[thick] (-3.8,-3.2) parabola[bend at end] (-5,-4);

     \draw[thick] (0,0) parabola[bend at end] (-1.5,1.1);
    \draw[thick] (-2.5,1.9) parabola[bend at end] (-1.5,1.1);
    \draw[thick] (-2.5,1.9) parabola[bend at end] (-3.1,2.6);
     \draw[thick] (-4,3.1) parabola[bend at end] (-3.1,2.6);
     \draw[thick] (-4,3.1) parabola[bend at end] (-5,4);

     \draw[thick] (0,0) parabola[bend at end] (1.5,-1.1);
    \draw[thick] (2.5,-1.9) parabola[bend at end] (1.5,-1.1);
    \draw[thick] (2.5,-1.9) parabola[bend at end] (3.1,-2.6);
     \draw[thick] (4,-3.1) parabola[bend at end] (3.1,-2.6);
     \draw[thick] (4,-3.1) parabola[bend at end] (5,-4);

\draw (5.1,4.4) node {\small $W_+(y)$};
\draw (5.1,-5) node {\small $W_-(y)$};
\draw (-2.8,4.4) node {\small $W_+(1)=W_-(-1)$};
\draw (-2.8,-4.6) node {\small $W_-(1)=W_+(-1)$};
 \draw[dotted] (-2.55,0) -- (-2.55,2);
 \draw (-2.55,-0.5) node {$y_{c_-}$};
  \draw[dotted] (1.8,0) -- (1.8,2);
  \draw (1.8,-0.5) node {$y_{c_+}$};
\end{tikzpicture}

For {\bf{Case 7}}, we let
\beno
&\ & D_0\stackrel{def}{=}\big\{c\in[W_-(1), W_+(1)]\big\},\\
&\ & D_{\ep_0}\stackrel{def}{=}\big\{c=c_r+i\ep, \ c_r\in[W_-(1), W_+(1)], \ 0<|\ep|<\ep_0\big\},\\
&\ & B_{\ep_0}^{l}\stackrel{def}{=}\big\{c=W_-(1)+\ep e^{i\theta},\  0<\ep<\ep_0, \ \frac{\pi}{2}\leq\theta\leq\frac{3\pi}{2}\big\},\\
&\ & B_{\ep_0}^{r}\stackrel{def}{=}\big\{c=W_+(1)-\ep e^{i\theta}, \ 0<\ep<\ep_0, \ \frac{\pi}{2}\leq\theta\leq\frac{3\pi}{2}\big\},
\eeno
for some $\ep_0\in(0,1).$ We denote
$
\Omega_{\ep_0}\stackrel{def}{=}D_0\cup D_{\ep_0}\cup B_{\ep_0}^{l}\cup B_{\ep_0}^{r}.
$
We also define
\beqno
c_r=Re c\ for\  c\in D_0\cup D_{\ep_0}, \ c_r=W_-(1)
\ for \ c\in B_{\ep_0}^{l}, \ c_r=W_+(1) \ for \ c\in B_{\ep_0}^{r}.
\eeqno
By Lemma \ref{lem: extend}, we can take a $C^5$ extension of $W_+$ to be $\wt{W}_+$ for $y\in[{\rm{a}}_-,1]$ such that $\wt{W}_+({\rm{a}}_-)=W_-(1)$, $\wt{W}'_+(y)>0$.
\begin{itemize}
\item For $c\in D_0\cup D_{\ep_0}$ and $c_r\geq0$, we denote $y_{c_+}\in[0,1]$ with $y_{c_+}=(W_+)^{-1}(c_r)$, so that $W_+(y_{c_+})-c_r=0$ and $y_{c_-}\in[-1,0]$ with $y_{c_-}=(W_-)^{-1}(c_r)$, so that $W_-(y_{c_-})-c_r=0$.
\item For $c\in D_0\cup D_{\ep_0}$ and $c_r\leq0$, we denote $y_{c_+}\in[0,1]$ with $y_{c_+}=(W_-)^{-1}(c_r)$, so that $W_-(y_{c_+})-c_r=0$ and $y_{c_-}\in[{\rm{a}}_-,0]$ with $y_{c_-}=(\wt{W}_+)^{-1}(c_r)$, so that $\wt{W}_+(y_{c_-})-c_r=0$.
\item For $c\in B_{\ep_0}^l$, then $c_r=W_-(1)=\wt{W}_+({\rm{a}}_-)$, we denote $y_{c_+}=1$ and $y_{c_-}={\rm{a}}_-$.
\item For $c\in B_{\ep_0}^r$, then $c_r=W_+(1)=W_-(-1)$, we denote $y_{c_+}=1$ and $y_{c_-}=-1$.
\end{itemize}

We show the relationship between $y_{c_+}, y_{c_-}$ and $c_r$ by the following picture for the above case.

\begin{tikzpicture}[thick, scale=0.5]
    \draw[very thin,color=gray];
    \draw[->] (-7,0) -- (7,0) node[right] {$y$};
    \draw(0.1,-0.01) node[anchor=north] {0}
		(5,0) node[anchor=north] {1}
        (-4.8,0) node[anchor=north] {-1};

    \draw[dotted] (-5,4) -- (5,4);
    \draw[dotted] (-5.8,-4) -- (5,-4);
    \draw[dotted] (-5,-3) -- (-5,4);
    \draw[dotted] (5,-4) -- (5,4);

    \draw[red,thick] (-5,1.8) -- (5,1.8);
    \draw[red] (0.3,2.2) node {$c_r$};
    \draw[->] (0,-5) -- (0,5) node[above] {$c_r$};
    \draw[thick] (0,0) parabola[bend at end] (1,1.2);
    \draw[thick] (1.8,2) parabola[bend at end] (1,1.2);
    \draw[thick] (1.8,2) parabola[bend at end] (2.6,2.8);
     \draw[thick] (3.8,3.2) parabola[bend at end] (2.6,2.8);
     \draw[thick] (3.8,3.2) parabola[bend at end] (5,4);

      \draw[thick] (0,0) parabola[bend at end] (-1.5,-1.0);
    \draw[thick] (-2.5,-1.8) parabola[bend at end] (-1.5,-1.0);
    \draw[thick] (-2.5,-1.8) parabola[bend at end] (-3.9,-2.3);
     \draw[thick] (-5,-3) parabola[bend at end] (-3.9,-2.3);
      \draw[red,dotted] (-5,-3) parabola[bend at end] (-5.8,-4);

    \draw[thick] (0,0) parabola[bend at end] (-1.5,1.1);
    \draw[thick] (-2.5,1.9) parabola[bend at end] (-1.5,1.1);
    \draw[thick] (-2.5,1.9) parabola[bend at end] (-3.1,2.6);
     \draw[thick] (-4,3.1) parabola[bend at end] (-3.1,2.6);
     \draw[thick] (-4,3.1) parabola[bend at end] (-5,4);

     \draw[thick] (0,0) parabola[bend at end] (1.5,-1.1);
    \draw[thick] (2.5,-1.9) parabola[bend at end] (1.5,-1.1);
    \draw[thick] (2.5,-1.9) parabola[bend at end] (3.1,-2.6);
     \draw[thick] (4,-3.1) parabola[bend at end] (3.1,-2.6);
     \draw[thick] (4,-3.1) parabola[bend at end] (5,-4);

\draw (5.1,4.4) node {\small $W_+(y)$};
\draw (6,-4) node {\small $W_-(y)$};
 \draw[dotted] (-2.7,0) -- (-2.7,1.8);
 \draw (-2.7,-0.5) node {$y_{c_-}$};
  \draw[dotted] (1.7,0) -- (1.7,1.9);
  \draw (1.8,-0.5) node {$y_{c_+}$};
  \draw[dotted] (-5.8,-4)-- (-5.8,0); \draw (-6.2,-0.5) node {${\rm{a}}_-$};
   \draw (-3.5,4.5) node {\small $W_-(-1)=W_+(1)$};
\draw (-3.5,-4.6) node {\small $\wt{W}_+({\rm{a}}_-)=W_-(1)$};
\end{tikzpicture}

We also take a $C^5$ extension of $W_-$ to be $\wt{W}_-$ for $y\in [{\rm{a}}_-,1]$, so that $\wt{W}_-'(y)<0$.

For {\bf{Case 8}}, we let
\beno
&\ & D_0\stackrel{def}{=}\big\{c\in[W_+(-1), W_-(-1)]\big\},\\
&\ & D_{\ep_0}\stackrel{def}{=}\big\{c=c_r+i\ep, \ c_r\in[W_+(-1), W_-(-1)], \ 0<|\ep|<\ep_0\big\},\\
&\ & B_{\ep_0}^{l}\stackrel{def}{=}\big\{c=W_+(-1)+\ep e^{i\theta},\  0<\ep<\ep_0, \ \frac{\pi}{2}\leq\theta\leq\frac{3\pi}{2}\big\},\\
&\ & B_{\ep_0}^{r}\stackrel{def}{=}\big\{c=W_-(-1)-\ep e^{i\theta}, \ 0<\ep<\ep_0, \ \frac{\pi}{2}\leq\theta\leq\frac{3\pi}{2}\big\},
\eeno
for some $\ep_0\in(0,1).$ We denote
$
\Omega_{\ep_0}\stackrel{def}{=}D_0\cup D_{\ep_0}\cup B_{\ep_0}^{l}\cup B_{\ep_0}^{r}.
$
We also define
\beqno
c_r=Re c\ for\  c\in D_0\cup D_{\ep_0}, \ c_r=W_+(-1)
\ for \ c\in B_{\ep_0}^{l}, \ c_r=W_-(-1) \ for \ c\in B_{\ep_0}^{r}.
\eeqno
By Lemma \ref{lem: extend}, we can take a $C^5$ extension of  $W_+$ to be $\wt{W}_+$ for $y\in[-1,{\rm{a}}_+]$ such that $\wt{W}_+({\rm{a}}_+)=W_-(-1)$, $\wt{W}'_+(y)>0$.
\begin{itemize}
\item For $c\in D_0\cup D_{\ep_0}$ and $c_r\geq0$, we denote $y_{c_+}\in[0,{\rm{a}}_+]$ with $y_{c_+}=(\wt{W}_+)^{-1}(c_r)$, so that $\wt{W}_+(y_{c_+})-c_r=0$ and $y_{c_-}\in[-1,0]$ with $y_{c_-}=(W_-)^{-1}(c_r)$, so that $W_-(y_{c_-})-c_r=0$.
\item For $c\in D_0\cup D_{\ep_0}$ and $c_r\leq0$, we denote $y_{c_+}\in[0,1]$ with $y_{c_+}=(W_-)^{-1}(c_r)$, so that $W_-(y_{c_+})-c_r=0$ and $y_{c_-}\in[-1,0]$ with $y_{c_-}=(W_+)^{-1}(c_r)$, so that $W_+(y_{c_-})-c_r=0$.
\item For $c\in B_{\ep_0}^l$, then $c_r=W_+(-1)=W_-(-1)$, we denote $y_{c_+}=1$ and $y_{c_-}=-1$.
\item For $c\in B_{\ep_0}^r$, then $c_r=W_-(-1)=\wt{W}_+({\rm{a}}_+)$, we denote $y_{c_+}={\rm{a}}_+$ and $y_{c_-}=-1$.
\end{itemize}

We show the relationship between $y_{c_+}, y_{c_-}$ and $c_r$ by the following picture for the above case.

\begin{tikzpicture}[thick, scale=0.5]
    \draw[very thin,color=gray];
    \draw[->] (-7,0) -- (7,0) node[right] {$y$};
    \draw	(0.1,-0.01) node[anchor=north] {0}
		(5,0) node[anchor=north] {1}
        (-4.8,0) node[anchor=north] {-1};
    \draw[dotted] (-5,4) -- (5.9,4);
    \draw[dotted] (-5,-4) -- (5,-4);
    \draw[dotted] (-5,-4) -- (-5,4);
    \draw[dotted] (5,-4) -- (5,3);

    \draw[red,thick] (-5,2) -- (5,2);
    \draw[red] (0.3,2.2) node {$c_r$};
    \draw[->] (0,-5) -- (0,5) node[above] {$c_r$};

    \draw[thick] (0,0) parabola[bend at end] (1.5,1.0);
    \draw[thick] (2.5,1.8) parabola[bend at end] (1.5,1.0);
    \draw[thick] (2.5,1.8) parabola[bend at end] (3.9,2.3);
     \draw[thick] (5,3) parabola[bend at end] (3.9,2.3);
      \draw[red,dotted] (5,3) parabola[bend at end] (5.9,4);

      \draw[thick] (0,0) parabola[bend at end] (-1,-1.2);
    \draw[thick] (-1.8,-2) parabola[bend at end] (-1,-1.2);
    \draw[thick] (-1.8,-2) parabola[bend at end] (-2.6,-2.8);
     \draw[thick] (-3.8,-3.2) parabola[bend at end] (-2.6,-2.8);
     \draw[thick] (-3.8,-3.2) parabola[bend at end] (-5,-4);

     \draw[thick] (0,0) parabola[bend at end] (-1.5,1.1);
    \draw[thick] (-2.5,1.9) parabola[bend at end] (-1.5,1.1);
    \draw[thick] (-2.5,1.9) parabola[bend at end] (-3.1,2.6);
     \draw[thick] (-4,3.1) parabola[bend at end] (-3.1,2.6);
     \draw[thick] (-4,3.1) parabola[bend at end] (-5,4);

    \draw[thick] (0,0) parabola[bend at end] (1.5,-1.1);
    \draw[thick] (2.5,-1.9) parabola[bend at end] (1.5,-1.1);
    \draw[thick] (2.5,-1.9) parabola[bend at end] (3.1,-2.6);
     \draw[thick] (4,-3.1) parabola[bend at end] (3.1,-2.6);
     \draw[thick] (4,-3.1) parabola[bend at end] (5,-4);

\draw (4.0,3.0) node {\small $W_+(y)$};
\draw (3.0,-3.5) node {\small $W_-(y)$};
 \draw[dotted] (-2.55,0) -- (-2.55,2);
 \draw (-2.55,-0.2) node {$y_{c_-}$};
  \draw[dotted] (2.8,0) -- (2.8,2);
  \draw (2.75,-0.2) node {$y_{c_+}$};
  \draw[dotted] (5.9,0)-- (5.9,4); \draw (5.9,-0.5) node {${\rm{a}}_+$};
  \draw (3.5,4.5) node {\small $\wt{W}_+({\rm{a}}_+)=W_-(-1)$};
\draw (3.5,-4.6) node {\small $W_-(1)=W_+(-1)$};
\end{tikzpicture}

We also take a $C^5$ extension of $W_-$ to be $\wt{W}_-$ for $y\in [-1, {\rm{a}}_+]$, so that $\wt{W}_-'(y)<0$.

For {\bf{Case 9}}, we let
\beno
&\ & D_0\stackrel{def}{=}\big\{c\in[W_-(1), W_-(-1)]\big\},\\
&\ & D_{\ep_0}\stackrel{def}{=}\big\{c=c_r+i\ep, \ c_r\in[W_-(1), W_-(-1)], \ 0<|\ep|<\ep_0\big\},\\
&\ & B_{\ep_0}^{l}\stackrel{def}{=}\big\{c=W_-(1)+\ep e^{i\theta},\  0<\ep<\ep_0, \ \frac{\pi}{2}\leq\theta\leq\frac{3\pi}{2}\big\},\\
&\ & B_{\ep_0}^{r}\stackrel{def}{=}\big\{c=W_-(-1)-\ep e^{i\theta}, \ 0<\ep<\ep_0, \ \frac{\pi}{2}\leq\theta\leq\frac{3\pi}{2}\big\},
\eeno
for some $\ep_0\in(0,1).$ We denote
$
\Omega_{\ep_0}\stackrel{def}{=}D_0\cup D_{\ep_0}\cup B_{\ep_0}^{l}\cup B_{\ep_0}^{r}.
$
We also define
\beqno
c_r=Re c\ for\  c\in D_0\cup D_{\ep_0}, \ c_r=W_-(1)
\ for \ c\in B_{\ep_0}^{l}, \ c_r=W_-(-1) \ for \ c\in B_{\ep_0}^{r}.
\eeqno
By Lemma \ref{lem: extend}, we can take a $C^5$ extension of $W_+$ to be $\wt{W}_+$ for $y\in[{\rm{a}}_-,{\rm{a}}_+]$ such that $\wt{W}_+({\rm{a}}_-)=W_-(1)$, $\wt{W}_+({\rm{a}}_+)=W_-(-1)$ and $\wt{W}'_+(y)>0$.
\begin{itemize}
\item For $c\in D_0\cup D_{\ep_0}$ and $c_r\geq0$, we denote $y_{c_+}\in[0,{\rm{a}}_+]$ with $y_{c_+}=(\wt{W}_+)^{-1}(c_r)$, so that $\wt{W}_+(y_{c_+})-c_r=0$ and $y_{c_-}\in[-1,0]$ with $y_{c_-}=(W_-)^{-1}(c_r)$, so that $W_-(y_{c_-})-c_r=0$.
\item For $c\in D_0\cup D_{\ep_0}$ and $c_r\leq0$, we denote $y_{c_+}\in[0,1]$ with $y_{c_+}=(W_-)^{-1}(c_r)$, so that $W_-(y_{c_+})-c_r=0$ and $y_{c_-}\in[{\rm{a}}_-,0]$ with $y_{c_-}=(\wt{W}_+)^{-1}(c_r)$, so that $\wt{W}_+(y_{c_-})-c_r=0$.
\item For $c\in B_{\ep_0}^l$, then $c_r=W_-(1)=\wt{W}_+(a_-)$, we denote $y_{c_+}=1$ and $y_{c_-}={\rm{a}}_-$.
\item For $c\in B_{\ep_0}^r$, then $c_r=W_-(-1)=\wt{W}_+({\rm{a}}_+)$, we denote $y_{c_+}={\rm{a}}_+$ and $y_{c_-}=-1$.
\end{itemize}

We show the relationship between $y_{c_+}, y_{c_-}$ and $c_r$ by the following picture for the above case.

\begin{tikzpicture}[thick, scale=0.45]
    \draw[very thin,color=gray];
    \draw[->] (-7,0) -- (7,0) node[right] {$y$};
    \draw	(0.1,-0.01) node[anchor=north] {0}
		(5,0) node[anchor=north] {1}
        (-4.8,0) node[anchor=north] {-1};
    \draw[dotted] (-5,4) -- (6,4);
    \draw[dotted] (-6,-4) -- (5,-4);
    \draw[dotted] (-5,-3) -- (-5,4);
    \draw[dotted] (5,-4) -- (5,3);

    \draw[red,thick] (-5,2) -- (5,2);
    \draw[red] (0.3,2.2) node {$c_r$};
    \draw[->] (0,-5) -- (0,5) node[above] {$c_r$};
    \draw[thick] (0,0) parabola[bend at end] (1.5,1.0);
    \draw[thick] (2.5,1.8) parabola[bend at end] (1.5,1.0);
    \draw[thick] (2.5,1.8) parabola[bend at end] (3.9,2.3);
     \draw[thick] (5,3) parabola[bend at end] (3.9,2.3);
      \draw[red,dotted] (5,3) parabola[bend at end] (6,4);

     \draw[thick] (0,0) parabola[bend at end] (-1.5,-1.0);
    \draw[thick] (-2.5,-1.8) parabola[bend at end] (-1.5,-1.0);
    \draw[thick] (-2.5,-1.8) parabola[bend at end] (-3.9,-2.3);
     \draw[thick] (-5,-3) parabola[bend at end] (-3.9,-2.3);
      \draw[red,dotted] (-5,-3) parabola[bend at end] (-6,-4);

     \draw[thick] (0,0) parabola[bend at end] (-1.5,1.1);
    \draw[thick] (-2.5,1.9) parabola[bend at end] (-1.5,1.1);
    \draw[thick] (-2.5,1.9) parabola[bend at end] (-3.1,2.6);
     \draw[thick] (-4,3.1) parabola[bend at end] (-3.1,2.6);
     \draw[thick] (-4,3.1) parabola[bend at end] (-5,4);

     \draw[thick] (0,0) parabola[bend at end] (1.5,-1.1);
    \draw[thick] (2.5,-1.9) parabola[bend at end] (1.5,-1.1);
    \draw[thick] (2.5,-1.9) parabola[bend at end] (3.1,-2.6);
     \draw[thick] (4,-3.1) parabola[bend at end] (3.1,-2.6);
     \draw[thick] (4,-3.1) parabola[bend at end] (5,-4);

\draw (4.0,3.2) node {\small $W_+(y)$};
\draw (5.5,-3.5) node {\small $W_-(y)$};
 \draw[dotted] (-2.55,0) -- (-2.55,2);
 \draw (-2.55,-0.5) node {$y_{c_-}$};
  \draw[dotted] (2.8,0) -- (2.8,2);
  \draw (2.75,-0.5) node {$y_{c_+}$};
   \draw[dotted] (6,4)-- (6,0); \draw (6,-0.5) node {${\rm{a}}_+$};
  \draw[dotted] (-6,-4)-- (-6,0); \draw (-6,-0.5) node {${\rm{a}}_-$};
  \draw (3.5,4.5) node {\small $\wt{W}_+({\rm{a}}_+)=W_-(-1)$};
\draw (-3.5,-4.6) node {\small $\wt{W}_+({\rm{a}}_-)=W_-(1)$};
\end{tikzpicture}

We also take a $C^5$ extension of $W_-$ to be $\wt{W}_-$ for $y\in [{\rm{a}_-}, {\rm{a}}_+]$, so that $\wt{W}_-'(y)<0$.

In the last step of Case 1, 2, 3, 7, 8, 9, we only restrict the regularity and monotonicity of the extension. 

\no{\bf Notations:}
We summarize the above nine cases and make the following notations. 

We denote $a_+={\rm{a}}_+$ if we need to extend the definition of $W_{+}(y)$ or $W_{-}(y)$ for $y\geq 1$ and $a_+=1$ if we do not need to extend the definition of $W_{+}(y)$ nor $W_{-}(y)$ for $y\geq 1$. Similarly we also denote $a_-={\rm{a}}_-$ if we need to extend the definition of $W_{+}(y)$ or $W_{-}(y)$ for $y\leq -1$ and $a_-=-1$ if we do not need to extend the definition of $W_{+}(y)$ nor $W_{-}(y)$ for $y\leq -1$.  

By letting $u(y)=\f{\wt{W}_+(y)+\wt{W}_-(y)}{2}$ and $b(y)=\f{\wt{W}_+(y)-\wt{W}_-(y)}{2}$, we extend $u$ and $b$. We also take a $C^3$ extension of $\widehat{\phi}_0(\al,y)$ for $y\in (1,a_+]\cup [a_-,-1)$ and extend $\widehat{\om}_0(\al,y)=0$ for $y\in (1,a_+] \cup [a_-,-1)$ then $cG(\al, y,c)\in L^{\infty}([a_-,a_+]\times \Om_{\ep_0})$ and $\|cG\|_{L^{\infty}}\leq C(\|\widehat{\om}_0\|_{H^1_y}+C\|\widehat{j}_0\|_{H^3_y})$. 

We also use $W_{\pm}$ to represent $\wt{W}_{\pm}$, whether they are extended or not. For $\ep_0>0$, we let 
\beno
&&D_0=[\min\{W_-(1),W_+(-1)\}, \max\{W_+(1),W_-(-1)\}], \\
&&D_{\ep_0}=\{z=c+i\ep:~c\in D_0, 0<|\ep|<\ep_0\},\\
&&B_{\ep_0}^l=\{z=\min\{W_-(1),W_+(-1)\}+\ep e^{i\th},0<\ep<\ep_0, \f{\pi}{2}\leq \th\leq \f{3\pi}{2}\},\\
&&B_{\ep_0}^r=\{z=\max\{W_+(1),W_-(-1)\}-\ep e^{i\th}, 0<\ep<\ep_0, \f{\pi}{2}\leq \th\leq \f{3\pi}{2}\},
\eeno 
and $\Omega_{\ep_0}\stackrel{def}{=}D_0\cup D_{\ep_0}\cup B_{\ep_0}^{l}\cup B_{\ep_0}^{r}$. 
We let $\mathcal{H}(y,c)=({W}_+(y)-c)({W}_-(y)-c)$ be well defined on $[a_-, a_+]\times \Om_{\ep_0}$. Let $d_+=[0,a_+]$ and $d_-=[a_-,0]$. 

\subsection{Sturmian integral operator.}
Given $|\al|\geq 1$, let $A$ be a constant larger
than $C|\al|$ with $C\geq1$ independent of $\al$.
\begin{definition}\label{def: X+ norm}
For a function $f(y,c)$ defined on $d_{+}\times \Omega_{\ep_0}$ or $d_{-}\times \Omega_{\ep_0}$, we define
\beqno
\begin{split}
&\|f\|_{X_0^{\pm}}\stackrel{def}{=}\sup_{(y,c)\in d_{\pm}
\times D_0}\Big|\frac{f(y,c)}{\cosh(A(y-y_{c_{\pm}}))}\Big|,\\
&\|f\|_{X^{\pm}}\stackrel{def}{=}\sup_{(y,c)\in d_{\pm}
\times D_{\ep_0}\cup D_0}\Big|\frac{f(y,c)}{\cosh(A(y-y_{c_+}))}\Big|,\\
&\|f\|_{X_l^{\pm}}\stackrel{def}{=}\sup_{(y,c)\in d_{\pm}
\times B_{\ep_0}^{l}}\Big|\frac{f(y,c)}{\cosh(A(y-y_{c_{\pm}}))}\Big|,\\
&\|f\|_{X_r^{\pm}}\stackrel{def}{=}\sup_{(y,c)\in d_{\pm}\times B_{\ep_0}^{r}}\Big|\frac{f(y,c)}{\cosh(A(y-y_{c_{\pm}}))}\Big|,
\end{split}
\eeqno
where $c_r$ and $y_{c_{\pm}}$ were defined in the previous section satisfies $\mathcal{H}(y,c_r)=0$.
\end{definition}
%\begin{definition}\label{def: X- norm}
%For a function $f(y,c)$ defined on $[-1,0]\times \Omega_{\ep_0}$ , we define
%\beqno
%\begin{split}
%&\|f\|_{X_0^-}\stackrel{def}{=}\sup_{(y,c)\in[-1,0]\times D_0}\Big|\frac{f(y,c)}{\cosh(A(y-y_{c_-}))}\Big|,\\
%&\|f\|_{X^-}\stackrel{def}{=}\sup_{(y,c)\in[-1,0]\times D_{\ep_0}\cup D_0}\Big|\frac{f(y,c)}{\cosh(A(y-y_{c_-}))}\Big|,\\
%&\|f\|_{X_l^-}\stackrel{def}{=}\sup_{(y,c)\in[-1,0]\times B_{\ep_0}^{l}}\Big|\frac{f(y,c)}{\cosh(A(y+1))}\Big|,\\
%&\|f\|_{X_r^-}\stackrel{def}{=}\sup_{(y,c)\in[-1,0]\times B_{\ep_0}^{r}}\Big|\frac{f(y,c)}{\cosh(A(y+1))}\Big|,
%\end{split}
%\eeqno
%where $y_{c_-}\in[-1,0]$ satisfies $(W_+(y_{c_-})-c_r)(W_-(y_{c_-})-c_r)=0$.
%\end{definition}

\begin{definition}
For a function $f(y,c)$ defined on $d_+\times \Omega_{\ep_0}$ or $d_-\times \Omega_{\ep_0}$, we define
\beqno
\begin{split}
&\|f\|_{Y^{\pm}}\stackrel{def}{=}\|f\|_{X^{\pm}}
+\frac{1}{A}\big(\|\pa_yf\|_{X^{\pm}}+\|\pa_{c_r}f\|_{X^{\pm}}+\|\pa_{\ep}f\|_{X^{\pm}}\big),\\
&\|f\|_{Y_l^{\pm}}\stackrel{def}{=}\|f\|_{X_l^{\pm}}
+\frac{1}{A}\big(\|\pa_yf\|_{X_l^{\pm}}+\|\pa_{\ep}f\|_{X_l^{\pm}}+\|\pa_{\theta}f\|_{X_l^{\pm}}\big),\\
&\|f\|_{Y_r^{\pm}}\stackrel{def}{=}\|f\|_{X_r^{\pm}}
+\frac{1}{A}\big(\|\pa_yf\|_{X_r^{\pm}}+\|\pa_{\ep}f\|_{X_r^{\pm}}+\|\pa_{\theta}f\|_{X_r^{\pm}}\big),\\
&\|f\|_{Y_0^{\pm}}\stackrel{def}{=}\|f\|_{X_0^{\pm}}+\frac{1}{A}\big(\|\pa_yf\|_{X_0^{\pm}}
+\|\pa_cf\|_{X_0^{\pm}}\big)+\frac{1}{A^2}\|\pa_y\pa_cf\|_{X_0^{\pm}}.
\end{split}
\eeqno
\end{definition}
%\begin{definition}
%For $y\in[-1,0]$, for a function $f(y,c)$ defined on $[-1,0]\times \Omega_{\ep_0}$, we define
%\beqno
%\begin{split}
%&\|f\|_{Y^-}\stackrel{def}{=}\|f\|_{X^-}
%+\frac{1}{A}\big(\|\pa_yf\|_{X^-}+\|\pa_{c_r}f\|_{X^-}+\|\pa_{\ep}f\|_{X^-}\big),\\
%&\|f\|_{Y_l^-}\stackrel{def}{=}\|f\|_{X_l^-}
%+\frac{1}{A}\big(\|\pa_yf\|_{X_l^-}+\|\pa_{\ep}f\|_{X_l^-}+\|\pa_{\theta}f\|_{X_l^-}\big),\\
%&\|f\|_{Y_r^-}\stackrel{def}{=}\|f\|_{X_r^-}
%+\frac{1}{A}\big(\|\pa_yf\|_{X_r^-}+\|\pa_{\ep}f\|_{X_r^-}+\|\pa_{\theta}f\|_{X_r^-}\big),\\
%&\|f\|_{Y_0^-}\stackrel{def}{=}\|f\|_{X_0^-}+\frac{1}{A}\big(\|\pa_yf\|_{X_0^-}
%+\|\pa_cf\|_{X_0^-}\big)+\frac{1}{A^2}\|\pa_y\pa_cf\|_{X_0^-}.
%\end{split}
%\eeqno
%\end{definition}

Now, we introduce the Sturmian integral operator, which will be used to give the solution formula of the homogeneous  Sturmian equation.
\begin{definition}
Let $y\in d_+$ or $y\in d_-$, the Sturmian integral operator $S^{\pm}$ is defined by
\beqno
S^{\pm}f(y,c)\stackrel{def}{=}S_0^{\pm}\circ S_1^{\pm}f(y,c)=\int_{y_{c_{\pm}}}^y
\frac{\int_{y_{c_{\pm}}}^{y'}\big(W_+(z)-c\big)\big(W_-(z)-c\big)f(z,c)dz}
{\big(W_+(y')-c\big)\big(W_-(y')-c\big)}dy',
\eeqno
where
\beqno
\begin{split}
&S_0^{\pm}f(y,c)\stackrel{def}{=}\int_{y_{c_{\pm}}}^yf(y',c)dy',\\
&S_1^{\pm}f(y,c)\stackrel{def}{=}
\frac{\int_{y_{c_{\pm}}}^{y}\big(W_+(z)-c\big)\big(W_-(z)-c\big)f(z,c)dz}
{\big(W_+(y)-c\big)\big(W_-(y)-c\big)}.
\end{split}
\eeqno
\end{definition}

%\begin{definition}
%Let $y\in[-1,0]$, the Sturmian integral operator $S^-$ is defined by
%\beqno
%S^-f(y,c)\stackrel{def}{=}S_0^-\circ S_1^-f(y,c)=\int_{y_{c_-}}^y
%\frac{\int_{y_{c_-}}^{y'}\big(W_+(z)-c\big)\big(W_-(z)-c\big)f(z,c)dz}
%{\big(W_+(y')-c\big)\big(W_-(y')-c\big)}
%dy',
%\eeqno
%where
%\beqno
%\begin{split}
%&S_0^-f(y,c)\stackrel{def}{=}\int_{y_{c_-}}^yf(y',c)dy',\\
%&S_1^-f(y,c)\stackrel{def}{=}
%\frac{\int_{y_{c_-}}^{y}\big(W_+(z)-c\big)\big(W_-(z)-c\big)f(z,c)dz}
%{\big(W_+(y)-c\big)\big(W_-(y)-c\big)}.
%\end{split}
%\eeqno
%\end{definition}

\begin{proposition}\label{prop: S+ estimate}
For $y\in d_+$, there exists a constant $C_1$ independent of $A$ so that
\beqno
\begin{split}
&\|S^+f\|_{Y_0^+}\leq \frac{C_1}{A^2}\|f\|_{Y_0^+}, \
\|S^+f\|_{Y^+}\leq \frac{C_1}{A^2}\|f\|_{Y^+},\\
&\|S^+f\|_{Y_l^+}\leq \frac{C_1}{A^2}\|f\|_{Y_l^+},\
\|S^+f\|_{Y_r^+}\leq \frac{C_1}{A^2}\|f\|_{Y_r^+}.
\end{split}
\eeqno
Moreover, if $f\in C\big(d_+\times\Om_{\ep_0}\big)$, then
\beqno
S_0^+f,\  S_1^+f,\  S^+f\in C\big(d_+\times \Om_{\ep_0}\big).
\eeqno
\end{proposition}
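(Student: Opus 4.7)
The plan is to exploit the factorization $S^{+}=S_0^{+}\circ S_1^{+}$ and show that each factor gains a power of $A^{-1}$ in the weighted norms; composing yields the asserted $A^{-2}$. The weight $\cosh(A(y-y_{c_+}))$ is calibrated precisely so that integrating it against a function that vanishes to first order at $z=y_{c_+}$ (such as $\mathcal H(z,c_r)$ does on $D_0$) produces a factor $A^{-1}$. The gain for $S_0^{+}$ is immediate from $\int_{y_{c_+}}^{y}\cosh(A(y'-y_{c_+}))\,dy'\leq A^{-1}\cosh(A(y-y_{c_+}))$. For $S_1^{+}$, the crucial input is the lower bound $|\mathcal H(y,c_r)|\geq c_0|y-y_{c_+}|$ on $d_+$, which follows from $(\mathbf M)$ together with the fact that the other turning point $y_{c_-}$ lies in $d_-$. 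Combined with the elementary identity $\int_{0}^{U}u\cosh u\,du\leq CU\sinh U$ and the inequality $\sinh(AT)/T\leq A\cosh(AT)$, this gives $|S_1^{+}f(y,c_r)|\leq (C/A)\cosh(A(y-y_{c_+}))\|f\|_{X_0^{+}}$. For complex $c\in D_{\ep_0}\cup B_{\ep_0}^{l}\cup B_{\ep_0}^{r}$, the same computation goes through with the refined lower bound $|\mathcal H(y,c)|\geq c_0(|y-y_{c_+}|+|\mathrm{Im}\,c|)$ times a bounded factor; composition then yields $\|S^{+}f\|_{X_0^{+}}\leq C_1A^{-2}\|f\|_{X_0^{+}}$ and likewise on $X^{+}, X_l^{+}, X_r^{+}$.

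For the $Y$-type norms we differentiate through the integral. Since $\pa_y S^{+}f=S_1^{+}f$, the $A^{-1}\pa_y$ component of each $Y$-norm is already bounded. For $\pa_{c_r}$ (and similarly $\pa_{\ep}$, $\pa_{\th}$, and the mixed $\pa_y\pa_c$ entry of $Y_0^{+}$), define $I(y',c):=\int_{y_{c_+}(c)}^{y'}\mathcal H(z,c)f(z,c)\,dz$ and expand
\[
\pa_{c_r}S^{+}f(y,c)=\int_{y_{c_+}}^{y}\frac{\pa_{c_r}I(y',c)}{\mathcal H(y',c)}\,dy'-\int_{y_{c_+}}^{y}\frac{I(y',c)\,\pa_{c_r}\mathcal H(y',c)}{\mathcal H(y',c)^{2}}\,dy',
\]
the boundary contribution from $\pa_{c_r}y_{c_+}$ at the lower limit vanishing because $I/\mathcal H$ vanishes linearly at $y_{c_+}$. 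The first integral is controlled like $S^{+}$ itself since $\pa_{c_r}\mathcal H=-(W_+-c)-(W_--c)$ is bounded on $d_+$. The second integral has an apparent $|y'-y_{c_+}|^{-2}$ singularity from $\mathcal H^{-2}$, but the sharper bound $|I(y',c)|\leq(C/A)|y'-y_{c_+}|\sinh(A(y'-y_{c_+}))\|f\|_{X_0^{+}}$ (a consequence of the $S_1^{+}$-type estimate applied to $\cosh$) combined with $\sinh(At)/t\leq A\cosh(At)$ reduces the integrand to a bounded multiple of $\cosh(A(y'-y_{c_+}))\|f\|_{X_0^{+}}$, yielding the $A^{-1}$ gain after outer integration in $dy'$. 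The $B_{\ep_0}^{l,r}$ cases are simpler because $y_{c_+}\in\{a_\pm,\pm 1\}$ is independent of $(\ep,\th)$ there, eliminating the endpoint derivative.

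The continuity of $S_0^{+}f, S_1^{+}f, S^{+}f$ on $d_+\times\Omega_{\ep_0}$ then follows from the uniform bounds above by dominated convergence, using that $y_{c_\pm}(c)$ depends continuously on $c$ by construction, that $\cosh(A(\cdot-y_{c_+}))$ is jointly continuous in $(y,c)$, and that the parametrizations on $D_0\cup D_{\ep_0}$, $B_{\ep_0}^{l}$, $B_{\ep_0}^{r}$ glue continuously at their shared corner points.

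The principal obstacle is the second integral in the $\pa_{c_r}$ identity above: the borderline $|y'-y_{c_+}|^{-2}$ singularity from $\mathcal H^{-2}$ is tamed only by the precise second-order vanishing of $I(y',c)$ at $y'=y_{c_+}$ together with the elementary identity $\sinh(At)/t\leq A\cosh(At)$, which converts the singular factor into exactly the extra power of $A$ needed to match the $A^{-1}$ gain. A secondary technicality is the uniform comparability of $\cosh(A(\cdot-y_{c_+}))$ under small shifts of $c$, required to pass from pointwise-in-$c$ bounds to the supremum norms on the curved regions $B_{\ep_0}^{l,r}$.
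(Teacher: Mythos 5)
Your overall architecture is the same as the paper's: factor $S^+=S_0^+\circ S_1^+$, extract an $A^{-1}$ gain from each factor via the $\cosh$ weight, obtain the $Y$-norm bounds by differentiating under the integral (with $\pa_yS^+f=S_1^+f$ for free), and get continuity by dominated convergence. The problem is the quantitative input you rely on for $S_1^+$: the claimed lower bound $|\mathcal H(y,c)|\geq c_0|y-y_{c_+}|$ on $d_+$ (and its complex variant ``times a bounded factor'') is false uniformly in $c$. Near the degenerate corner $y\to0$, $c\to 0$ \emph{both} factors of $\mathcal H=(W_+-c)(W_--c)$ vanish: for $c=0$ one has $y_{c_+}=0$ and $|\mathcal H(y,0)|=|W_+(y)W_-(y)|\leq Cy^{2}$, while $|y-y_{c_+}|=y$, so no uniform $c_0$ exists; in general the non-critical factor is only bounded below by $C^{-1}(|y|+|c|)$, which degenerates as $c\to0$, and the fact that $y_{c_-}$ lies in $d_-$ does not prevent this. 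Since the supremum in the $X$-norms runs over all $c\in D_0$ (including $c$ near $0$), your $S_1^+$ estimate does not close as written. What is actually needed, and what the paper establishes as its key preliminary step, is the ratio bound $\big|\mathcal H(z,c)/\mathcal H(y,c)\big|\leq C$, i.e. $\big|(W_{\pm}(z)-c)/(W_{\pm}(y)-c)\big|\leq C$, for $z$ between $y_{c_+}$ and $y$, uniformly on $d_+\times\Om_{\ep_0}$; the nontrivial case $0<y<z<y_{c_+}$ for the non-vanishing factor uses that $y_{c_+}$ and $|y_{c_-}|$ are both comparable to $|c|$ (strict monotonicity of $W_{\pm}$ plus $W_{\pm}(0)=0$). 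With that ratio bound both factors cancel between numerator and denominator and the weight computation gives the $C/A$ gain; your version, which cancels only the vanishing factor and treats the other as bounded below, does not.

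A secondary incompleteness: in your $\pa_{c_r}$ expansion the only boundary term you discuss is the one at the lower limit of the outer integral, which indeed vanishes. But $\pa_{c_r}I(y',c)$ also contains the contribution $-\pa_{c_r}y_{c_+}\,\mathcal H(y_{c_+},c)\,f(y_{c_+},c)$ from the moving lower limit of the inner integral, and for $c\in D_{\ep_0}$ this is nonzero since $\mathcal H(y_{c_+},c)=\mp i\ep\,(W_{\mp}(y_{c_+})-c)$; it produces the $O(\ep)$ term $\ (W_{\mp}(y_{c_+})-c)\int_{y_{c_+}}^{y}\ep\,\mathcal H(y',c)^{-1}dy'$ that the paper keeps explicitly and bounds by $C|y-y_{c_+}|/\cosh(A(y-y_{c_+}))\leq C/A$ --- an estimate that again uses the ratio bound. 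Your treatment (``$\pa_{c_r}\mathcal H$ is bounded'') covers only the explicit $c$-dependence of the integrand, so this term is unaddressed. The remaining ingredients (the $S_0^+$ gain, taming the $\mathcal H^{-2}$ terms via the quadratic vanishing of $I$ at $y_{c_+}$, the simplification on $B^{l}_{\ep_0},B^{r}_{\ep_0}$ where $y_{c_+}$ is fixed, and the continuity argument) are sound and essentially identical to the paper's.
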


\begin{proposition}\label{prop: S- estimate}
For $y\in d_-$, there exists a constant $C_1$ independent of $A$ so that
\beqno
\begin{split}
&\|S^-f\|_{Y_0^-}\leq \frac{C_1}{A^2}\|f\|_{Y_0^-},\
\|S^-f\|_{Y^-}\leq \frac{C_1}{A^2}\|f\|_{Y^-},\\
&
\|S^-f\|_{Y_l^-}\leq \frac{C_1}{A^2}\|f\|_{Y_l^-},\
 \|S^-f\|_{Y_r^-}\leq \frac{C_1}{A^2}\|f\|_{Y_r^-}.
 \end{split}
 \eeqno
Moreover, if $f\in C\big(d_-\times\Om_{\ep_0}\big)$, then
\beqno
S_0^-f, \ S_1^-f, \ S^-f\in C\big(d_-\times \Om_{\ep_0}\big).
\eeqno
\end{proposition}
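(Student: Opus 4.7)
My plan is to prove Proposition \ref{prop: S- estimate} by mirroring the argument for Proposition \ref{prop: S+ estimate}: the operator $S^-$ is defined on $d_-=[a_-,0]$ with the root $y_{c_-}$ playing the role of $y_{c_+}$, and all the quantitative estimates depend only on the weight $\cosh(A(y-y_{c_-}))$ and the behavior of $\mathcal{H}(y,c)=(W_+(y)-c)(W_-(y)-c)$ near its zeros, which is symmetric in the two factors. I will therefore transfer the same decomposition $S^-=S_0^-\circ S_1^-$ and prove the bound one factor at a time.

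For $S_0^-$, the gain of $A^{-1}$ follows from the elementary inequality
\[
\left|\int_{y_{c_-}}^{y}\cosh\bigl(A(y'-y_{c_-})\bigr)\,dy'\right|=\frac{1}{A}\sinh\bigl(A|y-y_{c_-}|\bigr)\leq \frac{1}{A}\cosh\bigl(A(y-y_{c_-})\bigr),
\]
applied to any $f$ with $|f(y',c)|\leq \|f\|_{X_\ast^-}\cosh(A(y'-y_{c_-}))$. For $S_1^-$, the core estimate is the pointwise bound
\[
\left|\frac{\int_{y_{c_-}}^{y}\mathcal{H}(z,c)\cosh\bigl(A(z-y_{c_-})\bigr)\,dz}{\mathcal{H}(y,c)\cosh\bigl(A(y-y_{c_-})\bigr)}\right|\leq \frac{C}{A},
\]
valid uniformly for $(y,c)\in d_-\times \Omega_{\ep_0}$. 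The proof proceeds by expanding $\mathcal{H}(z,c)$ around $z=y_{c_-}$, using the monotonicity condition $(\mathbf{M})$ together with the $C^5$ extension from Lemma \ref{lem: extend} to guarantee that $W_\pm'$ are bounded below on the relevant branches, so that $|\mathcal{H}(y,c)|$ is comparable to $|y-y_{c_-}|$ multiplied by the distance of $y$ to the other root; then an integration-by-parts in $z$ against $\cosh$ absorbs the singularity of $1/\mathcal{H}(y,c)$ and produces the factor $1/A$. Composing the two estimates yields $\|S^-f\|_{X_\ast^-}\leq C_1A^{-2}\|f\|_{X_\ast^-}$ for each of the base norms $X_0^-,X^-,X_l^-,X_r^-$.

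The derivative-weighted $Y^-$-norms are handled by differentiating the formulas for $S_0^-$ and $S_1^-$ in turn: $\partial_y$ produces either an integrand evaluation (reducing to a lower-order estimate) or a term with one more power of $\mathcal{H}(y,c)$ in the denominator, which is controlled by repeating the Taylor-expansion argument; derivatives $\partial_{c_r}$, $\partial_\ep$, $\partial_\theta$ act on the integrand, on the lower limit $y_{c_-}(c)$, and on $\mathcal{H}(z,c)$. The boundary contribution at $y_{c_-}$ vanishes because $\mathcal{H}\cdot f$ vanishes there (or is controlled by $\ep$ in $D_{\ep_0}$), and the motion $\partial_{c_r}y_{c_-}$ is bounded by $1/|W_\pm'(y_{c_-})|$, hence by $c_0^{-1}$ thanks to $(\mathbf{M})$. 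For $Y_0^-$ the mixed derivative $\partial_y\partial_c$ costs an extra $A^{-1}$ but is absorbed by the $A^{-2}$ gain of $S^-$. Finally, continuity of $S_0^-f$, $S_1^-f$ and $S^-f$ on $d_-\times \Omega_{\ep_0}$ follows from the dominated convergence theorem once one observes that $y_{c_-}$ is continuous in $c$ across the joint boundaries of $D_0$, $D_{\ep_0}$, $B_{\ep_0}^l$ and $B_{\ep_0}^r$ in each of the nine cases.

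The step I expect to be most delicate is the bound on $S_1^-$ in the regime where $c\in D_0$ (so $\ep=0$) and $y$ is close to $y_{c_-}$: here $\mathcal{H}(y,c)$ has a genuine zero and the quotient is only bounded after a careful cancellation with the numerator. The argument uses $\mathcal{H}(z,c)=(z-y_{c_-})\bigl(W_\mp(y_{c_-})-c\bigr)W_\pm'(y_{c_-})+O(|z-y_{c_-}|^2)$, so that $\mathcal{H}(z,c)/\mathcal{H}(y,c)$ is comparable to $(z-y_{c_-})/(y-y_{c_-})$ up to a bounded multiplicative error controlled by $(\mathbf{M})$ and the $C^5$ bound on $u,b$. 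Once this is in place, integrating against the weight $\cosh(A(z-y_{c_-}))$ produces the $A^{-1}$ gain. All other estimates in the $Y^-$, $Y_l^-$, $Y_r^-$, $Y_0^-$ norms are routine variations on this single analytic input.
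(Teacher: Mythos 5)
Your proposal is correct and follows essentially the same route as the paper: the paper establishes Proposition \ref{prop: S- estimate} by simply repeating the proof of Proposition \ref{prop: S+ estimate} on $d_-$ with $y_{c_-}$ in place of $y_{c_+}$, i.e.\ the decomposition $S^-=S_0^-\circ S_1^-$, the $\cosh$-weighted integral giving a factor $A^{-1}$ from each of $S_0^-$ and $S_1^-$, the monotonicity-based ratio bounds for $\mathcal{H}(z,c)/\mathcal{H}(y,c)$, the $\ep$-weighted boundary terms arising when differentiating in $c_r$, and continuity via the kernel representation and dominated convergence, all of which you reproduce. The only cosmetic difference is your Taylor-expansion/integration-by-parts phrasing of the $S_1^-$ estimate, which is an equivalent way of exploiting the same pointwise ratio bound the paper records in \eqref{eq: W mono}.
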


In the following, we only give the proof of the Proposition \ref{prop: S+ estimate} and Proposition \ref{prop: S- estimate} can be obtained by the same method.
\begin{proof}
By the fact that $W'_+(y)>0$, $W'_-(y)<0$ for $0\leq y\leq a_+$ and $W_+(0)=W_-(0)=0$, we have for $c_r\geq 0$, then
$W_+(y_{c_+})=c_r$, we can conclude that for $c\in \Omega_{\ep_0}$,
\beqno
\textrm{if} \ 0<y_{c_+}<y'<y<a_+ \ \textrm{or}\  0<y<y'<y_{c_+}<a_+,  \ \textrm{then} \
\Big|\frac{W_+(y')-c}{W_+(y)-c}\Big|\leq 1,
\eeqno
and
\begin{align*}
\textrm{if} \ 0<y_{c_+}<y'<y<a_+, \  \textrm{then}\
\Big|\frac{W_-(y')-c}{W_-(y)-c}\Big|\leq 1,\\
\textrm{if} \  0<y<y'<y_{c_+}<a_+, \  \textrm{then}\
\Big|\frac{W_-(y')-c}{W_-(y)-c}\Big|
%\leq\Big|\frac{W_-(y_{c_+})-c}{-c}\Big|
\leq C.
\end{align*}
And for the case $c_r\leq 0$, then $W_-(y_{c_+})=c_r$, we can conclude that for $c\in \Omega_{\ep_0}$,
\beqno
\textrm{if} \ 0<y_{c_+}<y'<y<a_+ \ or\  0<y<y'<y_{c_+}<a_+,  \ \textrm{then} \
\Big|\frac{W_-(y')-c}{W_-(y)-c}\Big|\leq 1,
\eeqno
and
\begin{align*}
\textrm{if} \ 0<y_{c_+}<y'<y<a_+, \ \textrm{then} \
\Big|\frac{W_+(y')-c}{W_+(y)-c}\Big|\leq 1,\\
\textrm{if} \  0<y<y'<y_{c_+}<a_+,  \ \textrm{then}\
\Big|\frac{W_+(y')-c}{W_+(y)-c}\Big|
%\leq\Big|\frac{W_+(y_{c_+})-c}{-c}\Big|
\leq C.
\end{align*}
Thus we have that for $(y,c)\in[0,a_+]\times \Om_{\ep_0}$ and $ 0<y_{c_+}<y'<y<a_+$ or $0<y<y'<y_{c_+}<a_+$,
\beq\label{eq: W mono}
\Big|\frac{W_+(y')-c}{W_+(y)-c}\Big|\leq C,\quad
\Big|\frac{W_-(y')-c}{W_-(y)-c}\Big|\leq C.
\eeq
A direct calculation shows that for $(y,c)\in[0,a_+]\times D_0$
\beq\label{eq: S_0f}
\begin{split}
\|S_0^+f\|_{X_0^+}
&=\sup_{(y,c)\in[0,a_+]\times D_0}
\Big|\frac{1}{\cosh A(y-y_{c_+})}\int_{y_{c_+}}^y
\frac{f(z,c)}{\cosh A(y-y_{c_+})}\cosh A(z-y_{c_+})dz\Big|\\
&\leq \sup_{(y,c)\in[0,a_+]\times D_0}\Big|\frac{1}{\cosh A(y-y_{c_+})}\int_{y_{c_+}}^y\cosh A(z-y_{c_+})dz\Big|\|f\|_{X_0^+}\leq \frac{1}{A}\|f\|_{X_0^+}.
\end{split}
\eeq
Then we deduce
\beq\label{eq: S_11f}
\begin{split}
\Big\|S_1^+f(y,c)\Big\|_{X_0^+}
&\leq C\sup_{(y,c)\in[0,a_+]\times D_0}
\Big|\frac{y-y_{c_+}}{\cosh A(y-y_{c_+})}\int_0^1\cosh tA(y-y_{c_+})dt
\Big|\|f\|_{X_0^+}\\
&\leq \frac{C}{A}\|f\|_{X_0^+},
\end{split}
\eeq
which along with (\ref{eq: S_0f}) shows that
\beq\label{eq: Sf}
\|S^+f\|_{X_0^+}\leq \frac{C}{A^2}\|f\|_{X_0^+}.
\eeq
By using (\ref{eq: W mono}), we obtain
\ben\label{0}
&\ & \left\|\frac{\int_{y_{c_+}}^{y}(W_-(y')-c)f(y',c)dy'}
{(W_+(y)-c)(W_-(y)-c)}\right\|_{X_0^+}
+\left\|\frac{\int_{y_{c_+}}^{y}(W_+(y')-c)(W_-(y')-c)f(y',c)dy'}
{(W_+(y)-c)^2(W_-(y)-c)}\right\|_{X_0^+}\nonumber\\
&\ &\leq C\sup_{(y,c)\in[0,1]\times D_0}\Big|\frac{\int_0^1\cosh tA(y-y_{c_+})dt}
{\cosh A(y-y_{c_+})\int_0^1W_+^{'}(y_{c_+}+t(y-y_{c_+}))dt}
\Big|\|f\|_{X_0^+}\nonumber\\
&\ & \leq C\|f\|_{X_0^+},
\een
and
\ben\label{00}
&\ & \left\|\frac{\int_{y_{c_+}}^{y}(W_+(y)-c)f(y',c)dy'}
{(W_+(y)-c)(W_-(y)-c)}\right\|_{X_0^+}
+\left\|\frac{\int_{y_{c_+}}^{y}(W_+(y')-c)(W_-(y')-c)f(y',c)dy'}
{(W_+(y)-c)(W_-(y)-c)^2}\right\|_{X_0^+}\nonumber\\
&\ & \leq C\sup_{(y,c)\in[0,a_+]\times D_0}\Big|\frac{1}{\cosh A(y-y_{c_+})}
\int_0^1\cosh tA(y-y_{c_+})dt
\Big|\|f\|_{X_0^+}
\nonumber\\
&\ & \leq C\|f\|_{X_0^+}.
\een
Similarly, we also obtain that
\ben\label{8}
\|S_0^+f\|_{Z^+}\leq \frac{C}{A}\|f\|_{Z^+}, \ \ \|S_1^+f\|_{Z^+}\leq \frac{C}{A}\|f\|_{Z^+},
\een
\beq\label{88}
\begin{split}
&\left\|\frac{\int_{y_{c_+}}^{y}(W_-(y')-c)f(y',c)dy'}
{(W_+(y)-c)(W_-(y)-c)}\right\|_{Z^+}
+\left\|\frac{\int_{y_{c_+}}^{y}(W_+(y')-c)(W_-(y')-c)f(y',c)dy'}
{(W_+(y)-c)^2(W_-(y)-c)}\right\|_{Z^+}\\
&\leq C\|f\|_{Z^+}
\end{split}
\eeq
and
\beq\label{888}
\begin{split}
&\left\|\frac{\int_{y_{c_+}}^{y}(W_+(y)-c)f(y',c)dy'}
{(W_+(y)-c)(W_-(y)-c)}\right\|_{Z^+}
+\left\|\frac{\int_{y_{c_+}}^{y}(W_+(y')-c)(W_-(y')-c)f(y',c)dy'}
{(W_+(y)-c)(W_-(y)-c)^2}\right\|_{Z^+}\\
&\leq C\|f\|_{Z^+},
\end{split}
\eeq
here $Z^+$ can be taken as $X^+, X_l^+, X_r^+$. \\

A direct calculation shows that for $c\in D_0$,
\beqno
\partial_yS^+f(y,c)=S_1^+f(y,c),
\eeqno
and
\beq\label{Sc+}
\begin{split}
\partial_cS^+f(y,c)&=-\int_{y_{c_+}}^y
\frac{\int_{y_{c_+}}^{y'}\big(W_-(z)-c)f(z,c)dz}
{\big(W_+(y')-c\big)\big(W_-(y')-c\big)}dy'
-\int_{y_{c_+}}^y
\frac{\int_{y_{c_+}}^{y'}\big(W_+(z)-c\big)f(z,c)dz}
{\big(W_+(y')-c\big)\big(W_-(y')-c\big)}dy'\nonumber\\
&\ \
+\int_{y_{c_+}}^y
\frac{\int_{y_{c_+}}^{y'}\big(W_+(z)-c\big)\big(W_-(z)-c\big)f(z,c)dz}
{\big(W_+(y')-c\big)^2\big(W_-(y')-c\big)}dy'\nonumber\\
&\ \
+\int_{y_{c_+}}^y
\frac{\int_{y_{c_+}}^{y'}\big(W_+(z)-c\big)\big(W_-(z)-c\big)f(z,c)dz}
{\big(W_+(y')-c\big)\big(W_-(y')-c\big)^2}dy'\nonumber\\
&\ \
+\int_{y_{c_+}}^y
\frac{\int_{y_{c_+}}^{y'}\big(W_+(z)-c\big)\big(W_-(z)-c\big)\pa_cf(z,c)dz}
{\big(W_+(y')-c\big)\big(W_-(y')-c\big)}dy',
\end{split}
\eeq
and
\beqno
\begin{split}
%\partial_{yc}S^+f(y,c)
\partial_{cy}S^+f(y,c)
&=-\frac{\int_{y_{c_+}}^{y}\big(W_-(y')-c\big)f(y',c)dy'}
{\big(W_+(y)-c\big)\big(W_-(y)-c\big)}
-\frac{\int_{y_{c_+}}^{y}\big(W_+(y')-c\big)f(y',c)dy'}
{\big(W_+(y)-c\big)\big(W_-(y)-c\big)}\\
&\ \
+\frac{\int_{y_{c_+}}^{y}\big(W_+(y')-c\big)\big(W_-(y')-c\big)f(y',c)dy'}
{\big(W_+(y)-c\big)^2\big(W_-(y)-c\big)}\\
&\ \
+\frac{\int_{y_{c_+}}^{y}\big(W_+(y')-c\big)\big(W_-(y')-c\big)f(y',c)dy'}
{\big(W_+(y)-c\big)\big(W_-(y)-c\big)^2}\\
&\ \
+\frac{\int_{y_{c_+}}^{y}\big(W_+(y')-c\big)\big(W_-(y')-c\big)\pa_cf(y',c)dy'}
{\big(W_+(y)-c\big)\big(W_-(y)-c\big)}.
\end{split}
\eeqno
Thus,  from (\ref{eq: Sf}), (\ref{0}) and (\ref{00}), we obtain
\begin{align*}
\|S^+f\|_{Y_0^+}&=\|S^+f\|_{X_0^+}+\f{1}{A}\|\pa_yS^+f\|_{X_0^+}
+\f{1}{A}\|\pa_cS^+f\|_{X_0^+}+\f{1}{A^2}
\|\pa_{yc}S^+f\|_{X_0^+}\\
&\leq C\f{1}{A^2}\|f\|_{X_0^+}+\frac{C}{A^3}\|\pa_cf\|_{X_0^+}
\leq \f{C_1}{A^2}\|f\|_{Y_0^+}.
\end{align*}

For $c\in D_{\ep_0}$, we have
\beno
\pa_yS^+f(y,c)=S_1^+f(y,c),
\eeno
and
\beqno
\begin{split}
\pa_{\ep}S^+f(y,c)&
=S^+\pa_{\ep}f(y,c)+
i\int_{y_{c_+}}^y\frac{\int_{y_{c_+}}^{y'}\big(W_+(z)-c\big)\big(W_-(z)-c\big)f(z,c)dz}
{\big(W_+(y')-c\big)^2\big(W_-(y')-c\big)}dy'\\
&\quad
+i\int_{y_{c_+}}^y\frac{\int_{y_{c_+}}^{y'}\big(W_+(z)-c\big)\big(W_-(z)-c\big)f(z,c)dz}
{\big(W_+(y')-c\big)\big(W_-(y')-c\big)^2}dy'\\
&\quad
-i\int_{y_{c_+}}^y\frac{\int_{y_{c_+}}^{y'}\big(W_+(z)-c\big)f(z,c)dz}
{\big(W_+(y')-c\big)\big(W_-(y')-c\big)}dy'
-i\int_{y_{c_+}}^y\frac{\int_{y_{c_+}}^{y'}\big(W_-(z)-c\big)f(z,c)dz}
{\big(W_+(y')-c\big)\big(W_-(y')-c\big)}dy'.
\end{split}
\eeqno
If $c_r\geq 0$, then $W_+(y_{c_+})=c_r$, we obtain
\beqno
\begin{split}
\pa_{c_r}S^+f(y,c)
&=S^+\pa_{c_r}f(y,c)+
\int_{y_{c_+}}^y\frac{\int_{y_{c_+}}^{y'}\big(W_+(z)-c\big)\big(W_-(z)-c\big)f(z,c)dz}
{\big(W_+(y')-c\big)^2\big(W_-(y')-c\big)}dy'\\
&\quad
+\int_{y_{c_+}}^y\frac{\int_{y_{c_+}}^{y'}\big(W_+(z)-c\big)\big(W_-(z)-c\big)f(z,c)dz}
{\big(W_+(y')-c\big)\big(W_-(y')-c\big)^2}dy'\\
&\quad
-\int_{y_{c_+}}^y\frac{\int_{y_{c_+}}^{y'}\big(W_+(z)-c\big)f(z,c)dz}
{\big(W_+(y')-c\big)\big(W_-(y')-c\big)}dy'
-\int_{y_{c_+}}^y\frac{\int_{y_{c_+}}^{y'}\big(W_-(z)-c\big)f(z,c)dz}
{\big(W_+(y')-c\big)\big(W_-(y')-c\big)}dy'\\
&\quad
+i(W_+^{-1})'(c_r)f(y_{c_+},c)\big(W_-(y_{c_+})-c\big)
\int_{y_{c_+}}^y\frac{\ep}{\big(W_+(y')-c\big)\big(W_-(y')-c\big)}dy',
\end{split}
\eeqno
and if $c_r\leq 0$, then $W_-(y_{c_+})=c_r$ and we have
\beqno
\begin{split}
\pa_{c_r}S^+f(y,c)
&=S^+\pa_{c_r}f(y,c)+
\int_{y_{c_+}}^y\frac{\int_{y_{c_+}}^{y'}\big(W_+(z)-c\big)\big(W_-(z)-c\big)f(z,c)dz}
{\big(W_+(y')-c\big)^2\big(W_-(y')-c\big)}dy'\\
& \quad
+\int_{y_{c_+}}^y\frac{\int_{y_{c_+}}^{y'}\big(W_+(z)-c\big)\big(W_-(z)-c\big)f(z,c)dz}
{\big(W_+(y')-c\big)\big(W_-(y')-c\big)^2}dy'\\
& \quad
-\int_{y_{c_+}}^y\frac{\int_{y_{c_+}}^{y'}\big(W_+(z)-c\big)f(z,c)dz}
{\big(W_+(y')-c\big)\big(W_-(y')-c\big)}dy'
-\int_{y_{c_+}}^y\frac{\int_{y_{c_+}}^{y'}\big(W_-(z)-c\big)f(z,c)dz}
{\big(W_+(y')-c\big)\big(W_-(y')-c\big)}dy'\\
& \quad
+i(W_-^{-1})'(c_r)f(y_{c_+},c)\big(W_+(y_{c_+})-c\big)
\int_{y_{c_+}}^y\frac{\ep}{\big(W_+(y')-c\big)\big(W_-(y')-c\big)}dy'.
\end{split}
\eeqno
Due to (\ref{eq: W mono}), we have
\beqno
\left|\frac{\big(W_+(y_{c_+})-c\big)}{\cosh A(y-y_{c_+})}
\int_{y_{c_+}}^y\frac{\ep}{\big(W_+(y')-c\big)\big(W_-(y')-c\big)}dy'\right|
\leq \frac{C_1|y-y_{c_+}|}{\cosh A(y-y_{c_+})}\leq \frac{C_1}{A},
\eeqno
\beqno
\left|\frac{\big(W_-(y_{c_+})-c\big)}{\cosh A(y-y_{c_+})}
\int_{y_{c_+}}^y\frac{\ep}{\big(W_+(y')-c\big)\big(W_-(y')-c\big)}dy'\right|
\leq \frac{C_1|y-y_{c_+}|}{\cosh A(y-y_{c_+})}\leq \frac{C_1}{A},
\eeqno
Then from which and (\ref{8})-(\ref{888}), we get
\beq
\|S^+f\|_{Y^+}\leq \frac{C_1}{A^2}\|f\|_{Y^+}.
\eeq

For $c\in B_{\ep_0}^l$, we have
\beqno
\pa_yS^+f(y,c)=S_1^+f(y,c),
\eeqno
and
\beqno
\begin{split}
\pa_{\ep}S^+f(y,c)&=S^+\pa_{\ep}f(y,c)+
e^{i\theta}\Big\{\int_{y_{c_+}}^y\frac{\int_{y_{c_+}}^{y'}
\big(W_+(z)-c\big)\big(W_-(z)-c\big)f(z,c)dz}
{\big(W_+(y')-c\big)^2\big(W_-(y')-c\big)}dy'\\
& \ \
+\int_{y_{c_+}}^y\frac{\int_{y_{c_+}}^{y'}\big(W_+(z)-c\big)\big(W_-(z)-c\big)f(z,c)dz}
{\big(W_+(y')-c\big)\big(W_-(y')-c\big)^2}dy'\\
& \ \
-\int_{y_{c_+}}^y\frac{\int_{y_{c_+}}^{y'}\big(W_+(z)-c\big)f(z,c)dz}
{\big(W_+(y')-c\big)\big(W_-(y')-c\big)}dy'
-\int_{y_{c_+}}^y\frac{\int_{y_{c_+}}^{y'}\big(W_-(z)-c\big)f(z,c)dz}
{\big(W_+(y')-c\big)\big(W_-(y')-c\big)}dy'\Big\},
\end{split}
\eeqno
and
\beqno
\begin{split}
\pa_{\theta}S^+f(y,c)&=S^+\pa_{\theta}f(y,c)+
i\ep e^{i\theta}\Big\{\int_{y_{c_+}}^y
\frac{\int_{y_{c_+}}^{y'}\big(W_+(z)-c\big)\big(W_-(z)-c\big)f(z,c)dz}
{\big(W_+(y')-c\big)^2\big(W_-(y')-c\big)}dy'\\
& \ \
+\int_{y_{c_+}}^y\frac{\int_{y_{c_+}}^{y'}\big(W_+(z)-c\big)\big(W_-(z)-c\big)f(z,c)dz}
{\big(W_+(y')-c\big)\big(W_-(y')-c\big)^2}dy'\\
& \ \
-\int_{y_{c_+}}^y\frac{\int_{y_{c_+}}^{y'}\big(W_+(z)-c\big)f(z,c)dz}
{\big(W_+(y')-c\big)\big(W_-(y')-c\big)}dy'
-\int_{y_{c_+}}^y\frac{\int_{y_{c_+}}^{y'}\big(W_-(z)-c\big)f(z,c)dz}
{\big(W_+(y')-c\big)\big(W_-(y')-c\big)}dy'\Big\}.
\end{split}
\eeqno
Then we can get that by (\ref{8})-(\ref{888})
\beq
\|S^+f\|_{Y_l^+}\leq \frac{C_1}{A^2}\|f\|_{Y_l^+}.
\eeq

For $c\in B_{\ep_0}^r$, the proof is similar, we omit it for the sake of brevity.

Now we check the continuity. We rewrite $S_1^+f$ as
\beno
S_1^+f=\int_0^1K(t,y,c)f(y_{c_+}+t(y-y_{c_+}),c)dt,
\eeno
with $K(t,y,c)=\frac{(y-y_{c_+})\big(W_+(y_{c_+}+t(y-y_{c_+}))-c\big)\big(W_-(y_{c_+}+t(y-y_{c_+}))-c\big)}{(W_+(y)-c)(W_-(y)-c)}$.

Using the fact that for $(y,c)\in d_+\times\Om_{\ep_0}$ with  $0<y<y'<y_{c_+}$ or $y_{c_+}<y'<y<a_+$, $\Big|\frac{\big(W_+(y')-c\big)\big(W_-(y')-c\big)}{\big(W_+(y)-c\big)
\big(W_-(y)-c\big)}\Big|\leq C$,
the continuity of $K(t,y,c)$ and the Lebesgue's dominated convergence theorem, we conclude the continuity of $S_1^+f$. The continuity of $S^+f$ follows from $S^+f=S_0^+\circ S_1^+f$.
\end{proof}

\subsection{Existence of the solution}

In the following, the constant $C$ may depend on $\al$.

The homogeneous Sturmian equation on $[0,a_+]$ is
\beq\label{eq: homo eq}
\Big\{\begin{array}{l}
 \pa_y\Big(\mathcal{H}(y,c)\pa_y\va_+(y,c)\Big)=\al^2\mathcal{H}(y,c)\va_+(y,c),\\
\va_+(y_{c_+},c)=1, \ \pa_y\va_+(y_{c_+},c)=0.
\end{array}\Big.
\eeq

\begin{proposition}\label{prop: sol. hom. [0,1]}
1. For $c\in \Omega_{\ep_0}$, there exists a solution
$\va_+(y,c)\in C([0,a_+]\times \Omega_{\ep_0})$ of the
Sturmian equation \eqref{eq: homo eq} and $\pa_y\va_+(y,c)\in C([0,a_+]\times \Om_{\ep_0})$.
 Moreover, there exists $\ep_1>0$ such that for any $\ep_0\in[0,\ep_1)$
  and $(y,c)\in [0,a_+]\times \Omega_{\ep_0}$,
\beno
|\va_+(y,c)|\geq \frac{1}{2}, \ \ |\va_+(y,c)-1|\leq C|y-y_{c_+}|^2,
\eeno
where the constants $\ep_1, C$ may depend on $\al$.\\
2. For $c\in D_0$, for any $y\in[0,a_+]$, there is a constant $C$(depends on $\al$) such that,
\beno
\va_+(y,c)\geq \va_+(y',c)\geq 1, \ \ for \ \ 0\leq y_{c_+}\leq y'\leq y\leq 1
\ \ or\ \ 0\leq y\leq y'\leq y_{c_+}\leq 1;
\eeno
\beno
&\ &0\leq \va_+(y,c)-1\leq C\min\big\{\al^2(y-y_{c_+})^2, 1\big\}\va_+(y,c),
\eeno
\beno
C^{-1} |y-y_{c_+}|\leq |\pa_y\va_+(y,c)|\leq C |y-y_{c_+}|,
\eeno
and
\beno
|\pa_c\va_+(y,c)|\leq C |y-y_{c_+}|, \ \
|\pa_y\pa_c\va_+(y,c)|\leq C .
\eeno
\end{proposition}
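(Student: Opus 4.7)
I would recast \eqref{eq: homo eq} as a fixed-point equation by integrating twice from $y_{c_+}$ to $y$ and using the initial data $\va_+(y_{c_+},c)=1$ and $\pa_y\va_+(y_{c_+},c)=0$, which gives
\beq\label{eq: plan-fp}
\va_+=1+\al^2 S^+\va_+,\qquad \pa_y\va_+=S_1^+(\al^2\va_+).
\eeq
Choosing $A=C_0|\al|$ large enough that $\al^2 C_1/A^2\leq 1/2$, Proposition \ref{prop: S+ estimate} shows that $T\va\stackrel{def}{=}1+\al^2 S^+\va$ is a contraction on each of $Y_0^+,Y^+,Y_l^+,Y_r^+$, yielding a unique fixed point. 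Since the weight $\cosh(A(y-y_{c_+}))$ is bounded on the compact set $[0,a_+]\times\Om_{\ep_0}$, convergence of the Picard iterates in the $X^+$-norms upgrades to uniform convergence in $L^\infty$, and the continuity of each iterate (by Proposition \ref{prop: S+ estimate}) passes to the limit, giving $\va_+,\pa_y\va_+\in C([0,a_+]\times\Om_{\ep_0})$.

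For the quantitative bounds I would work directly with \eqref{eq: plan-fp}. The ratio bound \eqref{eq: W mono} gives $|\mathcal{H}(z,c)/\mathcal{H}(y',c)|\leq C$ for $z$ between $y_{c_+}$ and $y'$, so
\beqno
|\al^2 S^+\va_+(y,c)|\leq C\al^2(y-y_{c_+})^2\sup|\va_+|,
\eeqno
with the supremum over the interval joining $y_{c_+}$ to $y$. This yields $|\va_+-1|\leq C|y-y_{c_+}|^2$ on $\Om_{\ep_0}$, and, for $c\in D_0$, the sharper $0\leq\va_+-1\leq C\al^2(y-y_{c_+})^2\va_+$ once the pointwise bound $\va_+(z)\leq\va_+(y)$ from monotonicity is used.

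The lower bound $\va_+\geq 1$ on $D_0$ I would obtain by a monotone Picard iteration $\va_+^{(n+1)}=1+\al^2 S^+\va_+^{(n)}$ with $\va_+^{(0)}\equiv 1$: the sign structure $\mathcal{H}\leq 0$ on $[y_{c_+},a_+]$ and $\mathcal{H}\geq 0$ on $[0,y_{c_+}]$ forces $S^+\va_+^{(n)}\geq 0$ in both subintervals (the signs of the two nested integrations cancel against $\mathcal{H}(y')$), whence inductively $\va_+^{(n)}\geq 1$ and so $\va_+\geq 1$. Applying the same sign check to $\pa_y\va_+=\mathcal{H}(y)^{-1}\al^2\int_{y_{c_+}}^y\mathcal{H}(z)\va_+(z)\,dz$ gives monotonicity of $\va_+$ in $|y-y_{c_+}|$, and the two-sided bound $C^{-1}|y-y_{c_+}|\leq|\pa_y\va_+|\leq C|y-y_{c_+}|$ follows from a first-order Taylor expansion of $\mathcal{H}$ at $y_{c_+}$ combined with $\va_+\geq 1$. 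For complex $c\in\Om_{\ep_0}$ the lower bound $|\va_+|\geq 1/2$ is obtained by continuity in $c$ upon choosing $\ep_1=\ep_1(\al)$ small. The estimates on $\pa_c\va_+$ and $\pa_y\pa_c\va_+$ come from differentiating \eqref{eq: plan-fp} in $c$ and invoking the $Y_0^+$-bound of Proposition \ref{prop: S+ estimate}, with the $c$-dependence of $y_{c_+}$ producing controlled boundary contributions that vanish like $|y-y_{c_+}|$ at the critical point.

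The main obstacle I expect is orchestrating the sign analysis for the positivity argument uniformly across the various case distinctions (the sign of $c_r$, either side of $y=y_{c_+}$, and the complex boundary layers $D_{\ep_0},B_{\ep_0}^l,B_{\ep_0}^r$), together with tracking the correct formula for $\pa_{c_r}y_{c_+}$ depending on whether $y_{c_+}$ comes from $W_+^{-1}$ or $\wt{W}_-^{-1}$ on the relevant branch. The lower bound on $|\pa_y\va_+|$ intrinsically rests on $\va_+\geq 1$, a real-line phenomenon, so extending it to $\Om_{\ep_0}$ hinges on the continuity argument and the $\al$-dependent smallness of $\ep_1$.
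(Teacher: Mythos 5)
Your proposal follows essentially the same route as the paper: the fixed-point equation $\va_+=1+\al^2S^+\va_+$ solved via Proposition \ref{prop: S+ estimate} in each of $Y_0^+,Y^+,Y_l^+,Y_r^+$ (the paper phrases the iteration as the Neumann series $\sum_k\al^{2k}(S^+)^k1$, which also yields continuity and the positivity $(S^+)^k1\geq0$, hence $\va_+\geq1$ on $D_0$), the sign of $\mathcal{H}(z,c)/\mathcal{H}(y,c)$ on either side of $y_{c_+}$ for monotonicity and the two-sided bound on $\pa_y\va_+$, continuity in $c$ with $\ep_1(\al)$ small for $|\va_+|\geq1/2$ on $\Om_{\ep_0}$, and the $Y_0^+$-bound plus $\pa_c\va_+(y_{c_+},c)=0$ for the $\pa_c$ estimates. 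This matches the paper's proof in all essentials.
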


The proof is based on the following lemmas.
\begin{lemma}\label{lem: Y+}
Let $c\in D_{\ep_0}$.
Then there exists a solution $\va_+(y,c)\in Y^+$ to the Sturmian equation \eqref{eq: homo eq}. Moreover, it holds
\beno
\|\va_+\|_{Y^+}\leq C.
\eeno
\end{lemma}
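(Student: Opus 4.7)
The strategy is to recast the second-order Sturmian ODE as a fixed-point problem driven by the operator $S^+$, and then invoke Proposition \ref{prop: S+ estimate} together with a Neumann-series argument to produce the solution in $Y^+$.

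First, I would integrate the equation $\pa_y(\mathcal{H}\pa_y\va_+) = \al^2\mathcal{H}\va_+$ from $y_{c_+}$ to $y$. Since $\pa_y\va_+(y_{c_+},c)=0$ by the initial condition, the boundary term drops out regardless of the value of $\mathcal{H}(y_{c_+},c)$, leaving
$$\mathcal{H}(y,c)\pa_y\va_+(y,c) = \al^2\int_{y_{c_+}}^y\mathcal{H}(z,c)\va_+(z,c)\,dz.$$
For $c\in D_{\ep_0}$ the imaginary part $\ep$ is nonzero, so $\mathcal{H}(y,c)\neq 0$ on $[0,a_+]$; dividing by $\mathcal{H}(y,c)$ and integrating once more, using $\va_+(y_{c_+},c)=1$, yields the fixed-point equation
$$\va_+ = 1 + \al^2 S^+\va_+.$$

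Second, Proposition \ref{prop: S+ estimate} gives $\|\al^2 S^+ f\|_{Y^+} \leq \al^2 C_1 A^{-2}\,\|f\|_{Y^+}$. Since $A$ may be chosen larger than $C|\al|$ for any prescribed $C\geq 1$, I fix $C\geq \sqrt{2C_1}$, so that $\al^2 C_1 / A^2 \leq 1/2$. Consequently $I-\al^2 S^+$ is invertible on $Y^+$ by Neumann series, and
$$\va_+ = \sum_{n=0}^{\infty}(\al^2 S^+)^n(1).$$
A direct check shows $\|1\|_{X^+}\leq \sup|1/\cosh(A(y-y_{c_+}))|=1$, while the partial derivatives of the constant function in $y,c_r,\ep$ vanish, so $\|1\|_{Y^+}\leq 1$, and we obtain $\|\va_+\|_{Y^+}\leq 2$.

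The only subtlety I anticipate is confirming that the function produced by the Neumann series actually solves the original differential equation rather than merely its twice-integrated form, and that it lies in the right regularity class. Both follow from the continuity assertion in Proposition \ref{prop: S+ estimate}: each iterate $(\al^2 S^+)^n(1)$ is continuous on $d_+\times\Om_{\ep_0}$, hence so is the geometric-series sum $\va_+$; the identity $\pa_y\va_+=\al^2 S_1^+\va_+$ recovered from the first integration shows $\va_+$ is $C^1$ in $y$, and one further differentiation recovers the Sturmian equation pointwise.
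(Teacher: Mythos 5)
Your proposal is correct and follows essentially the same route as the paper: converting \eqref{eq: homo eq} into the fixed-point equation $\va_+=1+\al^2S^+\va_+$, using Proposition \ref{prop: S+ estimate} with $A$ chosen so that $\al^2C_1/A^2\leq\tfrac12$ to invert $I-\al^2S^+$ by Neumann series, and reading off $\|\va_+\|_{Y^+}\leq C$. The extra checks you mention (that the series solves the original ODE and is continuous) are handled in the paper within the proof of Proposition \ref{prop: sol. hom. [0,1]} rather than in this lemma, but the substance is the same.
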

\begin{proof}
$\va_+$ satisfies
\beqno
\pa_y\Big(\mathcal{H}(y,c)\pa_y\va_+(y,c)\Big)=\al^2\mathcal{H}(y,c)\va_+(y,c),
\eeqno
from which, we infer that
\beqno
\va_+(y,c)=1+\int_{y_{c_+}}^y\frac{\al^2}{\mathcal{H}(y',c)}
\int_{y_{c_+}}^{y'}\mathcal{H}(z,c)\va_+(z,c)dzdy'.
\eeqno
This means that $\va_+$ satisfies
\beqno
\va_+(y,c)=1+\al^2S^+\va_+(y,c).
\eeqno
In follows from Proposition \ref{prop: S+ estimate} that the
operator $I-\al^2 S^+$  is invertible in the space $Y^+$, if
\beqno
\frac{\al^2C_1}{A^2}\leq \frac{1}{2}<1,
\eeqno
where $C_1$ is the constant in Proposition \ref{prop: S+ estimate}. Thus
\beqno
\va_+(y,c)=(I-\al^2S^+)^{-1}1.
\eeqno
Hence, $\|\va_+\|_{Y^+}\leq \|1\|_{Y^+}+\|\al^2S^+\va_+\|_{Y^+}
\leq C+\frac{1}{2}\|\va_+\|_{Y^+}$ implies $\|\va_+\|_{Y^+}\leq C$,
here $C$ is a constant independent of $A$ and $\al$.
\end{proof}
In a similar way as in Lemma \ref{lem: Y+}, we can show that
\begin{lemma}\label{lem: Y+l}
Let $c\in B_{\ep_0}^l$. Then there exists a solution
$\va_+(y,c)\in Y^+_l$ to the Sturmian equation \eqref{eq: homo eq}.
Moreover, it holds
\beqno
\|\va_+\|_{Y^+_l}\leq C.
\eeqno
\end{lemma}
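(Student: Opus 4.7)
The plan is to mirror the argument of Lemma \ref{lem: Y+}, swapping the ambient space $Y^{+}$ for $Y^{+}_{l}$ and invoking the corresponding bound in Proposition \ref{prop: S+ estimate}. First, I would rewrite the Sturmian boundary value problem \eqref{eq: homo eq} as a fixed point equation: integrating $\pa_y(\mathcal{H}\pa_y\va_+) = \al^{2}\mathcal{H}\va_+$ twice from $y_{c_+}$ and using the normalizations $\va_+(y_{c_+},c)=1$, $\pa_y\va_+(y_{c_+},c)=0$ yields
\[
\va_+(y,c) \;=\; 1 \;+\; \al^{2}\, S^{+}\va_+(y,c),\qquad (y,c)\in d_{+}\times B_{\ep_0}^{l}.
\]
The first-order derivative terms that would obstruct this manipulation are absent because $\mathcal{H}(y_{c_+},c)\ne 0$ fails only at the endpoints of the semicircle, which is already accounted for by the definition of $B_{\ep_0}^{l}$ and the corresponding placement of $y_{c_+}$.

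Next, I would apply the Banach fixed point argument in the space $Y^{+}_{l}$. By Proposition \ref{prop: S+ estimate}, $S^{+}:Y^{+}_{l}\to Y^{+}_{l}$ with $\|S^{+}f\|_{Y^{+}_{l}}\le C_1 A^{-2}\|f\|_{Y^{+}_{l}}$. Since $A$ is chosen with $A\ge C|\al|$ for a sufficiently large constant $C$, we get $\al^{2}C_1/A^{2}\le 1/2$, so $I-\al^{2}S^{+}$ is invertible on $Y^{+}_{l}$ via a Neumann series. The constant function $1$ lies in $Y^{+}_{l}$ with $\|1\|_{Y^{+}_{l}}=\|1\|_{X^{+}_{l}}\le 1$ because its $y$, $\ep$ and $\theta$ derivatives vanish identically, so
\[
\va_+ \;=\; (I-\al^{2}S^{+})^{-1}1 \;\in\; Y^{+}_{l},
\qquad
\|\va_+\|_{Y^{+}_{l}} \;\le\; \|1\|_{Y^{+}_{l}} + \tfrac12\|\va_+\|_{Y^{+}_{l}},
\]
which gives $\|\va_+\|_{Y^{+}_{l}}\le C$ independent of $A$ and $\al$ after absorbing.

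I do not anticipate a substantial obstacle beyond what Lemma \ref{lem: Y+} already resolves; the only point that warrants a quick check is that the $\pa_\ep$ and $\pa_\theta$ derivatives appearing in the $Y^{+}_{l}$-norm are harmless when applied to the seed function $1$ and interact correctly with the integral operator. The latter interaction, however, is exactly the content of the $Y^{+}_{l}$ bound in Proposition \ref{prop: S+ estimate}, where the boundary term generated by differentiating the lower limit $y_{c_+}$ with respect to $\ep$ or $\theta$ was already controlled using $|W_{\pm}(y_{c_+})-c|\lesssim \ep$ together with the monotonicity estimate \eqref{eq: W mono}. Thus the existence and norm bound follow directly.
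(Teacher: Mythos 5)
Your proposal is correct and follows exactly the route the paper intends: the paper proves Lemma \ref{lem: Y+} by the fixed point identity $\va_+=1+\al^2S^+\va_+$ together with the $Y^+$ bound of Proposition \ref{prop: S+ estimate}, and then obtains Lemma \ref{lem: Y+l} ``in a similar way,'' i.e.\ by the same contraction argument with $Y^+$ replaced by $Y^+_l$, which is precisely what you carry out (including the observation that the seed function $1$ has unit $Y^+_l$-norm and that the $\pa_\ep$, $\pa_\theta$ contributions are already handled in the $Y_l^+$ estimate for $S^+$).
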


\begin{lemma}\label{lem: Y+r}
Let $c\in B_{\ep_0}^r$. Then there exists a solution $\va_+(y,c)\in Y^+_r$
to the Sturmian equation \eqref{eq: homo eq}. Moreover, it holds
\beqno
\|\va_+\|_{Y^+_r}\leq C.
\eeqno
\end{lemma}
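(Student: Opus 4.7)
The plan is to carry out the same fixed-point argument as in Lemma \ref{lem: Y+}, only with the space $Y^+$ replaced by $Y^+_r$ throughout. For $c \in B_{\ep_0}^r$ the reference point $y_{c_+}$ is still well defined (namely $y_{c_+} = 1$ in Case 1, etc.), so the Sturmian operator $S^+$ introduced earlier is well defined on functions in $Y^+_r$ by the very definition of these norms. This means that integrating equation \eqref{eq: homo eq} twice from $y_{c_+}$, using the boundary data $\va_+(y_{c_+},c) = 1$ and $\pa_y \va_+(y_{c_+},c) = 0$, still yields the integral reformulation
\beq\label{eq: fixed pt r}
\va_+(y,c) = 1 + \al^2 S^+ \va_+(y,c),
\eeq
which is what I would take as the starting point.

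Next, I would invoke Proposition \ref{prop: S+ estimate}, which provides the crucial bound $\|S^+ f\|_{Y^+_r} \leq \frac{C_1}{A^2}\|f\|_{Y^+_r}$ with $C_1$ independent of $A$. Since the constant $A$ is at our disposal and only required to satisfy $A \geq C|\al|$, I would fix $A$ large enough that $\al^2 C_1 / A^2 \leq 1/2$. Then $I - \al^2 S^+$ is invertible on $Y^+_r$ by a standard Neumann series argument, so \eqref{eq: fixed pt r} has a unique solution
\beqno
\va_+(y,c) = (I - \al^2 S^+)^{-1} 1 \in Y^+_r.
\eeqno

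For the norm bound, I would apply $\|\cdot\|_{Y^+_r}$ to both sides of \eqref{eq: fixed pt r}, giving
\beqno
\|\va_+\|_{Y^+_r} \leq \|1\|_{Y^+_r} + \al^2 \|S^+ \va_+\|_{Y^+_r} \leq C + \tfrac{1}{2}\|\va_+\|_{Y^+_r},
\eeqno
so $\|\va_+\|_{Y^+_r} \leq C$. I do not anticipate any real obstacle here: all the geometric information about $y_{c_+}$ and $y_{c_-}$ on $B_{\ep_0}^r$, as well as the monotonicity estimates \eqref{eq: W mono} adapted to this boundary arc, have already been absorbed into the statement $\|S^+ f\|_{Y^+_r} \leq \frac{C_1}{A^2}\|f\|_{Y^+_r}$ of Proposition \ref{prop: S+ estimate}. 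The only small point worth checking is that $\|1\|_{Y^+_r}$ is finite, which follows immediately from the definition since $1/\cosh(A(y - y_{c_+})) \leq 1$ and the derivative terms in the $Y^+_r$ norm vanish. Since the whole argument is structurally identical to the proof of Lemma \ref{lem: Y+}, I would simply write it as a one-line remark that the same method applies.
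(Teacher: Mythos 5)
Your proposal is correct and is essentially the paper's own argument: the paper proves Lemma \ref{lem: Y+r} by the same fixed-point scheme as Lemma \ref{lem: Y+}, namely the integral reformulation $\va_+=1+\al^2S^+\va_+$, the bound $\|S^+f\|_{Y_r^+}\leq \frac{C_1}{A^2}\|f\|_{Y_r^+}$ from Proposition \ref{prop: S+ estimate}, the choice of $A$ so that $\al^2C_1/A^2\leq\frac12$, and the resulting Neumann-series inversion and norm estimate. Nothing further is needed.
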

\begin{lemma}\label{lem: Y+0}
Let $c\in D_0$.
 Then there exists a solution $\va_+(y,c)\in Y^+_0$ to the Sturmian equation \eqref{eq: homo eq}. Moreover, it holds
\beqno
\|\va_+\|_{Y^+_0}\leq C.
\eeqno
\end{lemma}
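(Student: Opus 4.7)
The plan is to recast the ODE \eqref{eq: homo eq} as a fixed-point equation for the Sturmian integral operator and then invoke the $Y_0^+$ estimate already provided by Proposition \ref{prop: S+ estimate}. Using the initial data $\va_+(y_{c_+},c)=1$ and $\pa_y\va_+(y_{c_+},c)=0$, dividing \eqref{eq: homo eq} by $\mathcal{H}(y,c)$ and integrating twice from $y_{c_+}$ produces
\[
\va_+(y,c) = 1 + \al^2 S^+ \va_+(y,c).
\]

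Next I would fix $A > C|\al|$ large enough that $\al^2 C_1 / A^2 \leq 1/2$, where $C_1$ is the constant from Proposition \ref{prop: S+ estimate}. Since that Proposition gives $\|S^+ f\|_{Y_0^+} \leq C_1 A^{-2}\|f\|_{Y_0^+}$, the operator $I - \al^2 S^+$ is a contraction perturbation of the identity on $Y_0^+$, hence invertible via the Neumann series. Therefore $\va_+ := (I - \al^2 S^+)^{-1}(1)$ is well-defined in $Y_0^+$ and solves the integral equation, which is equivalent to \eqref{eq: homo eq} with the prescribed Cauchy data.

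To close the norm estimate I would verify that the constant function $1$ has finite $Y_0^+$ norm. This is immediate from the definition: $\|1\|_{X_0^+} = \sup 1/\cosh(A(y - y_{c_+})) \leq 1$, and $\pa_y 1 = \pa_c 1 = \pa_y\pa_c 1 = 0$, so $\|1\|_{Y_0^+} \leq 1$. The fixed-point identity then yields
\[
\|\va_+\|_{Y_0^+} \leq \|1\|_{Y_0^+} + \tfrac{1}{2}\|\va_+\|_{Y_0^+},
\]
and absorbing the second term gives $\|\va_+\|_{Y_0^+} \leq 2$, which is the claimed bound.

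The argument is structurally identical to Lemma \ref{lem: Y+}, the only distinction being that $Y_0^+$ additionally controls $\pa_c$ and $\pa_y\pa_c$ with the weight $A^{-2}$. The main subtlety—already absorbed into Proposition \ref{prop: S+ estimate}—is that on $D_0$ the critical point $y_{c_+}$ depends smoothly on $c$ via $W_+^{-1}$ (for $c_r \geq 0$) or $\wt{W}_-^{-1}$ (for $c_r \leq 0$), thanks to the monotonicity $(\mathbf{M})$. Consequently differentiating $S^+ f$ in $c$ produces the boundary-type correction terms handled by \eqref{0} and \eqref{00} in the Proposition's proof, and no new work is needed here.
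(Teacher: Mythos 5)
Your proposal is correct and follows essentially the same route as the paper: the paper proves Lemma \ref{lem: Y+} by writing $\va_+=1+\al^2S^+\va_+$ and inverting $I-\al^2S^+$ via Proposition \ref{prop: S+ estimate}, and then obtains the $Y_0^+$ case (this lemma) by the identical fixed-point argument with the $Y_0^+$ bound $\|S^+f\|_{Y_0^+}\leq C_1A^{-2}\|f\|_{Y_0^+}$. Your verification that $\|1\|_{Y_0^+}\leq 1$ and the absorption step match the paper's reasoning, so nothing further is needed.
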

Now we are in a position to prove the Proposition \ref{prop: sol. hom. [0,1]}.
\begin{proof}
{\bf Proof of 1. }
Let us define
\beqno
 \va_+(y,c)\stackrel{def}{=}\left\{\begin{array}{l}
 \va_+^0(y,c) \ \ for \ \ c\in D_0,\\
\va_+^{\pm}(y,c) \ \ for \ \ c\in D_{\ep_0},\\
\va_+^l(y,c) \ \ for \ \ c\in B_{\ep_0}^l,\\
\va_+^r(y,c) \ \ for \ \ c\in B_{\ep_0}^r,\\
\end{array}\right.
\eeqno
where $\va_+^{\pm}, \va_+^l, \va_+^r, \va_+^0$ are given
by Lemma \ref{lem: Y+},  Lemma \ref{lem: Y+l}, Lemma \ref{lem: Y+r} and
Lemma \ref{lem: Y+0} respectively. Then $\va_+(y,c)$ is our desired solution.

By Proposition \ref{prop: S+ estimate} and using the formula
$\va_+(y,c)=\sum\limits_{k=0}^{+\infty}\al^{2k}(S^+)^k1$ for $\Omega_{\ep_0}$, we can conclude
that $\va_+(y,c)\in C([0,a_+]\times \Omega_{\ep_0})$.
Moreover, for $c\in D_0$, we have
\beno
(S^+)^k1(y,c)\geq 0, \ \ \va_+(y,c)\geq 1,
\eeno
which ensures that there exists $\ep_1>0$ so that for
any $\ep_0\in[0,\ep_1)$ and $(y,c)\in d_+\times \Om_{\ep_0}$,
\beno
|\va_+(y,c)|\geq \frac{1}{2}, \ \ |\va_+(y,c)|\leq C.
\eeno
 Thanks to $\va_+=1+\al^2S^+\va_+$, we have
 \ben\label{va+}
 \va_+(y,c)=1+\int_{y_{c_+}}^y
 \frac{\al^2}{\mathcal{H}(y',c)}\int_{y_{c_+}}^{y'}\mathcal{H}(z,c)\va_+(z,c)dzdy',
 \een
 from which, it follows that
 \beno
 \pa_y\Big(\mathcal{H}(y,c)\pa_y\va_+(y,c)\Big)=\al^2\mathcal{H}(y,c)\va_+(y,c),\ \ \va_+(y_{c_+},c)=1.
 \eeno
 Then $\va_+(y,c)$ satisfies the Sturmian equation \eqref{eq: homo eq}.

By the fact that $\pa_y\va_+(y,c)=\al^2S_1^+\va_+(y,c)$ and  Proposition \ref{prop: S+ estimate}, we have $\pa_y\va_+(y,c)\in C(d_+\times \Om_{\ep_0})$.

From (\ref{va+}), we have
 \beqno
 |\va_+(y,c)-1|\leq \al^2\int_{y_{c_+}}^y\int_{y_{c_+}}^{y'}|\va_+(z,c)|
 \Big|\frac{(W_+(z)-c)(W_-(z)-c)}{(W_+(y')-c)(W_-(y')-c)}\Big|dzdy'\leq C|y-y_{c_+}|^2.
 \eeqno

\no{\bf Proof of 2. } Since we have $\pa_y\va_+(y,c)=\al^2S_1^+\va_+(y,c)$, thus for $y\geq z\geq y_{c_+}$ or $y\leq z\leq y_{c_+}$, $\frac{\mathcal{H}(z,c)}
{\mathcal{H}(y,c)}\geq 0$ and then $\pa_y\va_+(y,c)\geq 0$,
for $y\geq y_{c_+}$ and $\pa_y\va_+(y,c)\leq 0$, for $y\leq y_{c_+}$.

Since $\va_+(y,c)-1=\al^2S^+\va_+(y,c)$, using (\ref{eq: W mono}), we have
\beqno
0\leq S^+\va_+(y,c)\leq \Big(\int_{y_{c_+}}^y\int_{y_{c_+}}^{y'}\Big|\frac{\mathcal{H}(z,c)}
{\mathcal{H}(y',c)}\Big|dzdy'\Big)\va_+(y,c)\leq C|y-y_{c_+}|^2\va_+(y,c),
\eeqno
then we obtain
\beno
0\leq \va_+(y,c)-1\leq C|y-y_{c_+}|^2.
\eeno
By the fact that $|S_1^+\va_+(y,c)|\leq C |y-y_{c_+}|\va_+(y,c)$, we have
\beno
|\pa_y\va_+(y,c)|\leq C|y-y_{c_+}|\va_+(y,c).
\eeno
On the other hand, $\va_+\geq 1$ and $\mathcal{H}(y,c)\geq C^{-1}|y-y_{c_+}||y+y_{c_+}|$, thus
\begin{align*}
|S_1^+\va_+(y,c)|\geq \left|\int_{y_{c_+}}^y\Big|\frac{\mathcal{H}(z,c)}{\mathcal{H}(y,c)}\Big|dz\right|\geq C^{-1}|y-y_{c_+}|.
\end{align*}
By Lemma \ref{lem: Y+0}, we get $\va_+(y,c)\in Y_0^+$, which implies
\beno
|\va_+(y,c)|+|\pa_c\va_+(y,c)|+|\pa_c\pa_y\va_+(y,c)|\leq C,
\eeno
with $C$ depending on $\al$. Then by the fact
that $\pa_c\va_+(y_{c_+},c)=0$, we have
\beno
|\partial_c\va_+(y,c)|=\left|\int_{y_{c_+}}^y\pa_y\pa_c\va_+(y',c)dy'\right|\leq C|y-y_{c_+}|,
\eeno
with $C$ depending on $\al$.

%Due to the fact that
%\beqno
%\begin{split}
%\pa_y\pa_c\va_+(y,c)&=-\al^2\frac{\int_{y_{c_+}}^{y}\big(W_-(z)-c\big)\va_+(z,c)dz}
%{\big(W_+(y)-c\big)\big(W_-(y)-c\big)}
%-\al^2\frac{\int_{y_{c_+}}^{y}\big(W_+(z)-c\big)\va_+(z,c)dz}
%{\big(W_+(y)-c\big)\big(W_-(y)-c\big)}\\
%&\ \ +\al^2\frac{\int_{y_{c_+}}^{y}\big(W_+(z)-c\big)\big(W_-(z)-c\big)\va_+(z,c)dz}
%{\big(W_+(y)-c\big)^2\big(W_-(y)-c\big)}\\
%&\ \
%+\al^2\frac{\int_{y_{c_+}}^{y}\big(W_+(z)-c\big)\big(W_-(z)-c\big)\va_+(z,c)dz}
%{\big(W_+(y)-c\big)\big(W_-(y)-c\big)^2}+\al^2S_1^+\partial_c \va_+(y,c)
%\end{split}
%\eeqno
%and $|\va_+(y,c)|+|\pa_c\va_+(y,c)|\leq C$, we have
%\beno
%|\pa_y\pa_c\va_+(y,c)|\leq C.
%\eeno
This completes the proof of the proposition.
\end{proof}

Similarly, for the case $y\in [a_-,0]$, we solve the Sturmian  equation:
\beq\label{eq: homo eq1}
 \left\{\begin{array}{l}
 \pa_y\Big(\mathcal{H}(y,c)\pa_y\va_-(y,c)\Big)=\al^2\mathcal{H}(y,c)\va_-(y,c),\\
\va_-(y_{c_-},c)=1, \va'_-(y_{c_-},c)=0.\\
\end{array}\right.
\eeq
Here we only give the conclusion, and omit the details of the proof for brevity.
\begin{proposition}\label{prop: sol. hom. [-1,0]}
1. For $c\in \Omega_{\ep_0}$, there exists a solution
$\va_-(y,c)\in C(d_-\times \Omega_{\ep_0})$ of the Sturmian
equation \eqref{eq: homo eq1} and  $\pa_y\va_-(y,c)\in C(d_-\times \Om_{\ep_0})$.
 Moreover,  there exists $\ep_1>0$ such that for any $\ep_0\in[0,\ep_1)$ and $(y,c)\in d_-\times \Om_{\ep_0}$,
\beqno
|\va_-(y,c)|\geq \frac{1}{2}, \ \ |\va_-(y,c)-1|\leq C|y-y_{c_-}|^2,
\eeqno
where the constants $\ep_1, C$ may depend on $\al$.\\
2. For $c\in D_0$,  we have that for any $y\in d_-$, there is a constant $C$(depends on $\al$) such that,
\beno
\va_-(y,c)\geq \va_-(y',c)\geq1, \  for \  -1\leq y_{c_-}
\leq y'\leq y\leq 0 \ or\  -1\leq y\leq y'\leq y_{c_-}\leq 0;
\eeno
and
\beno
&\ &0\leq \va_-(y,c)-1\leq C\min\big\{\al^2(y-y_{c_-})^2, 1\big\}\va_-(y,c)
\eeno
\beno
C^{-1}|y-y_{c_-}|\leq |\pa_y\va_-(y,c)|\leq C|y-y_{c_-}|,
\eeno
and
\beno
|\pa_c\va_-(y,c)|\leq C|y-y_{c_-}|,
 \ \ |\pa_y\pa_c\va_-(y,c)|\leq C.
\eeno
\end{proposition}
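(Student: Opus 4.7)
The plan is to mirror, step by step, the argument already carried out for Proposition \ref{prop: sol. hom. [0,1]}, swapping the roles of $y_{c_+}\mapsto y_{c_-}$, $d_+\mapsto d_-$, and the operator $S^+\mapsto S^-$, and invoking Proposition \ref{prop: S- estimate} at every point where Proposition \ref{prop: S+ estimate} was used. More precisely, integrating the ODE in \eqref{eq: homo eq1} twice from $y_{c_-}$, using the two initial conditions $\va_-(y_{c_-},c)=1$ and $\pa_y\va_-(y_{c_-},c)=0$, converts the problem into the fixed point equation
\beq\label{eq: fp minus}
\va_-(y,c)=1+\al^2 S^-\va_-(y,c).
\eeq
By Proposition \ref{prop: S- estimate}, for $A\geq C|\al|$ with $\al^2 C_1/A^2\leq 1/2$, the operator $I-\al^2 S^-$ is invertible on each of the spaces $Y^-$, $Y_l^-$, $Y_r^-$, $Y_0^-$, so $\va_-=(I-\al^2 S^-)^{-1}1$ is well defined there, and the uniform bound $\|\va_-\|\leq C$ in the corresponding norm follows as in Lemmas \ref{lem: Y+}--\ref{lem: Y+0}. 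These four local pieces agree on the overlaps by uniqueness for the Cauchy problem centered at $y_{c_-}$, producing a single $\va_-\in C(d_-\times\Om_{\ep_0})$, and Proposition \ref{prop: S- estimate} also yields $\pa_y\va_-=\al^2 S_1^-\va_-\in C(d_-\times\Om_{\ep_0})$.

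For the pointwise bounds in Part 1, I would expand $\va_-=\sum_{k\geq 0}\al^{2k}(S^-)^k 1$. When $c\in D_0$, the kernel of $S^-$ is nonnegative (because $\cH(z,c_r)=(W_+(z)-c_r)(W_-(z)-c_r)\geq 0$ on the interval of integration, a consequence of the monotonicity of $W_\pm$ together with $y_{c_-}$ being the correct root), hence $(S^-)^k 1\geq 0$ and $\va_-\geq 1$ there. By continuity in $c$ on the compact set $\Om_{\ep_0}$, choosing $\ep_1>0$ small enough gives $|\va_-|\geq 1/2$ throughout $\Om_{\ep_0}$ for $\ep_0\in[0,\ep_1)$. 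The quadratic remainder estimate
\beno
|\va_-(y,c)-1|\leq \al^2\Big|\int_{y_{c_-}}^y\int_{y_{c_-}}^{y'}\Big|\f{\cH(z,c)}{\cH(y',c)}\Big|\,|\va_-(z,c)|\,dz\,dy'\Big|\leq C|y-y_{c_-}|^2
\eeno
follows from the analog of \eqref{eq: W mono} on $d_-$, which is proved exactly as on $d_+$ using $W_+'(y)>0$ and $W_-'(y)<0$ together with the location of $y_{c_-}$ in the three cases $c_r\geq 0$, $c_r\leq 0$, and $c\in B_{\ep_0}^{l,r}$ laid out in the notations.

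For Part 2, restricting to $c\in D_0$, the identity $\pa_y\va_-=\al^2 S_1^-\va_-$ combined with the sign of $\cH(z,c)/\cH(y,c)$ between $y_{c_-}$ and $y$ gives the monotonicity statement $\va_-(y,c)\geq \va_-(y',c)\geq 1$. Repeating the estimate of $S^-\va_-$ with the sharper bound $|\cH(z,c)/\cH(y',c)|\leq C$ yields $0\leq \va_--1\leq C\min\{\al^2(y-y_{c_-})^2,1\}\va_-$. The upper bound $|\pa_y\va_-|\leq C|y-y_{c_-}|$ is immediate from $|\pa_y\va_-|=\al^2|S_1^-\va_-|\leq C|y-y_{c_-}|\va_-$; the matching lower bound uses $\va_-\geq 1$ together with the explicit lower bound $|\cH(z,c)|\geq C^{-1}|z-y_{c_-}||z+y_{c_-}|$, giving $|S_1^-\va_-(y,c)|\geq C^{-1}|y-y_{c_-}|$. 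Finally, the $Y_0^-$ bound from the minus analog of Lemma \ref{lem: Y+0} furnishes $|\pa_c\va_-|,|\pa_y\pa_c\va_-|\leq C$, and since $\pa_c\va_-(y_{c_-},c)=0$, integration gives $|\pa_c\va_-(y,c)|\leq C|y-y_{c_-}|$.

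The only place where extra care is needed is the invertibility and continuity on the curved pieces $B_{\ep_0}^{l,r}$ of $\Om_{\ep_0}$, because there the parametrization is by $(\ep,\th)$ rather than $(c_r,\ep)$. But Proposition \ref{prop: S- estimate} already packages those bounds with the correct weights, so the main obstacle is purely bookkeeping: verifying that the positions of $y_{c_-}$ defined case by case in the nine geometric configurations remain consistent with the weight $\cosh(A(y-y_{c_-}))$ used to define the norms. Once this is checked in each case (which is the symmetric counterpart of the verification already performed for $y_{c_+}$), the rest of the argument goes through verbatim.
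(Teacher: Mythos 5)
Your proposal is correct and takes essentially the same route as the paper, which omits this proof precisely because it is the mirror of Proposition \ref{prop: sol. hom. [0,1]}: one solves the fixed-point equation $\va_-=1+\al^2S^-\va_-$ via Proposition \ref{prop: S- estimate} in $Y^-$, $Y^-_l$, $Y^-_r$, $Y^-_0$, and then reads off the pointwise, monotonicity and derivative bounds from the series $\sum_k\al^{2k}(S^-)^k1$ and the identity $\pa_y\va_-=\al^2S_1^-\va_-$, exactly as you do. The only cosmetic point is that the regions $D_0$, $D_{\ep_0}$, $B^{l}_{\ep_0}$, $B^{r}_{\ep_0}$ are essentially disjoint rather than overlapping, so continuity of $\va_-$ and $\pa_y\va_-$ on $d_-\times\Om_{\ep_0}$ comes from the series representation together with the continuity statement in Proposition \ref{prop: S- estimate}, which you also invoke.
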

The proposition can be proved mainly by the following lemmas.
\begin{lemma}
Let $c\in D_{\ep_0}$.
Then there exists a solution $\va_-(y,c)\in Y^-$ to the Sturmian
equation \eqref{eq: homo eq1}. Moreover, it holds
\beno
\|\va_-\|_{Y^-}\leq C.
\eeno
\end{lemma}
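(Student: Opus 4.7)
The plan is to mirror exactly the argument used in Lemma \ref{lem: Y+} for $\varphi_+$, replacing the Sturmian operator $S^+$ on $d_+\times\Omega_{\ep_0}$ by its counterpart $S^-$ on $d_-\times\Omega_{\ep_0}$ and using Proposition \ref{prop: S- estimate} in place of Proposition \ref{prop: S+ estimate}. First I would rewrite the Sturmian equation \eqref{eq: homo eq1} as an integral equation. Integrating
\beqno
\pa_y\bigl(\mathcal{H}(y,c)\pa_y\va_-(y,c)\bigr)=\al^2\mathcal{H}(y,c)\va_-(y,c)
\eeqno
twice from $y_{c_-}$ against the boundary data $\varphi_-(y_{c_-},c)=1$, $\pa_y\varphi_-(y_{c_-},c)=0$, gives
\beqno
\va_-(y,c)=1+\int_{y_{c_-}}^y\frac{\al^2}{\mathcal{H}(y',c)}\int_{y_{c_-}}^{y'}\mathcal{H}(z,c)\va_-(z,c)\,dz\,dy'=1+\al^2 S^-\va_-(y,c).
\eeqno
The two formulations are equivalent since $\mathcal{H}(y_{c_-},c)=0$ so the normalization at $y=y_{c_-}$ is automatic.

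Next I would invert $I-\al^2 S^-$ on $Y^-$. By Proposition \ref{prop: S- estimate}, there is a constant $C_1$ independent of $A$ such that
\beqno
\|S^- f\|_{Y^-}\leq \frac{C_1}{A^2}\|f\|_{Y^-}.
\eeqno
Recalling that $A$ was chosen as a constant larger than $C|\al|$ with $C\geq 1$ independent of $\al$, I fix $A$ large enough (depending on $\al$) so that $\al^2 C_1/A^2\leq 1/2$. Then $I-\al^2 S^-$ is invertible on $Y^-$ via the Neumann series, and setting
\beqno
\va_-(y,c)=(I-\al^2 S^-)^{-1}1=\sum_{k=0}^{+\infty}\al^{2k}(S^-)^k 1,
\eeqno
which converges in $Y^-$, produces the solution.

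Finally I would derive the quantitative bound. Since $\|1\|_{Y^-}\leq C$ (the constant function $1$ is trivially bounded in each of the $X^-$-norms and has vanishing $\pa_y$, $\pa_{c_r}$, $\pa_\varepsilon$ derivatives), applying $\|\cdot\|_{Y^-}$ to the identity $\va_-=1+\al^2 S^-\va_-$ gives
\beqno
\|\va_-\|_{Y^-}\leq \|1\|_{Y^-}+\frac{\al^2 C_1}{A^2}\|\va_-\|_{Y^-}\leq C+\tfrac{1}{2}\|\va_-\|_{Y^-},
\eeqno
which yields $\|\va_-\|_{Y^-}\leq 2C$, as desired. No real obstacle is expected: the entire estimate is supplied by Proposition \ref{prop: S- estimate}, whose proof mirrors that of Proposition \ref{prop: S+ estimate}; the only mild subtlety is checking that $1\in Y^-$ uniformly in $c\in D_{\ep_0}$, which is immediate from the definition since $\cosh(A(y-y_{c_-}))\geq 1$ and derivatives of the constant function vanish.
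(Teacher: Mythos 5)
Your proposal is correct and is exactly the argument the paper intends: the paper omits the proof of this lemma, stating that it follows by the same method as Lemma \ref{lem: Y+}, and your proof is precisely that argument transposed to $d_-$ — the integral equation $\va_-=1+\al^2S^-\va_-$, invertibility of $I-\al^2S^-$ on $Y^-$ via Proposition \ref{prop: S- estimate} with $\al^2C_1/A^2\leq 1/2$, and the resulting bound $\|\va_-\|_{Y^-}\leq C$.
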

\begin{lemma}
Let $c\in B_{\ep_0}^l$. Then there exists a solution
$\va_-(y,c)\in Y^-_l$ to the Sturmian equation \eqref{eq: homo eq1}).
Moreover, it holds
\beqno
\|\va_-\|_{Y^-_l}\leq C.
\eeqno
\end{lemma}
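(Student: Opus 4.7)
The plan is to mirror the proof of Lemma \ref{lem: Y+} verbatim, replacing $S^+$ by $S^-$ and the functional space $Y^+$ by $Y^-_l$, and using the $B_{\ep_0}^l$ bound in Proposition \ref{prop: S- estimate} in place of the bound in Proposition \ref{prop: S+ estimate}.

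First I would rewrite the ODE \eqref{eq: homo eq1} as an integral equation. Dividing by $\mathcal{H}(y,c)$ and integrating once from $y_{c_-}$ using $\pa_y\va_-(y_{c_-},c)=0$ gives
\beno
\mathcal{H}(y,c)\pa_y\va_-(y,c)=\al^2\int_{y_{c_-}}^y\mathcal{H}(z,c)\va_-(z,c)\,dz,
\eeno
and a second integration using $\va_-(y_{c_-},c)=1$ yields
\beno
\va_-(y,c)=1+\al^2 S^-\va_-(y,c).
\eeno
Thus finding $\va_-$ in $Y^-_l$ reduces to inverting $I-\al^2 S^-$ on $Y^-_l$.

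Next I would observe that the constant function $1$ belongs to $Y^-_l$ with norm bounded by a constant independent of $A$ (since $\pa_y 1=\pa_\ep 1=\pa_\th 1=0$ and $1/\cosh(A(y-y_{c_-}))\le 1$). By Proposition \ref{prop: S- estimate}, the operator $\al^2 S^-$ satisfies
\beno
\|\al^2 S^- f\|_{Y^-_l}\leq \frac{\al^2 C_1}{A^2}\|f\|_{Y^-_l}.
\eeno
Choosing $A\geq C|\al|$ with $C$ large enough so that $\al^2 C_1/A^2\leq 1/2$, the operator $I-\al^2 S^-$ is invertible on $Y^-_l$ by the Neumann series
\beno
\va_-(y,c)=(I-\al^2 S^-)^{-1}1=\sum_{k=0}^{+\infty}\al^{2k}(S^-)^k 1,
\eeno
which converges in $Y^-_l$. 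Taking norms gives $\|\va_-\|_{Y^-_l}\leq \|1\|_{Y^-_l}+\tfrac{1}{2}\|\va_-\|_{Y^-_l}$, hence $\|\va_-\|_{Y^-_l}\leq C$ for a constant $C$ independent of $A$ (but possibly depending on $\al$).

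There is no genuine obstacle once Proposition \ref{prop: S- estimate} is in hand; the only point to be careful about is that the derivatives $\pa_\ep$ and $\pa_\th$ in the definition of the $Y^-_l$ norm act on $S^-$ exactly as in the $B_{\ep_0}^l$ case of Proposition \ref{prop: S+ estimate}, so the same contraction estimate applies after interchanging the roles of $d_+$ and $d_-$ and of $y_{c_+}$ and $y_{c_-}$. Finally, to verify that the fixed point $\va_-$ indeed solves \eqref{eq: homo eq1}, I would differentiate the identity $\va_-=1+\al^2 S^-\va_-$ twice in $y$, which recovers the Sturmian equation together with the boundary conditions $\va_-(y_{c_-},c)=1$ and $\pa_y\va_-(y_{c_-},c)=0$.
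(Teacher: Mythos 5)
Your proposal is correct and is exactly the argument the paper intends: the paper proves the $Y^+$ case (Lemma \ref{lem: Y+}) via the integral equation $\va=1+\al^2S\va$ and the contraction bound from the Sturmian-operator estimate, and states that the remaining cases, including this one, follow "in a similar way," which is precisely your reduction to Proposition \ref{prop: S- estimate} on $Y^-_l$ with the Neumann series $\va_-=(I-\al^2S^-)^{-1}1$.
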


\begin{lemma}
Let $c\in B_{\ep_0}^r$. Then there exists a solution $\va_-(y,c)\in Y^-_r$
to the Sturmian equation \eqref{eq: homo eq1}. Moreover, it holds
\beqno
\|\va_-\|_{Y^-_r}\leq C.
\eeqno
\end{lemma}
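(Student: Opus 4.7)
The plan is to mirror, on the left interval $d_-=[a_-,0]$ with base point $y_{c_-}$ associated to $c\in B_{\ep_0}^r$, the contraction--mapping argument already used on the right side in Lemmas \ref{lem: Y+}--\ref{lem: Y+r}. First I would use the boundary conditions $\va_-(y_{c_-},c)=1$ and $\pa_y\va_-(y_{c_-},c)=0$ to integrate \eqref{eq: homo eq1} twice from $y_{c_-}$, recasting it as the Volterra fixed-point identity
\begin{equation*}
\va_-(y,c) = 1 + \al^2 S^- \va_-(y,c), \qquad (y,c)\in d_-\times B_{\ep_0}^r.
\end{equation*}

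Next I would invoke Proposition \ref{prop: S- estimate}, which provides the bound $\|S^- f\|_{Y_r^-}\leq (C_1/A^2)\|f\|_{Y_r^-}$ with $C_1$ independent of $A$. Since $A$ is chosen $\geq C|\al|$ (and the constant $C$ may be enlarged once and for all), we may enforce $\al^2 C_1/A^2 \leq 1/2$. Then $I-\al^2 S^-$ is invertible on the Banach space $Y_r^-$ by Neumann series, and
\begin{equation*}
\va_-(y,c) = (I-\al^2 S^-)^{-1}1 = \sum_{k=0}^{\infty}\al^{2k}(S^-)^k 1
\end{equation*}
produces a candidate solution in $Y_r^-$. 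The norm estimate then follows from $\|\va_-\|_{Y_r^-}\leq \|1\|_{Y_r^-}+\al^2\|S^-\va_-\|_{Y_r^-}\leq \|1\|_{Y_r^-}+\tfrac12\|\va_-\|_{Y_r^-}$, after noting that $\|1\|_{Y_r^-}$ is harmless since $\cosh(A(y-y_{c_-}))\geq 1$ and the $y$-, $\ep$- and $\theta$-derivatives of the constant $1$ vanish identically.

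Finally I would verify that this fixed point solves \eqref{eq: homo eq1}: differentiating the identity once yields $\pa_y\va_- = \al^2 S_1^- \va_-$, which gives $\pa_y\va_-(y_{c_-},c)=0$ and, after multiplying by $\mathcal{H}(y,c)$ and differentiating again, recovers the differential equation. Continuity of $\va_-$ on $d_-\times B_{\ep_0}^r$ is inherited term by term from the continuity statement in Proposition \ref{prop: S- estimate} together with uniform convergence of the Neumann series in $Y_r^-$. Because all the genuine analytic work (weighted control of the Volterra kernel near the turning points $y_{c_-}$ where $\mathcal{H}(\cdot,c)$ degenerates, and parameter-differentiability in $\ep,\theta$) has already been packaged into Proposition \ref{prop: S- estimate}, the present lemma is a purely routine Neumann--series assembly; the main obstacle, if any, is making sure that the case analysis defining $y_{c_-}$ for $c\in B_{\ep_0}^r$ in $d_-$ is consistent with the one used in Proposition \ref{prop: S- estimate}, which is immediate from the notational conventions set up at the end of Section~3.
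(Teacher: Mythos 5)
Your proposal is correct and follows essentially the same route as the paper: the paper proves the analogous Lemma \ref{lem: Y+} by rewriting the Sturmian equation as the fixed-point identity $\va=1+\al^2 S\va$ and inverting $I-\al^2 S$ via the bound from Proposition \ref{prop: S+ estimate} (choosing $A$ so that $\al^2C_1/A^2\leq 1/2$), and then states that the $Y_l^{\pm}$, $Y_r^{\pm}$ and minus-side cases follow in the same way from Proposition \ref{prop: S- estimate}. Your Neumann-series assembly, norm estimate, and verification of the initial conditions at $y_{c_-}$ reproduce exactly this argument.
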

\begin{lemma}
Let $c\in D_0$.
 Then there exists a solution $\va_-(y,c)\in Y^-_0$ to the Sturmian equation \eqref{eq: homo eq1}. Moreover, there holds
\beqno
\|\va_-\|_{Y^-_0}\leq C.
\eeqno
\end{lemma}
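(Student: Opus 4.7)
The plan is to mirror the argument used for Lemma \ref{lem: Y+0}, transferring it from $d_+$ to $d_-$ by means of Proposition \ref{prop: S- estimate}. First, I would integrate the Sturmian equation \eqref{eq: homo eq1} twice starting from the regular point $y_{c_-}\in d_-$. Using $\va_-(y_{c_-},c)=1$ and $\pa_y\va_-(y_{c_-},c)=0$, and dividing by $\mathcal{H}(y',c)$ (which vanishes only at $y'=y_{c_-}$ among the relevant zeros on $d_-$ for $c\in D_0$), I obtain the equivalent fixed-point formulation
\begin{equation*}
\va_-(y,c)=1+\al^2\!\int_{y_{c_-}}^{y}\!\frac{1}{\mathcal{H}(y',c)}\!\int_{y_{c_-}}^{y'}\!\mathcal{H}(z,c)\va_-(z,c)\,dz\,dy'
=1+\al^2 S^-\va_-(y,c).
\end{equation*}

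Next, I would invoke Proposition \ref{prop: S- estimate}, which guarantees $\|S^- f\|_{Y_0^-}\leq C_1 A^{-2}\|f\|_{Y_0^-}$ with $C_1$ independent of $A$. Choosing $A$ sufficiently large relative to $\al$, say $A^2\geq 2C_1\al^2$, the operator $\al^2 S^-$ is a contraction on $Y_0^-$ with norm at most $\tfrac12$. Therefore $I-\al^2 S^-$ is invertible on $Y_0^-$, and the unique solution is $\va_-(y,c)=(I-\al^2 S^-)^{-1}1$. Since $\|1\|_{Y_0^-}\leq C$ (the constant function is manifestly in $Y_0^-$ because $\cosh(A(y-y_{c_-}))\geq 1$), we deduce
\begin{equation*}
\|\va_-\|_{Y_0^-}\leq \|1\|_{Y_0^-}+\|\al^2 S^-\va_-\|_{Y_0^-}\leq C+\tfrac12\|\va_-\|_{Y_0^-},
\end{equation*}
which yields $\|\va_-\|_{Y_0^-}\leq C$ after absorbing the second term.

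Finally, applying $\pa_y$ to the fixed-point identity gives $\pa_y\va_-=\al^2 S_1^-\va_-$, so the boundary condition $\pa_y\va_-(y_{c_-},c)=0$ is automatically recovered, and the full Sturmian equation \eqref{eq: homo eq1} follows by differentiating once more. The only point requiring a sanity check is that for every $c\in D_0$ the base point $y_{c_-}$ lies in $d_-=[a_-,0]$, which is exactly the content of the case-by-case construction of $y_{c_\pm}$ recorded in the Notations paragraph; once this is in hand, the $S^-$-estimates of Proposition \ref{prop: S- estimate} apply verbatim. I do not expect a genuine obstacle here—the argument is structurally identical to that of Lemma \ref{lem: Y+0}, and the main work (the contraction estimate for $S^-$ on $Y_0^-$) has already been carried out.
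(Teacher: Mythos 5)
Your argument is correct and coincides with the paper's: the paper proves the corresponding result for $\va_+$ (Lemma \ref{lem: Y+} and its variants, including the $Y_0^+$ case) by exactly this fixed-point scheme $\va=1+\al^2 S^{\pm}\va$ combined with the contraction bound from Proposition \ref{prop: S+ estimate}, and it explicitly states that the $\va_-$ lemmas, including this one, follow by the same method via Proposition \ref{prop: S- estimate}. Nothing further is needed.
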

\begin{remark}
From the above construction, we  note that $\va_+(y,c)$, $\va_-(y,c)$ may be not equal at the point $y=0$.
\end{remark}
\begin{remark}\label{rmk: epsilon}
By the definition of $Y^{\pm}$ and $Y_r^{\pm}$, $Y_l^{\pm}$ and Proposition \ref{prop: sol. hom. [0,1]}, Proposition \ref{prop: sol. hom. [-1,0]}, we have for $c\in D_0$ and $c_{\ep}=c+i\ep\in D_{\ep_0}\cup D_0$ with $0\leq |\ep|\leq \ep_0$, 
\beno
|\va_{\pm}(y,c_{\ep})-\va_{\pm}(y,c)|\leq C|\ep|,
\eeno
and for $c_{\ep}=c+\ep e^{i\th}\in B_{\ep_0}^l$ or $c_{\ep}=c+\ep e^{i\th}\in B_{\ep_0}^r$ with $0\leq |\ep|\leq \ep_0$, 
\beno
|\va_{\pm}(y,c_{\ep})-\va_{\pm}(y,c)|\leq C|\ep|.
\eeno
\end{remark}

\section{The inhomogeneous Sturmian equations}
\subsection{The Wronskian and its estimate}
In the following, we introduce  for $c\in \Omega_{\ep_0}\setminus D_0$,
\beq
I_+(c)=\int_0^1\frac{1}{\mathcal{H}(y,c)\va_+(y,c)^2}dy,\ \
I_-(c)=\int_{-1}^0\frac{1}{\mathcal{H}(y,c)\va_-(y,c)^2}dy,
\eeq
\beq
P(c)=\va_-(0,c)^2(\va_+\pa_y\va_+)(0,c)-\va_+(0,c)^2(\va_-\pa_y\va_-)(0,c),
\eeq
\ben\label{wronskian}
\mathcal{D}(c)=c^2P(c)I_+(c)I_-(c)-\va_+(0,c)^2I_+(c)-\va_-(0,c)^2I_-(c).
\een
Here and as what follows, $\va_+(y,c),\va_-(y,c)$ are the solutions of the homogeneous Sturmian
equation constructed in Proposition \ref{prop: sol. hom. [0,1]} and
Proposition \ref{prop: sol. hom. [-1,0]}.

The Stern stability condition ({\bf{SS}}) was proved in \cite{Stern}. Here we recall the lemma and show the relationship between the Stern stability condition ({\bf{SS}}) and the Wronskian $\mathcal{D}(c)$.
\begin{lemma}\label{stern lemma}
 If $|u(y)|\leq|b(y)|$ for $y\in[-1,1]$, $M_{\al}$ has no $H^1$  eigenvalue.
  Thus for any $c\notin D_0$, the  Sturmian equation
 \beq\label{homo Sturmian}
 \left\{\begin{array}{l}
\pa_y\Big(\mathcal{H}(y,c)\pa_y\Psi(y,c)\Big)-\al^2\mathcal{H}(y,c)\Psi(y,c)=0\\
\Psi(-1,c)=\Psi(1,c)=0.\\
\end{array}\right.
\eeq
has no $H^1(-1,1)$ solution.   And then  we have $\mathcal{D}(c)\neq0$ for $c\in \Om_{\ep_0}\setminus D_0$.
 \end{lemma}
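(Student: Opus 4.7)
The first assertion---that $M_{\al}$ admits no $H^1$ eigenvalue under $(\mathbf{SS})$---I would invoke directly from \cite{Stern}. The underlying argument pairs the eigenvalue equation $(cI-M_{\al})(\widehat\psi,\widehat\phi)^T=0$ against an appropriate test pair to produce an energy identity whose real and imaginary parts, combined with $|u|\leq|b|$, force the eigenfunction to vanish identically.

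To deduce the second assertion from the first, I argue by contraposition. Given a nontrivial $H^1$ solution $\Psi$ of \eqref{homo Sturmian} for some $c\notin D_0$, I would set
\[
\widehat\psi(y)=(u(y)-c)\Psi(y),\qquad \widehat\phi(y)=b(y)\Psi(y).
\]
A direct computation, using the factorization $\mathcal{H}=(u-c)^2-b^2$, shows that the Sturmian equation $\pa_y(\mathcal{H}\pa_y\Psi)=\al^2\mathcal{H}\Psi$ is equivalent to $(cI-M_{\al})(\widehat\psi,\widehat\phi)^T=0$. The Dirichlet boundary conditions on $(\widehat\psi,\widehat\phi)$ follow immediately from $\Psi(\pm1)=0$, and because $b$ vanishes only at $y=0$ by $(\mathbf{I})$--$(\mathbf{M})$ while the Sobolev embedding $H^1\hookrightarrow C^{1/2}$ precludes $\Psi$ from being supported at a single point, the pair $(\widehat\psi,\widehat\phi)$ is nontrivial---contradicting the first assertion.

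For the third assertion I argue by contraposition once more. Assuming $\mathcal{D}(c_*)=0$ for some $c_*\in\Om_{\ep_0}\setminus D_0$, I construct a nontrivial $H^1$ solution of \eqref{homo Sturmian}. Since $0\in D_0$, one has $c_*\neq 0$ and $\mathcal{H}(\cdot,c_*)$ is bounded away from zero on $[-1,1]$, so reduction of order supplies the second linearly independent solutions
\[
\psi_+(y,c_*)=\va_+(y,c_*)\int_1^y\frac{dz}{\mathcal{H}(z,c_*)\va_+(z,c_*)^2},\quad \psi_-(y,c_*)=\va_-(y,c_*)\int_{-1}^y\frac{dz}{\mathcal{H}(z,c_*)\va_-(z,c_*)^2},
\]
which vanish at $y=1$ and $y=-1$ respectively. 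Imposing $C^1$ matching at $y=0$ of $\Psi=A_+\psi_+$ on $[0,1]$ and $\Psi=A_-\psi_-$ on $[-1,0]$ yields a $2\times 2$ linear system in $(A_+,A_-)$ whose determinant, by direct computation using $\mathcal{H}(0,c_*)=c_*^2$, equals $\mathcal{D}(c_*)/\big(c_*^2\,\va_+(0,c_*)\va_-(0,c_*)\big)$. Since $\va_\pm(0,c_*)\geq 1/2$ by Propositions \ref{prop: sol. hom. [0,1]} and \ref{prop: sol. hom. [-1,0]} and $c_*\neq 0$, the hypothesis $\mathcal{D}(c_*)=0$ forces nontrivial $(A_+,A_-)$; the corresponding glued $\Psi$ is $C^1$ at $y=0$, hence lies in $H^1(-1,1)$, satisfies \eqref{homo Sturmian} and the Dirichlet conditions---contradicting the second assertion.

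The one step requiring genuine care is this last determinant calculation: unwinding the signs introduced by the different integration limits in $I_\pm$ versus $\psi_\pm$, and recognizing the combination $\va_+(0,c_*)\va_-(0,c_*)\bigl[\va_-\pa_y\va_+-\va_+\pa_y\va_-\bigr](0,c_*)$ as precisely $P(c_*)$. The remaining steps---verifying the eigenvalue/Sturmian correspondence and checking $C^1$ regularity across $y=0$---are routine given that $\mathcal{H}(\cdot,c_*)$ is smooth and nonvanishing on $[-1,1]$.
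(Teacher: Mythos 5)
Your proposal is correct, but it reaches the lemma by a route that differs from the paper's in two respects, and one point deserves a caution. For the stability input, the paper does not just quote \cite{Stern}: it pairs \eqref{homo Sturmian} directly with $\overline{\Psi}$, takes real and imaginary parts of $\int_{-1}^1\mathcal{H}(y,c)\big(|\pa_y\Psi|^2+\al^2|\Psi|^2\big)dy=0$ and combines them into $\int_{-1}^1\big(u^2-b^2-c_r^2-c_i^2\big)\big(|\pa_y\Psi|^2+\al^2|\Psi|^2\big)dy=0$, which forces $\Psi\equiv0$ since $|u|\le|b|$ and $0\in D_0$ gives $|c|>0$; so the paper proves the Sturmian non-solvability first, and the statement about $M_{\al}$ rides on the Section 2 reduction. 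You invert this order: you cite Stern for the no-eigenvalue statement and transfer it to \eqref{homo Sturmian} via $(\widehat\psi,\widehat\phi)=((u-c)\Psi,b\Psi)$ — a legitimate direction (no division by $b$ or $u-c$ is needed, and your nontriviality argument via $b\ne0$ off $y=0$ is fine) — but if the lemma is to be self-contained you should still record the quadratic-form computation, and note that for the \emph{real} points of $\Om_{\ep_0}\setminus D_0$ (e.g. $c=\max\{W_+(1),W_-(-1)\}+\ep$) the imaginary-part identity is vacuous, while $\mathcal{H}(\cdot,c)$ is single-signed there so the conclusion is immediate; relying on the cited result to cover such real $c$ is the only soft spot in your plan. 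For $\mathcal{D}(c)\neq0$, the paper keeps all four coefficients $(\mu_\pm,\nu_\pm)$ of the general solution and reads off the $4\times4$ matrix $W$ with $\det W=\mathcal{D}(c)$, whereas you match only the boundary-adapted solutions $\psi_\pm$ at $y=0$. Your $2\times2$ determinant does come out as $\mathcal{D}(c_*)/\big(c_*^2\va_+(0,c_*)\va_-(0,c_*)\big)$ up to a sign fixed by the ordering of the system (using $\mathcal{H}(0,c_*)=c_*^2$, $|\va_\pm(0,c_*)|\ge\frac12$, and $P=\big[\va_+\va_-(\va_-\pa_y\va_+-\va_+\pa_y\va_-)\big](0,c_*)$, all of which check out), and the reduction to two unknowns is justified because $\va_+(1,c)\neq0$ and $\va_-(-1,c)\neq0$ make the Dirichlet solution spaces on $[0,1]$ and $[-1,0]$ one-dimensional. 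Your version buys a cleaner contrapositive — $\mathcal{D}(c_*)=0$ produces an explicit nontrivial $C^1$ (hence classical, since $y=0$ is a regular point) solution — while the paper's $4\times4$ formulation is the one reused verbatim to determine the coefficients of the inhomogeneous problem in Proposition \ref{prop: inhom solu}, which is why it is set up that way.
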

 \begin{proof}
For $c=c_r+ic_i\notin D_0$, let $\Psi(y,c)\in H^1_0(-1,1)$ be a nontrivial solution of the Sturmian equation
 \beq\label{sturmian eq}
\pa_y\Big(\mathcal{H}(y,c)\pa_y\Psi(y,c)\Big)-\al^2\mathcal{H}(y,c)\Psi(y,c)=0.
\eeq
Taking the inner product with $\overline{\Psi}(y,c)$ on both sides of (\ref{sturmian eq})
 and by integration by parts, we obtain
\ben\label{integral}
\int_{-1}^1\mathcal{H}(y,c)\Big(|\pa_y\Psi(y,c)|^2+\al^2|\Psi(y,c)|^2\Big)dy=0.
\een
Due to
\beqno
\begin{split}
\mathcal{H}(y,c)&=\big(u(y)+b(y)-c\big)\big(u(y)-b(y)-c\big)\\
&=\big(u(y)+b(y)-c_r\big)\big(u(y)-b(y)-c_r\big)
-c_i^2-2ic_i\big(u(y)-c_r\big),
\end{split}
\eeqno
taking the real part of (\ref{integral}) gives
\beq\label{re}
\int_{-1}^1\Big[\big(u(y)+b(y)-c_r\big)\big(u(y)-b(y)-c_r\big)-c_i^2\Big]
\Big(|\pa_y\Psi(y,c)|^2+\al^2|\Psi(y,c)|^2\Big)dy=0,
\eeq
and taking the imagine part of (\ref{integral}) gives
\ben\label{im}
\int_{-1}^1(u(y)-c_r)\Big(|\pa_y\Psi(y,c)|^2+\al^2|\Psi(y,c)|^2\Big)dy=0.
\een
Then multiplying $2c_r$ on both sides of (\ref{im}) and adding (\ref{re}), we get
\beno
\int_{-1}^1\Big[u(y)^2-b(y)^2-c_r^2-c_i^2\Big]
\Big(|\pa_y\Psi(y,c)|^2+\al^2|\Psi(y,c)|^2\Big)dy=0.
\eeno
Thus if $|u(y)|\leq |b(y)|$ for $y\in[-1,1]$, we have $\Psi(y,c)\equiv0$, which leads to a contradiction.\\

Let $\Psi(y,c)\in H^1(-1,1)$ is a solution of (\ref{homo Sturmian}) and assume $\Psi(y,c)=\left\{\begin{array}{l}
\Psi_+(y,c), \ \ y\in[0,1], \\
\Psi_-(y,c), \ \ y\in[-1,0],\\
\end{array}\right.$ then we have for $y\in[0,1]$, $\Psi_+$ satisfies
\beq\label{Psi+}
 \left\{\begin{array}{l}
\pa_y\Big(\mathcal{H}(y,c)\pa_y\Psi_+(y,c)\Big)-\al^2\mathcal{H}(y,c)\Psi_+(y,c)=0\\
\Psi_+(1,c)=0.\\
\end{array}\right.
\eeq
and for $y\in[-1,0]$, $\Psi_-$ satisfies
\beq\label{Psi-}
 \left\{\begin{array}{l}
\pa_y\Big(\mathcal{H}(y,c)\pa_y\Psi_-(y,c)\Big)-\al^2\mathcal{H}(y,c)\Psi_-(y,c)=0\\
\Psi_-(-1,c)=0.\\
\end{array}\right.
\eeq

By Proposition \ref{prop: sol. hom. [0,1]} and Proposition \ref{prop: sol. hom. [-1,0]}, $\va_+,\va_-\neq 0$, then it holds that the equations (\ref{Psi+}) and (\ref{Psi-}) are equivalent to
\beqno
 \left\{\begin{array}{l}
\pa_y\Big(\mathcal{H}\va_+^2\pa_y\Big(\frac{\Psi_+}{\va_+}\Big)\Big)=0\\
\Psi_+(1,c)=0,\\
\end{array}\right.
\quad\textrm{and}\quad
 \left\{\begin{array}{l}
\pa_y\Big(\mathcal{H}\va_-^2\pa_y\Big(\frac{\Psi_-}{\va_-}\Big)\Big)=0\\
\Psi_-(1,c)=0.\\
\end{array}\right.
\eeqno
 As $\mathcal{H}\neq0$ for $c\notin D_0$, by integration twice, we obtain that $\va_+(y,c)$ and
$\va_+(y,c)\int_1^y\frac{1}{\mathcal{H}(y,c)\va_+(y,c)^2}dy'$ are two independent solutions
of the homogeneous Sturmian equation for $y\in[0,1]$, and $\va_-(y,c)$
and $\va_-(y,c)\int_{-1}^y\frac{1}{\mathcal{H}(y,c)\va_-(y,c)^2}dy'$ are two independent
solutions of the homogeneous Sturmian equation for $y\in[-1,0]$.
Thus for $y\in[0,1]$ and $c\notin D_0$, we have
\beqno
\begin{split}
\Psi_+(y,c)&=\wt{\mu}_+(c) \va_+(y,c)\int_0^y\frac{1}{\mathcal{H}(y',c)\va_+(y',c)^2}dy'
+\nu_+(c)\va_+(y,c):= \Psi_+^0(y,c)\\
&=\mu_+(c) \va_+(y,c)\int_1^y\frac{1}{\mathcal{H}(y',c)\va_+(y',c)^2}dy':= \Psi_+^1(y,c),
\end{split}
\eeqno
and for $y\in[-1,0]$ and $c\notin D_0$,
\beqno
\begin{split}
\Psi_-(y,c)&=\mu_-(c) \va_-(y,c)\int_{-1}^y\frac{1}{\mathcal{H}(y',c)\va_-(y',c)^2}dy'
:=\Psi_-^{-1}(y,c)\\
&=\wt{\mu}_-(c) \va_-(y,c)\int_0^y\frac{1}{\mathcal{H}(y',c)\va_-(y',c)^2}dy'
+\nu_-(c)\va_-(y,c):=\Psi_-^0(y,c).
\end{split}
\eeqno
By the boundary conditions and the fact that $\Psi(y,c)\in H_0^{1}(-1,1)$, we get,
\beno
&\ & \Psi_+^0(1,c)=0, \  \Psi_-^0(-1,c)=0, \
\Psi_+^1(0,c)=\Psi_+^0(0,c), \\
&\ &  \Psi_-^{-1}(0,c)=\Psi_-^0(0,c),\
\Psi_+^0(0,c)=\Psi_-^0(0,c), \  \pa_y\Psi_+^0(0,c)=\pa_y\Psi_-^0(0,c),
\eeno
which gives
\beqno
 \left\{\begin{array}{l}
\mu_+(c)=\wt{\mu}_+(c), \ \
 \mu_-(c)=\wt{\mu}_-(c),\\
I_+(c)\mu_+(c)+\nu_+(c)=0,\\
I_-(c)\mu_-(c)-\nu_-(c)=0,\\
\va_+(0,c)\nu_+(c)-\va_-(0,c)\nu_-(c)=0,\\
\va_-(0,c)\mu_+(c)-\va_+(0,c)\mu_-(c)
+c^2\big(\va_-\va_+\pa_y\va_+\big)(0,c)\nu_+(c)\\
\ \  -c^2\big(\va_+\va_-\pa_y\va_-\big)(0,c)\nu_-(c)=0.
\end{array}\right.
\eeqno
Thus we have
\begin{equation}\label{matrix}
%\begin{bmatrix}
% I_+(c)& 0 & 1&  0\\
% 0&  I_-(c)& 0 &  -1\\
% 0& 0&  \va_+(0,c)&  -\va_-(0,c)\\
% \va_-(0,c)&  -\va_+(0,c)& c^2\big(\va_-\va_+\pa_y\va_+\big)(0,c)&
% -c^2\big(\va_+\va_-\pa_y\va_-\big)(0,c)
% \end{bmatrix}
W
 \begin{bmatrix}
 \mu_+(c)\\
 \mu_-(c)\\
 \nu_+(c)\\
 \nu_-(c)
  \end{bmatrix}
=0,
\end{equation}
where
$
W=\begin{bmatrix}
 I_+(c)& 0 & 1&  0\\
 0&  I_-(c)& 0 &  -1\\
 0& 0&  \va_+(0,c)&  -\va_-(0,c)\\
 \va_-(0,c)&  -\va_+(0,c)& c^2\big(\va_-\va_+\pa_y\va_+\big)(0,c)
 & -c^2\big(\va_+\va_-\pa_y\va_-\big)(0,c)
 \end{bmatrix},
 $
 and the fact that \eqref{homo Sturmian} has no nontrivial $H^1$ solution, we obtain the Wronskian $\det(W)\neq 0$ holds for $c\in \Om_{\ep_0}\setminus D_0$, which gives
\beno
 \det(W)=\mathcal{D}(c)=c^2P(c)I_+(c)I_-(c)-\va_+(0,c)^2I_+(c)-\va_-(0,c)^2I_-(c)\neq0.
\eeno
Thus we proved the lemma.
\end{proof}
 \begin{remark}
 Thanks to the continuity of $\va_+$ and $\va_-$, we have
 that $\mathcal{D}(c)$ is continuous for $c\in \Om_{\ep_0}\setminus D_0$.
 \end{remark}

Next we show that $\mathcal{D}(c)$ is continuous to the boundary.

For $c\in D_{\ep_0}\cup D_0$, let
\beq\label{sigma+}
\sigma_+(c)=W'_+(y_{c_+})\big(W_-(y_{c_+})-c\big)
-W'_-(y_{c_+})\big(W_+(y_{c_+})-c\big),
\eeq
\beq\label{sigma-}
\sigma_-(c)=W'_+(y_{c_-})\big(W_-(y_{c_-})-c\big)
-W'_-(y_{c_-})\big(W_+(y_{c_-})-c\big),
\eeq
\beq\label{Pi}
\Pi_+(c)=\int_0^1\frac{1}{\mathcal{H}(y,c)}\Big(\frac{1}{\va_+(y,c)^2}-1\Big)dy, \ \
\Pi_-(c)=\int_{-1}^0\frac{1}{\mathcal{H}(y,c)}\Big(\frac{1}{\va_-(y,c)^2}-1\Big)dy,
\eeq

For $c\in D_0$, let
\beq\label{R+1}
R_+^1(c)=\int_0^1\frac{W'_+(y_{c_+})\big[W_-(y_{c_+})-W_-(y)\big]
-W'_-(y_{c_+})\big[W_+(y_{c_+})-W_+(y)\big]}
{\big(W_+(y)-c\big)\big(W_-(y)-c\big)}dy,
\eeq
\beq\label{R+2}
R_+^2(c)=\int_0^1\frac{W'_+(y_{c_+})-W'_+(y)}{W_+(y)-c}
-\frac{W'_-(y_{c_+})-W'_-(y)}{W_-(y)-c}dy,
\eeq
\beq\label{R-1}
R_-^1(c)=\int_{-1}^0\frac{W'_+(y_{c_-})\big[W_-(y_{c_-})-W_-(y)\big]
-W'_-(y_{c_-})\big[W_+(y_{c_-})-W_+(y)\big]}
{\big(W_+(y)-c\big)\big(W_-(y)-c\big)}dy,
\eeq
\beq\label{R-2}
R_-^2(c)=\int_{-1}^0\frac{W'_+(y_{c_-})-W'_+(y)}{W_+(y)-c}
-\frac{W'_-(y_{c_-})-W'_-(y)}{W_-(y)-c}dy,
\eeq

For $c\in D_0\setminus \big\{0,W_+(1),W_-(1),W_+(-1),W_-(-1)\big\}$, let
\beno
\chi_+(c)=\left\{\begin{aligned}
1,\ c\in (W_-(1),W_+(1)),\\
0,\ c\notin [W_-(1),W_+(1)];
\end{aligned}
\right.
\quad
\chi_-(c)=\left\{\begin{aligned}
1,\ c\in (W_+(-1),W_-(-1)),\\
0,\ c\notin [W_+(-1),W_-(-1)].
\end{aligned}
\right.
\eeno
Then $\chi_+(c)^2+\chi_-(c)^2\geq 1$ for $c\in D_0\setminus \big\{0,W_+(1),W_-(1),W_+(-1),W_-(-1)\big\}$. 

We denote for $c\in D_0\setminus \big\{0,W_+(1),W_-(1),W_+(-1),W_-(-1)\big\}$, 
\beq\label{I+ re def}
I^{re}_+(c)=\Pi_+(c)+\frac{1}{\sigma_+(c)}
\left(R_+^1(c)+R_+^2(c)+\f12\ln \Big(\frac{W_+(1)-c}{c-W_-(1)}\Big)^2\right),
\eeq
\beq\label{I- re def}
I^{re}_-(c)=\Pi_-(c)+\frac{1}{\sigma_-(c)}
\left(R_-^1(c)+R_-^2(c)+\f12\ln\Big(\frac{W_-(-1)-c}{c-W_+(-1)}\Big)^2\right),
\eeq
\beq
\mathcal{D}^{re}(c)=c^2P(c)\Big(I^{re}_+(c)I^{re}_-(c)
-\frac{\pi^2\chi_+(c)\chi_-(c)}{\sigma_+(c)\sigma_-(c)}\Big)
-\va_+(0,c)^2I^{re}_+(c)-\va_-(0,c)^2I^{re}_-(c),
\eeq
\beq
\mathcal{D}^{im}(c)=c^2P(c)\Big(\frac{\pi I^{re}_+(c)\chi_-(c)}{\sigma_-(c)}
+\frac{\pi I^{re}_-(c)\chi_+(c)}{\sigma_+(c)}\Big)
-\frac{\pi\va_+(0,c)^2\chi_+(c)}{\sigma_+(c)}
-\frac{\pi\va_-(0,c)^2\chi_-(c)}{\sigma_-(c)}.
\eeq

We also define $l(x)=\ln(e+|x|^{-1})$ for $x\in \mathbb{C}$, so that $C(M)^{-1}(1+|\ln|x||)\leq l(x)\leq C(M)(1+|\ln|x||)$ for $|x|\leq M$.

\begin{remark}\label{rmk: sigma bdd}
By the definition of $\sigma_+(c)$ and $\sigma_-(c)$, for $c\in D_{\ep_0}\cup D_0$, we can easily get that there exists a positive constant $C$ such that
\beno
C^{-1} |c|\leq|\sigma_+(c)|\leq C |c|, \ C^{-1} |c|\leq|\sigma_-(c)|\leq C |c|.
\eeno
\end{remark}

\begin{proposition}\label{prop: D est}
There exists $\ep_0>0$, such that for $c_{\ep}\in \Om_{\ep_0}$, the following properties hold. \\
1. For $c\in  D_0\setminus \big\{0,W_+(1),W_-(1),W_+(-1),W_-(-1)\big\}$, $c_{\ep}=c+i\ep$. It holds
 \beno
 \lim_{\ep\rightarrow0^{\pm}} \mathcal{D}(c_{\ep})=\mathcal{D}^{re}(c)\pm i\mathcal{D}^{im}(c).
 \eeno
 Moreover, there exists a constant $C\geq 1$ such that
 \beno
\mathcal{D}^{re}(c)^2+\mathcal{D}^{im}(c)^2\geq C^{-1} >0.
 \eeno
2. For $c_{\ep}=c+i\ep\in \Om_{\ep_0}\setminus D_0$, $0<\ep<\ep_0$, there exists a constant $\delta_0>0$ such that for $|c|<\delta_0$,
 \beno
 |\mathcal{D}(c_{\ep})|\geq \frac{C^{-1}}{|c_{\ep}|};
 \eeno
%and for $|c_{\ep}-W_+(1)|<\delta_0$,
%$
% |\mathcal{D}(c_{\ep})|
% %\geq C^{-1}\Big(1+\Big|\ln\big|W_+(1)-c_{\ep}\big|\Big|\Big)^2
% \geq C^{-1}l\big(W_+(1)-c_{\ep}\big);
%$\\
%and for $|c_{\ep}-W_+(-1)|<\delta_0$,
%$
% |\mathcal{D}(c_{\ep})|
% %\geq C^{-1}\Big(1+\Big|\ln\big|W_+(1)-c_{\ep}\big|\Big|\Big)^2
% \geq C^{-1}l\big(W_+(-1)-c_{\ep}\big);
%$\\
%and for $|c_{\ep}-W_-(1)|<\delta_0$,
%$
% |\mathcal{D}(c_{\ep})|
% %\geq C^{-1}\Big(1+\Big|\ln\big|W_+(1)-c_{\ep}\big|\Big|\Big)^2
% \geq C^{-1}l\big(W_-(1)-c_{\ep}\big);
%$\\
%and for $|c_{\ep}-W_-(-1)|<\delta_0$,
%$
% |\mathcal{D}(c_{\ep})|
% %\geq C^{-1} \Big(1+\Big|\ln \big|c_{\ep}-W_-(1)\big|\Big|\Big)^2
% \geq C^{-1}l\big(c_{\ep}-W_-(-1)\big).
%$\\
3. For $c_{\ep}\in \Omega_{\ep_0}\setminus D_0$, it holds that
 \beno
 |\mathcal{D}(c_{\ep})|\geq \frac{
 l\big(c_{\ep}-W_-(-1)\big)l\big(W_+(-1)-c_{\ep}\big)l\big(c_{\ep}-W_-(1)\big)l\big(W_+(1)-c_{\ep}\big)}{C|c_{\ep}|}.
 \eeno
Here we recall $l(x)=\ln(e+|x|^{-1})$.
 \end{proposition}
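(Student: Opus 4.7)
\textbf{Plan of proof for Proposition \ref{prop: D est}.}

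The starting point is a decomposition of $I_{\pm}(c_\ep)$ that isolates its singular structure. Writing $\f{1}{\va_\pm(y,c)^2}=1+\bigl(\f{1}{\va_\pm(y,c)^2}-1\bigr)$ splits $I_\pm(c)=\Pi_\pm(c)+\int\f{dy}{\mathcal{H}(y,c)}$, and the partial fraction identity
\beqno
\f{1}{\mathcal{H}(y,c)}=\f{1}{(W_+(y)-c)(W_-(y)-c)}=\f{1}{W_+(y)-W_-(y)}\Bigl(\f{1}{W_-(y)-c}-\f{1}{W_+(y)-c}\Bigr)
\eeqno
reduces everything to Cauchy-type integrals. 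By adding and subtracting the values of $W'_\pm$ and $W_\pm$ at $y_{c_\pm}$ one recognises $R_\pm^1+R_\pm^2$ as the regular remainder and the $\f12\ln\bigl(\cdot\bigr)$ pieces as the boundary logarithms. From this algebraic identity together with the Plemelj--Sokhotski limits $\lim_{\ep\to0^\pm}\int\f{dy}{W_\pm(y)-(c+i\ep)}=\mathrm{p.v.}\int\f{dy}{W_\pm(y)-c}\pm i\pi(W_\pm')^{-1}(c)$, the computation of $\lim_{\ep\to0^\pm}I_\pm(c+i\ep)=I^{re}_\pm(c)\pm\f{i\pi\chi_\pm(c)}{\sigma_\pm(c)}$ is routine, after which $\lim_{\ep\to0^\pm}\mathcal{D}(c+i\ep)=\mathcal{D}^{re}(c)\pm i\mathcal{D}^{im}(c)$ follows by expanding the definition \eqref{wronskian}.

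For the non-degeneracy $\mathcal{D}^{re}(c)^2+\mathcal{D}^{im}(c)^2\geq C^{-1}$ in Part 1, I would proceed by contradiction using Lemma \ref{stern lemma}. If $\mathcal{D}^{re}(c_0)=\mathcal{D}^{im}(c_0)=0$ for some $c_0\in D_0\setminus\{0,W_+(\pm1),W_-(\pm1)\}$, then the one-sided limits $\mathcal{D}(c_0\pm i0)$ both vanish. Using the representation of $\Psi_\pm$ from the proof of Lemma \ref{stern lemma} adapted to the complex values $c_0\pm i0$, the vanishing of the Wronskian allows the construction of a nontrivial solution $\Psi$ of \eqref{homo Sturmian} in an appropriate weighted sense. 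Taking the imaginary part of the energy identity \eqref{im} now picks out exactly the delta-contributions from $y_{c_{0,\pm}}$ (the sources of $\mathcal{D}^{im}$), forcing these contributions plus the non-negative bulk terms controlled by Stern's inequality to vanish jointly --- a contradiction. Continuity of $\va_\pm(0,c)$, $P(c)$, $\sigma_\pm(c)$ on $D_0$ then upgrades this pointwise positivity to the uniform bound, since the excluded exceptional points are handled separately in Parts 2 and 3.

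For Part 2, near $c=0$ with $c_\ep\notin D_0$, the key is $\mathcal{H}(0,c_\ep)=c_\ep^2$ together with $W_\pm(y)=W_\pm'(0)y+O(y^2)$ and $W_+'(0)\ne W_-'(0)$. Asymptotic analysis of $I_\pm(c_\ep)=\f{A_\pm}{c_\ep}+O(1)$ with $A_\pm\ne 0$, and $\va_\pm(0,c_\ep)=1+O(c_\ep^2)$, $P(c_\ep)=P(0)+O(c_\ep)$, gives
\beqno
\mathcal{D}(c_\ep)=-\f{A_+}{c_\ep}-\f{A_-}{c_\ep}+O(1),
\eeqno
and one checks $A_++A_-\neq 0$ from the explicit leading behaviour (here the Stern assumption again prevents degenerate cancellation), hence $|\mathcal{D}(c_\ep)|\geq C^{-1}|c_\ep|^{-1}$ for $|c_\ep|\leq\delta_0$.

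Part 3 is where the various regimes must be pieced together and will be the main technical obstacle. The region $\Om_{\ep_0}\setminus D_0$ is covered by: (i) a neighborhood of $0$, handled by Part 2; (ii) neighborhoods of the four boundary points $W_\pm(\pm 1)$, where the $\f12\ln$ pieces in $I^{re}_\pm$ dominate and give exactly the logarithmic factors $l(\cdot)$ in the claim via the identity $\f12\ln\bigl(\f{W_+(1)-c}{c-W_-(1)}\bigr)^2=\ln|W_+(1)-c|+\ln|c-W_-(1)|+O(1)$ combined with Remark \ref{rmk: sigma bdd}; and (iii) the remaining compact region bounded away from these exceptional points plus the real axis, handled by combining Lemma \ref{stern lemma} with the continuity of $\mathcal{D}$ up to the boundary established in Part 1 via a compactness argument. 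The delicate bookkeeping is to show that the logarithmic lower bound survives the combination $c_\ep^2P(c_\ep)I_+(c_\ep)I_-(c_\ep)-\va_+(0,c_\ep)^2I_+(c_\ep)-\va_-(0,c_\ep)^2I_-(c_\ep)$ without cancellation; this will follow from matching the signs and magnitudes of the dominant logarithmic terms using again that Stern's condition forces $\mathcal{D}^{re}$ and $\mathcal{D}^{im}$ to be non-simultaneously small.
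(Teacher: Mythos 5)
Your computation of the boundary limits (splitting off $\Pi_\pm$, recognising $R_\pm^1+R_\pm^2$ as regular remainders, and applying Plemelj to the explicit logarithmic part) is essentially the paper's Lemmas \ref{lem: I limit+}--\ref{lem: I limit-}, and your regional splitting in Parts 2--3 matches the paper's. The genuine gaps are in the lower bounds, which is where the real content of the proposition lies. For $\mathcal{D}^{re}(c)^2+\mathcal{D}^{im}(c)^2\geq C^{-1}$ you argue by contradiction via Lemma \ref{stern lemma} plus continuity/compactness, and this does not work: Stern's energy argument excludes eigenvalues only for $c\notin D_0$ (for real $c\in D_0$ the weight $\mathcal{H}(y,c)$ changes sign, and with $c_i=0$ the identities \eqref{re}--\eqref{im} yield no contradiction), so embedded resonances on $D_0$ are not ruled out by it; moreover the candidate eigenfunction you would build from a vanishing limiting Wronskian involves $\va_\pm(y,c)\int\frac{dy'}{\mathcal{H}(y',c)\va_\pm(y',c)^2}$, whose derivative has a nonintegrable-square singularity at the critical layers $y_{c_\pm}$ for real $c$, so it is not in $H^1$ and the contradiction never materialises. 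Even granting pointwise non-vanishing, continuity cannot produce a uniform constant, since $D_0\setminus\{0,W_\pm(\pm1)\}$ is not compact, and Parts 2--3 (which concern $c_\ep\notin D_0$) do not cover real $c$ approaching the punctures. The paper instead proves the bound quantitatively (Lemma \ref{lem: D>0}): when $\chi_+\chi_-=1$ it rewrites $\mathcal{D}^{re}(c)^2+\mathcal{D}^{im}(c)^2$ as a sum of squares plus $\frac{\pi^4c^4P(c)^2}{\sigma_+(c)^2\sigma_-(c)^2}+\frac{2\pi^2\va_+(0,c)^2\va_-(0,c)^2}{\sigma_+(c)\sigma_-(c)}$, and the sign facts $\sigma_+(c)\sigma_-(c)>0$ and $C^{-1}|c|\leq|P(c)|\leq C|c|$ (see \eqref{eq: P est}) give the explicit bound $\geq C^{-1}(c^2+c^{-2})$; when one of the $\chi$'s vanishes it shows $|\mathcal{D}^{im}(c)|\geq C^{-1}$ directly from the positivity of $I_+^{re}$ (resp. $I_-^{re}$), which comes from $\mathcal{H}(\cdot,c)>0$ on the corresponding half-interval, together with $P(c)<0$. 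Nothing in your outline supplies these sign and size inputs.

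The same misattribution to Stern's condition undermines your Parts 2 and 3. Near $c=0$ the reason your $A_++A_-$ does not vanish is not stability but the fact that the two boundary logarithms jump with the same sign, $|\mathrm{Im}(I_3(c_\ep)+J_3(c_\ep))|\geq\frac{3\pi}{2}$; this must be combined with $|\sigma_\pm(c_\ep)|\sim|c_\ep|$, with the comparison $\big|\frac{\sigma_+(c_\ep)}{\sigma_-(c_\ep)}-1\big|\leq C|c|$ of Lemma \ref{lem: Re sigma} before the two $1/c_\ep$ contributions may be added over a common denominator, and with the $\ep|\ln\ep|$ estimates on $\mathrm{Im}(I_1,I_2,J_1,J_2)$. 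In Part 3 a genuine case analysis is needed: near an endpoint where only $I_+$ diverges, the logarithm survives because its coefficient $c^2P(c)I_-^{re}(c)-\va_+(0,c)^2$ is bounded away from zero by a sign argument ($I_-^{re}>0$, $P<0$), whereas if $W_+(1)=W_-(-1)$ (or $W_-(1)=W_+(-1)$) both $I_+$ and $I_-$ diverge simultaneously and one instead uses $|c_\ep^2P(c_\ep)I_+(c_\ep)I_-(c_\ep)|\gg|I_+(c_\ep)|+|I_-(c_\ep)|$ to obtain the product of two logarithms. Your sketch does not identify these mechanisms or this dichotomy, so the asserted absence of cancellation in Parts 1--3 remains unproved.
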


The proof of the proposition is mainly dependent on the following lemmas.

\begin{lemma}\label{lem: I limit+}
%There is $\ep_0>0$, such that 
For $c_{\ep}= c+i\ep \in D_{\ep_0}$, $c\in D_0\setminus \big\{W_+(1),W_-(1)\big\}$, $\ep\in(0,\ep_0)$. It holds that
\beq\label{I+ lim}
\lim_{\ep\rightarrow 0{\pm}}\sigma_+(c_{\ep})I_+(c_{\ep})
=\sigma_+(c)I_+^{re}(c)\pm i\pi\chi_+(c),
%=\sigma_1(c)\Pi_+(c)+R_+^1(c)+R_+^2(c)+\ln\frac{W_+(1)-c}{c-W_-(1)}\pm i\pi,
\eeq
where
$\sigma_+(c_{\ep})$ is defined as \eqref{sigma+}.
\end{lemma}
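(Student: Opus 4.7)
The plan is to separate the regular and singular parts of $I_+$ and compute the boundary values of the singular piece explicitly. Write $I_+(c)=\Pi_+(c)+J_+(c)$ with $J_+(c):=\int_0^1 dy/\mathcal{H}(y,c)$. Because $|\va_+(y,c)^{-2}-1|\leq C|y-y_{c_+}|^2$ by Proposition \ref{prop: sol. hom. [0,1]} and $|\mathcal{H}(y,c_\ep)|\gtrsim |y-y_{c_+}|$, the integrand defining $\Pi_+$ is dominated uniformly in $\ep$ by $C|y-y_{c_+}|$; dominated convergence together with continuity of $\sigma_+$ gives $\sigma_+(c_\ep)\Pi_+(c_\ep)\to\sigma_+(c)\Pi_+(c)$ as $\ep\to 0^\pm$. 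All the work is in $\sigma_+ J_+$.

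For this I use the algebraic identity obtained by adding and subtracting $W_\pm(y)$ in the numerator of the definition of $\sigma_+$:
\[
\f{\sigma_+(c)}{\mathcal{H}(y,c)}=\f{W_+'(y_{c_+})[W_-(y_{c_+})-W_-(y)]-W_-'(y_{c_+})[W_+(y_{c_+})-W_+(y)]}{\mathcal{H}(y,c)}+\f{W_+'(y_{c_+})}{W_+(y)-c}-\f{W_-'(y_{c_+})}{W_-(y)-c}.
\]
Integrating against $dy$ on $[0,1]$ yields $R_+^1(c)$ plus two elementary integrals, in each of which I further split $W_\pm'(y_{c_+})=W_\pm'(y)+[W_\pm'(y_{c_+})-W_\pm'(y)]$. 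The bracketed remainders combine to form exactly $R_+^2(c)$, and the principal pieces $\int_0^1 W_\pm'(y)/(W_\pm(y)-c)\,dy$ integrate to the primitive $[\ln(W_\pm(y)-c)]_0^1$. Since $W_+(0)=W_-(0)=0$, the two $\ln(-c)$ boundary contributions cancel, leaving
\[
\sigma_+(c_\ep)J_+(c_\ep)=R_+^1(c_\ep)+R_+^2(c_\ep)+\ln\bigl(W_+(1)-c_\ep\bigr)-\ln\bigl(W_-(1)-c_\ep\bigr).
\]

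To take $\ep\to 0^\pm$, I first note that the integrands in $R_+^{1,2}$ have numerators that vanish at $y=y_{c_+}$ to the order needed to cancel the zero of the denominator, so $R_+^{1,2}$ extend continuously to $D_0$ and $R_+^{1,2}(c_\ep)\to R_+^{1,2}(c)$ by dominated convergence. For the logarithms I take principal branches: for $\ep>0$ the curve $y\mapsto W_\pm(y)-c_\ep$ lives in $\{\mathrm{Im}\,z=-\ep\}$, which never crosses the branch cut $(-\infty,0)$. Tracking arguments at the endpoints, $\ln(W_+(1)-c_\ep)\to \ln(W_+(1)-c)$ stays real since $c<W_+(1)$, while $\ln(W_-(1)-c_\ep)$ acquires an additional $-i\pi$ exactly when $c>W_-(1)$ (because then $W_-(1)-c<0$ is approached from below). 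Thus the real part of the difference equals $\tfrac12\ln\bigl((W_+(1)-c)/(c-W_-(1))\bigr)^2$, and the imaginary contribution is $+i\pi$ precisely when $c\in(W_-(1),W_+(1))$, i.e.\ $+i\pi\chi_+(c)$; for $\ep\to 0^-$ all signs reverse. Reinstating $\sigma_+(c_\ep)\Pi_+(c_\ep)\to\sigma_+(c)\Pi_+(c)$ and comparing with the definition \eqref{I+ re def} of $I_+^{re}$ yields \eqref{I+ lim}.

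The chief technical point I anticipate addressing carefully is the switch in the definition of $y_{c_+}$ at $c_r=0$: one has $y_{c_+}=W_+^{-1}(c_r)$ for $c_r\geq 0$ and $y_{c_+}=\wt{W}_-^{-1}(c_r)$ for $c_r\leq 0$, and correspondingly the vanishing factor of $\mathcal{H}$ on $[0,1]$ migrates between $W_+-c$ and $W_--c$. The algebraic identity in Step~2 is insensitive to this switch, but the branch-tracking in Step~3 has to be redone in the $c_r\leq 0$ subcase; the computation is symmetric (one checks the same $\pm i\pi\chi_+(c)$ emerges from the $W_-$-log instead of the $W_+$-log, and the two $\ln(-c)$ contributions still cancel), but requires a direct case verification across the nine geometric configurations of Section~3.
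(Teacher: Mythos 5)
Your proposal follows essentially the same route as the paper: the same splitting $\sigma_+(c_\ep)I_+(c_\ep)=\sigma_+(c_\ep)\Pi_+(c_\ep)+\big(I_1+I_2+I_3\big)$ obtained by adding and subtracting $W_\pm(y)$ and $W'_\pm(y)$ (so that $I_1,I_2$ converge to $R_+^1,R_+^2$ by dominated convergence and $I_3=\ln\frac{W_+(1)-c_\ep}{W_-(1)-c_\ep}$), with the paper computing the limit of the logarithmic term via its explicit real/imaginary ($\arctan$) decomposition rather than by branch-tracking of the complex logarithm as you do. One small caution: your remark that $\ln(W_+(1)-c_\ep)$ ``stays real since $c<W_+(1)$'' fails in Cases 3, 8, 9, where $D_0$ extends above $W_+(1)$, but there both logarithms acquire the same $\mp i\pi$ and cancel, so your final conclusion ($\pm i\pi$ exactly on $(W_-(1),W_+(1))$, i.e. $\pm i\pi\chi_+(c)$) agrees with the paper's.
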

\begin{proof}
Let $c_{\ep}=c+i\ep$.
Due to the fact that
\beq\label{cI+}
\sigma_+(c_{\ep})I_+(c_{\ep})
=\sigma_+(c_{\ep})\int_0^1\frac{1}{\mathcal{H}(y,c_{\ep})}
\Big(\frac{1}{\va_+(y,c_{\ep})^2}-1\Big)dy
+\sigma_+(c_{\ep})\int_0^1\frac{1}{\mathcal{H}(y,c_{\ep})}dy,
\eeq
and by Proposition \ref{prop: sol. hom. [0,1]}, $\va_+(y,c_{\ep})$ is continuous for $(y,c_{\ep})\in [0,a_+]\times \Omega_{\ep_0}$ and for $\ep_0$ small enough,
\beqno
|\va_+(y,c_{\ep})|\geq \frac{1}{2}, \ \ |\va_+(y,c_{\ep})-1|\leq C|y-y_{c_+}|^2.
\eeqno
By using the fact that $y_{c_-}\leq 0\leq y_{c_+}$, we have
\beq\label{mono}
|y-y_{c_+}|\leq |y-y_{c_-}|,
\eeq
and for $c_{\ep}\in\Omega_{\ep_0}$,
\beq\label{1}
\Big|\frac{1}{\mathcal{H}(y,c_{\ep})}
\Big(\frac{1}{\va_+(y,c_{\ep})^2}-1\Big)\Big|\leq C\frac{|y-y_{c_+}|^2}{|y-y_{c_+}||y-y_{c_-}|}\leq C.
\eeq
Thus, by the Lebesgue's dominated convergence theorem, we have that as $\ep\rightarrow0$,
\beqno
\sigma_+(c_{\ep})\int_0^1\frac{1}{\mathcal{H}(y,c_{\ep})}
\Big(\frac{1}{\va_+(y,c_{\ep})^2}-1\Big)dy\rightarrow\sigma_+(c)
\int_0^1\frac{1}{\mathcal{H}(y,c)}
\Big(\frac{1}{\va_+(y,c)^2}-1\Big)dy.
\eeqno

On the other hand, we have
\beq\label{sigma I+}
\begin{split} &\sigma_+(c_{\ep})\int_0^1\frac{1}{\big(W_+(y)-c_{\ep}\big)\big(W_-(y)-c_{\ep}\big)}dy\\
&=
\int_0^1\frac{W'_+(y_{c_+})\big(W_-(y_{c_+})-W_-(y)\big)
-W'_-(y_{c_+})\big(W_+(y_{c_+})-W_+(y)\big)}
{\big(W_+(y)-c_{\ep}\big)\big(W_-(y)-c_{\ep}\big)}dy\\
&\quad+
\int_0^1\frac{W'_+(y_{c_+})-W'_+(y)}{W_+(y)-c_{\ep}}
-\frac{W'_-(y_{c_+})-W'_-(y)}{W_-(y)-c_{\ep}}dy\\
&\quad
+
\int_0^1\frac{W'_+(y)}{W_+(y)-c_{\ep}}-\frac{W'_-(y)}{W_-(y)-c_{\ep}}dy
=I_1(c_{\ep})+I_2(c_{\ep})+I_3(c_{\ep}).
\end{split}
\eeq
By denoting that $h(y,y_{c_+})=W'_+(y_{c_+})\big(W_-(y_{c_+})-W_-(y)\big)-W'_-(y_{c_+})\big(W_+(y_{c_+})-W_+(y)\big)$, we have $h(y_{c_+},y_{c_+})=0$
and $(\pa_yh)(y_{c_+},y_{c_+})=0$. Thus we obtain that
\beqno
h(y,y_{c_+})=(y-y_{c_+})^2\int_0^1\int_0^1\pa_{yy}h\big(y_{c_+}+ts(y-y_{c_+})\big)dtds,
\eeqno
and then
\beq\label{h bdd}
|h(y,y_{c_+})|\leq C|y-y_{c_+}|^2.
\eeq
Due to $|(W_+(y)-c_{\ep})(W_-(y)-c_{\ep})|\geq C|y-y_{c_+}||y-y_{c_-}|$, we get by (\ref{mono}) and (\ref{h bdd}),
\beq\label{A+1}
\Big|\frac{h(y,y_c)}
{(W_+(y)-c_{\ep})(W_-(y)-c_{\ep})}\Big|\leq \frac{C|y-y_{c_+}|^2}{|y-y_{c_+}||y-y_{c_-}|}\leq C,
\eeq
and
\beq\label{A+2}
\Big|\frac{W'_+(y_{c_+})-W'_+(y)}{W_+(y)-c_{\ep}}
-\frac{W'_-(y_{c_+})-W'_-(y)}{W_-(y)-c_{\ep}}\Big|
\leq \frac{C|y-y_{c_+}|}{|y-y_{c_+}|}
+\frac{C|y-y_{c_+}|}{|y-y_{c_-}|}\leq C.
\eeq
Therefore $|I_1(c_{\ep})|+|I_2(c_{\ep})|\leq C$ and by the Lebesgue's dominated convergence theorem, we have as $\ep$ tends to $0$,
\beqno
\begin{split}
I_1(c_{\ep})&\rightarrow
\int_0^1\frac{h(y,y_{c_+})}
{\big(W_+(y)-c\big)\big(W_-(y)-c\big)}dy,\\
I_2(c_{\ep})&\rightarrow
\int_0^1\frac{W'_+(y_{c_+})-W'_+(y)}{W_+(y)-c}
-\frac{W'_-(y_{c_+})-W'_-(y)}{W_-(y)-c}dy.
\end{split}
\eeqno
Due to $W_+(0)=W_-(0)=0$, we also have for $c_{\ep}\in \Omega_{\ep_0}\setminus D_0$,
\beq\label{eq: I3}
I_3(c_{\ep})=\ln\frac{W_+(1)-c_{\ep}}{W_-(1)-c_{\ep}}.
\eeq
And for $c_{\ep}=c+i\ep, c\in D_{\ep_0}\setminus D_0$, we have
\beq
I_3(c_{\ep})=
\frac{1}{2}\ln\frac{\big(W_+(1)-c\big)^2+\ep^2}{\big(W_-(1)-c\big)^2+\ep^2}
+i\arctan\frac{W_+(1)-c}{\ep}
-i\arctan\frac{W_-(1)-c}{\ep},
\eeq
and then
\begin{align*}
\lim_{\ep\rightarrow0+}I_3(c_{\ep})=\f12\ln\Big(\frac{W_+(1)-c}{c-W_-(1)}\Big)^2+i\pi\chi_+(c), \\
\lim_{\ep\rightarrow0-}I_3(c_{\ep})=
\f12\ln\Big(\frac{W_+(1)-c}{c-W_-(1)}\Big)^2-i\pi\chi_+(c).
\end{align*}

Thus, from (\ref{cI+}), we get
\beqno
\lim_{\ep\rightarrow 0^{\pm}}\sigma_+(c_{\ep})I_+(c_{\ep})
=\sigma_+(c)\Pi_+(c)+R_+^1(c)+R_+^2(c)+\f12\ln \Big(\frac{W_+(1)-c}{c-W_-(1)}\Big)^2\pm i\pi\chi_+(c).
\eeqno
This complete the proof of the lemma.
\end{proof}
\begin{lemma}\label{lem: I limit-}
%There is $\ep_0>0$ such that 
For $c_{\ep}= c+i\ep \in D_{\ep_0}$ and $c\in D_0\setminus\big\{W_-(-1), W_+(-1)\big\}$. It holds that
\beq\label{I- lim}
\lim_{\ep\rightarrow 0{\pm}}\sigma_-(c_{\ep})I_-(c_{\ep})
=\sigma_-(c)I_-^{re}(c)\pm i\pi\chi_-(c),
%=\sigma_-(c)\Pi_-(c)+R_-^1(c)+R_-^2(c)+\ln \frac{W_+(1)-c}{c-W_-(1)}\pm i\pi,
\eeq
where $\sigma_-(c_{\ep})$ is defined by \eqref{sigma-}.
\end{lemma}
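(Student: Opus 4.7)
The proof should run in strict parallel with the proof of Lemma \ref{lem: I limit+}, exploiting the symmetry of the setup under the reflection $y \mapsto -y$ and the interchange of the roles of $W_+$ and $W_-$. The plan is to split
\beqno
\sigma_-(c_{\ep})I_-(c_{\ep}) = \sigma_-(c_{\ep})\int_{-1}^{0}\frac{1}{\mathcal{H}(y,c_{\ep})}\Big(\frac{1}{\va_-(y,c_{\ep})^2}-1\Big)\,dy + \sigma_-(c_{\ep})\int_{-1}^{0}\frac{1}{\mathcal{H}(y,c_{\ep})}\,dy
\eeqno
and analyze the two pieces separately.

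For the first piece, Proposition \ref{prop: sol. hom. [-1,0]} supplies, for $\ep_0$ small enough, the bounds $|\va_-(y,c_{\ep})|\geq 1/2$ and $|\va_-(y,c_{\ep})-1|\leq C|y-y_{c_-}|^2$ together with continuity of $\va_-$ on $d_-\times \Omega_{\ep_0}$. Combined with the inequality $|y-y_{c_-}|\leq |y-y_{c_+}|$ valid on $[-1,0]$ (the analogue of \eqref{mono}), this produces the uniform pointwise bound $|\mathcal{H}(y,c_{\ep})^{-1}(\va_-(y,c_{\ep})^{-2}-1)|\leq C$. Dominated convergence then gives convergence of this piece to $\sigma_-(c)\Pi_-(c)$ as $\ep\to 0^{\pm}$.

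For the second piece, I would use the same additive decomposition as in \eqref{sigma I+}, but centered at $y_{c_-}$:
\beqno
\sigma_-(c_{\ep})\int_{-1}^{0}\frac{dy}{\mathcal{H}(y,c_{\ep})} = I_1^-(c_{\ep}) + I_2^-(c_{\ep}) + I_3^-(c_{\ep}),
\eeqno
where $I_1^-$ involves the quantity $h_-(y,y_{c_-})=W_+'(y_{c_-})(W_-(y_{c_-})-W_-(y))-W_-'(y_{c_-})(W_+(y_{c_-})-W_+(y))$, which satisfies $h_-(y_{c_-},y_{c_-})=0$ and $(\partial_y h_-)(y_{c_-},y_{c_-})=0$, hence $|h_-(y,y_{c_-})|\leq C|y-y_{c_-}|^2$; $I_2^-$ is the regularization of the $W_\pm'$ coefficients; and $I_3^-=\int_{-1}^0\bigl(W_+'/(W_+-c_{\ep})-W_-'/(W_--c_{\ep})\bigr)\,dy$. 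Exactly as in the proof of Lemma \ref{lem: I limit+}, $I_1^-$ and $I_2^-$ admit uniform bounds and pass to the limit by dominated convergence, yielding the contributions $R_-^1(c)$ and $R_-^2(c)$ defined in \eqref{R-1}--\eqref{R-2}.

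The only genuinely delicate piece is the limit of $I_3^-$, which is the place where the imaginary part $\pm i\pi\chi_-(c)$ is produced. Using $W_\pm(0)=0$, explicit integration gives
\beqno
I_3^-(c_{\ep}) = \ln\frac{-c_{\ep}}{-c_{\ep}} - \ln\frac{W_+(-1)-c_{\ep}}{W_-(-1)-c_{\ep}} = -\ln\frac{W_+(-1)-c_{\ep}}{W_-(-1)-c_{\ep}},
\eeqno
and splitting $c_{\ep}=c+i\ep$ yields, exactly as for $I_3$,
\beqno
I_3^-(c_{\ep}) = \tfrac{1}{2}\ln\frac{(W_-(-1)-c)^2+\ep^2}{(W_+(-1)-c)^2+\ep^2} + i\arctan\frac{W_-(-1)-c}{\ep} - i\arctan\frac{W_+(-1)-c}{\ep}.
\eeqno
For $c\in D_0\setminus\{W_+(-1),W_-(-1)\}$ we have $W_+(-1)<c<W_-(-1)$ precisely when $\chi_-(c)=1$, in which case both arctangents converge, as $\ep\to 0^{\pm}$, to values whose difference is $\pm \pi$; otherwise they cancel. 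This gives
\beqno
\lim_{\ep\to 0^{\pm}} I_3^-(c_{\ep}) = \tfrac{1}{2}\ln\Big(\frac{W_-(-1)-c}{c-W_+(-1)}\Big)^2 \pm i\pi\chi_-(c).
\eeqno
Summing the three limits and comparing with \eqref{I- re def} yields the stated identity. The main obstacle is merely bookkeeping: keeping the signs and orientations correct when replacing $y_{c_+}$ by $y_{c_-}$ and the endpoint $1$ by $-1$, and checking that the sign convention in $\sigma_-$ produces exactly $\pm i\pi\chi_-(c)$ (and not $\mp i\pi\chi_-(c)$); this is handled by observing that the integration $\int_{-1}^0$ introduces no sign flip beyond what is absorbed into the logarithm already.
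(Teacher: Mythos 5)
Your proposal is correct and follows essentially the same route as the paper's proof: the same splitting into $\sigma_-(c_{\ep})\Pi_-(c_{\ep})$ plus three integrals (the paper's $J_1, J_2, J_3$, your $I_1^-, I_2^-, I_3^-$), the same uniform bounds and dominated convergence for the first two, and the same explicit evaluation of the logarithmic term whose boundary values produce $\pm i\pi\chi_-(c)$. The only quibble is that your mirrored inequality $|y-y_{c_-}|\leq |y-y_{c_+}|$ need not hold with constant $1$ on $[-1,0]$; what is true, and all that is needed (exactly as for \eqref{mono}), is $|y-y_{c_-}|\leq C|y-y_{c_+}|$, which follows since $|y_{c_-}|$ and $y_{c_+}$ are comparable.
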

\begin{proof}
The proof of the lemma is same as the proof of Lemma \ref{lem: I limit+}. A direct calculation gives
\begin{align*}
&\sigma_-(c_{\ep})I_-(c_{\ep})\\
&=\sigma_-(c_{\ep})\int_{-1}^0\frac{1}{\mathcal{H}(y,c_{\ep})}
\left(\frac{1}{\va_-(y,c_{\ep})^2}-1\right)dy
+\sigma_-(c_{\ep})\int_{-1}^0\frac{1}{\mathcal{H}(y,c_{\ep})}dy\\
&=\sigma_-(c_{\ep})\int_{-1}^0\frac{1}{\mathcal{H}(y,c_{\ep})}
\left(\frac{1}{\va_-(y,c_{\ep})^2}-1\right)dy
+\int_{-1}^0\frac{g(y,y_{c_+})}
{\big(W_+(y)-c_{\ep}\big)\big(W_-(y)-c_{\ep}\big)}dy\\
&\quad+
\int_{-1}^0\frac{W'_+(y_{c_-})-W'_+(y)}{W_+(y)-c_{\ep}}
-\frac{W'_-(y_{c_-})-W_-{'}(y)}{W_-(y)-c_{\ep}}dy\\
&\quad
+\int_{-1}^0\frac{W'_+(y)}{W_+(y)-c_{\ep}}-\frac{W'_-(y)}{W_-(y)-c_{\ep}}dy\eqdef\sigma(c_{\ep})\Pi_-(c_{\ep})+J_1(c_{\ep})+J_2(c_{\ep})+J_3(c_{\ep}),
\end{align*}
with $g(y,y_{c_-})=W'_+(y_{c_-})\big[W_-(y_{c_-})-W_-(y)\big]
-W'_-(y_{c_-})\big[W_+(y_{c_-})-W_+(y)\big]$.

Proposition \ref{prop: sol. hom. [-1,0]} implies
\beno
\Big|\frac{1}{\mathcal{H}(y,c_{\ep})}
\Big(\frac{1}{\va_-(y,c_{\ep})^2}-1\Big)\Big|\leq C,
\eeno
and thus $|\Pi_-(c_{\ep})|\leq C$ and as $\ep\rightarrow0$,
\beqno
\sigma_-(c_{\ep})\Pi_-(c_{\ep})\rightarrow\sigma_-(c)
\int_{-1}^0\frac{1}{\mathcal{H}(y,c)}
\Big(\frac{1}{\va_-(y,c)^2}-1\Big)dy.
\eeqno
By the fact that
\beq\label{eq: g est}
|g(y,y_{c_+})|\leq C|y-y_{c_+}|^2,
\eeq
and then
\beno
\left|\frac{W'_+(y_{c_-})\big[W_-(y_{c_-})-W_-(y)\big]
-W'_-(y_{c_-})\big[W_+(y_{c_-})-W_+(y)\big]}
{\big(W_+(y)-c_{\ep}\big)\big(W_-(y)-c_{\ep}\big)}\right|\leq C,
\eeno
and
\beno
\Big|\frac{W'_+(y_{c_-})-W'_+(y)}{W_+(y)-c_{\ep}}
-\frac{W'_-(y_{c_-})-W'_-(y)}{W_-(y)-c_{\ep}}\Big|
\leq C,
\eeno
and the Lebesgue's dominated convergence theorem, we have $|J_1(c_{\ep})|+|J_2(c_{\ep})|\leq C$ and as $\ep$ tends to $0$,
\beqno
\begin{split}
J_1(c_{\ep})&\rightarrow
\int_{-1}^0\frac{W'_+(y_{c_-})\big[W_-(y_{c_-})-W_-(y)\big]
-W'_-(y_{c_-})\big[W_+(y_{c_-})-W_+(y)\big]}
{\big(W_+(y)-c\big)\big(W_-(y)-c\big)}dy,\\
J_2(c_{\ep})&\rightarrow
\int_{-1}^0\frac{W'_+(y_{c_-})-W'_+(y)}{W_+(y)-c}
-\frac{W'_-(y_{c_-})-W'_-(y)}{W_-(y)-c}dy.
\end{split}
\eeqno
Due to $W_+(0)=W_-(0)=0$, we also have for $c_{\ep}\in \Omega_{\ep_0}\setminus D_0$,
\beq\label{eq: J3}
J_3(c_{\ep})=\ln\frac{W_-(-1)-c_{\ep}}{W_+(-1)-c_{\ep}}.
\eeq
And for $c_{\ep}=c+i\ep, c\in D_{\ep_0}\setminus D_0$, we have
\beqno
J_3(c_{\ep})=
\frac{1}{2}\ln\frac{\big(W_-(-1)-c\big)^2+\ep^2}{\big(W_+(-1)-c\big)^2+\ep^2}
-i\arctan\frac{W_+(-1)-c}{\ep}
+i\arctan\frac{W_-(-1)-c}{\ep}.
\eeqno
And then, we get
\beno
&&\lim_{\ep\rightarrow0+}J_3(c_{\ep})=\f12\ln\Big(\frac{W_-(-1)-c}{c-W_+(-1)}\Big)^2+i\pi\chi_-(c),\\
&&\lim_{\ep\rightarrow0-}J_3(c_{\ep})=\f12
\ln\Big(\frac{W_-(-1)-c}{c-W_+(-1)}\Big)^2-i\pi\chi_-(c).
\eeno

Thus we complete the proof of lemma.
\end{proof}
\begin{lemma}\label{lem: I bdd}
There is $\ep_0>0$ such that for $c_{\ep}\in \Omega_{\ep_0}\setminus D_0$, there exists a  constant $C\geq1$ such that
\beno
\frac{l\big(W_+(\pm1)-c_{\ep}\big)l
\big(W_-(\pm1)-c_{\ep}\big)}{C|c_{\ep}|}\leq |I_{\pm}(c_{\ep})|
\leq \frac{Cl\big(W_+(\pm1)-c_{\ep}\big)l
\big(W_-(\pm1)-c_{\ep}\big)}{|c_{\ep}|}.
\eeno
\end{lemma}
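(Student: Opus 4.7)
The plan is to isolate an explicit logarithmic contribution in $\sigma_\pm(c_\ep)I_\pm(c_\ep)$ and read both bounds off from it, leveraging the decomposition already carried out in the proofs of Lemma~\ref{lem: I limit+} and Lemma~\ref{lem: I limit-}. I describe the case of $I_+$; the case of $I_-$ is entirely parallel, substituting $J_3$, $\sigma_-$, and $\chi_-$.

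First I would write, for $c_\ep\in\Om_{\ep_0}\setminus D_0$,
\beqno
\sigma_+(c_\ep)I_+(c_\ep)=\sigma_+(c_\ep)\Pi_+(c_\ep)+I_1(c_\ep)+I_2(c_\ep)+I_3(c_\ep),
\eeqno
where $I_3(c_\ep)=\ln\bigl((W_+(1)-c_\ep)/(W_-(1)-c_\ep)\bigr)$ by \eqref{eq: I3}, and the other three summands are uniformly bounded by a constant $C$ in view of \eqref{1}, \eqref{A+1}, \eqref{A+2} together with Remark~\ref{rmk: sigma bdd}. Setting $L_1=l(W_+(1)-c_\ep)$ and $L_2=l(W_-(1)-c_\ep)$, the upper bound follows from the elementary inequality $|\ln z|\leq|\ln|z||+\pi$, which yields $|I_3|\leq C(L_1+L_2)$. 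Since $(\mathbf M)$ forces $W_+(1)-W_-(1)=2b(1)>0$, at most one of $|W_\pm(1)-c_\ep|$ can be small once $\ep_0$ is small enough, so one of $L_1,L_2$ is $O(1)$; combined with $L_i\geq 1$ this gives $L_1+L_2\leq 2L_1L_2$. Hence $|\sigma_+I_+|\leq CL_1L_2$, and dividing by $|\sigma_+|\sim|c_\ep|$ delivers the upper bound.

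For the lower bound I would distinguish two regimes. If $\max(L_1,L_2)$ is significantly larger than $C$, then exactly one of $L_1,L_2$ is large, and since $\mathrm{Re}\,I_3=\ln|W_+(1)-c_\ep|-\ln|W_-(1)-c_\ep|$ we obtain
\beqno
|\mathrm{Re}\,I_3(c_\ep)|\geq C^{-1}\max(L_1,L_2)-C\geq C^{-1}L_1L_2,
\eeqno
so $|\sigma_+I_+|\geq|\mathrm{Re}\,I_3|-C\geq C^{-1}L_1L_2$ after absorbing the bounded remainder. In the complementary regime $L_1,L_2\leq 4C$, the point $c_\ep$ is bounded away from $\{W_+(1),W_-(1)\}$ and therefore lies in $D_{\ep_0}$ with $\mathrm{Re}\,c_\ep$ in the interior of $(W_-(1),W_+(1))$, where $\chi_+(\mathrm{Re}\,c_\ep)=1$. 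By Lemma~\ref{lem: I limit+} and the continuity of $\sigma_+I_+$ in $c_\ep$, shrinking $\ep_0$ if necessary ensures $|\mathrm{Im}(\sigma_+I_+)|\geq\pi/2$, whence $|\sigma_+I_+|\geq\pi/2\sim L_1L_2$.

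The main obstacle is the lower bound in the regular regime: with no endpoint singularity available to dominate, one must rule out cancellation in the real part of $\sigma_+I_+$, and this is precisely where the $\pm i\pi\chi_+$ imaginary contribution identified in Lemma~\ref{lem: I limit+} enters—this is the point at which the Stern stability structure is quantitatively exploited. A minor technical point is that if $c_\ep\in B_{\ep_0}^l\cup B_{\ep_0}^r$ then $c_\ep$ is automatically close to an endpoint and always falls into the singular regime, so no separate analysis is needed for those parts of $\Om_{\ep_0}\setminus D_0$.
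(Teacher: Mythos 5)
Your upper bound and your treatment of the endpoint-singular regime (where one of $l(W_+(1)-c_{\ep})$, $l(W_-(1)-c_{\ep})$ is large and $|\mathrm{Re}\,I_3|$ dominates) follow the paper's argument, and your near-zero argument via $\mathrm{Im}\,I_3\approx\pm\pi$ is also in the spirit of the paper (which makes it uniform in $\ep$ by the explicit $\arctan$ computation together with $|\mathrm{Im}(I_1+I_2)|\leq C\ep|\ln\ep|$). The genuine gap is in your ``regular regime'': you claim that if $c_{\ep}$ is bounded away from $W_+(1)$ and $W_-(1)$ then $\mathrm{Re}\,c_{\ep}$ lies in the interior of $(W_-(1),W_+(1))$, so that $\chi_+=1$ and $|\mathrm{Im}(\sigma_+I_+)|\geq\pi/2$. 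This is false in general: $D_0=[\min\{W_-(1),W_+(-1)\},\,\max\{W_+(1),W_-(-1)\}]$ can strictly contain $[W_-(1),W_+(1)]$ (e.g.\ Case 1, where $W_+(-1)<W_-(1)$, or Case 3, where $W_-(-1)>W_+(1)$). For $\mathrm{Re}\,c_{\ep}$ in $D_0\setminus[W_-(1),W_+(1)]$ and away from all the distinguished points, one has $\chi_+(c)=0$, so by Lemma \ref{lem: I limit+} the imaginary part of $\sigma_+I_+$ tends to $0$ as $\ep\to0$, and your argument produces no lower bound at all, even though the lemma still demands $|I_+(c_{\ep})|\geq C^{-1}$ there. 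The paper closes exactly this case by a sign argument on the real part: for such $c$ one has $\mathcal{H}(y,c)>0$ for all $y\in[0,1]$, hence the boundary value $I_+^{re}(c)=\int_0^1\frac{dy}{\mathcal{H}(y,c)\va_+(y,c)^2}>0$, and compactness plus continuity to the boundary then give $|\sigma_+(c_{\ep})I_+(c_{\ep})|\geq\pi/2$ for all sufficiently small $\ep$. Some such non-cancellation input for the real part is indispensable and is missing from your proposal.

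The same oversight undermines your closing remark about the caps: $B_{\ep_0}^{l}$ and $B_{\ep_0}^{r}$ are centered at $\min\{W_-(1),W_+(-1)\}$ and $\max\{W_+(1),W_-(-1)\}$, which in several of the nine cases are $W_+(-1)$ or $W_-(-1)$ rather than $W_\mp(1)$; for $I_+$ these are not singular points, so points of the caps need not fall into your singular regime and again require the $\chi_+=0$ (real-part positivity) argument. The analogous correction is needed for $I_-$, with $\chi_-$ and the endpoints $W_\pm(-1)$.
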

\begin{proof}
We only give the estimate of $I_+(c_{\ep})$, the estimate of $I_-(c_{\ep})$ can be obtained by the same way and we omit the details here.

Due to (\ref{cI+}), we have
\beno
I_+(c_{\ep})=\Pi_+(c_{\ep})
+\frac{I_1(c_{\ep})+I_2(c_{\ep})+I_3(c_{\ep})}{\sigma_+(c_{\ep})}.
\eeno
The upper bound is directly follows from the fact that $|\Pi_+(c_{\ep})|+|I_1(c_{\ep})|+|I_2(c_{\ep})|\leq C$ and
\begin{align*}
|I_3(c_{\ep})|\leq
\left|\frac{1}{2}\ln\frac{\big(W_+(1)-c\big)^2+\ep^2}{\big(W_-(1)-c\big)^2+\ep^2}\right|+C
\leq Cl\big(W_+(1)-c_{\ep}\big)l
\big(W_-(1)-c_{\ep}\big).
\end{align*}

For the lower bounded. At first, we consider the case of $|c|<\delta_0$ for some $\delta_0>0$ small enough.
By Remark \ref{rmk: sigma bdd} and the fact that $|\Pi_+(c_{\ep})|\leq C$, we have
\begin{align*}
|I_+(c_{\ep})|&\geq \frac{\big|I_1(c_{\ep})+I_2(c_{\ep})+I_3(c_{\ep})\big|}{|\sigma_+(c_{\ep})|}
-|\Pi_+(c_{\ep})|\\
&\geq \frac{\big|Im(I_1(c_{\ep})+I_2(c_{\ep})+I_3(c_{\ep}))\big|}{C|c_{\ep}|}
-C\\
&\geq \frac{\big|Im(I_3(c_{\ep}))\big|}{C|c_{\ep}|}
-\frac{\big|Im(I_1(c_{\ep})+I_2(c_{\ep}))\big|}{C|c_{\ep}|}
-C.
\end{align*}

We have for $|c|<\delta_0$.
\beqno
\begin{split}
Im(I_1(c_{\ep}))&=\int_0^1\frac{\ep h(y,y_{c_+})(W_+(y)+W_-(y)-2c)}
{\big((W_+(y)-c)^2+\ep^2\big)\big((W_-(y)-c)^2+\ep^2\big)}dy,\\
Im(I_2(c_{\ep}))&=\int_0^1\frac{\ep\big(W'_+(y_{c_+})-W'_+(y)\big)}
{(W_+(y)-c)^2+\ep^2}dy-\int_0^1\frac{\ep\big(W'_-(y_{c_+})-W'_-(y)\big)}
{(W_-(y)-c)^2+\ep^2}dy.
\end{split}
\eeqno
By (\ref{h bdd}) and $|W_+(y)+W_-(y)-2c|\leq C|y-y_{c_+}|+C|y-y_{c_-}|\leq C|y-y_{c_-}|$, we have
\beq\label{I1 im}
|Im(I_1(c_{\ep}))|\leq C\ep\int_0^1\frac{y-y_{c_-}}{(y-y_{c_-})^2+\ep^2}dy\leq C\ep|\ln\ep|,
\eeq
and
\beq\label{I2 im}
\begin{split}
|Im(I_2(c_{\ep}))|&\leq C\ep\int_0^1\frac{|y-y_{c_+}|}{(y-y_{c_+})^2+\ep^2}dy
+C\ep\int_0^1\frac{|y-y_{c_+}|}{(y-y_{c_-})^2+\ep^2}dy\\
&\leq C\ep\int_0^1\frac{|y-y_{c_+}|}{(y-y_{c_+})^2+\ep^2}dy
+C\ep\int_0^1\frac{|y-y_{c_-}|}{(y-y_{c_-})^2+\ep^2}dy\\
&\leq C\ep|\ln\ep|.
\end{split}
\eeq

On the other hand, under the assumption $|c|<\delta_0$,
\beq\label{I3 im}
|Im(I_3(c_{\ep}))|=\Big|\arctan\frac{W_+(1)-c}{\ep}-\arctan\frac{W_-(1)-c}{\ep}\Big|
\geq\frac{3\pi}{4}.
\eeq

By choosing $\delta_0\leq \f{W_+(1)-W_-(1)}{1000}\ep_0$ small enough with $\ep_0$ in Proposition \ref{prop: sol. hom. [0,1]} also small enough, then (\ref{I1 im}), (\ref{I2 im}) and (\ref{I3 im}) imply for $|c|<\delta_0$,
\beq\label{eq: I+0 low}
|I_+(c_{\ep})|\geq \frac{C^{-1}}{|c_{\ep}|}-\frac{C\ep|\ln\ep|}{|c_{\ep}|}-C\geq\frac{C^{-1}}{|c_{\ep}|}.
\eeq

For the case of $|c_{\ep}-W_+(1)|<\delta_0$, we have
\beq
|\Pi_+(c_{\ep})|+\Big|\frac{I_1(c_{\ep})+I_2(c_{\ep})}{\sigma_+(c_{\ep})}\Big|\leq C.
\eeq
Thus for $|c_{\ep}-W_+(1)|<\delta_0$,  by \eqref{eq: I3},
\beno
\begin{split}
|I_+(c_{\ep})|&\geq \frac{|I_3(c_{\ep})|}{|\sigma_+(c_{\ep})|}
-\frac{|I_1(c_{\ep})+I_2(c_{\ep})|}{|\sigma_+(c_{\ep})|}
-|\Pi_+(c_{\ep})|\\
&\geq \frac{\Big|\ln\big|W_+(1)-c_{\ep}\big|\Big|}{C}-C\\
&\geq C^{-1}\big|\ln|W_+(1)-c_{\ep}|\big|
\end{split}
\eeno

Similarly, we have for the case of $\big|c_{\ep}-W_-(1)\big|<\delta_0$,
\beno
|I_+(c_{\ep})|\geq C^{-1}|\ln\big|W_-(1)-c_{\ep}|\big|
\eeno

For $c_{\ep}\in \Om_{\ep_0}\setminus D_0$ with $|c_{\ep}|\geq \d_0$, $\big|c_{\ep}-W_-(1)\big|\geq \delta_0$ and $\big|c_{\ep}-W_-(1)\big|\geq \delta_0$, Lemma \ref{lem: I limit+} implies $\sigma_+(c_{\ep})I_+(c_{\ep})$ is continuous to the boundary with its boundary value $\sigma_+(c)I_+^{re}\pm i\pi\chi_+(c)$. Let $c_{\ep}=c+i\ep$ with $c\in D_0$ and $|c-W_+(1)|\geq \delta_0$ and $|c-W_-(1)|\geq \delta_0$, then if $\chi_+(c)=0$($c<W_-(1)$ in Case 1, 2, 3, or $c>W_+(1)$ in Case 3, 8, 9,) then $\mathcal{H}(y,c)>0$ which implies $I_+^{re}(c)=\int_0^1\f{1}{\mathcal{H}(y,c)\va_+(y,c)^2}dy>0$ and if $\chi_+(c)\neq 0$, then $|\sigma_+(c)I_+^{re}\pm i\pi\chi_+(c)|\geq \pi$. 

Therefore there is $\ep_0$ such that for any $0\leq |\ep|\leq \ep_0$ and $c_{\ep}=c+i\ep$
\beno
|\sigma_+(c_{\ep})I_+(c_{\ep})|\geq \f{\pi}{2}.
\eeno
Thus we conclude that
\beno
|I_+(c_{\ep})|\geq \frac{l\big(W_+(1)-c_{\ep}\big)l
\big(W_-(1)-c_{\ep}\big)}{C|c_{\ep}|}.
\eeno
This completes the proof of the lemma.
\end{proof}

\begin{lemma}\label{lem: D>0}
Let $c\in  D_0\setminus \big\{0,W_+(1),W_-(1),W_+(-1),W_-(-1)\big\}$, $c_{\ep}=c+i\ep$. It holds
 \beqno
 \lim_{\ep\rightarrow0^{\pm}} \mathcal{D}(c_{\ep})=\mathcal{D}^{re}(c)\pm i\mathcal{D}^{im}(c).
 \eeqno
 Moreover, we have that there exists a constant $C\geq 1$ such that
 \beqno
 \mathcal{D}^{re}(c)^2+\mathcal{D}^{im}(c)^2\geq C^{-1} >0.
 \eeqno
\end{lemma}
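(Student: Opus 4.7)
The plan is to prove the two assertions in order. For the existence of the limit, I note that $\va_{\pm}(0,c)$ and $(\pa_y\va_{\pm})(0,c)$ are continuous on $\Om_{\ep_0}$ by Proposition \ref{prop: sol. hom. [0,1]} and Proposition \ref{prop: sol. hom. [-1,0]}, hence $P(c)$ is continuous on $\Om_{\ep_0}$. Since $c\neq 0$, Remark \ref{rmk: sigma bdd} gives $\sigma_{\pm}(c)\neq 0$, so Lemma \ref{lem: I limit+} and Lemma \ref{lem: I limit-} yield
\beqno
I_{\pm}(c_{\ep})\longrightarrow I_{\pm}^{re}(c)\pm\frac{i\pi\chi_{\pm}(c)}{\sigma_{\pm}(c)}\quad\text{as}\ \ep\to 0^{\pm}.
\eeqno
Inserting these limits, together with $P(c_{\ep})\to P(c)$ and $\va_{\pm}(0,c_{\ep})\to\va_{\pm}(0,c)$, into the definition \eqref{wronskian} of $\mathcal{D}$ and separating real and imaginary parts recovers exactly $\mathcal{D}^{re}(c)\pm i\mathcal{D}^{im}(c)$.

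For the uniform lower bound I would argue by contradiction. Suppose there is a sequence $\{c_n\}\subset D_0\setminus\{0,W_+(\pm1),W_-(\pm1)\}$ with $\mathcal{D}^{re}(c_n)^2+\mathcal{D}^{im}(c_n)^2\to 0$, and extract a subsequence with $c_n\to c_{\infty}\in D_0$. If $c_{\infty}$ is one of the excluded points, the logarithmic terms in \eqref{I+ re def} or \eqref{I- re def} force $\sigma_{+}(c_n)I_{+}^{re}(c_n)$ or $\sigma_{-}(c_n)I_{-}^{re}(c_n)$ to blow up while the remaining quantities stay bounded; tracking the dominant log in $\mathcal{D}^{re}(c_n)$ and $\mathcal{D}^{im}(c_n)$ then excludes simultaneous vanishing, yielding a contradiction. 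Otherwise $c_{\infty}$ is an interior regular point of $D_0$ and by continuity $\mathcal{D}^{re}(c_{\infty})=\mathcal{D}^{im}(c_{\infty})=0$, i.e.\ $\lim_{\ep\to 0^+}\mathcal{D}(c_{\infty}+i\ep)=0$.

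In this interior case I would recover the determinantal criterion of Lemma \ref{stern lemma} at $c=c_{\infty}$ by replacing $I_{\pm}(c_{\infty})$ by their boundary values $I_{\pm}^{re}(c_{\infty})+i\pi\chi_{\pm}(c_{\infty})/\sigma_{\pm}(c_{\infty})$: the condition $\mathcal{D}^{re}(c_{\infty})+i\mathcal{D}^{im}(c_{\infty})=0$ is exactly the vanishing of the determinant of the matrix $W$ in \eqref{matrix}, so this matrix has a nonzero null vector $(\mu_+,\mu_-,\nu_+,\nu_-)$. Assembling $\Psi(y)$ from this null vector together with the two independent solutions $\va_{\pm}(y,c_{\infty})$ and $\va_{\pm}(y,c_{\infty})\int_{\pm1}^{y}\mathcal{H}^{-1}\va_{\pm}^{-2}dy'$ (the second interpreted via the $+i0$ regularization as a principal value plus $i\pi$ residues at $y=y_{c_{\infty,\pm}}$) produces a nonzero generalized $H^1_0$ solution of the homogeneous Sturmian equation at $c=c_{\infty}$. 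Multiplying by $\overline{\Psi}$, integrating and carrying out the real/imaginary decomposition as in \eqref{integral}-\eqref{im}, then passing $\ep\to 0^+$, gives $\int_{-1}^{1}(u^2-b^2)(|\pa_y\Psi|^2+\al^2|\Psi|^2)dy=0$, which under $({\bf SS})$ forces $\Psi\equiv 0$, the required contradiction.

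The main obstacle is justifying the limit of the Stern energy identity: the regularization $\Psi_{\ep}$ at $c_{\infty}+i\ep$ has gradients that diverge logarithmically near $y=y_{c_{\infty,\pm}}$, and the Plemelj $\pm i\pi$ residues produce imaginary boundary contributions that must be controlled; one needs a careful normalization of $\Psi_{\ep}$ together with dominated convergence after integration by parts across the critical layers in order to preserve the coercive sign of the real part of the identity. The exceptional-point subcase similarly requires isolating the dominant logarithmic divergence and ruling out miraculous cancellation between the real and imaginary parts.
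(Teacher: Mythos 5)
Your treatment of the first assertion (the limit $\lim_{\ep\to0^{\pm}}\mathcal{D}(c_{\ep})=\mathcal{D}^{re}(c)\pm i\mathcal{D}^{im}(c)$) is the same as the paper's: pass to the limit in \eqref{wronskian} using Lemma \ref{lem: I limit+}, Lemma \ref{lem: I limit-} and the continuity of $P$ and $\va_{\pm}(0,\cdot)$. The problem is the second assertion. Your compactness/contradiction scheme reduces the uniform bound to showing that $\mathcal{D}^{re}(c_{\infty})+i\mathcal{D}^{im}(c_{\infty})\neq 0$ at interior non-exceptional real $c_{\infty}$, and you propose to get this by assembling a neutral mode from a null vector of $W$ and invoking the Stern energy argument. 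But Lemma \ref{stern lemma} is stated and proved only for $c\notin D_0$: its proof divides the imaginary-part identity by $c_i\neq 0$, and for real $c_{\infty}\in D_0$ the coefficient $\mathcal{H}(y,c_{\infty})$ vanishes at the critical layers, so the argument does not transfer. Worse, the function you build is in general not in $H^1$: the second solution $\va_{\pm}(y,c_{\infty})\int^{y}\mathcal{H}(y',c_{\infty})^{-1}\va_{\pm}(y',c_{\infty})^{-2}dy'$ behaves like $\ln|y-y_{c_{\pm}}|$ near the critical layer (under any $\pm i0$ prescription), so its derivative is of order $|y-y_{c_{\pm}}|^{-1}$ and fails to be square integrable unless its coefficient vanishes. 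Hence the "generalized $H^1_0$ solution" does not exist as such, and the step you yourself flag as the main obstacle --- passing the energy identity through the critical layers with the Plemelj contributions and a careful renormalization --- is not a technical afterthought: it \emph{is} the nonvanishing statement to be proved, so as written the interior case is circular/unresolved.

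The exceptional-point case is also only asserted. Near $c=0$ the divergence of $I^{re}_{\pm}$ is of size $|c|^{-1}$ (through $\sigma_{\pm}(c)\sim c$), not logarithmic, and excluding cancellation in $\mathcal{D}^{im}$ there requires the sign information $\sigma_{\pm}(c)<0$ (hence $\sigma_+(c)\sigma_-(c)>0$), $P(c)<0$, together with $|P(c)|\geq C^{-1}|c|$; near $c=W_{\pm}(\pm1)$ with one of $\chi_{\pm}$ equal to zero, the logarithm multiplies the bracket $c^2P(c)I^{re}_{\mp}(c)-\va_{\pm}(0,c)^2$, and one must show this bracket does not degenerate, which again is a sign argument ($I^{re}_{\mp}>0$ there since $\mathcal{H}>0$, and $P<0$). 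The paper avoids all limiting spectral reasoning and proves the bound directly: for $\chi_+(c)\chi_-(c)=1$ it expands $\mathcal{D}^{re}(c)^2+\mathcal{D}^{im}(c)^2$ into a sum of three squares plus the positive terms $\frac{\pi^4c^4P(c)^2}{\sigma_+(c)^2\sigma_-(c)^2}+\frac{2\pi^2\va_+(0,c)^2\va_-(0,c)^2}{\sigma_+(c)\sigma_-(c)}\geq C^{-1}\big(c^2+c^{-2}\big)$, and when one of $\chi_{\pm}$ vanishes it bounds $|\mathcal{D}^{im}(c)|$ from below by the sign argument just described. If you want to keep your route, you must either prove the exclusion of embedded neutral modes at real $c$ rigorously (vanishing of $\mathcal{D}^{im}$ forcing the singular coefficients at the critical layers to vanish, then a real-$c$ energy identity), or replace it with the explicit algebraic computation; at present the proposal has a genuine gap at exactly the point where the lemma's content lies.
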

\begin{proof}
By Lemma \ref{lem: I limit+} and Lemma \ref{lem: I limit-}, we get for $D_0\setminus \big\{0,W_+(1),W_-(1),W_+(-1),W_-(-1)\big\}$,
\beno
\lim_{\ep\rightarrow 0{\pm}}I_+(c)=I^{re}_+(c)\pm \frac{i\pi\chi_+(c)}{\sigma_+(c)},\quad
\lim_{\ep\rightarrow 0{\pm}}I_-(c)
=I^{re}_-(c)\pm \frac{i\pi\chi_-(c)}{\sigma_-(c)}.
\eeno
Thus for $c_{\ep}=c+i\ep$ and $c\in  D_0\setminus \big\{0,W_+(1),W_-(1),W_+(-1),W_-(-1)\big\}$, we get $P(c_{\ep})\to P(c)$ as $\ep\to 0$ and
\begin{align*}
&\lim_{\ep\rightarrow0\pm}\mathcal{D}(c_{\ep})\\
&=c^2P(c)\Big(I^{re}_+(c)\pm\frac{i\pi\chi_+(c)}{\sigma_+(c)}\Big)
\Big(I^{re}_-(c)\pm\frac{i\pi\chi_-(c)}{\sigma_-(c)}\Big)\\
&\quad-\va_+(0,c)^2\Big(I^{re}_+(c)\pm\frac{i\pi\chi_+(c)}{\sigma_+(c)}\Big)
-\va_-(0,c)^2\Big(I^{re}_-(c)\pm\frac{i\pi\chi_-(c)}{\sigma_-(c)}\Big)\\
&=c^2P(c)\Big(I^{re}_+(c)I^{re}_-(c)
-\frac{\pi^2\chi_+(c)\chi_-(c)}{\sigma_+(c)\sigma_-(c)}\Big)-\va_+(0,c)^2I^{re}_+(c)
-\va_-(0,c)^2I^{re}_-(c)\\
&\quad
\pm i\left(c^2P(c)\Big(\frac{\pi I^{re}_+(c)\chi_-(c)}{\sigma_-(c)}+\frac{\pi\ I^{re}_-(c)\chi_+(c)}
{\sigma_+(c)}\Big)-\frac{\pi\va_+(0,c)^2\chi_+(c)}{\sigma_+(c)}
-\frac{\pi\va_-(0,c)^2\chi_-(c)}{\sigma_-(c)}\right)\\
&=\mathcal{D}^{re}(c)\pm i\mathcal{D}^{im}(c),
\end{align*}
And then we have, for $c\in  D_0\setminus \big\{0,W_+(1),W_-(1),W_+(-1),W_-(-1)\big\}$ with $\chi_+(c)\chi_-(c)=1$, 
\begin{align*}
&\mathcal{D}^{re}(c)^2+\mathcal{D}^{im}(c)^2\\
&=c^4P(c)^2I^{re}_+(c)^2I^{re}_-(c)^2
+\frac{\pi^4c^4P(c)^2}{\sigma_+(c)^2\sigma_-(c)^2}
+\va_+(0,c)^4I^{re}_+(c)^2+\va_-(0,c)^4I^{re}_-(c)^2\\
&\quad
-2c^2P(c)\va_+(0,c)^2I^{re}_+(c)^2I^{re}_-(c)
-2c^2P(c)\va_-(0,c)^2I^{re}_+(c)I^{re}_-(c)^2\\
&\quad
+2\va_+(0,c)^2\va_-(0,c)^2I^{re}_+(c)I^{re}_-(c)
+\frac{\pi^2}{\sigma_-(c)^2}c^4P(c)^2I^{re}_+(c)^2\\
&\quad
+\frac{\pi^2}{\sigma_+(c)^2}c^4P(c)^2I^{re}_-(c)^2
+\frac{\pi^2}{\sigma_-(c)^2}\va_-(0,c)^4
+\frac{\pi^2}{\sigma_+(c)^2}\va_+(0,c)^4\\
&\quad
+\frac{2\pi^2\va_+(0,c)^2\va_-(0,c)^2}{\sigma_+(c)\sigma_-(c)}
-\frac{2\pi^2}{\sigma_-(c)^2}c^2P(c)\va_-(0,c)^2I^{re}_+(c)\\
&\quad
-\frac{2\pi^2}{\sigma_+(c)^2}c^2P(c)\va_+(0,c)^2I^{re}_-(c)\\
&=\Big[c^2P(c)I^{re}_+(c)I^{re}_-(c)
-\va_+(0,c)^2I^{re}_+(c)-\va_-(0,c)^2I^{re}_-(c)\Big]^2\\
&\quad +
\frac{\pi^2}{\sigma_-(c)^2}\Big[c^2P(c)I^{re}_+(c)-\va_-(0,c)^2\Big]^2
+\frac{\pi^2}{\sigma_+(c)^2}\Big[c^2P(c)I^{re}_-(c)-\va_+(0,c)^2\Big]^2\\
&\quad
+\frac{\pi^4c^4P(c)^2}{\sigma_+(c)^2\sigma_-(c)^2}
+\frac{2\pi^2\va_+(0,c)^2\va_-(0,c)^2}{\sigma_+(c)\sigma_-(c)}.
\end{align*}

On one hand, for $c\in  D_0\setminus \big\{0,W_+(1),W_-(1),W_+(-1),W_-(-1)\big\}$, we have 
$$\sigma_+(c)\sigma_-(c)\geq C^{-1}|c|^2>0.$$
 Indeed, by the fact that $W'_+(y)\geq C^{-1}>0, W'_-(y)\leq -C^{-1}<0$, we have
\beno
&&0>\sigma_+(c)=W'_+(y_{c_+})\big(W_-(y_{c_+})-W_-(y_{c_-})\big)\geq -C^{-1}(y_{c_+}-y_{c_-})\geq -C^{-1}|c|, \\
&&0>\sigma_-(c)=-W'_-(y_{c_+})\big(W_+(y_{c_-})-W_+(y_{c_+})\big)\geq -C^{-1}(y_{c_+}-y_{c_-})\geq -C^{-1}|c|,
 \eeno

On the other hand, by Proposition \ref{prop: sol. hom. [0,1]} and Proposition \ref{prop: sol. hom. [-1,0]}, we have
\beq\label{eq: P est}
\begin{split}
C|c|\geq |P(c)|&=|\va_-(0,c)^2(\va_+\pa_y\va_+)(0,c)-\va_+(0,c)^2(\va_-\pa_y\va_-)(0,c)|\\
&=|\va_-(0,c)^2(\va_+\pa_y\va_+)(0,c)|+|\va_+(0,c)^2(\va_-\pa_y\va_-)(0,c)|\\
&\geq C^{-1}(|y_{c_+}|+|y_{c_-}|)\geq C^{-1}|c|,
\end{split}
\eeq
and then
\beq
\mathcal{D}^{re}(c)^2+\mathcal{D}^{im}(c)^2\geq \frac{\pi^4c^4P(c)^2}{\sigma_+(c)^2\sigma_-(c)^2}
+\frac{2\pi^2\va_+(0,c)^2\va_-(0,c)^2}{\sigma_+(c)\sigma_-(c)}\geq C^{-1}\big(c^2+\frac{1}{c^2}\big)>C^{-1}.
\eeq

For $c\in  D_0\setminus \big\{0,W_+(1),W_-(1),W_+(-1),W_-(-1)\big\}$ with $\chi_+(c)=0$ then $\chi_-(c)=1$, we have
\begin{align*}
\mathcal{D}^{im}(c)=\f{\pi}{\sigma_-(c)}(I_+^{re}(c)c^2P(c)-\va_-(0,c)^2)
\end{align*}
In this case $c>W_+(1)$(in Case 3, 8, 9,) or $c<W_-(1)$(in Case 1, 2, 3), then $\mathcal{H}(y,c)>0$ for $y\in [0,1]$, thus $I_+^{re}(c)>0$ and by the fact that $P(c)<-C^{-1}|c|\leq 0$, we obtain that $|\mathcal{D}^{im}(c)|>C^{-1}$ for $c\in (W_+(1), W_-(-1)]$ and $c\in [W_+(-1), W_-(1))$. 

For $c\in  D_0\setminus \big\{0,W_+(1),W_-(1),W_+(-1),W_-(-1)\big\}$ with $\chi_-(c)=0$ then $\chi_+(c)=1$(in Case 1, 4, 5, 7, 9), we also have  for $c\in (W_-(-1),W_+(1)]$ and $c\in [W_-(1),W_+(-1))$
\begin{align*}
|\mathcal{D}^{im}(c)|=\Big|\f{\pi}{\sigma_+(c)}(I_-^{re}(c)c^2P(c)-\va_+(0,c)^2)\Big|\geq C^{-1}.
\end{align*}

Thus we complete the proof of the lemma.
\end{proof}

%For convenience, we use the notations that $Re \{f(c_{\ep})\}$ and $Im\{f(c_{\ep})\}$ are the real part and the imagine part of some complex function $f(c_{\ep})$ respectively.

%According to the definition of $\sigma_1(c_{\ep}), \sigma_2(c_{\ep})$,

\begin{lemma}\label{lem: Re sigma}
Let $c_{\ep}\in \Omega_{\ep_0}\setminus\{0\}$. It holds that
\beno
\Big|\frac{Re(\sigma_+(c_{\ep}))}{Re(\sigma_-(c_{\ep}))}-1\Big|\leq C|c|.
\eeno
\end{lemma}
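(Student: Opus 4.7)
\textbf{Proof plan for Lemma~\ref{lem: Re sigma}.} The strategy is a direct Taylor expansion of $\sigma_\pm(c_\ep)$ around $c=0$, exploiting the Island assumption $(\mathbf{I})$ that $u(0)=b(0)=0$, so that both $\sigma_+$ and $\sigma_-$ vanish at $c=0$ and, crucially, their leading Taylor coefficients in $c_r$ coincide.

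First, I would split off the real part. Grouping by powers of $c_\ep$ in the definition \eqref{sigma+}--\eqref{sigma-} gives
$$\sigma_\pm(c_\ep)=\bigl[W'_+(y_{c_\pm})W_-(y_{c_\pm})-W'_-(y_{c_\pm})W_+(y_{c_\pm})\bigr]-\bigl[W'_+(y_{c_\pm})-W'_-(y_{c_\pm})\bigr]c_\ep.$$
Since $y_{c_\pm}$ depends only on $c_r$, the bracket in front is real. Using the defining relations $W_+(y_{c_+})=c_r$ and $W_-(y_{c_-})=c_r$ (for $c_r\ge 0$), together with $W_+-W_-=2b$, the real parts collapse to
$$Re\,\sigma_+(c_\ep)=W'_+(y_{c_+})\bigl(W_-(y_{c_+})-c_r\bigr)=-2W'_+(y_{c_+})\,b(y_{c_+}),$$
$$Re\,\sigma_-(c_\ep)=-W'_-(y_{c_-})\bigl(W_+(y_{c_-})-c_r\bigr)=-2W'_-(y_{c_-})\,b(y_{c_-}).$$
For $c_r\le 0$ the analogous formulas hold with the roles of $W_+$ and $W_-$ swapped.

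Next, I would Taylor expand around $y=0$. From $b(0)=u(0)=0$ one has $W_\pm(y)=W'_\pm(0)\,y+O(y^2)$, $b(y)=b'(0)\,y+O(y^2)$ and $W'_\pm(y)=W'_\pm(0)+O(y)$. Inverting the defining equations yields $y_{c_\pm}=c_r/W'_\pm(0)+O(c_r^2)$, hence $W'_\pm(0)\,y_{c_\pm}=c_r+O(c_r^2)$, and therefore
$$W'_\pm(y_{c_\pm})\,b(y_{c_\pm})=W'_\pm(0)\,b'(0)\,y_{c_\pm}+O(c_r^2)=b'(0)\,c_r+O(c_r^2).$$
Substituting back shows
$$Re\,\sigma_+(c_\ep)=-2b'(0)\,c_r+O(c_r^2),\qquad Re\,\sigma_-(c_\ep)=-2b'(0)\,c_r+O(c_r^2),$$
so the two real parts have identical leading behaviour. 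In particular $Re\,\sigma_+(c_\ep)-Re\,\sigma_-(c_\ep)=O(c_r^2)$, and by the Monotone assumption $(\mathbf{M})$ we have $b'(0)\ge c_0>0$, so $|Re\,\sigma_-(c_\ep)|\ge C^{-1}|c_r|$ for $|c_r|$ small enough. Dividing,
$$\left|\frac{Re\,\sigma_+(c_\ep)}{Re\,\sigma_-(c_\ep)}-1\right|=\frac{|Re\,\sigma_+-Re\,\sigma_-|}{|Re\,\sigma_-|}\le \frac{Cc_r^2}{C^{-1}|c_r|}\le C|c_r|\le C|c|.$$
Away from $c=0$, i.e.\ $|c|\ge \delta_0$, the bound follows by noting that both $Re\,\sigma_\pm$ are bounded and that $|c|\ge \delta_0$ lets one absorb any bounded expression into $C|c|$. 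The case $c_r\le 0$, and the cases $c_\ep\in B^l_{\ep_0}\cup B^r_{\ep_0}$ where $c_r$ takes a fixed nonzero boundary value, proceed along exactly the same lines after swapping $W_+\leftrightarrow W_-$.

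The main obstacle is precisely the matching of leading coefficients: the key cancellation $Re\,\sigma_+-Re\,\sigma_-=O(c_r^2)$ relies on the Island assumption $b(0)=0$, which forces the common leading term $-2b'(0)c_r$ in both expansions. Keeping the $O(c_r^2)$ remainder uniform across the nine geometric cases and across the sign of $c_r$ is routine bookkeeping but should be carried out carefully (in particular, one must use that the extensions $\wt W_\pm$ agree with $W_\pm$ at $y=0$ to order $C^5$, so Taylor expansion at $0$ is unaffected by the choice of case).
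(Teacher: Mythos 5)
Your proposal is essentially the paper's own argument in slightly different clothing: your identities $Re\,\sigma_+(c_{\ep})=-2W'_+(y_{c_+})b(y_{c_+})$ and $Re\,\sigma_-(c_{\ep})=-2W'_-(y_{c_-})b(y_{c_-})$ (for $c_r\geq 0$, with the symmetric formulas for $c_r\leq 0$) coincide with the paper's $Re\,\sigma_{\pm}=-W'_{\pm}(y_{c_{\pm}})\,y_{c_{\pm}}\,\lambda_{\pm}(y_{c_{\pm}})$, since $\lambda_{\pm}(y_{c_{\pm}})=2b(y_{c_{\pm}})/y_{c_{\pm}}$. The only real difference is that the paper estimates the differences $W'_+(y_{c_+})y_{c_+}-W'_-(y_{c_-})y_{c_-}=O(|c|^2)$ and $\lambda_+(y_{c_+})-\lambda_-(y_{c_-})=O(|c|)$ by integral/chain-rule identities, whereas you Taylor-expand both real parts at $c=0$ and identify the common leading coefficient $-2b'(0)c_r$; both routes use $(\bf I)$ and $(\bf M)$ in the same way, and your remainders are uniform on the (compact) range of $c_r$ because $W_{\pm}\in C^5$.

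The one step that does not stand as written is the regime $|c|\geq \delta_0$: from ``both $Re\,\sigma_{\pm}$ are bounded'' you cannot conclude that $Re\,\sigma_+/Re\,\sigma_- -1$ is bounded, because nothing you have said there keeps the denominator away from zero. What is needed (and what the paper proves, via $\lambda_{\pm}\geq C^{-1}$ and $|y_{c_{\pm}}|\geq C^{-1}|c|$) is the global lower bound $|Re\,\sigma_-(c_{\ep})|\geq C^{-1}|c_r|$ for all admissible $c$, not just for $|c_r|$ small. The fix is contained in your own identity: by $(\bf M)$ (and the monotonicity of the extensions) $W'_-(y_{c_-})\leq -C^{-1}<0$ and $|b(y_{c_-})|\geq C^{-1}|y_{c_-}|\geq C^{-1}|c_r|$, hence $|Re\,\sigma_-(c_{\ep})|=2|W'_-(y_{c_-})|\,|b(y_{c_-})|\geq C^{-1}|c_r|$ for every $c_r$; combined with your uniform bound $|Re\,\sigma_+-Re\,\sigma_-|\leq C c_r^2$ this gives the lemma in one stroke, with no case split at $\delta_0$ at all. (A minor further point: on the arcs $B^{l}_{\ep_0}$, $B^{r}_{\ep_0}$ one has $Re(c_{\ep})=c_r+\ep\cos\theta\neq c_r$, so your ``real bracket'' formula acquires an $O(\ep)$ correction; since $|c_r|$ is bounded below there this is harmless, and the paper glosses over it as well.)
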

\begin{proof}
We have
\beq\label{Re sigma}
\begin{split}
&Re(\sigma_+(c_{\ep}))=W'_+(y_{c_+})\big(W_-(y_{c_+})-c\big)
-W'_-(y_{c_+})\big(W_+(y_{c_+})-c\big),\\
&Re(\sigma_-(c_{\ep}))=W'_+(y_{c_-})\big(W_-(y_{c_-})-c\big)
-W'_-(y_{c_-})\big(W_+(y_{c_-})-c\big),
\end{split}
\eeq
For the case $c\geq 0$, then $W_+(y_{c_+})=c=W_-(y_{c_-})$, we have
\begin{align*}
Re(\sigma_+(c_{\ep}))=W'_+(y_{c_+})\big(W_-(y_{c_+})-c\big),\ Re(\sigma_-(c_{\ep}))=-W'_-(y_{c_-})\big(W_+(y_{c_-})-c\big),
\end{align*}
and then due to $W_+(0)=W_-(0)=0$,
\begin{align*}
Re(\sigma_+(c_{\ep}))&=W'_+(y_{c_+})\big(W_-(y_{c_+})-W_+(y_{c_+})\big)\\
&=-W'_+(y_{c_+})y_{c_+}\lambda_+(y_{c_+}),\\
 Re(\sigma_-(c_{\ep}))&=-W'_-(y_{c_-})\big(W_+(y_{c_-})-W_-(y_{c_-})\big)\\
 &=-W'_-(y_{c_-})y_{c_-}\lambda_-(y_{c_-}),
\end{align*}
where
\begin{align*}
&\lambda_+(y_{c_+})=-\int_0^1\big[W'_-(sy_{c_+})-W'_+(sy_{c_+})\big]ds, \\
&\lambda_-(y_{c_-})=\int_0^1\big[W'_+(sy_{c_-})-W'_-(sy_{c_-})\big]ds,
\end{align*}
Thus we obtain $\lambda_+(y_{c_+})\geq C^{-1}$, $\lambda_-(y_{c_-})\geq C^{-1}$ and $|Re(\sigma_{\pm})(c_{\ep})|\geq C^{-1}|c|$.

We also have
\begin{align*}
\frac{Re(\sigma_+(c_{\ep}))}{Re(\sigma_-(c_{\ep}))}-1
&
=\frac{\lambda_+(y_{c_+})\big(W'_+(y_{c_+})y_{c_+}-W'_-(y_{c_-})y_{c_-}\big)}
{\lambda_-(y_{c_-})W'_-(y_{c_-})y_{c_-}}+\frac{\lambda_+(y_{c_+})-\lambda_-(y_{c_-})}{\lambda_-(y_{c_-})}.
\end{align*}
Due to $y_{c_-}=W_-^{-1}(c)=W_-^{-1}\big(W_+(y_{c_+})\big)$ and
\begin{align*}
&|W'_+(y_{c_+})y_{c_+}-W'_-(y_{c_-})y_{c_-}|\\
%&=\int_0^{y_{c_+}}\big(sW''_+(s)+W'_+(s)\big)ds
%-\int_0^{y_{c_+}}\Big[W'_-\Big(W_-^{-1}\big(W_+(s)\big)\Big)
%\big(W_-^{-1}\big)'\big(W_+(s)\big)W'_+(s)\\
%&\
%+W_-^{-1}\big(W_+(s)\big)W''_-\Big(W_-^{-1}\big(W_+(s)\big)\Big)
%\big(W_-^{-1}\big)'\big(W_+(s)\big)W'_+(s)\Big]ds\\
&=\left|\int_0^{y_{c_+}} sW''_+(s)
-(W_-^{-1}\circ W_+)(s)(W''_-\circ W_-^{-1}\circ W_+)(s)
\big((W_-^{-1})'\circ W_+\big)(s)W'_+(s)ds\right|\\
&\leq C \int_0^{y_{c_+}}  \left|s\right|
+\left|(W_-^{-1}\circ W_+)(s)\right|ds\leq C|c|^2,
\end{align*}
then we get
\begin{align*}
\Big|\frac{\lambda_+(y_{c_+})\big(W'_+(y_{c_+})y_{c_+}-W'_-(y_{c_-})y_{c_-}\big)}
{\lambda_-(y_{c_-})W'_-(y_{c_-})y_{c_-}}\Big|\leq C|c|.
\end{align*}
On the other hand,
\begin{align*}
&|\lambda_+(y_{c_+})-\lambda_-(y_{c_-})|\\
&=\left|\int_0^1\big[W'_-(sy_{c_+})-W'_-(sy_{c_-})
-W'_+(sy_{c_+})+W'_+(sy_{c_-})\big]ds\right|\\
&=|(y_{c_+}-y_{c_-})|\left|\int_0^1s\int_0^1\big[\big(W''_--W''_+\big)
\big(sy_{c_-}+ts(y_{c_+}-y_{c_-})\big)\big]dtds\right|\leq C|c|.
\end{align*}

For $c\leq 0$, then $W_-(y_{c_+})=c=W_+(y_{c_-})$, we have
\begin{align*}
Re(\sigma_+(c_{\ep}))&=-W'_-(y_{c_+})\big(W_+(y_{c_+})-W_-(y_{c_+})\big)
=-W'_-(y_{c_+})y_{c_+}\wt{\lambda}_+(y_{c_+}),\\
 Re(\sigma_-(c_{\ep}))&=W'_+(y_{c_-})\big(W_-(y_{c_-})-W_+(y_{c_-})\big)
 =-W'_+(y_{c_-})y_{c_-}\wt{\lambda}_-(y_{c_-}),
\end{align*}
where
\begin{align*}
&\wt{\lambda}_+(y_{c_+})=\int_0^1\big[W'_+(sy_{c_+})-W'_-(sy_{c_+})\big]ds, \\
&\wt{\lambda}_-(y_{c_-})=-\int_0^1\big[W'_-(sy_{c_-})-W'_+(sy_{c_-})\big]ds,
\end{align*}
Thus we obtain $\wt{\lambda}_+(y_{c_+})\geq C^{-1}$, $\wt{\lambda}_-(y_{c_-})\geq C^{-1}$ and $|Re(\sigma_{\pm}(c_{\ep}))|\geq C^{-1}|c|$.

We also have
\begin{align*}
\frac{Re(\sigma_+(c_{\ep}))}{Re(\sigma_-(c_{\ep}))}-1
&
=\frac{\wt{\lambda}_+(y_{c_+})\big(W'_-(y_{c_+})y_{c_+}-W'_+(y_{c_-})y_{c_-}\big)}
{\wt{\lambda}_-(y_{c_-})W'_+(y_{c_-})y_{c_-}}
+\frac{\wt{\lambda}_+(y_{c_+})-\wt{\lambda}_-(y_{c_-})}{\wt{\lambda}_-(y_{c_-})}.
\end{align*}
Due to $y_{c_-}=W_+^{-1}(c)=W_+^{-1}\big(W_-(y_{c_+})\big)$ and
\beqno
|W'_-(y_{c_+})y_{c_+}-W'_+(y_{c_-})y_{c_-}|\leq C|c|^2,
\eeqno
then we get
\beqno
\Big|\frac{\wt{\lambda}_+(y_{c_+})\big(W'_-(y_{c_+})y_{c_+}-W'_+(y_{c_-})y_{c_-}\big)}
{\wt{\lambda}_-(y_{c_-})W'_+(y_{c_-})y_{c_-}}\Big|\leq C|c|
\eeqno
and
\beqno
|\wt{\lambda}_+(y_{c_+})-\wt{\lambda}_-(y_{c_-})|\leq C|c|.
\eeqno

Therefore,
\begin{align*}
\Big|\frac{Re(\sigma_+(c_{\ep}))}{Re(\sigma_-(c_{\ep}))}-1\Big|\leq C|c|.
\end{align*}
This completes the proof of the Lemma.
\end{proof}

Now, we present the proof of Proposition \ref{prop: D est}.
\begin{proof}
The first part of the Proposition follows directly from Lemma \ref{lem: D>0}.

For the case of $|c|<\delta_0$, we have
\begin{align}
\mathcal{D}(c_{\ep})&=c_{\ep}^2P(c_{\ep})I_+(c_{\ep})I_-(c_{\ep})
-\big(\va_+(0,c_{\ep})^2-1\big)I_+(c_{\ep})-\Pi_+(c_{\ep})\nonumber\\
&\ \
-\big(\va_-(0,c_{\ep})^2-1\big)I_-(c_{\ep})
-\Pi_-(c_{\ep})\nonumber\\
&\ \
-\frac{I_1(c_{\ep})+I_2(c_{\ep})+I_3(c_{\ep})}{\sigma_+(c_{\ep})}
-\frac{J_1(c_{\ep})+J_2(c_{\ep})+J_3(c_{\ep})}{\sigma_-(c_{\ep})}\nonumber\\
&=c_{\ep}^2P(c_{\ep})I_+(c_{\ep})I_-(c_{\ep})
-\big(\va_+(0,c_{\ep})^2-1\big)I_+(c_{\ep})-\Pi_+(c_{\ep})\nonumber\\
&\ \
-\big(\va_-(0,c_{\ep})^2-1\big)I_-(c_{\ep})
-\Pi_-(c_{\ep})\nonumber\\
&\ \ -\frac{I_1(c_{\ep})+I_2(c_{\ep})+I_3(c_{\ep})}{\sigma_+(c_{\ep})}
-\frac{J_1(c_{\ep})+J_2(c_{\ep})+J_3(c_{\ep})}{\sigma_+(c_{\ep})}\nonumber\\
&\ \
-\frac{\frac{\sigma_+(c_{\ep})}{\sigma_-(c_{\ep})}-1}{\sigma_+(c_{\ep})}
\big(J_1(c_{\ep})+J_2(c_{\ep})+J_3(c_{\ep})\big),
\end{align}
where we recall in the proof of Lemma \ref{lem: I limit+} and Lemma \ref{lem: I limit-},
\beno
\sigma_+(c_{\ep})I_+(c_{\ep})=\sigma_+(c_{\ep})\Pi_+(c_{\ep})+I_1(c_{\ep})+I_2(c_{\ep})+I_3(c_{\ep}),\\
\sigma_-(c_{\ep})I_-(c_{\ep})=\sigma_-(c_{\ep})\Pi_-(c_{\ep})+J_1(c_{\ep})+J_2(c_{\ep})+J_3(c_{\ep}).
\eeno
By Proposition \ref{prop: sol. hom. [0,1]}, Proposition \ref{prop: sol. hom. [-1,0]},  Lemma \ref{lem: I bdd} and \eqref{eq: P est}, we obtain
\begin{align}\label{eq: up bdd1}
&\Big|c_{\ep}^2P(c_{\ep})I_+(c_{\ep})I_-(c_{\ep})-\big(\va_+(0,c_{\ep})^2-1\big)I_+(c_{\ep})
-\big(\va_-(0,c_{\ep})^2-1\big)I_-(c_{\ep})\Big|\nonumber\\
&\leq |c_{\ep}|^2|P(c_{\ep})||I_+(c_{\ep})||I_-(c_{\ep})|+C\big|\va_+(0,c_{\ep})-1\big|
|I_+(c_{\ep})|+C\big|\va_-(0,c_{\ep})-1\big||I_-(c_{\ep})|\nonumber\\
& \leq C|c_{\ep}|.
\end{align}
We also have
\beqno
Im(I_1(c_{\ep}))=\int_0^1\frac{\ep h(y,y_{c_+})\big(W_+(y)+W_-(y)-2c\big)}
{\big((W_+(y)-c)^2+\ep^2\big)\big((W_-(y)-c)^2+\ep^2\big)}dy,
\eeqno
by (\ref{h bdd}) and $|W_+(y)+W_-(y)-2c|\leq C|y-y_{c_-}|$, we have
\beqno
\Big|\frac{h(y,y_{c_+})\big(W_+(y)+W_-(y)-2c\big)}
{\big((W_+(y)-c)^2+\ep^2\big)\big((W_-(y)-c)^2+\ep^2\big)}\Big|\leq C\frac{|y-y_{c_-}|}{|(y-y_{c_-})^2+\ep^2|}.
\eeqno
Thus we can get that for $|c|<\delta_0$ with $\delta_0$ small enough,
\beqno
|Im(I_1(c_{\ep}))|\leq C\ep|\ln \ep|.
\eeqno
And similarly, we have
\beqno
Im(I_2(c_{\ep}))=\ep\int_0^1\frac{W'_+(y_{c_+})-W'_+(y)}{(W_+(y)-c)^2+\ep^2}dy
-\ep\int_0^1\frac{W'_-(y_{c_+})-W'_-(y)}{(W_-(y)-c)^2+\ep^2}dy,
\eeqno
which implies
\beqno
\big|Im(I_2(c_{\ep}))\big|\leq C\ep|\ln\ep|.
\eeqno
By the same argument, we can deduce that for $|c|<\delta_0$,
\beqno
\big|Im(J_1(c_{\ep})+J_2(c_{\ep}))\big|\leq C\ep|\ln\ep|.
\eeqno
Thus from the above, we have
\beq\label{eq: IJ12 bdd}
\big|Im(I_1(c_{\ep})+I_2(c_{\ep})+J_1(c_{\ep})+J_2(c_{\ep}))\big|\leq C\ep|\ln\ep|.
\eeq
Due to $
Im(I_3(c_{\ep})+J_3(c_{\ep}))=2\arctan\frac{W_+(1)-c}{\ep}-2\arctan\frac{W_-(1)-c}{\ep},
$
we get for $|c|\leq \delta_0$,
\beq\label{eq: IJ3 low bdd}
\big|Im(I_3(c_{\ep})+J_3(c_{\ep}))\big|\geq\frac{3\pi}{2}.
\eeq

 On the other hand, we have $C^{-1}|c|\leq \big|Re(\sigma_-(c_{\ep}))\big|\leq C|c|$ and
\beno
Im(\sigma_+(c_{\ep}))=-\ep \big(W'_+(y_{c_+})-W'_-(y_{c_+})\big),\\
Im(\sigma_-(c_{\ep}))=-\ep \big(W'_+(y_{c_-})-W'_-(y_{c_-})\big),
\eeno
and then
\beqno
C^{-1}\ep\leq \big|Im(\sigma_-(c_{\ep}))\big|\leq C\ep,
\eeqno
and
\begin{align*}
\big|Im(\sigma_+(c_{\ep}))-Im(\sigma_-(c_{\ep}))\big|
&=\ep\left|W'_+(y_{c_+})-W'_+(y_{c_-})-W'_-(y_{c_+})+W'_-(y_{c_-})\right|
\leq C\ep|c|.
\end{align*}
Then from which and by Lemma \ref{lem: Re sigma}, we get
\begin{align}\label{eq: sig bdd}
\Big|\frac{\sigma_+(c_{\ep})}{\sigma_-(c_{\ep})}-1\Big|
&=\left|\frac{Re(\sigma_+(c_{\ep}))-Re(\sigma_-(c_{\ep}))
+i\big(Im(\sigma_+(c_{\ep}))-Im(\sigma_-(c_{\ep}))\big)}
{Re(\sigma_-(c_{\ep}))+iIm(\sigma_-(c_{\ep}))}\right|\nonumber\\
&
\leq\Big|\frac{Re(\sigma_+(c_{\ep}))}
{Re(\sigma_-(c_{\ep}))}-1\Big|
+\Big|\frac{Im(\sigma_+(c_{\ep}))-Im(\sigma_-(c_{\ep}))}
{Re(\sigma_-(c_{\ep}))+iIm(\sigma_-(c_{\ep}))}\Big|\nonumber\\
&\leq C|c|+\frac{C\ep |c|}{C^{-1}(\ep+|c|)}
\leq C|c|.
\end{align}

Therefore by the fact that $|\Pi_{\pm}(c_{\ep})|+|J_1(c_{\ep})|+|J_2(c_{\ep})|\leq C$ and that for $|c|\leq \delta_0$, $|J_3(c_{\ep})|\leq C$ and \eqref{eq: P est}, \eqref{eq: up bdd1}, \eqref{eq: IJ12 bdd}, \eqref{eq: IJ3 low bdd} and \eqref{eq: sig bdd} we obtain for $|c|<\delta_0$
\begin{align}\label{D 0}
|\mathcal{D}(c_{\ep})|&\geq \frac{|I_1(c_{\ep})+I_2(c_{\ep})+I_3(c_{\ep})+J_1(c_{\ep})+J_2(c_{\ep})+J_3(c_{\ep})|}{|\sigma_+(c_{\ep})|}
-\big|\Pi_+(c_{\ep})\big|-\big|\Pi_-(c_{\ep})\big|\nonumber\\
&\ \
-\Big|c_{\ep}^2P(c_{\ep})I_+(c_{\ep})I_-(c_{\ep})
-\big(\va_+(0,c_{\ep})^2-1\big)I_+(c_{\ep})
-\big(\va_-(0,c_{\ep})^2-1\big)I_-(c_{\ep})\Big|\nonumber\\
&\ \ -\Big|\frac{\frac{\sigma_+(c_{\ep})}{\sigma_-(c_{\ep})}-1}{\sigma_+(c_{\ep})}
\big(J_1(c_{\ep})+J_2(c_{\ep})+J_3(c_{\ep})\big)\Big|\nonumber\\
&\geq \frac{\big|Im(I_3(c_{\ep})+J_3(c_{\ep}))\big|}{\big|\sigma_+(c_{\ep})\big|}
-\frac{\big|Im(I_1(c_{\ep})+I_2(c_{\ep})+J_1(c_{\ep})+J_2(c_{\ep}))\big|}
{\big|\sigma_+(c_{\ep})\big|}-C-C|c_{\ep}|\nonumber\\
&\ \  -\frac{\big|\frac{\sigma_1(c_{\ep})}{\sigma_2(c_{\ep})}-1\big|}
{\big|\sigma_+(c_{\ep})\big|}\Big|J_1(c_{\ep})+J_2(c_{\ep})+J_3(c_{\ep})\Big|\nonumber\\
&\geq \frac{C^{-1}}{|c_{\ep}|}-\frac{C\ep|\ln\ep|}{|c_{\ep}|}-C-C|c_{\ep}|
\geq\frac{C^{-1}}{|c_{\ep}|}.
\end{align}

\no{\bf Case 1. $W_+(1)\neq W_-(-1)$ and $W_+(-1)\neq W_-(1)$. }\\
For $|c_{\ep}-W_{\pm}(1)|\leq \delta_0$, by Lemma \ref{lem: I limit+}, we get
\begin{align*}
\Big|\frac{c_{\ep}^2P(c_{\ep})I_-(c_{\ep})\big(I_1(c_{\ep})+I_2(c_{\ep})\big)}{\sigma_+(c_{\ep})}\Big|+\Big|\frac{\va_+(0,c_{\ep})^2\big(I_1(c_{\ep})+I_2(c_{\ep})\big)}
{\sigma_+(c_{\ep})}\Big|+\Big|\va_-(0,c_{\ep})^2I_-(c_{\ep})\Big|\leq C,
\end{align*}
and thus, we have
\begin{align*}
|\mathcal{D}(c_{\ep})|&\geq \left|Im\, \left(\frac{c_{\ep}^2P(c_{\ep})I_-(c_{\ep})I_3(c_{\ep})}
{\sigma_+(c_{\ep})}-\frac{\va_+(0,c_{\ep})^2I_3(c_{\ep})}{\sigma_+(c_{\ep})}\right)\right|\\
&\quad
-\Big|\frac{c_{\ep}^2P(c_{\ep})I_-(c_{\ep})\big(I_1(c_{\ep})+I_2(c_{\ep})\big)}{\sigma_+(c_{\ep})}\Big|-\Big|\frac{\va_+(0,c_{\ep})^2\big(I_1(c_{\ep})+I_2(c_{\ep})\big)}
{\sigma_+(c_{\ep})}\Big|-\Big|\va_-(0,c_{\ep})^2I_-(c_{\ep})\Big|\\
&\geq \left|Im\,\Big(\frac{c_{\ep}^2P(c_{\ep})I_-(c_{\ep})I_3(c_{\ep})}
{\sigma_+(c_{\ep})}\Big)
-Im\,\Big(\frac{\va_+(0,c_{\ep})^2I_3(c_{\ep})}{\sigma_+(c_{\ep})}\Big)\right|-C.
\end{align*}
Due to 
\begin{align*}
&Im\Big(\frac{c_{\ep}^2P(c_{\ep})I_-(c_{\ep})I_3(c_{\ep})}
{\sigma_+(c_{\ep})}\Big)
-Im\Big(\frac{\va_+(0,c_{\ep})^2I_3(c_{\ep})}{\sigma_+(c_{\ep})}\Big)\\
&=Im\Big(\frac{c_{\ep}^2P(c_{\ep})I_-(c_{\ep})}
{\sigma_+(c_{\ep})}\Big)Re(I_3(c_{\ep}))+Re\Big(\frac{c_{\ep}^2P(c_{\ep})I_-(c_{\ep})}
{\sigma_+(c_{\ep})}\Big)Im(I_3(c_{\ep}))\\
&\ \ 
-Im\Big(\frac{\va_+(0,c_{\ep})^2}{\sigma_+(c_{\ep})}\Big)Re(I_3(c_{\ep}))
-Re\Big(\frac{\va_+(0,c_{\ep})^2}{\sigma_+(c_{\ep})}\Big)Im(I_3(c_{\ep}))\\
&=Re\Big(\frac{c_{\ep}^2P(c_{\ep})}{\sigma_+(c_{\ep})}\Big)Im(I_-(c_{\ep}))Re(I_3(c_{\ep}))
+Im\Big(\frac{c_{\ep}^2P(c_{\ep})}{\sigma_+(c_{\ep})}\Big)Re(I_-(c_{\ep}))Re(I_3(c_{\ep}))\\
&\ \ 
-Im\Big(\frac{\va_+(0,c_{\ep})^2}{\sigma_+(c_{\ep})}\Big)Re(I_3(c_{\ep}))
+Re\Big(\frac{c_{\ep}^2P(c_{\ep})I_-(c_{\ep})}
{\sigma_+(c_{\ep})}\Big)Im(I_3(c_{\ep}))\\
&\ \
-Re\Big(\frac{\va_+(0,c_{\ep})^2}{\sigma_+(c_{\ep})}\Big)Im(I_3(c_{\ep}))\\
&=\frac{c^2P(c)}{\sigma_+(c)}Im(I_-(c_{\ep}))Re(I_3(c_{\ep}))
-\frac{\va_+(0,c)^2}{\sigma_+(c)}Im(I_3(c_{\ep}))\\
&\ \
+Re\Big(\frac{c_{\ep}^2P(c_{\ep})}{\sigma_+(c_{\ep})}
-\frac{c^2P(c)}{\sigma_+(c)}\Big)Im(I_-(c_{\ep}))Re(I_3(c_{\ep}))
+Im\Big(\frac{c_{\ep}^2P(c_{\ep})}{\sigma_+(c_{\ep})}\Big)Re(I_-(c_{\ep}))Re(I_3(c_{\ep}))\\
&\ \
-Im\Big(\frac{\va_+(0,c_{\ep})^2}{\sigma_+(c_{\ep})}\Big)Re(I_3(c_{\ep}))
+Re\Big(\frac{c_{\ep}^2P(c_{\ep})I_-(c_{\ep})}
{\sigma_+(c_{\ep})}\Big)Im(I_3(c_{\ep}))\\
&\ \
-Re\Big(\frac{\va_+(0,c_{\ep})^2}{\sigma_+(c_{\ep})}
-\frac{\va_+(0,c)^2}{\sigma_+(c)}\Big)Im(I_3(c_{\ep})),
\end{align*}
and by Remark \ref{rmk: epsilon},
\begin{align*}
&\Big|Re\Big(\frac{c_{\ep}^2P(c_{\ep})}{\sigma_+(c_{\ep})}
-\frac{c^2P(c)}{\sigma_+(c)}\Big)\Big|\leq C|\ep|,\\
&\Big|Re\Big(\frac{\va_+(0,c_{\ep})^2}{\sigma_+(c_{\ep})}
-\frac{\va_+(0,c)^2}{\sigma_+(c)}\Big)\Big|\leq C|\ep|,\\
&\Big|Im\Big(\frac{c_{\ep}^2P(c_{\ep})}{\sigma_+(c_{\ep})}\Big)\Big|
=\Big|Im\Big(\frac{c_{\ep}^2P(c_{\ep})}{\sigma_+(c_{\ep})}
-\frac{c^2P(c)}{\sigma_+(c)}\Big)\Big|\leq C|\ep|,\\
&\Big|Im\Big(\frac{\va_+(0,c_{\ep})^2}{\sigma_+(c_{\ep})}\Big)\Big|
=\Big|Im\Big(\frac{\va_+(0,c_{\ep})^2}{\sigma_+(c_{\ep})}
-\frac{\va_+(0,c)^2}{\sigma_+(c)}\Big)\Big|\leq C|\ep|.
\end{align*}
Thus by taking $\ep_0$ small enough, we have for any $|\ep|\leq \ep_0$, $|Im(I_-(c_{\ep}))|\geq |Im(J_3(c_{\ep}))|\geq\frac{3\pi}{4}$, $\frac{c^2P(c)}{\sigma_+(c)}\geq C^{-1}$, $\frac{\va_+(0,c)^2}{\sigma_+(c)}\geq C^{-1}$.
Thus by taking $\delta_0$ small enough, we have for $|c_{\ep}-W_{\pm}(1)|\leq \delta_0$,
\begin{align*}
|\mathcal{D}(c_{\ep})|
&\geq\Big|\frac{c^2P(c)}{\sigma_+(c)}Im(I_-(c_{\ep}))Re(I_3(c_{\ep}))\Big|
-\Big|\frac{\va_+(0,c)^2}{\sigma_+(c)}Im(I_3(c_{\ep}))\Big|\\
&\ \ 
-\Big|Re\Big(\frac{c_{\ep}^2P(c_{\ep})}{\sigma_+(c_{\ep})}
-\frac{c^2P(c)}{\sigma_+(c)}\Big)Im(I_-(c_{\ep}))Re(I_3(c_{\ep}))\Big|\\
&\ \ 
-\Big|Im\Big(\frac{c_{\ep}^2P(c_{\ep})}{\sigma_+(c_{\ep})}\Big)
Re(I_-(c_{\ep}))Re(I_3(c_{\ep}))\Big|
-\Big|Im\Big(\frac{\va_+(0,c_{\ep})^2}{\sigma_+(c_{\ep})}\Big)Re(I_3(c_{\ep}))\Big|\\
&\ \ -\Big|Re\Big(\frac{c_{\ep}^2P(c_{\ep})I_-(c_{\ep})}
{\sigma_+(c_{\ep})}\Big)Im(I_3(c_{\ep}))\Big|
+\Big|
-Re\Big(\frac{\va_+(0,c_{\ep})^2}{\sigma_+(c_{\ep})}
-\frac{\va_+(0,c)^2}{\sigma_+(c)}\Big)Im(I_3(c_{\ep}))\Big|\\
&\geq C^{-1}|Re(I_3(c_{\ep}))|-C\ep|Re(I_3(c_{\ep}))|-C\\
&\geq C^{-1} l(c_{\ep}-W_{\pm}(1)).
\end{align*}
For $|c_{\ep}-W_{\pm}(-1)|\leq \delta_0$, by Lemma \ref{lem: I limit-}, we get
\begin{align*}
\Big|\frac{c_{\ep}^2P(c_{\ep})I_+(c_{\ep})\big(J_1(c_{\ep})+J_2(c_{\ep})\big)}
{\sigma_-(c_{\ep})}\Big|+\Big|\frac{\va_-(0,c_{\ep})^2\big(J_1(c_{\ep})+J_2(c_{\ep})\big)}
{\sigma_-(c_{\ep})}\Big|+\Big|\va_+(0,c_{\ep})^2I_+(c_{\ep})\Big|\leq C.
\end{align*}
By the same method, we have
\begin{align*}
|\mathcal{D}(c_{\ep})|&\geq\Big|Im\Big(\frac{c_{\ep}^2P(c_{\ep})I_+(c_{\ep})J_3(c_{\ep})}
{\sigma_-(c_{\ep})}\Big)
-Im\Big(\frac{\va_-(0,c_{\ep})^2J_3(c_{\ep})}{\sigma_-(c_{\ep})}\Big)\Big|\\
&\quad
-\Big|\frac{c_{\ep}^2P(c_{\ep})I_+(c_{\ep})\big(J_1(c_{\ep})+J_2(c_{\ep})\big)}
{\sigma_-(c_{\ep})}-\frac{\va_-(0,c_{\ep})^2\big(J_1(c_{\ep})+J_2(c_{\ep})\big)}
{\sigma_-(c_{\ep})}-\va_+(0,c_{\ep})^2I_+(c_{\ep})\Big|\\
&\geq \Big|\frac{c^2P(c)}{\sigma_-(c)}Im(I_+(c_{\ep}))Re(J_3(c_{\ep}))\Big|
-\Big|\frac{\va_-(0,c)^2}{\sigma_-(c)}Im(J_3(c_{\ep}))\Big|\\
&\quad
-\Big|Re\Big(\frac{c_{\ep}^2P(c_{\ep})}{\sigma_-(c_{\ep})}
-\frac{c^2P(c)}{\sigma_-(c)}\Big)Im(I_+(c_{\ep}))Re(J_3(c_{\ep}))\Big|\\
&\quad
-\Big|Im\Big(\frac{c_{\ep}^2P(c_{\ep})}{\sigma_-(c_{\ep})}\Big)
Re(I_+(c_{\ep}))Re(J_3(c_{\ep}))\Big|
-\Big|Im\Big(\frac{\va_-(0,c_{\ep})^2}{\sigma_-(c_{\ep})}\Big)Re(J_3(c_{\ep}))\Big|\\
&\quad -\Big|Re\Big(\frac{c_{\ep}^2P(c_{\ep})I_+(c_{\ep})}
{\sigma_-(c_{\ep})}\Big)Im(J_3(c_{\ep}))\Big|
-\Big|
Re\Big(\frac{\va_-(0,c_{\ep})^2}{\sigma_-(c_{\ep})}
-\frac{\va_-(0,c)^2}{\sigma_-(c)}\Big)Im(J_3(c_{\ep}))\Big|-C\\
&\geq C^{-1}|Re(J_3(c_{\ep}))|-C\ep|Re(J_3(c_{\ep}))|-C\ep\\
&\geq C^{-1} l(c_{\ep}-W_{\pm}(-1)).
\end{align*}

\no{\bf{Case 2. $W_+(1)=W_-(-1)$ and $W_+(-1)\neq W_-(1)$.}}\\ 
For $|c_{\ep}-W_+(1)|=|c_{\ep}-W_-(-1)|< \delta_0$, by Lemma \ref{lem: I bdd} and \eqref{eq: P est}, we get
\begin{align*}
\big|\mathcal{D}(c_{\ep})\big|
&=\Big|c_{\ep}^2P(c_{\ep})I_+(c_{\ep})I_-(c_{\ep})-\va_+(0,c_{\ep})^2I_+(c_{\ep})
-\va_-(0,c_{\ep})^2I_-(c_{\ep})\Big|\nonumber\\
&\geq |c_{\ep}|^2|P(c_{\ep})||I_+(c_{\ep})||I_-(c_{\ep})|-C
|I_+(c_{\ep})|-C|I_-(c_{\ep})|\nonumber\\
& \geq C^{-1}l(c_{\ep}-W_+(1))l(c_{\ep}-W_-(-1))-Cl(c_{\ep}-W_+(1))
-Cl(c_{\ep}-W_-(-1))\\
&\geq C^{-1}l(c_{\ep}-W_+(1))l(c_{\ep}-W_-(-1)).
\end{align*}
And for $|c_{\ep}-W_-(1)|<\delta_0$, we can obtain by the same argument as in Case 1 that
\begin{align*}
\big|\mathcal{D}(c_{\ep})\big|\geq C^{-1}l(c_{\ep}-W_-(1))
\end{align*}
and for $|c_{\ep}-W_+(-1)|<\delta_0$,
\begin{align*}
\big|\mathcal{D}(c_{\ep})\big|\geq C^{-1}l(c_{\ep}-W_+(-1)).
\end{align*}

\no{\bf{Case 3. $W_+(1)\neq W_-(-1)$ and $W_+(-1)=W_-(1)$.}}\\
The proof is similar to Case 2 and we have for $|c_{\ep}-W_+(-1)|=|c_{\ep}-W_-(1)|< \delta_0$,
\begin{align*}
\big|\mathcal{D}(c_{\ep})\big|
&=\Big|c_{\ep}^2P(c_{\ep})I_+(c_{\ep})I_-(c_{\ep})-\va_+(0,c_{\ep})^2I_+(c_{\ep})
-\va_-(0,c_{\ep})^2I_-(c_{\ep})\Big|\nonumber\\
&\geq |c_{\ep}|^2|P(c_{\ep})||I_+(c_{\ep})||I_-(c_{\ep})|-C
|I_+(c_{\ep})|-C|I_-(c_{\ep})|\nonumber\\
& \geq C^{-1}l(c_{\ep}-W_+(-1))l(c_{\ep}-W_-(1))-Cl(c_{\ep}-W_+(-1))
-Cl(c_{\ep}-W_-(1))\\
&\geq C^{-1}l(c_{\ep}-W_+(-1))l(c_{\ep}-W_-(1)).
\end{align*}
And we have, for $|c_{\ep}-W_+(1)|<\delta_0$,
\begin{align*}
\big|\mathcal{D}(c_{\ep})\big|\geq C^{-1}l(c_{\ep}-W_+(1))
\end{align*}
and for $|c_{\ep}-W_-(-1)|<\delta_0$,
\begin{align*}
\big|\mathcal{D}(c_{\ep})\big|\geq C^{-1}l(c_{\ep}-W_-(-1)).
\end{align*} 

\no{\bf Case 4. $W_+(1)=W_-(-1)$ and $W_-(1)=W_+(-1)$.} \\
For the case $|c_{\ep}-W_+(1)|<\delta_0$, by Lemma \ref{lem: I bdd} and \eqref{eq: P est}, we get
\begin{align}
\big|\mathcal{D}(c_{\ep})\big|
&=\Big|c_{\ep}^2P(c_{\ep})I_+(c_{\ep})I_-(c_{\ep})-\va_+(0,c_{\ep})^2I_+(c_{\ep})
-\va_-(0,c_{\ep})^2I_-(c_{\ep})\Big|\nonumber\\
&\geq |c_{\ep}|^2|P(c_{\ep})||I_+(c_{\ep})||I_-(c_{\ep})|-C
|I_+(c_{\ep})|-C|I_-(c_{\ep})|\nonumber\\
& \geq C^{-1}\Big(1+\Big|\ln\big|W_+(1)-c_{\ep}\big|\Big|\Big)^2
-C\Big(1+\Big|\ln\big|W_+(1)-c_{\ep}\big|\Big|\Big)\nonumber\\
&\geq C^{-1}\Big(1+\Big|\ln\big|W_+(1)-c_{\ep}\big|\Big|\Big)^2.
\end{align}

Similarly, we can deduce that for  the case of $|c_{\ep}-W_-(1)|<\delta_0$,
\beqno
|\mathcal{D}(c_{\ep})|\geq C^{-1}\Big(1+\Big|\ln\big|W_-(1)-c_{\ep}\big|\Big|\Big)^2.
\eeqno

For the case $c_{\ep}\in\Omega_{\ep_0}\setminus D_0$ with $|c|\geq \delta_0$, $|c_{\ep}-W_+(1)|\geq \delta_0$ and $|c_{\ep}-W_-(1)|\geq \delta_0$, by the fact that $\mathcal{D}(c_{\ep})$ is continuous to the boundary and Lemma \ref{lem: D>0}, we have $|\mathcal{D}(c_{\ep})|\geq C^{-1}$ in this case.

Thus, from the above argument, we can deduce that for $c_{\ep}\in\Omega_{\ep_0}\setminus D_0$,
\beno
|\mathcal{D}(c_{\ep})|\geq \frac{l\big(W_+(1)-c_{\ep}\big)l
\big(W_-(1)-c_{\ep}\big)l\big(W_+(-1)-c_{\ep}\big)l
\big(W_-(-1)-c_{\ep}\big)}{C|c_{\ep}|}.
\eeno
This completes the proof of the proposition.
\end{proof}

\begin{remark}\label{rmk: D}
The above proposition implies for $\ep_0$ small enough
and $c_{\ep}\in \Omega_{\ep_0}\setminus D_0$, $|\mathcal{D}(c_{\ep})|$
is lower bounded and $\frac{1}{\mathcal{D}}$ is well-defined
in $\Omega_{\ep_0}\setminus D_0$ and
\beno
\Big|\frac{1}{\mathcal{D}(c_{\ep})}\Big|\leq \frac{C|c_{\ep}|}{l\big(W_+(1)-c_{\ep}\big)l
\big(W_-(1)-c_{\ep}\big)l\big(W_+(-1)-c_{\ep}\big)l
\big(W_-(-1)-c_{\ep}\big)}.
\eeno
 And we have for $c_{\ep}=c+i\ep$ with
 $c\in  D_0\setminus \big\{0,W_+(1),W_-(1),W_+(-1),W_-(-1)\big\}$,
\beno
\lim_{\ep\rightarrow0{\pm}}\frac{1}{\mathcal{D}(c_{\ep})}
=\frac{1}{\mathcal{D}^{re}(c)\pm i\mathcal{D}^{im}(c)}.
\eeno
Moreover, $\frac{1}{\mathcal{D}}$ can be continuous extend
 to the boundary with $\frac{1}{\mathcal{D}(0)}=\frac{1}{\mathcal{D}(W_-(1))}=\frac{1}{\mathcal{D}(W_-(-1))}=\frac{1}{\mathcal{D}(W_+(-1))}
 =\frac{1}{\mathcal{D}(W_+(1))}=0$.
And then we have for $c\in D_0$,
\beno
\Big|\frac{1}{\mathcal{D}^{re}(c)\pm i\mathcal{D}^{im}(c)}\Big|\leq \frac{C |c|}
{l\big(W_+(1)-c\big)l
\big(W_-(1)-c\big)l\big(W_+(-1)-c\big)l
\big(W_-(-1)-c\big)}.
\eeno
\end{remark}
The upper bound of $\frac{1}{\mathcal{D}^{re}(c)\pm i\mathcal{D}^{im}(c)}$ follows from Lemma \ref{lem: I bdd} and Lemma \ref{lem: D>0}. We omit the proof of this remark.

\begin{lemma}\label{lem: limit up bdd}
For $c\in D_0\setminus\{0, W_+(1), W_+(-1), W_-(1),W_-(-1)\}$, there exists a positive constant $C$ such that
\beno
|I^{re}_{\pm}(c)|\leq \frac{Cl\big(W_+(\pm1)-c\big)
l\big(W_-(\pm1)-c\big)}{|c|},
\eeno
and
\beno
|\mathcal{D}^{re}(c)|
\leq\frac{Cl\big(W_+(1)-c\big)
l\big(W_-(1)-c\big)l\big(W_+(-1)-c\big)
l\big(W_-(-1)-c\big)}{|c|},
\eeno
and
\beno
|\mathcal{D}^{im}(c)|
\leq\frac{Cl\big(W_+(1)-c\big)
l\big(W_-(1)-c\big)}{|c|}+\frac{Cl\big(W_+(-1)-c\big)
l\big(W_-(-1)-c\big)}{|c|}.
\eeno
\end{lemma}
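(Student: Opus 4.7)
The plan is to unpack the three definitions and estimate each piece by the bounds that have already been established in the proofs of Lemma \ref{lem: I limit+}, Lemma \ref{lem: I limit-} and Lemma \ref{lem: I bdd}. The key ingredients I will use are: $|\sigma_{\pm}(c)|\geq C^{-1}|c|$ (Remark \ref{rmk: sigma bdd}); $|P(c)|\leq C|c|$ (from \eqref{eq: P est}); $|\varphi_{\pm}(0,c)|\leq C$ (Proposition \ref{prop: sol. hom. [0,1]}, Proposition \ref{prop: sol. hom. [-1,0]}); the uniform bounds $|\Pi_{\pm}(c)|\leq C$ and $|R_{\pm}^1(c)|+|R_{\pm}^2(c)|\leq C$ (already shown using \eqref{1}, \eqref{A+1}, \eqref{A+2} and their minus-side analogues \eqref{eq: g est} and subsequent inequalities); and the elementary inequality
\[
\Bigl|\tfrac12\ln\bigl((W_+(\pm 1)-c)/(c-W_-(\pm 1))\bigr)^2\Bigr|\leq C\bigl(1+|\ln|W_+(\pm 1)-c||+|\ln|W_-(\pm 1)-c||\bigr)\leq C\,l(W_+(\pm 1)-c)\,l(W_-(\pm 1)-c),
\]
where in the last step we use $l\geq 1$.

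\textbf{Step 1 (estimate of $I^{re}_{\pm}$).} Directly from \eqref{eq: I+ re def} and the bounds above,
\[
|I^{re}_+(c)|\leq |\Pi_+(c)|+\frac{|R_+^1(c)|+|R_+^2(c)|+Cl(W_+(1)-c)l(W_-(1)-c)}{|\sigma_+(c)|}\leq C+\frac{Cl(W_+(1)-c)l(W_-(1)-c)}{|c|}.
\]
Since $D_0$ is bounded and $l\geq 1$, the first term is absorbed into the second and the claimed bound for $I^{re}_+$ follows. The argument for $I^{re}_-$ is identical, working from \eqref{eq: I- re def}.

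\textbf{Step 2 (estimate of $\mathcal{D}^{re}$ and $\mathcal{D}^{im}$).} Plugging Step 1 into the defining expressions and using $|c^2P(c)|\leq C|c|^3$, $|\sigma_+(c)\sigma_-(c)|^{-1}\leq C|c|^{-2}$, $|\chi_{\pm}|\leq 1$, $|\varphi_{\pm}(0,c)|\leq C$:
\[
|\mathcal{D}^{re}(c)|\leq C|c|^3\cdot\frac{l(W_+(1)-c)l(W_-(1)-c)l(W_+(-1)-c)l(W_-(-1)-c)}{|c|^2}+\frac{C|c|^3}{|c|^2}+\sum_{\pm}\frac{Cl(W_+(\pm 1)-c)l(W_-(\pm 1)-c)}{|c|},
\]
and since $|c|\leq C$ on $D_0$ and $l\geq 1$, every summand is absorbed into $\frac{C}{|c|}\prod l(W_\pm(\pm 1)-c)$, which is the desired inequality for $\mathcal{D}^{re}$. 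For $\mathcal{D}^{im}$, the same substitutions give
\[
|\mathcal{D}^{im}(c)|\leq \frac{C|c|^3}{|c|}\cdot\frac{l(W_+(1)-c)l(W_-(1)-c)}{|c|}+\frac{C|c|^3}{|c|}\cdot\frac{l(W_+(-1)-c)l(W_-(-1)-c)}{|c|}+\frac{C}{|c|}+\frac{C}{|c|},
\]
and the boundedness of $|c|$ together with $l\geq 1$ collapses the four contributions to the stated two-term upper bound.

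\textbf{Main obstacle.} There is no conceptual difficulty here; the proof is essentially bookkeeping on bounds already derived in the previous lemmas. The only point requiring a moment of care is to verify that the extra factors of $|c|$ coming from $c^2P(c)$ are correctly balanced by the $|\sigma_\pm(c)|^{-1}\leq C|c|^{-1}$ factors so that no net positive power of $|c|^{-1}$ worse than the desired $|c|^{-1}$ survives; this is automatic once one observes that $D_0$ is a bounded set and every $l(\cdot)\geq 1$.
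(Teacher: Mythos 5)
Your argument is correct and follows essentially the same route as the paper's own (very brief) proof: bound $I^{re}_{\pm}$ using $|\sigma_{\pm}(c)|\geq C^{-1}|c|$ together with the uniform bounds on $\Pi_{\pm}$, $R^{1}_{\pm}$, $R^{2}_{\pm}$ and the logarithmic term, then insert these into the definitions of $\mathcal{D}^{re}$, $\mathcal{D}^{im}$ using $|P(c)|\leq C|c|$ and the bounds on $\va_{\pm}(0,c)$. The bookkeeping with the factors of $|c|$ and the use of $l\geq 1$ on the bounded set $D_0$ is exactly what the paper leaves implicit, so nothing further is needed.
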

\begin{proof}
From Remark \ref{rmk: sigma bdd} and the fact that $|\Pi_{\pm}(c)|+|R^1_{\pm}(c)|+|R^2_{\pm}(c)|\leq C$ for $c\in D_0\setminus\{0, W_+(1), W_+(-1), W_-(1),W_-(-1)\}$, we can easily get the estimate of $I^{re}_{\pm}$.

The estimate of $\mathcal{D}^{re}(c)$ and $\mathcal{D}^{im}(c)$ for $c\in D_0\setminus\{0, W_+(1), W_+(-1), W_-(1),W_-(-1)\}$ follows from the estimate of $I^{re}_{\pm}$ and Proposition \ref{prop: sol. hom. [0,1]}, Proposition \ref{prop: sol. hom. [-1,0]} and \eqref{eq: P est}.
\end{proof}

%\begin{remark}\label{rmk: remove B}
%When $u,b$ do not satisfy the boundary condition $(\bf B)$,
%%$W_+(1)\neq W_-(-1)$ and $W_+(-1)\neq W_-(1)$,%
%the idea to prove this general case is similar except for some technique tricks. In precise, for the case of $|u'|<b'$, due to the fact that $0<W_-(-1)<W_+(1)$ and $W_+(-1)<W_-(1)<0$, by the  continuous extension of  $W_-(y)$ such that $W_-(-l_2)=W_+(1)$, $W_-(l_1)=W_+(-1)$ for some $l_1,l_2>1$, we can define $y_{c_+}\in[0,l_1]: (W_+(y_{c_+})-c_r)(W_-(y_{c_+})-c_r)=0$ and $y_{c_-}\in[-l_2,0]: (W_+(y_{c_-})-c_r)(W_-(y_{c_-})-c_r)=0$, here $c_r\in Ran \ W_+\cup Ran \ W_-,$ and $|c_r-c_{\ep}|=\inf_c|c-c_{\ep}|$. In this way, we also get that the estimate (\ref{eq: W mono}) holds. Secondly, by following the steps as Proposition \ref{prop: D est}, we can obtain the estimate of Wronskian $\mathcal{D}(c_{\ep})$:
%\beno
%\big|\frac{1}{\mathcal{D}(c_{\ep})}\big|\leq \frac{C|c_{\ep}|}{l\big(W_+(1)-c_{\ep})l(W_+(-1)-c_{\ep})
%l(W_-(1)-c_{\ep})l(W_-(-1)-c_{\ep})}.
%\eeno
%
%In this paper, we mainly consider the case of $W_+(1)=W_-(-1), W_+(-1)=W_-(1)$ for brevity.
%\end{remark}

\subsection{Determine the coefficients}

Now we solve the inhomogeneous Sturmian equation,
for  $c\in \Omega_{\ep_0}\setminus D_0$,
\beq\label{eq: inhomo Sturmian}
 \left\{\begin{array}{l}
\pa_y\Big(\mathcal{H}(y,c)\pa_y\Th(y,c)\Big)-\al^2\mathcal{H}(y,c)\Th(y,c)=F(y,c)\\
\Th(-1,c)=\Th(1,c)=0,\\
\end{array}\right.
\eeq
here $F(y,c)=cG(\al,y,c)$ and recall $G(\al, y,c)=G_1(\al, y,c)-\f{\widehat{\phi}_0(\al,0)f(\al,y,c)}{b(y)^3}$ defined as in \eqref{eq: G_1} and \eqref{eq: f}.

In particular, we can get that for $c=0$,
\beno
F(y,0)=-\widehat{\phi}_0(0)\left(\Big(\big(u(y)^2-b(y)^2\big)\Big(\frac{\chi}{b}(y)\Big)'\Big)'
-\al^2\big(u(y)^2-b(y)^2\big)\frac{\chi}{b}(y)\right).
\eeno

For $c\in \Omega_{\ep_0}\setminus\{0\}$, let
%\beno
%T_+(F)(c)=\int_0^1\frac{\int_{y_{c_+}}^{y'}F(z,c_{\ep})\va_+(z,c_{\ep})dz}
%{\mathcal{H}(y,c_{\ep})\va_+(y',c_{\ep})^2}dy',\ T_-(F)(c)=\int_{-1}^0\frac{\int_{y_{c_-}}^{y'}F(z,c_{\ep})\va_-(z,c_{\ep})dz}
%{\mathcal{H}(y,c_{\ep})\va_-(y',c_{\ep})^2}dy'
%\eeno
\beno
T_{\pm}(F)(c)=\int_0^{\pm1}\frac{\int_{y_{c_{\pm}}}^{y}F(z,c_{\ep})\va_{\pm}(z,c_{\ep})dz}
{\mathcal{H}(y,c_{\ep})\va_{\pm}(y,c_{\ep})^2}dy,
\eeno
and
\beno
L(F)(c)=\va_-(0,c)\int^{y_{c_+}}_0F(y,c)\va_+(y,c)dy
-\va_+(0,c)\int^{y_{c_-}}_0F(y,c)\va_-(y,c)dy.
\eeno

For $y\in[0,1]$ and $c\in \Om_{\ep_0}\setminus D_0$, let
\beq\label{eq: Theta_0+}
\begin{split}
\Th_+^0(y,c)&=\va_+(y,c)\int_0^y\frac{\int_{y_{c_+}}^{y'}(F\va_+)(z,c)dz}
{\mathcal{H}(y',c)\va_+(y',c)^2}dy'\\
&\quad+\wt{\mu}_+(F)(c) \va_+(y,c)\int_0^y\frac{1}{\mathcal{H}(y',c)\va_+(y',c)^2}dy'
+\nu_+(F)(c)\va_+(y,c)
\end{split}
\eeq
and
\beq\label{eq: Theta_1+}
\begin{split}
\Th_+^1(y,c)&=\va_+(y,c)\int_1^y\frac{\int_{y_{c_+}}^{y'}(F\va_+)(z,c)dz}
{\mathcal{H}(y',c)\va_+(y',c)^2}dy'\\
&\quad+\mu_+(F)(c) \va_+(y,c)\int_1^y\frac{1}{\mathcal{H}(y',c)\va_+(y',c)^2}dy',
\end{split}
\eeq
with
\beq\label{mu+}
\begin{split}
 \mu_+(F)(c)=\wt{\mu}_+(F)(c)&=\frac{1}{\mathcal{D}(c)}\big[-c^2P(c)T_+(F)(c)I_-(c)
 -\va_-(0,c)L(F)(c)I_-(c)\\
&\quad
+\va_+(0,c)^2T_+(F)(c)-(\va_+\va_-)(0,c)T_-(F)(c)\big],
\end{split}
\eeq
and
\beq\label{nu+}
\begin{split}
\nu_+(F)(c)&=\frac{1}{\mathcal{D}(c)}\big[\va_-(0,c)L(F)(c)I_+(c)I_-(c)
-(\va_+\va_-)(0,c)T_-(F)(c)I_+(c)\\
 &\quad +\va_-(0,c)^2T_+(F)(c)I_-(c)\big].
\end{split}
\eeq

For $y\in[-1,0]$ and $c\in \Om_{\ep_0}\setminus D_0$, let
\beq\label{eq: Theta_-1-}
\begin{split}
\Th_-^{-1}(y,c)&=\va_-(y,c)\int_{-1}^y\frac{\int_{y_{c_-}}^{y'}(F\va_-)(y'',c)dy''}{\mathcal{H}(y',c)\va_-(y',c)^2}dy'\\
&\quad+\mu_-(F)(c) \va_-(y,c)\int_{-1}^y\frac{1}{\mathcal{H}(y',c)\va_-(y',c)^2}dy',
\end{split}
\eeq
and
\beq\label{eq: Theta_0-}
\begin{split}
\Th_-^0(y,c)&=\va_-(y,c)\int_0^y\frac{\int_{y_{c_-}}^{y'}(F\va_-)(y'',c)dy''}
{\mathcal{H}(y',c)\va_-(y',c)^2}dy'\\
&\quad +\wt{\mu}_-(F)(c) \va_-(y,c)\int_0^y\frac{1}{\mathcal{H}(y',c)\va_-(y',c)^2}dy'
+\nu_-(F)(c)\va_-(y,c),
\end{split}
\eeq
with
\beq\label{mu-}
\begin{split}
\mu_-(F)(c)=\wt{\mu}_-(F)(c)&=\frac{1}{\mathcal{D}(c)}\big[c^2P(c)T_-(F)(c)I_+(c)
+\va_+(0,c)L(F)(c)I_+(c)\\
&\ \
+(\va_+\va_-)(0,c)T_+(F)(c)-\va_-(0,c)^2T_-(F)(c)\big],
\end{split}
\eeq
\beq\label{nu-}
\begin{split}
\nu_-(F)(c)&=\frac{1}{\mathcal{D}(c)}\big[\va_+(0,c)L(F)(c)I_+(c)I_-(c)
+\va_+(0,c)^2T_-(F)(c)I_+(c)\\
 &\quad+(\va_+\va_-)T_+(F)(c)I_-(c)\big].
 \end{split}
\eeq
\begin{proposition}\label{prop: inhom solu}
Let $c\in \Omega_{\ep_0}\setminus D_0$. Then for $y\in[0,1]$, $\Th_{+}^0(y,c)\equiv \Th_{+}^1(y,c)\eqdef \Th_+(y,c)$ and for $y\in[-1,0]$, $\Th_{-}^0(y,c)\equiv \Th_{-}^1(y,c)\eqdef \Th_-(y,c)$. Moreover,
\beno
\Th(y,c)=\left\{\begin{array}{l}
\Th_+(y,c), \ \ y\in[0,1],\\
\Th_-(y,c), \ \ y\in[-1,0],\\
\end{array}\right.
\eeno
is the unique $C^1([-1,1])$ solution to \eqref{eq: inhomo Sturmian}.
\end{proposition}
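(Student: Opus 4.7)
The plan is to view $\Theta_\pm^0$ and $\Theta_\pm^{\pm 1}$ as two parameterizations of the same variation-of-parameters solution on each half-interval, and to show that the coefficients $\mu_\pm$, $\tilde\mu_\pm$, $\nu_\pm$ have been chosen precisely so that the two forms agree, the Dirichlet conditions at $y = \pm 1$ hold, and the pieces glue into a $C^1$ function at $y=0$. First, on $[0,1]$ substitute $\Theta_+ = \va_+\Phi$ into \eqref{eq: inhomo Sturmian}; since $\va_+$ solves the homogeneous Sturmian equation, the ODE reduces to $\pa_y(\mathcal{H}\va_+^2\,\pa_y\Phi) = F\va_+$, which integrates twice to give
\[
\Theta_+(y,c) = \va_+(y,c)\!\!\int_a^y\!\!\frac{\int_{y_{c_+}}^{y'}(F\va_+)(z,c)\,dz}{\mathcal{H}(y',c)\va_+(y',c)^2}\,dy' + A\,\va_+(y,c) + B\,\va_+(y,c)\!\!\int_a^y\!\!\frac{dy'}{\mathcal{H}(y',c)\va_+(y',c)^2}
\]
for any base point $a\in[0,1]$. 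Taking $a=0$ yields the form of $\Theta_+^0$ with free constants $(A,B) = (\nu_+,\tilde\mu_+)$, while $a=1$ yields $\Theta_+^1$ with $A=0$ (forced by $\Theta_+^1(1,c)=0$) and $B=\mu_+$. The analogous reduction on $[-1,0]$ produces $\Theta_-^0$ and $\Theta_-^{-1}$, and $\Theta_-^{-1}(-1,c)=0$ is automatic.

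The identity $\Theta_+^0 \equiv \Theta_+^1$ on $[0,1]$ is equivalent to vanishing of their difference; since both are general solutions of the same ODE, the difference lies in the span of $\va_+$ and $\va_+\!\int_0^y\frac{dy'}{\mathcal{H}\va_+^2}$, and a direct computation using $\int_0^y - \int_1^y = \int_0^1$ together with the definitions of $T_+$ and $I_+$ gives
\[
\Theta_+^0(y,c) - \Theta_+^1(y,c) = \va_+(y,c)\Big[T_+(F)(c) + (\tilde\mu_+ - \mu_+)\!\!\int_0^y\!\!\tfrac{dy'}{\mathcal{H}\va_+^2} + \mu_+(F)(c)\,I_+(c) + \nu_+(F)(c)\Big].
\]
Demanding this vanish forces $\tilde\mu_+ = \mu_+$ and $\nu_+ = -T_+(F) - \mu_+ I_+$; an identical argument on $[-1,0]$ (accounting for the orientation $\int_0^{-1} = -\int_{-1}^0$ built into the definitions of $T_-$ and $I_-$) forces $\tilde\mu_- = \mu_-$ and $\nu_- = -T_-(F) + \mu_- I_-$. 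With these reductions, $\Theta_+^1$ and $\Theta_-^{-1}$ each contain the single free constant $\mu_\pm$, and imposing continuity $\Theta_+^1(0,c) = \Theta_-^{-1}(0,c)$ together with $C^1$-matching $\pa_y\Theta_+^1(0,c) = \pa_y\Theta_-^{-1}(0,c)$ gives a $2\times 2$ linear system for $(\mu_+,\mu_-)$.

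A direct computation, using $\mathcal{H}(0,c) = c^2$ and the expression for $L(F)$ given above \eqref{mu+}, shows the determinant of this matching system equals exactly the Wronskian $\mathcal{D}(c)$ from \eqref{wronskian}. Since $c\in\Omega_{\ep_0}\setminus D_0$, the Stern stability condition $(\textbf{SS})$ and Lemma \ref{stern lemma} give $\mathcal{D}(c)\neq 0$, so Cramer's rule uniquely determines $\mu_\pm$; back-substituting through the reductions of the previous paragraph produces $\nu_\pm$, and the resulting closed-form expressions coincide with \eqref{mu+}--\eqref{nu-}. This simultaneously proves $\Theta_\pm^0 \equiv \Theta_\pm^{\pm 1}$ and that the piecewise-defined $\Theta$ is a $C^1$ solution of \eqref{eq: inhomo Sturmian}. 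Uniqueness follows because any two such solutions differ by an $H^1_0(-1,1)$ solution of the homogeneous Sturmian equation, which by Lemma \ref{stern lemma} must vanish identically. The main obstacle is the bookkeeping in the last step: identifying the matching determinant with $\mathcal{D}(c)$ and reproducing the specific numerator structure of \eqref{mu+}--\eqref{nu-}; the algebra is elementary but delicate because sign conventions enter through the orientations of the integrals $T_-$, $I_-$ and through the precise form of the combination $P(c) = \va_-(0,c)^2(\va_+\pa_y\va_+)(0,c) - \va_+(0,c)^2(\va_-\pa_y\va_-)(0,c)$ appearing in the $C^1$-matching row.
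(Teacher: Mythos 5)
Your proposal is correct and takes essentially the same route as the paper: the same reduction $\pa_y\big(\mathcal{H}\va_{\pm}^2\pa_y(\Th/\va_{\pm})\big)=\va_{\pm}F$, the same boundary, continuity and $C^1$-matching conditions at $y=0$ (using $\mathcal{H}(0,c)=c^2$), solvability from $\mathcal{D}(c)\neq 0$ via Lemma \ref{stern lemma}, and uniqueness from the absence of nontrivial $H^1_0$ solutions of the homogeneous equation. Your elimination of $\wt{\mu}_{\pm},\nu_{\pm}$ to a $2\times 2$ system for $(\mu_+,\mu_-)$ is just a condensed form of the paper's $4\times4$ system $W(\mu_+,\mu_-,\nu_+,\nu_-)^T=(-T_+,T_-,0,L)^T$, whose determinant is $\mathcal{D}(c)$ (your matching determinant agrees with it up to a sign fixed by the row ordering, which does not affect the argument).
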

\begin{proof}
Recall the solution $\va_{\pm}(y,c)$ of \eqref{eq: homo eq} obtained in Proposition \ref{prop: sol. hom. [0,1]} and Proposition \ref{prop: sol. hom. [-1,0]}. Then it is easy to check that the solution of \eqref{eq: inhomo Sturmian} satisfies
\beno
\pa_y\Big(\mathcal{H}(y,c)\va_{\pm}(y,c)^2\pa_y\big(\frac{\Th}{\va_{\pm}}\big)(y,c)\Big)
=\va_{\pm}(y,c)F(y,c).
\eeno
Then the solution of \eqref{eq: inhomo Sturmian} must have the following forms: \\
For $y\in[0,1]$ and $c\in \Om_{\ep_0}\setminus D_0$,
\beno
\begin{split}
\Th(y,c)&=\va_+(y,c)\int_0^y\frac{\int_{y_{c_+}}^{y'}(F\va_+)(y'',c)dy''}{\mathcal{H}(y',c)\va_+(y',c)^2}dy'\\
&\quad+\wt{\mu}_+(F)(c) \va_+(y,c)\int_0^y\frac{1}{\mathcal{H}(y',c)\va_+(y',c)^2}dy'
+\nu_+(F)(c)\va_+(y,c)
\end{split}
\eeno
and
\beno
\begin{split}
\Th(y,c)&=\va_+(y,c)\int_1^y\frac{\int_{y_{c_+}}^{y'}(F\va_+)(y'',c)dy''}{\mathcal{H}(y',c)\va_+(y',c)^2}dy'\\
&\quad+\mu_+(F)(c) \va_+(y,c)\int_1^y\frac{1}{\mathcal{H}(y',c)\va_+(y',c)^2}dy'.
\end{split}
\eeno
For $y\in[-1,0]$ and $c\in \Om_{\ep_0}\setminus D_0$,
\beno
\begin{split}
\Th(y,c)&=\va_-(y,c)\int_{-1}^y\frac{\int_{y_{c_-}}^{y'}(F\va_-)(y'',c)dy''}{\mathcal{H}(y',c)\va_-(y',c)^2}dy'\\
&\quad+\mu_-(F)(c) \va_-(y,c)\int_{-1}^y\frac{1}{\mathcal{H}(y',c)\va_-(y',c)^2}dy',
\end{split}
\eeno
and
\beno
\begin{split}
\Th(y,c)&=\va_-(y,c)\int_0^y\frac{\int_{y_{c_-}}^{y'}(F\va_-)(y'',c)dy''}{\mathcal{H}(y',c)\va_-(y',c)^2}dy'\\
&\quad +\wt{\mu}_-(F)(c) \va_-(y,c)\int_0^y\frac{1}{\mathcal{H}(y',c)\va_-(y',c)^2}dy'
+\nu_-(F)(c)\va_-(y,c).
\end{split}
\eeno
Using the boundary condition and the fact that $\Th(y,c)$ is a $C^1([-1,1])$ function, we obtain that the coefficients are determined by the following equation:
\beqno
 \left\{\begin{array}{l}
\mu_+(F)(c)=\wt{\mu}_+(F)(c), \ \
 \mu_-(F)(c)=\wt{\mu}_-(F)(c),\\
I_+(c)\mu_+(F)(c)+\nu_+(F)(c)=-T_+(F)(c),\\
I_-(c)\mu_-(F)(c)-\nu_-(F)(c)=T_-(F)(c),\\
\va_+(0,c)\nu_+(F)(c)-\va_-(0,c)\nu_-(F)(c)=0,\\
\va_-(0,c)\mu_+(F)(c)-\va_+(0,c)\mu_-(F)(c)
+c^2(\va_-\va_+\pa_y\va_+)(0,c)\nu_+(F)(c)\\
\ \  -c^2(\va_+\va_-\pa_y\va_-)(0,c)\nu_-(F)(c)=L(F)(c),
\end{array}\right.
\eeqno
which is
\begin{gather*}
W
 \begin{bmatrix}
 \mu_+(F)(c)\\
 \mu_-(F)(c)\\
 \nu_+(F)(c)\\
 \nu_-(F)(c)
  \end{bmatrix}
=
\begin{bmatrix}
 -T_+(F)(c)\\
 T_-(F)(c)\\
 0\\
 L(F)(c)
\end{bmatrix}.
\end{gather*}
Therefore we get from Lemma \ref{stern lemma},
 \beno
 \det(W)=c^2P(c)I_+(c)I_-(c)-\va_+(0,c)^2I_+(c)-\va_-(0,c)^2I_-(c)
 = \mathcal{D}(c)\neq0.
 \eeno
 Thus, by solving the matrix equations (\ref{matrix}),
 we can deduce that  $\mu_{\pm}(F)(c), \wt{\mu}_{\pm}(F)(c),\nu_{\pm}(F)(c)$
 satisfy (\ref{mu+}), (\ref{nu+}),  (\ref{mu-}) and (\ref{nu-}). The fact $\Th_{+}^0(y,c)\equiv \Th_{+}^1(y,c)$ and $\Th_{-}^0(y,c)\equiv \Th_{-}^1(y,c)$ can be obtained by the construction. The uniqueness of the solution can be obtained by Lemma \ref{stern lemma}.
Thus we proved the proposition.
\end{proof}

\subsection{The behavior of inhomogeneous solution}
\begin{lemma}\label{lem: T(F)}
Suppose $c_{\ep}=c+i\ep\in D_{\ep_0}$ with $c\in D_0\setminus\{0\}$ and $F\in C([-a,a]\times\Om_{\ep})$. Then we have
\beqno
\lim_{\ep\rightarrow0} T_{\pm}(F)(c_{\ep})=T_{\pm}(F)(c).
\eeqno
Suppose $c_{\ep}\in\Omega_{\ep_0}\setminus D_0$. Then there exists a constant $C>0$ such that,
\beno
\big|T_{\pm}(F)(c_{\ep})\big|\leq C\|F\|_{L^{\infty}}l\big(c_{\ep}\big).
\eeno
\end{lemma}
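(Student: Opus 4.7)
Both statements follow from a single pointwise bound on the integrand of $T_{+}(F)$ (the argument for $T_{-}$ is symmetric), followed by an explicit integration in $y$.

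For the pointwise bound, Proposition \ref{prop: sol. hom. [0,1]} gives $|\va_+|\leq C$ and $|\va_+|\geq \tfrac12$, so
\[
\left|\int_{y_{c_+}}^{y}F(z,c_{\ep})\va_+(z,c_{\ep})\,dz\right|\leq C\|F\|_{L^{\infty}}|y-y_{c_+}|.
\]
Using $|W_{\pm}(y)-c_{\ep}|^2=(W_{\pm}(y)-c_r)^2+\ep^2$ together with the monotonicity-based bound $|W_{\pm}(y)-c_r|\geq C^{-1}|y-y_{c_{\pm}}|$ (valid on the extended domain by Lemma \ref{lem: extend} under (M)) yields $|W_{\pm}(y)-c_{\ep}|\geq C^{-1}(|y-y_{c_{\pm}}|+|\ep|)$, hence
\[
|\mathcal{H}(y,c_{\ep})|\geq C^{-1}(|y-y_{c_+}|+|\ep|)(|y-y_{c_-}|+|\ep|).
\]
The integrand of $T_+(F)$ is therefore bounded by $C\|F\|_{L^{\infty}}/(|y-y_{c_-}|+|\ep|)$.

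Since $y_{c_-}\leq 0\leq y$ on $[0,1]$, $|y-y_{c_-}|=y+|y_{c_-}|$, and an explicit integration gives
\[
\int_0^{1}\frac{dy}{y+|y_{c_-}|+|\ep|}=\ln\!\Big(1+\frac{1}{|y_{c_-}|+|\ep|}\Big).
\]
The relation $|y_{c_-}|\geq C^{-1}|c_r|$ (from $W_{\pm}(0)=0$ and $|W_{\pm}'(0)|\geq c_0>0$) yields $|y_{c_-}|+|\ep|\geq C^{-1}|c_{\ep}|$, so this integral is at most $Cl(c_{\ep})$, proving $|T_+(F)(c_{\ep})|\leq C\|F\|_{L^{\infty}}l(c_{\ep})$ on all of $\Omega_{\ep_0}\setminus D_0$.

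For the continuity as $\ep\to 0$ with $c\in D_0\setminus\{0\}$ fixed, the integrand converges pointwise in $y$ thanks to Remark \ref{rmk: epsilon} (continuity of $\va_+$ in $c$), the continuity of $\mathcal{H}$, and the continuity of $F$. The pointwise bound above, taken with $\ep=0$, supplies the dominating function $C\|F\|_{L^{\infty}}/(y+|y_{c_-}|)$, which is integrable on $[0,1]$ precisely because $c\neq 0$ forces $y_{c_-}\neq 0$. Lebesgue's dominated convergence theorem then yields $T_+(F)(c_{\ep})\to T_+(F)(c)$, and the $T_-$ case is identical with $y_{c_+}$ playing the role of $y_{c_-}$.

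The main point requiring care is the uniform verification of the lower bound $|\mathcal{H}(y,c_{\ep})|\geq C^{-1}(|y-y_{c_+}|+|\ep|)(|y-y_{c_-}|+|\ep|)$ across the nine geometric configurations for $(W_{\pm}(\pm 1))$, especially when $c_{\ep}\in B_{\ep_0}^l\cup B_{\ep_0}^r$ and one of $y_{c_{\pm}}$ sits at an extended endpoint of $[a_-,a_+]$; but this is exactly what the strict monotonicity preserved by the $C^5$ extensions from Lemma \ref{lem: extend} ensures, so once the bound is in hand the argument above applies uniformly.
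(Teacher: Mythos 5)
Your proof is correct and takes essentially the same route as the paper's: a pointwise bound on the integrand by $C\|F\|_{L^{\infty}}/(|y-y_{c_-}|+|\ep|)\le C\|F\|_{L^{\infty}}/(|y|+|c_{\ep}|)$ (using $\tfrac12\le|\va_{\pm}|\le C$ and the lower bound on $|\mathcal{H}|$), an explicit logarithmic integration, and Lebesgue dominated convergence for the limit. Two harmless slips worth noting: the pairing $|W_{\pm}(y)-c_r|\ge C^{-1}|y-y_{c_{\pm}}|$ as written is valid for $c_r\ge 0$ but swaps ($W_-$ with $y_{c_+}$, $W_+$ with $y_{c_-}$) when $c_r\le 0$ — the product lower bound on $|\mathcal{H}|$, which is all you use, holds in both cases — and $|y_{c_-}|\ge C^{-1}|c_r|$ follows from the Lipschitz (upper) bound on $W_{\pm}'$ rather than from $|W_{\pm}'(0)|\ge c_0$.
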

\begin{proof}
It is easy to check that $|\mathcal{H}(y,c_{\ep})|\geq C^{-1}\big(\big|y^2-y_{c_{\pm}}^2\big|+\ep^2\big)$.
By Proposition \ref{prop: sol. hom. [0,1]} and Proposition \ref{prop: sol. hom. [-1,0]}, we have $C\geq |\va_{\pm}(y,c_{\ep})|\geq \frac{1}{2}$ and
\beno
\Big|\frac{\int_{y_{c_{\pm}}}^{y}F(z,c_{\ep})\va_{\pm}(z,c_{\ep})dz}{\mathcal{H}(y,c_{\ep})\va_{\pm}(y,c_{\ep})^2}
\Big|
\leq C\|F(y,c_{\ep})\|_{L^{\infty}}\|\va_{\pm}\|_{L^{\infty}}
\frac{|y-y_{c_{\pm}}|}{|\mathcal{H}(y,c_{\ep})|}\leq \f{C}{|y|+|c_{\ep}|},
\eeno
which directly implies
\beno
\big|T_{\pm}(F)(c_{\ep})\big|\leq C\|F\|_{L^{\infty}}\Big(\big|\ln|c_{\ep}|\big|+1\Big).
\eeno
Since $F(y,c_{\ep})$, $\mathcal{H}(y,c_{\ep})$ and $\va_{\pm}(y,c_{\ep})$ are continuous functions, then by Lebesgue's dominated convergence theorem, as $\ep\rightarrow0$, it holds that
\beqno
\lim_{\ep\rightarrow0} T_{\pm}(F)(c_{\ep})=T_{\pm}(F)(c).
\eeqno
Thus we complete the proof of the lemma.
\end{proof}

\begin{remark}\label{rmk: T(F)}
For $c\in D_0\setminus \{0\}$, there is a constant $C>0$ such that,
\beno
\big|T_{\pm}(F)(c)\big|\leq C\|F\|_{L^{\infty}}l\big(c_{\ep}\big).
\eeno
\end{remark}
\begin{proof}
For $c\in D_0\setminus \{0\}$, we have $|\mathcal{H}(y,c)|\geq C^{-1}\big|y^2-y_{c_{\pm}}^2\big|$, then
\beno
\big|T_{\pm}(F)(c)\big|\leq C\|F\|_{L^{\infty}}\left|\int_{0}^{\pm 1}\f{1}{|y|+|c|}dy\right|\leq C\|F\|_{L^{\infty}}l\big(c_{\ep}\big).
\eeno
Thus we complete the proof of the remark.
\end{proof}

In the following paper, for $c\in D_0\setminus\{0, W_+(1), W_+(-1), W_-(1),W_-(-1)\}$, let
\begin{align*}
\mathcal{U}_+^{re}(F)(c)&=-c^2P(c)T_+(F)(c)I^{re}_-(c)-\va_-(0,c)L(F)(c)I^{re}_-(c)\\
&\quad
+\va_+(0,c)^2T_+(F)(c)-(\va_+\va_-)(0,c)T_-(F)(c),
\end{align*}
\beqno
\mathcal{U}_+^{im}(F)(c)=-\frac{\pi c^2P(c)T_+(F)(c)\chi_-(c)}{\sigma_-(c)}
-\frac{\pi\va_-(0,c)L(F)(c)\chi_-(c)}{\sigma_-(c)},
\eeqno
\begin{align*}
\mathcal{U}^{re}_-(F)(c)&=c^2P(c)T_-(F)(c)I^{re}_+(c)+\va_+(0,c)L(F)(c)I^{re}_+(c)\\
&\ \
+(\va_+\va_-)(0,c)T_+(F)(c)-\va_-(0,c)^2T_-(F)(c),
\end{align*}
\begin{align*}
\mathcal{U}^{im}_-(F)(c)=\frac{\pi c^2P(c)T_-(F)(c)\chi_+(c)}{\sigma_+(c)}
+\frac{\pi\va_+(0,c)L(F)(c)\chi_+(c)}{\sigma_+(c)},
\end{align*}
\begin{align*}
\mathcal{V}_+^{re}(F)(c)&=\va_-(0,c)L(F)(c)I^{re}_+(c)I^{re}_-(c)
+(\va_+\va_-)(0,c)T_-(F)(c)I^{re}_+(c)\\
&\quad
+\va_-(0,c)^2T_+(F)(c)I^{re}_-(c)
-\frac{\pi^2\va_-(0,c)L(F)(c)\chi_+(c)\chi_-(c)}{\sigma_+(c)\sigma_-(c)},
\end{align*}
\begin{align*}
\mathcal{V}_+^{im}(F)(c)&=\pi\va_-(0,c)L(F)(c)
\Big(\frac{I^{re}_+(c)\chi_-(c)}{\sigma_-(c)}+\frac{I^{re}_-(c)\chi_+(c)}{\sigma_+(c)}\Big)\\
&\quad
+\frac{\pi(\va_+\va_-)(0,c)T_-(F)(c)\chi_+(c)}{\sigma_+(c)}
+\frac{\pi\va_-(0,c)^2T_+(F)(c)\chi_-(c)}{\sigma_-(c)},
\end{align*}
\begin{align*}
\mathcal{V}_-^{re}(F)(c)&=
\va_+(0,c)L(F)(c)I^{re}_+(c)I^{re}_-(c)
+\va_+(0,c)^2T_-(F)(c)I^{re}_+(c)\\
&\quad
+(\va_+\va_-)(0,c)T_+(F)(c)I^{re}_-(c)-\frac{\pi^2\va_+(0,c)L(F)(c)\chi_+(c)\chi_-(c)}{\sigma_+(c)\sigma_-(c)},
\end{align*}
\begin{align*}
\mathcal{V}_-^{im}(F)(c)&=\pi\va_+(0,c)L(F)(c)
\Big(\frac{I^{re}_+(c)\chi_-(c)}{\sigma_-(c)}+\frac{I^{re}_-(c)\chi_+(c)}{\sigma_+(c)}\Big)\\
&\quad
+\frac{\pi\va_+(0,c)^2T_-(F)(c)\chi_+(c)}{\sigma_+(c)}
+\frac{\pi(\va_+\va_-)(0,c)T_+(F)(c)\chi_-(c)}{\sigma_-(c)}.
\end{align*}
And we also introduce for $c\in D_0\setminus\{0, W_+(1), W_+(-1), W_-(1),W_-(-1)\}$,
\begin{align*}
\mu_+^+(F)(c)&=\frac{\mathcal{U}^{re}_+(F)(c)
+i\mathcal{U}^{im}_+(F)(c)}{\mathcal{D}^{re}(c)+ i\mathcal{D}^{im}(c)},
\ \  \mu_+^-(F)(c)=\frac{\mathcal{U}^{re}_+(F)(c)- i\mathcal{U}^{im}_+(F)(c)}{\mathcal{D}^{re}(c)- i\mathcal{D}^{im}(c)},\\
\mu_-^+(F)(c)&=\frac{\mathcal{U}^{re}_-(F)(c)+i\mathcal{U}^{im}_-(F)(c)}{\mathcal{D}^{re}(c)+ i\mathcal{D}^{im}(c)},\ \
\mu_-^-(F)(c)=\frac{\mathcal{U}^{re}_-(F)(c)-i\mathcal{U}^{im}_-(F)(c)}{\mathcal{D}^{re}(c)- i\mathcal{D}^{im}(c)},\\
\nu_+^+(F)(c)&=\frac{\mathcal{V}^{re}_+(F)(c)+i\mathcal{V}^{im}_+(F)(c)}{\mathcal{D}^{re}(c)+ i\mathcal{D}^{im}(c)}, \ \
\nu_+^-(F)(c)=\frac{\mathcal{V}^{re}_+(F)(c)-i\mathcal{V}^{im}_+(F)(c)}{\mathcal{D}^{re}(c)- i\mathcal{D}^{im}(c)},\\
\nu_-^+(F)(c)&=\frac{\mathcal{V}^{re}_-(F)(c)+i\mathcal{V}^{im}_-(F)(c)}
{\mathcal{D}^{re}(c)+ i\mathcal{D}^{im}(c)},\ \
\nu_-^-(F)(c)=\frac{\mathcal{V}^{re}_-(F)(c)-i\mathcal{V}^{im}_-(F)(c)}
{\mathcal{D}^{re}(c)- i\mathcal{D}^{im}(c)}.
\end{align*}
\begin{proposition}\label{prop: mu nu lim}
(1) Let $c_{\ep}=c\pm i\ep\in \Omega_{\ep_0}\setminus D_0$, $0<\ep<\ep_0$. There holds that
\begin{align*}
&|\mu_+(F)(c_{\ep})|\leq \frac{C\|F\|_{L^{\infty}}|c_{\ep}|l\big(c_{\ep}\big)
\Big(l\big(W_+(-1)-c_{\ep}\big)+l\big(W_-(-1)-c_{\ep}\big)\Big)}
{l\big(W_+(1)-c_{\ep}\big)l\big(W_-(1)-c_{\ep}\big)
l\big(W_+(-1)-c_{\ep}\big)l\big(W_-(-1)-c_{\ep}\big)},\\
&|\mu_-(F)(c_{\ep})|\leq \frac{C\|F\|_{L^{\infty}}|c_{\ep}|l\big(c_{\ep}\big)
\Big(l\big(W_+(1)-c_{\ep}\big)+l\big(W_-(1)-c_{\ep}\big)\Big)}
{l\big(W_+(1)-c_{\ep}\big)l\big(W_-(1)-c_{\ep}\big)
l\big(W_+(-1)-c_{\ep}\big)l\big(W_-(-1)-c_{\ep}\big)},\\
&|\nu_{\pm}(F)(c_{\ep})|\leq C\|F\|_{L^{\infty}}l\big(c_{\ep}\big).
\end{align*}
(2) For $c_{\ep}=c+i\ep$, $0<\ep<1$, $c\in D_0\setminus\{0, W_+(1), W_+(-1), W_-(1),W_-(-1)\}$, there holds that
\beno
\lim_{\ep\rightarrow0^{\pm}}\mu_+(F)(c_{\ep})=\mu_+^{\pm}(F)(c),\ \
\lim_{\ep\rightarrow0^{\pm}}\nu_+(F)(c_{\ep})=\nu_+^{\pm}(F)(c),
\eeno
\beno
\lim_{\ep\rightarrow0^{\pm}}\mu_-(F)(c_{\ep})=\mu_-^{\pm}(F)(c),\ \
\lim_{\ep\rightarrow0^{\pm}}\nu_-(F)(c_{\ep})=\nu_-^{\pm}(F)(c).
\eeno
\end{proposition}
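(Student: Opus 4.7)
The plan is to prove both parts by inserting the explicit formulas \eqref{mu+}, \eqref{nu+}, \eqref{mu-}, \eqref{nu-} for $\mu_{\pm}(F)(c)$ and $\nu_{\pm}(F)(c)$ and reducing everything to bounds and limits already established in this section. Throughout I write $L_1=l(W_+(1)-c_{\ep})$, $L_2=l(W_-(1)-c_{\ep})$, $L_3=l(W_+(-1)-c_{\ep})$, $L_4=l(W_-(-1)-c_{\ep})$ for brevity. The key inputs are: the upper bound $|1/\mathcal{D}(c_{\ep})|\le C|c_{\ep}|/(L_1L_2L_3L_4)$ from Remark \ref{rmk: D}; the bounds $|I_+(c_{\ep})|\le C L_1L_2/|c_{\ep}|$ and $|I_-(c_{\ep})|\le C L_3L_4/|c_{\ep}|$ from Lemma \ref{lem: I bdd}; the bound $|T_{\pm}(F)(c_{\ep})|\le C\|F\|_{L^\infty}l(c_{\ep})$ from Lemma \ref{lem: T(F)}; the bound $|P(c_{\ep})|\le C|c_{\ep}|$ from \eqref{eq: P est}; the uniform bound $|\varphi_{\pm}(0,c_{\ep})|\le C$ from Propositions \ref{prop: sol. hom. [0,1]} and \ref{prop: sol. hom. [-1,0]}; and finally $|L(F)(c_{\ep})|\le C\|F\|_{L^\infty}$, sharpened to $|L(F)(c_{\ep})|\le C\|F\|_{L^\infty}|c_{\ep}|$ when $|c_{\ep}|$ is small since $|y_{c_\pm}|\le C|c_{\ep}|$ there.

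For part (1), I will estimate term by term in \eqref{mu+}. The contribution $\varphi_-(0,c_{\ep})L(F)(c_{\ep})I_-(c_{\ep})$ dominates when $c_{\ep}$ is near an endpoint among $\{W_\pm(-1)\}$ (it produces $L_3L_4$), while $\varphi_+(0,c_{\ep})^2 T_+(F)(c_{\ep})$ and $(\varphi_+\varphi_-)(0,c_{\ep})T_-(F)(c_{\ep})$ contribute the $l(c_{\ep})$ factor and the $c_{\ep}^2P(c_{\ep})T_+(F)(c_{\ep})I_-(c_{\ep})$ piece is absorbed using $|c_{\ep}|^2\le C(L_3+L_4)/(L_3L_4)\cdot L_3L_4/C'$ (valid because $|c_{\ep}|$ is bounded and $1/L_3+1/L_4$ stays bounded below). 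Multiplying by $|1/\mathcal{D}|$ and collecting gives the asserted ratio with $L_3+L_4$ in the numerator and $L_1L_2L_3L_4$ in the denominator. The estimate for $\mu_-$ is strictly symmetric under $\{W_\pm(1)\}\leftrightarrow\{W_\pm(-1)\}$, i.e., $L_1,L_2\leftrightarrow L_3,L_4$. For $\nu_{\pm}$ the numerator contains the product $I_+(c_{\ep})I_-(c_{\ep})$ which supplies all four $L_j$ factors, cancelling those in the denominator of $1/\mathcal{D}(c_{\ep})$ and leaving only the $l(c_{\ep})$ coming from $T_{\pm}(F)$, which yields the simple bound $|\nu_{\pm}(F)(c_{\ep})|\le C\|F\|_{L^\infty}l(c_{\ep})$.

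For part (2), I pass to the limit $\ep\to 0^{\pm}$ term by term inside the formulas \eqref{mu+}--\eqref{nu-}. The denominator converges via $\mathcal{D}(c_{\ep})\to\mathcal{D}^{re}(c)\pm i\mathcal{D}^{im}(c)$ by Lemma \ref{lem: D>0}, and Lemma \ref{lem: D>0} also provides the lower bound $\mathcal{D}^{re}(c)^2+\mathcal{D}^{im}(c)^2\ge C^{-1}>0$ which legitimizes the division. The numerator limits come from Lemma \ref{lem: T(F)} ($T_{\pm}(F)(c_{\ep})\to T_{\pm}(F)(c)$), Lemmas \ref{lem: I limit+}--\ref{lem: I limit-} ($I_{\pm}(c_{\ep})\to I_{\pm}^{re}(c)\pm i\pi\chi_{\pm}(c)/\sigma_{\pm}(c)$), and continuity in $c$ of $L(F)(c)$, $P(c)$ and $\varphi_{\pm}(0,c)$ (the first by Lebesgue dominated convergence since $F\varphi_{\pm}\in L^\infty$, the latter two immediate from Propositions \ref{prop: sol. hom. [0,1]} and \ref{prop: sol. hom. [-1,0]}). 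Substituting these limits and separating real and imaginary parts matches exactly the definitions of $\mathcal{U}_{\pm}^{re}$, $\mathcal{U}_{\pm}^{im}$, $\mathcal{V}_{\pm}^{re}$, $\mathcal{V}_{\pm}^{im}$ given just before the statement, hence the claimed limits $\mu_{\pm}^{\pm}(F)(c)$ and $\nu_{\pm}^{\pm}(F)(c)$.

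The main obstacle is the algebraic bookkeeping in part (1): one must track how the logarithmic singularities of $I_\pm$ at the four endpoints interact with the vanishing of $P$ and $L(F)$ at $c=0$, and in particular use the refined bound $|L(F)|\le C\|F\|_{L^\infty}|c_{\ep}|$ near $c=0$ to absorb the $1/|c_{\ep}|$ coming from $I_{\pm}$ before multiplying by $|1/\mathcal{D}|$. The assumption $c\notin\{0,W_\pm(\pm 1)\}$ in part (2) is precisely what keeps each individual factor finite so that term-by-term passage to the limit is valid; the non-degeneracy of $\mathcal{D}^{re}\pm i\mathcal{D}^{im}$ provided by Lemma \ref{lem: D>0} is essential for the final division.
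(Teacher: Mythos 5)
Your proposal is correct and follows essentially the same route as the paper: insert the explicit formulas \eqref{mu+}--\eqref{nu-}, combine the upper bound on $|1/\mathcal{D}|$ from Remark \ref{rmk: D} with Lemma \ref{lem: I bdd}, Lemma \ref{lem: T(F)}, $|P(c_\ep)|\leq C|c_\ep|$ and $|L(F)(c_\ep)|\leq C\|F\|_{L^{\infty}}|c_\ep|$ for part (1), and pass to the limit term by term via Lemma \ref{lem: D>0}, Lemma \ref{lem: T(F)} and Lemmas \ref{lem: I limit+}--\ref{lem: I limit-} for part (2). The only cosmetic difference is that the refined bound $|L(F)(c_\ep)|\leq C\|F\|_{L^{\infty}}|c_\ep|$ (the paper's \eqref{eq: L(F) bdd}) holds on all of $\Omega_{\ep_0}$, not just near $c=0$, since $|y_{c_\pm}|\leq C|c_\ep|$ everywhere.
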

\begin{proof}
By Proposition \ref{prop: sol. hom. [0,1]} and Proposition \ref{prop: sol. hom. [-1,0]}, we have
$
|\pa_y\va_{\pm}(0,c_{\ep})|\leq C|c_{\ep}|,
$
which gives for $c_{\ep}\in\Omega_{\ep_0}$,
\beqno
\big|P(c_{\ep})\big|=\big|(\va_-^2\va_+\pa_y\va_+)(0,c_{\ep})
-(\va_+^2\va_-\pa_y\va_-)(0,c_{\ep})\big|\leq C|c_{\ep}\big|,
\eeqno
and we also have
\beq\label{eq: L(F) bdd}
\begin{split}
\big|L(F)(c_{\ep})\big|&=\Big|
\va_-(0,c_{\ep})\int_0^{y_{c_+}}(F\va_+)(y,c_{\ep})dy
-\va_+(0,c_{\ep})\int^{y_{c_-}}_0(F\va_-)(y,c_{\ep})dy\Big|\\
&\leq C|c_{\ep}|\|F\|_{L^{\infty}}.
\end{split}
\eeq
 From which and by using Lemma \ref{prop: D est},  Lemma \ref{lem: I bdd} and Lemma \ref{lem: T(F)} gives rise to
\begin{align*}
&|\mu_+(F)(c_{\ep})|\leq 
\frac{C\|F\|_{L^{\infty}}|c_{\ep}|\big(\big|\ln|c_{\ep}|\big|+1\big)
\Big(l\big(W_+(-1)-c_{\ep}\big)+l\big(W_-(-1)-c_{\ep}\big)\Big)}
{l\big(W_+(1)-c_{\ep}\big)l\big(W_-(1)-c_{\ep}\big)
l\big(W_+(-1)-c_{\ep}\big)l\big(W_-(-1)-c_{\ep}\big)},\\
&|\mu_-(F)(c_{\ep})|\leq \frac{C\|F\|_{L^{\infty}}|c_{\ep}|\big(\big|\ln|c_{\ep}|\big|+1\big)
\Big(l\big(W_+(1)-c_{\ep}\big)+l\big(W_-(1)-c_{\ep}\big)\Big)}
{l\big(W_+(1)-c_{\ep}\big)l\big(W_-(1)-c_{\ep}\big)
l\big(W_+(-1)-c_{\ep}\big)l\big(W_-(-1)-c_{\ep}\big)}.
\end{align*}
Similarly, we get
\beno
|\nu_{\pm}(F)(c_{\ep})|\leq C\|F\|_{L^{\infty}}\big(\big|\ln|c_{\ep}|\big|+1\big).
\eeno

On the other hand, for $c\in D_0\setminus\{0, W_+(1), W_+(-1), W_-(1),W_-(-1)\}$,
by Lemma \ref{lem: D>0}, Lemma \ref{lem: T(F)}, \eqref{I+ lim} and \eqref{I- lim},
we can easily get that
\beno
&\ & \lim_{\ep\rightarrow0^{\pm}}\mu_+(F)(c_{\ep})
=\mu_+^{\pm}(F)(c),\ \ \lim_{\ep\rightarrow0^{\pm}}\nu_+(F)(c_{\ep})=\nu_+^{\pm}(F)(c),\\
&\ & \lim_{\ep\rightarrow0^{\pm}}\mu_-(F)(c_{\ep})
=\mu_-^{\pm}(F)(c),\ \ \lim_{\ep\rightarrow0^{\pm}}\nu_-(F)(c_{\ep})=\nu_-^{\pm}(F)(c).
\eeno

Thus we complete the proof of Proposition \ref{prop: mu nu lim}.
\end{proof}
\begin{remark}\label{rmk: munu lim up rem}
From Proposition \ref{prop: mu nu lim}, we have for $c\in D_0\setminus\{0, W_+(1), W_+(-1), W_-(1),W_-(-1)\}$,
\begin{align*}
|\mu^{\pm}_+(F)(c)|&\leq\frac{C\|F\|_{L^{\infty}}|c|l(c)
\Big(l\big(W_+(-1)-c\big)+l\big(W_-(-1)-c\big)\Big)}
{l\big(W_+(1)-c\big)l\big(W_-(1)-c\big)
l\big(W_+(-1)-c\big)l\big(W_-(-1)-c\big)},\\
|\mu^{\pm}_-(F)(c)|&\leq\frac{C\|F\|_{L^{\infty}}|c|l(c)
\Big(l\big(W_+(1)-c\big)+l\big(W_-(1)-c\big)\Big)}
{l\big(W_+(1)-c\big)l\big(W_-(1)-c\big)
l\big(W_+(-1)-c\big)l\big(W_-(-1)-c\big)},\\
|\nu^{\pm}_{\pm}(F)(c)|&\leq C\|F\|_{L^{\infty}}l\big(c\big).
\end{align*}
\end{remark}

\begin{lemma}\label{lem: XYST bdd}
For $c\in D_0\setminus\{0, W_+(1), W_+(-1), W_-(1),W_-(-1)\}$, we have the following estimates:
\begin{align*}
|\mathcal{U}^{re}_+(F)(c)|&\leq C\|F\|_{L^{\infty}}l\big(W_+(-1)-c\big)l\big(W_-(-1)-c\big)l(c),\\
|\mathcal{U}^{re}_-(F)(c)|&\leq C\|F\|_{L^{\infty}}l\big(W_+(1)-c\big)l\big(W_-(1)-c\big)l(c),\\
|\mathcal{U}^{im}_{\pm}(F)(c)|&\leq C\|F\|_{L^{\infty}},\\
|\mathcal{V}^{re}_{\pm}(F)(c)|&\leq \frac{C\|F\|_{L^{\infty}}}{|c|}l\big(W_+(1)-c\big)l\big(W_-(1)-c\big)
l\big(W_+(-1)-c\big)l\big(W_-(-1)-c\big)l(c),\\
|\mathcal{V}^{im}_{\pm}(F)(c)|&\leq \frac{C\|F\|_{L^{\infty}}}{|c|}\Big(l\big(W_+(1)-c\big)l\big(W_-(1)-c\big)
+l\big(W_+(-1)-c\big)l\big(W_-(-1)-c\big)+l(c)\Big).
\end{align*}
\end{lemma}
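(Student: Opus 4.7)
The plan is to insert, term by term, the bounds already established earlier in the paper. The ingredients I would collect first are:
\begin{itemize}
\item $|P(c)|\leq C|c|$ (shown in the proof of Lemma \ref{lem: D>0}, equation \eqref{eq: P est}),
\item $|L(F)(c)|\leq C|c|\,\|F\|_{L^\infty}$ (equation \eqref{eq: L(F) bdd}),
\item $|T_{\pm}(F)(c)|\leq C\|F\|_{L^\infty}\,l(c)$ (Remark \ref{rmk: T(F)}),
\item $|I^{re}_{\pm}(c)|\leq C|c|^{-1}\,l(W_+(\pm 1)-c)\,l(W_-(\pm 1)-c)$ (Lemma \ref{lem: limit up bdd}),
\item $|\sigma_{\pm}(c)|\geq C^{-1}|c|$ (Remark \ref{rmk: sigma bdd}),
\item $|\varphi_{\pm}(0,c)|\leq C$, $|\chi_{\pm}(c)|\leq 1$ (Propositions \ref{prop: sol. hom. [0,1]}, \ref{prop: sol. hom. [-1,0]}).
\end{itemize}
Since $c\in D_0$, the modulus $|c|$ is bounded, so positive powers of $|c|$ may freely be absorbed into $C$.

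Next, for each of the eight quantities I will apply the triangle inequality to its defining sum and bound each summand by the above. For instance, in $\mathcal{U}^{re}_+(F)(c)$ the term $c^2P(c)T_+(F)(c)I^{re}_-(c)$ is bounded by
\[
C|c|^3\|F\|_{L^\infty}l(c)\cdot\frac{l(W_+(-1)-c)\,l(W_-(-1)-c)}{|c|}\leq C\|F\|_{L^\infty}\,l(c)\,l(W_+(-1)-c)\,l(W_-(-1)-c),
\]
while the remaining three summands are dominated by the same expression; this yields the first claim. The bound on $\mathcal{U}^{re}_-$ is completely analogous, swapping the roles of $W_{\pm}(1)$ and $W_{\pm}(-1)$. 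For $\mathcal{U}^{im}_{\pm}$, the gain $|P(c)|\leq C|c|$ together with $|\sigma_{\mp}(c)|\geq C^{-1}|c|$ (for one term) and $|L(F)(c)|\leq C|c|\|F\|_{L^\infty}$ together with $|\sigma_{\mp}(c)|\geq C^{-1}|c|$ (for the other) cancels the singular factor, leaving the uniform bound $C\|F\|_{L^\infty}$.

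The quantities $\mathcal{V}^{re}_{\pm}$ and $\mathcal{V}^{im}_{\pm}$ are handled in the same way, but now each summand contains one factor of $I^{re}_{\pm}$ or $\sigma_{\pm}^{-1}$ that produces a single $|c|^{-1}$, and the numerator factor $L(F)$ contributes an extra $|c|$; the bookkeeping is that for $\mathcal{V}^{re}_{\pm}$ the worst summand is $\varphi_{\mp}(0,c)L(F)(c)I^{re}_+(c)I^{re}_-(c)$, giving $|c|^{-1}\,l(W_+(1)-c)l(W_-(1)-c)l(W_+(-1)-c)l(W_-(-1)-c)$, whereas for $\mathcal{V}^{im}_{\pm}$ only two of the four logarithmic factors appear in each summand, which is why the claimed bound is a sum of two such products plus a remainder $l(c)$ coming from the $T_{\pm}(F)$ terms.

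There is no real obstacle here: each summand is a product of a known bounded or controlled quantity, and the only care needed is to track which $l(W_{\pm}(\pm 1)-c)$ factors are produced by $I^{re}_+$ versus $I^{re}_-$ and to verify that cancellation between $|c|$ from $P$ or $L(F)$ and $|c|^{-1}$ from $\sigma_{\pm}^{-1}$ or $I^{re}_{\pm}$ gives exactly the powers of $|c|$ claimed. The mildly delicate point is the imaginary bounds, where the presence of $\chi_{\pm}(c)$ restricts the summation to cases in which the corresponding $\sigma_{\pm}(c)^{-1}$ factor is the one that we are allowed to cancel against $|c|$ from $P$ or $L(F)$; this is automatic from the definitions of $\chi_{\pm}$.
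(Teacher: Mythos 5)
Your proposal is correct and follows essentially the same route as the paper: both bound each summand in the definitions of $\mathcal{U}^{re/im}_{\pm}$ and $\mathcal{V}^{re/im}_{\pm}$ via the triangle inequality, inserting $|P(c)|\leq C|c|$ from \eqref{eq: P est}, $|L(F)(c)|\leq C|c|\|F\|_{L^{\infty}}$ from \eqref{eq: L(F) bdd}, Remark \ref{rmk: T(F)}, Lemma \ref{lem: limit up bdd} and Remark \ref{rmk: sigma bdd}, and absorbing bounded factors such as $|c|^2 l(c)$ into the constant. The bookkeeping of which logarithmic factors come from $I^{re}_{\pm}$ and of the $|c|$-cancellations matches the paper's computation.
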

\begin{proof}
By Lemma \ref{lem: limit up bdd}, Remark \ref{rmk: T(F)}, \eqref{eq: P est} and  \eqref{eq: L(F) bdd}, we can get
\begin{align*}
|\mathcal{U}^{re}_+(F)(c)|&\leq C\|F\|_{L^{\infty}}|c|^2\big(\big|\ln|c|\big|+1\big)\Big(1+\big|\ln|W_+(-1)-c|\big|
+\big|\ln|c-W_-(-1)|\big|\Big)\nonumber\\
&\quad +C\|F\|_{L^{\infty}}\Big(1+\big|\ln|W_+(-1)-c|\big|+
\big|\ln|c-W_-(-1)|\big|\Big)+C\big(\big|\ln|c|\big|+1\big)\nonumber\\
&\leq C\|F\|_{L^{\infty}}\Big(1+\big|\ln|W_+(-1)-c|\big|+\big|\ln|c-W_-(-1)|\big|
+\big|\ln|c|\big|\Big)
\end{align*}
and 
\begin{align*}
|\mathcal{U}^{re}_-(F)(c)|&\leq C\|F\|_{L^{\infty}}|c|^2\big(\big|\ln|c|\big|+1\big)\Big(1+\big|\ln|W_+(1)-c|\big|
+\big|\ln|c-W_-(1)|\big|\Big)\nonumber\\
&\quad +C\|F\|_{L^{\infty}}\Big(1+\big|\ln|W_+(1)-c|\big|+
\big|\ln|c-W_-(1)|\big|\Big)+C\big(\big|\ln|c|\big|+1\big)\nonumber\\
&\leq C\|F\|_{L^{\infty}}\Big(1+\big|\ln|W_+(1)-c|\big|+\big|\ln|c-W_-(1)|\big|
+\big|\ln|c|\big|\Big)
\end{align*}
From Remark \ref{rmk: sigma bdd}, Remark \ref{rmk: T(F)},  \eqref{eq: P est} and  \eqref{eq: L(F) bdd}, we have
\begin{align*}
|\mathcal{U}^{im}_{\pm}(F)(c)|\leq C\|F\|_{L^{\infty}}+C\|F\|_{L^{\infty}}|c|^2\big(1+\big|\ln|c|\big|\big)\leq C\|F\|_{L^{\infty}}.
\end{align*}
Similarly, combining Lemma \ref{lem: limit up bdd}, Remark \ref{rmk: sigma bdd}, Remark \ref{rmk: T(F)} and \eqref{eq: L(F) bdd}, we deduce
\begin{align*}
|\mathcal{V}^{re}_{\pm}(F)(c)|
&\leq \frac{C\|F\|_{L^{\infty}}}{|c|}\Big\{\Big(1+\big|\ln|W_+(1)-c|\big|+\big|
\ln|c-W_-(1)|\big|\Big)\\
&\ \ \times\Big(1+\big|\ln|W_+(-1)-c|\big|+\big|
\ln|c-W_-(-1)|\big|\Big)\Big\}\\
&\quad +\frac{C\|F\|_{L^{\infty}}}{|c|}\big(1+\big|\ln|c|\big|\big)
\Big\{1+\big|\ln|W_+(1)-c|\big|+\big|
\ln|c-W_-(1)|\big|\\
&\ \ +\big|\ln|W_+(-1)-c|\big|+\big|
\ln|c-W_-(-1)|\big|\Big\}+\frac{C\|F\|_{L^{\infty}}}{|c|}\\
&\leq \frac{C\|F\|_{L^{\infty}}}{|c|}l\big(W_+(1)-c\big)l\big(W_-(1)-c\big)
l\big(W_+(-1)-c\big)l\big(W_-(-1)-c\big)l(c),
\end{align*}
and
\begin{align*}
|\mathcal{V}^{im}_{\pm}(F)(c)|
&\leq C\|F\|_{L^{\infty}}\Big(1+\big|\ln|W_+(1)-c|\big|+\big|
\ln|c-W_-(1)|\big|+\big|\ln|W_+(-1)-c|\big|\\
&\ \ +\big|
\ln|c-W_-(-1)|\big|\Big)+\frac{C\|F\|_{L^{\infty}}\big(1+\big|\ln|c|\big|\big)}{|c|}\\
&\leq \frac{C\|F\|_{L^{\infty}}}{|c|}\Big(1+\big|\ln|W_+(1)-c|\big|
+\big|\ln|c-W_-(1)|\big|\\
&\ \ +\big|\ln|W_+(-1)-c|\big|+\big|
\ln|c-W_-(-1)|\big|+\big|\ln|c|\big|\Big)\\
&\leq \frac{C\|F\|_{L^{\infty}}}{|c|}\Big(l\big(W_+(1)-c\big)+l\big(W_-(1)-c\big)
+l\big(W_+(-1)-c\big)\\
&\ \ +l\big(W_-(-1)-c\big)+l(c)\Big).
\end{align*}
Thus we prove the lemma.
\end{proof}
\begin{proposition}\label{prop: F y bdd lim}
1. Let $c_{\ep}=c+i\ep\in D_{\ep_0}\cup D_0$. Then it holds that,\\
for $0\leq y<y_{c_+}\leq 1$ or $0\leq y\leq 1<y_{c_+}\leq a_+$
\begin{align*}
&\left|\va_+(y,c_{\ep})
\int_0^y\frac{1}{\mathcal{H}(y',c_{\ep})\va_+(y',c_{\ep})^2}dy'\right|
\leq  \frac{C\left(\big|\ln|y-y_{c_+}|\big|+1\right)}{|c_{\ep}|},\\
&\left|\va_+(y,c_{\ep})
\int_0^y\frac{\int_{y_{c_+}}^{y'}(F\va_+)(z,c_{\ep})dz}
{\mathcal{H}(y',c_{\ep})\va_+(y',c_{\ep})^2}dy'\right|
\leq C\|F\|_{L^{\infty}}\big(\big|\ln(|y|+|c_{\ep}|)\big|+1\big),
\end{align*}
and for $0\leq y_{c_+}<y\leq 1$,
\begin{align*}
&\left|\va_+(y,c_{\ep})
\int_1^y\frac{1}{\mathcal{H}(y',c_{\ep})\va_+(y',c_{\ep})^2}dy'\right|
\leq \frac{C\left(\big|\ln|y-y_{c_+}|\big|+1\right)}{|y|},\\
&\left|\va_+(y,c_{\ep})
\int_1^y\frac{\int_{y_{c_+}}^{y'}(F\va_+)(z,c_{\ep})dz}
{\mathcal{H}(y',c_{\ep})\va_+(y',c_{\ep})^2}dy'\right|
\leq C\|F\|_{L^{\infty}}\big(\big|\ln(|y|+|c_{\ep}|)\big|+1\big);
\end{align*}
and for $-1\leq y<y_{c_-}\leq 0$,
\begin{align*}
&\left|\va_-(y,c_{\ep})
\int_{-1}^y\frac{1}{\mathcal{H}(y',c_{\ep})\va_-(y',c_{\ep})^2}dy'\right|\leq \frac{C\left(\big|\ln|y-y_{c_-}|\big|+1\right)}{|y|},\\
&\left|\va_-(y,c_{\ep})
\int_{-1}^y\frac{\int_{y_{c_-}}^{y'}(F\va_-)(z,c_{\ep})dz}
{\mathcal{H}(y',c_{\ep})\va_-(y',c_{\ep})^2}dy'\right|
\leq C\|F\|_{L^{\infty}}\big(\big|\ln(|y|+|c_{\ep}|)\big|+1\big),
\end{align*}
and for $-1\leq y_{c_-}<y \leq0$ or $a_-\leq y_{c_-}<-1 \leq y\leq 0$,
\begin{align*}
&\left|\va_-(y,c_{\ep})
\int_0^y\frac{1}{\mathcal{H}(y',c_{\ep})\va_-(y',c_{\ep})^2}dy'\right|
\leq \frac{C\left(\big|\ln|y-y_{c_-}|\big|+1\right)}{|c_{\ep}|},\\
&\left|\va_-(y,c_{\ep})
\int_0^y\frac{\int_{y_{c_-}}^{y'}(F\va_-)(z,c_{\ep})dz}
{\mathcal{H}(y',c_{\ep})\va_-(y',c_{\ep})^2}dy'\right|
\leq C\|F\|_{L^{\infty}}\big(\big|\ln(|y|+|c_{\ep}|)\big|+1\big).
\end{align*}

2. Let $c_{\ep}=c+i\ep\in D_{\ep_0}$. Then it holds that for $0\leq y<y_{c_+}\leq 1$ or $0\leq y\leq1<y_{c_+}\leq a_+$,
\beno
\lim_{\ep\rightarrow0}\va_+(y,c_{\ep})
\int_0^y\frac{1}{\mathcal{H}(y',c_{\ep})\va_+(y',c_{\ep})^2}dy'=\va_+(y,c)
\int_0^y\frac{1}{\mathcal{H}(y',c)\va_+(y',c)^2}dy',
\eeno
\beno
\lim_{\ep\rightarrow0}\va_+(y,c_{\ep})
\int_0^y\frac{\int_{y_{c_+}}^{y'}(F\va_+)(z,c_{\ep})dz}
{\mathcal{H}(y',c_{\ep})\va_+(y',c_{\ep})^2}dy'=\va_+(y,c)
\int_0^y\frac{\int_{y_{c_+}}^{y'}(F\va_+)(z,c)dz}
{\mathcal{H}(y',c)\va_+(y',c)^2}dy',
\eeno
and for $0\leq y_{c_+}<y\leq 1$,
\beno
\lim_{\ep\rightarrow0}\va_+(y,c_{\ep})
\int_1^y\frac{1}{\mathcal{H}(y',c_{\ep})\va_+(y',c_{\ep})^2}dy'=\va_+(y,c)
\int_1^y\frac{1}{\mathcal{H}(y',c)\va_+(y',c)^2}dy',
\eeno
\beno
\lim_{\ep\rightarrow0}\va_+(y,c_{\ep})
\int_1^y\frac{\int_{y_{c_+}}^{y'}(F\va_+)(z,c_{\ep})dz}
{\mathcal{H}(y',c_{\ep})\va_+(y',c_{\ep})^2}dy'=\va_+(y,c)
\int_1^y\frac{\int_{y_{c_+}}^{y'}(F\va_+)(z,c)dz}
{\mathcal{H}(y',c)\va_+(y',c)^2}dy'.
\eeno
For $-1\leq y<y_{c_-}\leq 0$,
\beno
\lim_{\ep\rightarrow0}\va_-(y,c_{\ep})
\int_{-1}^y\frac{1}{\mathcal{H}(y',c_{\ep})\va_-(y',c_{\ep})^2}dy'=\va_-(y,c)
\int_{-1}^y\frac{1}{\mathcal{H}(y',c)\va_-(y',c)^2}dy',
\eeno
\beno
\lim_{\ep\rightarrow0}\va_-(y,c_{\ep})
\int_{-1}^y\frac{1}{\mathcal{H}(y',c_{\ep})\va_-(y',c_{\ep})^2}dy'=\va_-(y,c)
\int_{-1}^y\frac{1}{\mathcal{H}(y',c)\va_-(y',c)^2}dy',
\eeno
and for $-1\leq y_{c_-}<y \leq0$ or $a_-\leq y_{c_-}<-1 \leq y\leq 0$,
\beno
\lim_{\ep\rightarrow0}\va_-(y,c_{\ep})
\int_0^y\frac{1}{\mathcal{H}(y',c_{\ep})\va_-(y',c_{\ep})^2}dy'=\va_-(y,c)
\int_0^y\frac{1}{\mathcal{H}(y',c)\va_-(y',c)^2}dy',
\eeno
\beno
\lim_{\ep\rightarrow0}\va_-(y,c_{\ep})
\int_0^y\frac{1}{\mathcal{H}(y',c_{\ep})\va_-(y',c_{\ep})^2}dy'=\va_-(y,c)
\int_0^y\frac{1}{\mathcal{H}(y',c)\va_-(y',c)^2}dy'.
\eeno

%3. Let $c\in D_0$. Then it holds that for $0\leq y<y_{c_+}\leq 1$,
%\begin{align*}
%\Big|\va_+(y,c)
%\int_0^y\frac{1}{\mathcal{H}(y',c)\va_+(y',c)^2}dy'\Big|&\leq  \frac{C\big|\ln|y-y_{c_+}|\big|}{|c|},\\
%\Big|\va_+(y,c)
%\int_0^y\frac{\int_{y_{c_+}}^{y'}(F\va_+)(z,c)dz}
%{\mathcal{H}(y',c)\va_+(y',c)^2}dy'\Big|&\leq C\big(\big|\ln|y-y_{c_-}|\big|+1\big),
%\end{align*}
%and for $0\leq y_{c_+}<y\leq 1$,
%\begin{align*}
%\Big|\va_+(y,c)
%\int_1^y\frac{1}{\mathcal{H}(y',c)\va_+(y',c)^2}dy'\Big|&\leq \frac{C\big|\ln|y-y_{c_+}|\big|}{|y|},\\
%\Big|\va_+(y,c)
%\int_1^y\frac{\int_{y_{c_+}}^{y'}(F\va_+)(z,c)dz}
%{\mathcal{H}(y',c)\va_+(y',c)^2}dy'\Big|&\leq C\big(\big|\ln|y-y_{c_-}|\big|+1\big).
%\end{align*}
%For $-1\leq y<y_{c_-}\leq 0$,
%\begin{align*}
%\Big|\va_-(y,c)
%\int_{-1}^y\frac{1}{\mathcal{H}(y',c)\va_-(y',c)^2}dy'\Big|&\leq \frac{C\big|\ln|y-y_{c_-}|\big|}{|y|},\\
%\Big|\va_-(y,c)
%\int_{-1}^y\frac{\int_{y_{c_-}}^{y'}(F\va_-)(z,c)dz}
%{\mathcal{H}(y',c)\va_-(y',c)^2}dy'\Big|&\leq C\big(\big|\ln |y-y_{c_+}|\big|+1\big),
%\end{align*}
%and for $-1\leq y_{c_-}<y \leq0$,
%\begin{align*}
%\Big|\va_-(y,c)
%\int_0^y\frac{1}{\mathcal{H}(y',c)\va_-(y',c)^2}dy'\Big|&\leq \frac{C\big|\ln|y-y_{c_-}|\big|}{|c|},\\
%\Big|\va_-(y,c)
%\int_0^y\frac{\int_{y_{c_-}}^{y'}(F\va_-)(z,c)dz}
%{\mathcal{H}(y',c)\va_-(y',c)^2}dy'\Big|&\leq  C\big(\big|\ln |y-y_{c_+}|\big|+1\big).
%\end{align*}
\end{proposition}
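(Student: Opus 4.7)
The proof rests on pointwise estimates for the integrands combined with partial-fraction analysis of $1/\mathcal{H}$, together with Lebesgue dominated convergence for the limit statements. By Propositions~\ref{prop: sol. hom. [0,1]} and \ref{prop: sol. hom. [-1,0]}, we have $\tfrac12 \le |\va_{\pm}(y,c_\ep)| \le C$ throughout $[a_-,a_+]\times \Om_{\ep_0}$, so all $\va_\pm$ factors may be absorbed into the constant. The monotonicity of $W_\pm$ and the extension procedure give the fundamental lower bound
$$|\mathcal{H}(y',c_\ep)| \;=\; |W_+(y')-c_\ep|\,|W_-(y')-c_\ep| \;\ge\; C^{-1}\bigl(|y'-y_{c_+}|^2+\ep^2\bigr)^{1/2}\bigl(|y'-y_{c_-}|^2+\ep^2\bigr)^{1/2},$$
so it suffices to estimate the real model integrals with singularities at $y'=y_{c_+}$ and $y'=y_{c_-}$.

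For the first inequality (integration from $0$, $y<y_{c_+}$), I would apply the partial fraction decomposition
$$\frac{1}{(y_{c_+}-y')(y'-y_{c_-})} \;=\; \frac{1}{y_{c_+}-y_{c_-}}\left(\frac{1}{y_{c_+}-y'}+\frac{1}{y'-y_{c_-}}\right),$$
and use $|y_{c_+}-y_{c_-}|\ge C^{-1}|c_\ep|$ to pull the factor $1/|c_\ep|$ outside; the two logarithmic integrals over $[0,y]$ produce terms of the form $|\ln|y-y_{c_+}||$, $|\ln|y_{c_-}||$, etc., all bounded by $|\ln|y-y_{c_+}||+C$. For the bound with integration from $1$ and $y>y_{c_+}$, the same partial fraction applies but the bounded factor comes from $1/|y|$ instead of $1/|c_\ep|$, because the endpoint $y'=1$ contributes $|\ln|1-y_{c_+}||+|\ln|1-y_{c_-}||\le C$ and the dangerous contribution is at $y'=y$ where $y'-y_{c_-}\sim y$. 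For the $F$-weighted integrals, I would exploit the Lipschitz-type bound
$$\Bigl|\int_{y_{c_+}}^{y'}(F\va_+)(z,c_\ep)\,dz\Bigr| \;\le\; C\|F\|_{L^\infty}|y'-y_{c_+}|,$$
which cancels one factor of $W_+(y')-c_\ep$ in the denominator. What remains is $\int \tfrac{dy'}{|y'-y_{c_-}|\vee \ep}$, yielding the logarithmic bound $C\|F\|_{L^\infty}(|\ln(|y|+|c_\ep|)|+1)$. The $\va_-$ estimates on $d_-$ are obtained by the obvious symmetry.

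For Part~2, the pointwise convergence of the integrands $(F\va_+)(z,c_\ep)/(\mathcal{H}(y',c_\ep)\va_+(y',c_\ep)^2)$ as $\ep\to 0$ is immediate from the continuity of $\va_+$, $F$, and $\mathcal{H}$ in $c$ on $d_+\times(D_{\ep_0}\cup D_0)$, on the complement of the singular set $\{y'=y_{c_+}\}\cup\{y'=y_{c_-}\}$, which has measure zero. Part~1 furnishes an $\ep$-independent integrable majorant (specifically, the $y'$-integrand is dominated by a constant times $1/(|y'-y_{c_+}|+|\ep|)\cdot 1/(|y'-y_{c_-}|+|\ep|)$, multiplied by the Lipschitz factor when $F$ is present), so Lebesgue dominated convergence yields the desired passage to the limit; the outer $y'$ integration is handled the same way.

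The main obstacle is the case separation: one must carefully distinguish whether the integration segment contains $y_{c_+}$ (in which case the integral is improper and one must verify via partial fractions that the singularity is only logarithmic, not a principal value), and whether the $1/|c_\ep|$ factor arises from $|y_{c_+}-y_{c_-}|$ or whether it is replaced by $1/|y|$ when one of the endpoints is separated from the singularity. Careful bookkeeping of these cases, together with the fact that the extension of $W_\pm$ past $y=\pm 1$ via Lemma~\ref{lem: extend} maintains monotonicity so that the partial fraction identity continues to hold whenever $y_{c_\pm}\in[a_-,a_+]$, yields all the stated bounds and limits.
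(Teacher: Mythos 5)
Your overall scheme --- absorbing $\va_\pm$ via $\tfrac12\le|\va_\pm|\le C$, reducing to lower bounds on $|\mathcal H|$, using the cancellation $\big|\int_{y_{c_+}}^{y'}(F\va_+)(z,c_\ep)dz\big|\le C\|F\|_{L^\infty}|y'-y_{c_+}|$ for the $F$-weighted integrals, and finishing Part 2 by dominated convergence --- is the same as the paper's. The genuine gap is in the partial-fraction step, specifically the inequality $|y_{c_+}-y_{c_-}|\ge C^{-1}|c_\ep|$ that you use to extract the factor $1/|c_\ep|$. The points $y_{c_\pm}$ are determined by $c_r=\mathrm{Re}\,c_\ep$ alone, and monotonicity of $W_\pm$ with $W_\pm(0)=0$ gives $|y_{c_+}-y_{c_-}|\le C|c_r|$; in $D_{\ep_0}$ the imaginary part is not coupled to $c_r$, so for $|c_r|\ll\ep\sim\ep_0$ one has $|y_{c_+}-y_{c_-}|\sim|c_r|\ll|c_\ep|$ and your inequality fails. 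In that regime your argument only yields the weaker bound $C\big(|\ln|y-y_{c_+}||+1\big)/|c_r|$ (degenerating as $c_r\to0$), not the stated $C\big(|\ln|y-y_{c_+}||+1\big)/|c_\ep|$; the same defect appears in the symmetric $\va_-$ estimate with $\int_0^y$. Your treatment of the $1/|y|$ case is also phrased as ``the same partial fraction,'' although the prefactor $1/|y_{c_+}-y_{c_-}|$ does not produce $1/|y|$; what saves you there is your parenthetical remark (the non-singular factor is of size $|y'|\ge|y|$ on the integration range), which is a direct estimate, not partial fractions.

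The repair is exactly the paper's route: do not split $1/\mathcal H$; instead take the $\ep$-gain from the imaginary part of the non-vanishing factor. For $y'\in d_+$ and $c_\ep\in D_{\ep_0}\cup D_0$, the factor of $\mathcal H(y',c_\ep)$ vanishing at $y_{c_+}$ is bounded below by $C^{-1}|y'-y_{c_+}|$, while the other factor satisfies $\ge C^{-1}(|y'|+|c_r|+|\ep|)\ge C^{-1}(|y'|+|c_\ep|)$, so that $\big|\mathcal H(y',c_\ep)\va_+(y',c_\ep)^2\big|^{-1}\le C\big(|y'-y_{c_+}|(|y'|+|c_\ep|)\big)^{-1}$. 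Bounding $|y'|+|c_\ep|\ge|c_\ep|$ and integrating $1/|y'-y_{c_+}|$ over $[0,y]$ gives the $1/|c_\ep|$ bound; bounding $|y'|+|c_\ep|\ge|y|$ on $[y,1]$ gives the $1/|y|$ bound; and keeping $|y'|+|c_\ep|$ in the $F$-weighted integrals produces $\ln\frac{1+|c_\ep|}{|y|+|c_\ep|}$-type quantities, i.e.\ the stated $C\|F\|_{L^\infty}\big(|\ln(|y|+|c_\ep|)|+1\big)$. With these corrected pointwise bounds (which are uniform in $\ep$), your dominated-convergence argument for Part 2 goes through as written.
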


\begin{proof}
We consider the case of $y\in[0,1]$ and the case of $y\in[-1,0]$ can be proved by the same argument.

By Proposition \ref{prop: sol. hom. [0,1]}, we get for $c\in D_{\ep_0}\cup D_0$
\beno
\left|\frac{1}{\mathcal{H}(y',c)\va_+(y',c)^2}\right|\leq \f{C}{|y'-y_{c_+}|(|y'|+|c|)},
\eeno
and
\beno
\left|\frac{\int_{y_{c_+}}^{y'}(F\va_+)(z,c)dz}
{\mathcal{H}(y',c)\va_+(y',c)^2}\right|\leq \f{C\|F\|_{L^{\infty}}}{|y'|+|c|},
\eeno
which implies for $0\leq y<y_{c_+}\leq 1$  or $0\leq y\leq1<y_{c_+}\leq a_+$,
\begin{align*}
&\Big|\va_+(y,c)
\int_0^y\frac{1}{\mathcal{H}(y',c)\va_+(y',c)^2}dy'\Big|\leq
\frac{C\left(\big|\ln|y_{c_+}-y|\big|+1\right)}{|c|}\\
&\Big|\va_+(y,c)\int_0^y\frac{\int_{y_{c_+}}^{y'}(F\va_+)(z,c)dz}
{\mathcal{H}(y',c)\va_+(y',c)^2}dy'\Big|\leq C\|F\|_{L^{\infty}}\big(\big|\ln(|y|+|c|)\big|+1\big).
\end{align*}
and for $0\leq y_{c_+}<y\leq 1$,
\begin{align*}
&\Big|\va_+(y,c)
\int_1^y\frac{1}{\mathcal{H}(y',c)\va_+(y',c)^2}dy'\Big|\leq
\frac{C\left(\big|\ln|y_{c_+}-y|\big|+1\right)}{|y|}\\
&\Big|\int_1^y\va_+(y,c)\frac{\int_{y_{c_+}}^{y'}(F\va_+)(z,c)dz}
{\mathcal{H}(y',c)\va_+(y',c)^2}dy'\Big|\leq C\|F\|_{L^{\infty}}\big(\big|\ln(|y|+|c|)\big|+1\big)
\end{align*}

Since $F(y,c_{\ep})$, $\mathcal{H}(y,c_{\ep})$ and $\va_{\pm}(y,c_{\ep})$ are continuous functions, then by Lebesgue's dominated convergence theorem, as $\ep\rightarrow0$, we can obtain the second part.
\end{proof}
\begin{proposition}\label{prop: Br Bl Theta bdd}
Let $c_{\ep}\in B_{\ep_0}^l$ or $c_{\ep}\in B_{\ep_0}^r$. Then it holds that
\beno
|\Th(y,c_{\ep})|\leq  C\|F\|_{L^{\infty}}.
\eeno
\end{proposition}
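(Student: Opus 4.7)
The strategy is to apply Proposition \ref{prop: inhom solu} and, for each $y$-range, choose whichever of the two equivalent representations $\Th_\pm^0$ or $\Th_\pm^{\pm 1}$ is more tractable. The dichotomy is governed by where the critical point $y_{c_\pm}$ lies: if $y_{c_+}=1$ I use $\Th_+^1$ (and symmetrically $\Th_-^{-1}$ when $y_{c_-}=-1$), whose integrations $\int_{\pm1}^y\cdots$ and $\int_{y_{c_\pm}}^{y'}\cdots$ both start at the critical endpoint, so that the linear vanishing of the inner integrand cancels the linear zero of $\mathcal{H}(y',c_\ep)$ there. If instead $y_{c_\pm}$ equals $a_\pm$ and lies on the extended boundary outside the working interval, I use $\Th_\pm^0$, for which $\mathcal{H}(y',c_\ep)$ is uniformly bounded away from $0$ on the interval of integration.

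The first ingredient is to extract bounds on $\mu_\pm(F)(c_\ep)$ and $\nu_\pm(F)(c_\ep)$ from Proposition \ref{prop: mu nu lim}(1). Under $(\mathbf{I})$ and $(\mathbf{M})$ the four endpoints $W_\pm(\pm1)$ are nonzero (since $b(\pm1)>|u(\pm1)|$), so $|c_\ep|$ is bounded below on $B_{\ep_0}^l\cup B_{\ep_0}^r$ and $l(c_\ep)\leq C$; hence $|\nu_\pm(F)(c_\ep)|\leq C\|F\|_{L^\infty}$ in every case. When $c_\ep$ is centered on an endpoint $c_\ast$, the factor $l(c_\ast-c_\ep)\sim|\log\ep|$ sits in the denominator of the bound for $\mu_+$ or $\mu_-$. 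After cancellation one always obtains $|\mu_\pm(F)(c_\ep)|\leq C\|F\|_{L^\infty}$, and moreover a \emph{small} bound $|\mu_+(F)(c_\ep)|\leq C\|F\|_{L^\infty}/|\log\ep|$ whenever $c_\ep$ is near $W_+(1)$ or $W_-(-1)$ (with the analogous statement for $\mu_-$ near $W_\pm(\mp1)$).

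The core estimate, say for $\Th_+^1$ with $y_{c_+}=1$ and $y\in[0,1]$, uses $|W_+(y')-c_\ep|\geq C^{-1}(|1-y'|+\ep)$ together with the lower bound $|W_-(y')-c_\ep|\geq C^{-1}$ coming from the case analysis, and the pointwise bounds $\tfrac12\leq|\va_+(y,c_\ep)|\leq C$ from Proposition \ref{prop: sol. hom. [0,1]} and Lemma \ref{lem: Y+r} (or Lemma \ref{lem: Y+l} on the left ball). For the first term of $\Th_+^1$ this gives
\beqno
\Big|\va_+(y,c_\ep)\int_1^y\frac{\int_1^{y'}(F\va_+)(z,c_\ep)dz}{\mathcal{H}(y',c_\ep)\va_+(y',c_\ep)^2}dy'\Big|\leq C\|F\|_{L^\infty}\int_y^1\frac{|1-y'|\,dy'}{|1-y'|+\ep}\leq C\|F\|_{L^\infty},
\eeqno
while the second term is controlled by
\beqno
\Big|\mu_+(F)(c_\ep)\va_+(y,c_\ep)\int_1^y\frac{dy'}{\mathcal{H}(y',c_\ep)\va_+(y',c_\ep)^2}\Big|\leq |\mu_+(F)(c_\ep)|\cdot C|\log\ep|\leq C\|F\|_{L^\infty},
\eeqno
thanks to the matching $|\log\ep|^{-1}$ gain in $\mu_+$. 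For the non-critical representation $\Th_+^0$ (when $y_{c_+}=a_+$ lies outside $[0,1]$), $\mathcal{H}$ is bounded below on all of $[0,1]$ and every term is trivially bounded by $C\|F\|_{L^\infty}$ using the bounds on $\mu_+,\nu_+$ and $F$. The mirror argument on $[-1,0]$ with $\Th_-^{-1}$ or $\Th_-^0$ completes the estimate.

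The main obstacle is the bookkeeping across the nine geometric configurations: in each one must verify that the logarithmic factor of Proposition \ref{prop: mu nu lim} blowing up like $|\log\ep|$ is \emph{precisely} the one associated with the endpoint near which $c_\ep$ sits, so that it cancels the $|\log\ep|$ growth of the singular integral $\int_y^{\pm1}(|1-y'|+\ep)^{-1}dy'$. In the coincident-endpoint cases such as Case 2 ($W_+(1)=W_-(-1)$) or Case 6, both $\mu_+$ and $\mu_-$ simultaneously gain a $|\log\ep|^{-1}$ factor, which is exactly what is needed to cancel the simultaneous singularities at $y=1$ and $y=-1$.
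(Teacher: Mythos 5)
Your overall strategy coincides with the paper's: represent $\Th$ through Proposition \ref{prop: inhom solu}, bound the inhomogeneous term by the cancellation between the inner integral $\int_{y_{c_\pm}}^{y'}(F\va_\pm)\,dz$ and the zero of $\mathcal{H}$, control the $\nu_\pm$ term by $|\nu_\pm(F)(c_\ep)|\leq C\|F\|_{L^\infty}l(c_\ep)\leq C\|F\|_{L^\infty}$ (since $|c_\ep|$ is bounded below on the balls), and compensate the $|\ln\ep|$ growth of $\int \mathcal{H}^{-1}\va_\pm^{-2}$ by a matching decay of $\mu_\pm$ from Proposition \ref{prop: mu nu lim}. (The paper simply uses the $\Th_\pm^0$ form with $y_{c_\pm}=a_\pm$ throughout; your switching between $\Th_+^1$ and $\Th_+^0$ is a cosmetic variant.)

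There is, however, a concrete error in the key bookkeeping, and as written the argument does not close in several of the nine configurations. In the bound of Proposition \ref{prop: mu nu lim}(1) the numerator of the $\mu_+$ estimate contains only $l(W_+(-1)-c_\ep)+l(W_-(-1)-c_\ep)$, while the denominator contains all four logarithms; hence $\mu_+$ gains the factor $|\ln\ep|^{-1}$ when $c_\ep$ approaches $W_+(1)$ \emph{or} $W_-(1)$ (the values attained at $y=1$), whereas near $W_-(-1)$ the cited bound gives only $|\mu_+(F)(c_\ep)|\leq C\|F\|_{L^\infty}$; symmetrically, $\mu_-$ is small near $W_+(-1)$ or $W_-(-1)$. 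Your rule pairs the endpoints diagonally ($\mu_+$ with $\{W_+(1),W_-(-1)\}$, $\mu_-$ with $\{W_+(-1),W_-(1)\}$), which is the wrong pairing. Concretely, take $c_\ep\in B_{\ep_0}^l$ with $\min\{W_+(-1),W_-(1)\}=W_-(1)$ (Cases 4--9) and $y\in[0,1]$: then $y_{c_+}=1$, the integral $\int_1^y\mathcal{H}^{-1}\va_+^{-2}dy'$ is of size $|\ln\ep|$, and what is needed is the smallness of $\mu_+$ near $W_-(1)$ --- which your stated rule does not supply, so your estimate leaves an uncompensated $|\ln\ep|$. The same transposition breaks the estimate on $[-1,0]$ for $c_\ep\in B_{\ep_0}^r$ whenever $\max\{W_+(1),W_-(-1)\}=W_-(-1)$, where one needs $\mu_-$ small near $W_-(-1)$. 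The required smallness is in fact true and follows directly from Proposition \ref{prop: mu nu lim}, so the proof is repaired by replacing your pairing with the correct one ($\mu_+\leftrightarrow\{W_+(1),W_-(1)\}$, $\mu_-\leftrightarrow\{W_+(-1),W_-(-1)\}$); your companion claim that $\mu_+$ is small near $W_-(-1)$ is also false in general, though harmless, since in that configuration $\mathcal{H}$ is bounded below on $[0,1]$ and boundedness of $\mu_+$ suffices.
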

\begin{proof}
We only show the proof of the case $0\leq y\leq 1$ and $c_{\ep}\in  B_{\ep_0}^r$, the proofs of the other three cases are similar.

In this case, $c_{\ep}=\max\{W_+(1),W_-(-1)\}+\ep e^{i\th}$ and $c_r=\max\{W_+(1),W_-(-1)\}$ with $y_{c_+}=W_+^{-1}\big(\max\{W_+(1),W_-(-1)\}\big)=a_+$ and by Proposition \ref{prop: inhom solu}, we can write $\Th(y,c_{\ep})$ in the following way,
\begin{align*}
\Th(y,c_{\ep})&=\va_+(y,c_{\ep})\int_0^y\frac{\int_{a_+}^{y'}(F\va_+)(z,c_{\ep})dz}
{\mathcal{H}(y',c_{\ep})\va_+(y',c_{\ep})^2}dy'\\
&\quad+\mu_+(F)(c_{\ep}) \va_+(y,c_{\ep})\int_0^y\frac{y_{c_+}}{\mathcal{H}(y',c_{\ep})\va_+(y',c_{\ep})^2}dy'
+\nu_+(F)(c_{\ep})\va_+(y,c_{\ep}).
\end{align*}
Then we have
\begin{align*}
&\left|\va_+(y,c_{\ep})\int_0^y\frac{\int_{a_+}^{y'}(F\va_+)(z,c_{\ep})dz}
{\mathcal{H}(y',c_{\ep})\va_+(y',c_{\ep})^2}dy'\right|\\
&\leq C\|F\|_{L^{\infty}}\left|\int_0^y\f{|y'-a_+|}{|W_+(y')-W_+(1)-\ep e^{i\th}||W_-(y')-W_+(1)-\ep e^{i\th}|}dy'\right|\leq C\|F\|_{L^{\infty}}.
\end{align*}
By Proposition \ref{prop: mu nu lim}, we have
\begin{align*}
&\left|\mu_+(F)(c_{\ep}) \va_+(y,c_{\ep})\int_0^y\frac{1}{\mathcal{H}(y',c_{\ep})\va_+(y',c_{\ep})^2}dy'\right|\\
&\leq \f{C\|F\|_{L^{\infty}}}{1+|\ln |\ep||}\left|\int_0^y\f{dy'}{|W_+(y')-c_r-\ep e^{i\th}||W_-(y')-c_r-\ep e^{i\th}|}\right|
\leq C\|F\|_{L^{\infty}},
\end{align*}
and
\beno
\left|\nu_+(F)(c_{\ep})\va_+(y,c_{\ep})\right|\leq C\|F\|_{L^{\infty}}.
\eeno
Thus we prove the proposition.
\end{proof}
\section{The proof of main theorem}
Now we present the proof of Theorem \ref{main thm}.
\begin{proof}
We recall that $\big(cI-M_{\al}\big)^{-1}\Big(\begin{array}{l} \widehat{\psi}_0\\ \widehat{\phi}_0\end{array}\Big)(\al,y)=\Big(\begin{array}{l} \Psi_1\\ \Phi_1\end{array}\Big)(\al,y,c)$ and let $\Phi_1(\al,y,c)=b(y)\Phi(\al,y,c)+\widehat{\phi}_0(\al,0)\chi(y)/c$, then $\Psi_1(\al,y,c)=(u(y)-c)\Phi(\al,y,c)
+(u(y)-c)\widehat{\phi}_0(\al,0)\f{\chi(y)}{cb(y)}+\f{\widehat{\phi}_0(\al,y)}{b(y)}$ and $\Phi(\al, y,c)$ satisfies
\beno
\pa_y\Big[\Big(\big(u(y)-c\big)^2-b(y)^2\Big)\pa_y{\Phi(\al, y,c)}\Big]
-\al^2\Big(\big(u(y)-c\big)^2-b(y)^2\Big){\Phi(\al,y,c)}=G(\al,y,c).
\eeno
In Proposition \ref{prop: inhom solu} we proved that $\Th(y,c)$ satisfies
\beno
\pa_y\Big[\Big(\big(u(y)-c\big)^2-b(y)^2\Big)\pa_y\Th(y,c)\Big]
-\al^2\Big(\big(u(y)-c\big)^2-b(y)^2\Big)\Th(y,c)=F(y,c)=cG(\al,y,c).
\eeno
Thus $\Th(y,c)=c{\Phi(\al,y,c)}$ and by \eqref{eq: matrix-form}, we obtain that for $y\in[-1,1]$,
\begin{align*}
&\widehat{\psi}(t,\al,y)\\
&=\frac{1}{2\pi i}
\int_{\pa\Omega_{\ep_0}}e^{-i\al tc_{\ep}}\Big(\frac{u(y)-c_{\ep}}{c_{\ep}}\Theta(y,c_{\ep})
+\frac{u(y)-c_{\ep}}{c_{\ep}b(y)}\widehat{\phi}_0(\al,0)\chi(y)
+\frac{\widehat{\phi}_0(\al,y)}{b(y)}\Big)dc_{\ep}\\
&=\frac{1}{2\pi i}
\int_{\pa\Omega_{\ep_0}}e^{-i\al tc_{\ep}}\Big(\frac{u(y)-c_{\ep}}{c_{\ep}}\Theta(y,c_{\ep})
+\frac{u(y)\widehat{\phi}_0(\al,0)\chi(y)}{c_{\ep}b(y)}
+\frac{\widehat{\phi}_0(\al,y)-\chi(y)\widehat{\phi}(\al,0)}{b(y)}\Big)dc_{\ep},
\end{align*}
and
\begin{align*}
\widehat{\phi}(t,\al,y)&=\frac{1}{2\pi i}
\int_{\pa\Omega_{\ep_0}}e^{-i\al tc_{\ep}}\Big(\frac{b(y)}{c_{\ep}}\Theta(y,c_{\ep})
+\frac{\widehat{\phi}_0(\al,0)\chi(y)}{c_{\ep}}\Big)dc_{\ep}.
\end{align*}
By the fact that $\Theta(y,c_{\ep})$ is an analytic function in $\Om_{\ep_0}\setminus D_0$, we obtain
\beq\label{eq: psi and phi}
\begin{split}
\widehat{\psi}(t,\al,y)&=\frac{u(y)}{b(y)}\widehat{\phi}_0(\al,0)\chi(y)+\lim_{\ep_0\rightarrow0+}\frac{1}{2\pi i}
\int_{\pa\Omega_{\ep_0}}e^{-i\al tc_{\ep}}\Big(\frac{u(y)-c_{\ep}}{c_{\ep}}\Theta(y,c_{\ep})\Big)dc_{\ep},\\
\widehat{\phi}(t,\al,y)&=\widehat{\phi}_0(\al,0)\chi(y)+\lim_{\ep_0\rightarrow0+}\frac{1}{2\pi i}
\int_{\pa\Omega_{\ep_0}}e^{-i\al tc_{\ep}}\Big(\frac{b(y)}{c_{\ep}}\Theta(y,c_{\ep})\Big)dc_{\ep}.
\end{split}
\eeq
In the following, we denote %$m_+=\min\{W_+(1),W_-(-1)\}$, 
$M_+=\max\{W_+(1),W_-(-1)\}$ and $m_-=\min\{W_+(-1),W_-(1)\}$ %$M_-=\max\{W_+(-1),W_-(1)\}$ 
for brevity.

\no{\bf{Proof \ of \ 1.}}
For $y=0$ and $\chi(0)=1$, we get that,
\beno
\widehat{\phi}(t,\al,0)=\lim_{\ep_0\rightarrow0+}\frac{1}{2\pi i}
\int_{\pa\Omega_{\ep_0}}e^{-i\al tc_{\ep}}\frac{\widehat{\phi}_0(\al,0)}{c_{\ep}}dc_{\ep}
=\widehat{\phi}_0(\al,0),
\eeno
and
\begin{align*}
\widehat{\psi}(t,\al,0)&=
\frac{u'(0)}{b'(0)}\widehat{\phi}_0(\al,0)
-\lim_{\ep_0\rightarrow0+}\frac{1}{2\pi i}
\int_{\pa\Omega_{\ep_0}}e^{-i\al tc_{\ep}}\Theta(0,c_{\ep})dc_{\ep}\\
&=\frac{u'(0)}{b'(0)}\widehat{\phi}_0(\al,0)
-\lim_{\ep_0\rightarrow0+}\frac{1}{2\pi i}
\int_{\pa\Omega_{\ep_0}}e^{-i\al tc_{\ep}}\nu_+(F)(c_{\ep})\va_+(0,c_{\ep})dc_{\ep}.
\end{align*}
We have for the second term
\begin{align*}
&\frac{1}{2\pi i}\int_{\pa\Omega_{\ep_0}}e^{-i\al tc_{\ep}}
\nu_+(F)(c_{\ep})\va_+(0,c_{\ep})dc_{\ep}\\
&=
\frac{1}{2\pi i}\int_{\{|c_{\ep}|\leq \sqrt{2}\ep_0\}
\cap\pa\Omega_{\ep_0}}e^{-i\al tc_{\ep}}\nu_+(F)(c_{\ep})\va_+(0,c_{\ep})dc_{\ep}\\
&\quad+\frac{1}{2\pi i}\int_{\{|c_{\ep}|>\sqrt{2}\ep_0\}
\cap\pa\Omega_{\ep_0}}e^{-i\al tc_{\ep}}\nu_+(F)(c_{\ep})\va_+(0,c_{\ep})dc_{\ep}\\
&=I(c_{\ep_0})+J(c_{\ep_0}).
\end{align*}
By Proposition \ref{prop: mu nu lim}, for $c_{\ep}\in \pa\Omega_{\ep_0}$ and $|c_{\ep}|\leq \sqrt{2}\ep_0$, we have
\beno
|\nu_+(F)(c_{\ep})|\leq C\|F\|_{L^{\infty}}\big(1+\big|\ln|c_{\ep}|\big|\big)\leq C\|F\|_{L^{\infty}}\left(\big|\ln|\ep_0|\big|+1\right).
\eeno
Thus, we deduce that
$I(c_{\ep})\leq C\|F\|_{L^{\infty}}\ep_0\left(\big|\ln|\ep_0|\big|+1\right),$
and then $\lim\limits_{\ep_0\rightarrow0+}I(c_{\ep_0})=0.$

As for $J(c_{\ep_0})$, we have
\begin{align*}
J(c_{\ep_0})
&=-\frac{1}{2\pi i}\int_{\ep_0}^{M_+}e^{-i\al t(c+i\ep_0)}\nu_+(F)(c+i\ep_0)\va_+(0,c+i\ep_0)dc\\
&\quad-\frac{1}{2\pi i}\int_{m_-}^{\ep_0}e^{-i\al t(c+i\ep_0)}\nu_+(F)(c+i\ep_0)\va_+(0,c+i\ep_0)dc\\
&\quad+\frac{1}{2\pi i}\int_{m_-}^{\ep_0}e^{-i\al t(c-i\ep_0)}\nu_+(F)(c-i\ep_0)\va_+(0,c-i\ep_0)dc\\
&\quad+\frac{1}{2\pi i}\int_{\ep_0}^{M_+}e^{-i\al t(c-i\ep_0)}\nu_+(F)(c-i\ep_0)\va_+(0,c-i\ep_0)dc\\
&\quad+\frac{1}{2\pi i}\int_{\pa B^{l}_{\ep_0}}e^{-i\al tc_{\ep}}\nu_+(F)(c_{\ep})\va_+(0,c_{\ep})dc_{\ep}
+\frac{1}{2\pi i}\int_{\pa B^r_{\ep_0}}e^{-i\al tc_{\ep}}\nu_+(F)(c_{\ep})\va_+(0,c_{\ep})dc_{\ep},
\end{align*}
by Proposition \ref{prop: mu nu lim} and the Lebesgue's dominated convergence theorem, we have
\begin{align*}
\lim_{\ep_0\to 0+}J(c_{\ep_0})
&=\frac{1}{2\pi i}\int_{m_-}^{M_+}e^{-i\al tc}\big(\nu_+^-(F)(c)-\nu_+^+(F)(c)\big)\va_+(0,c)dc.
%&\quad+\lim_{\ep_0\to 0+}\frac{1}{2\pi i}\int_{B^{l}_{\ep_0}}e^{-i\al tc_{\ep}}\nu_+(F)(c_{\ep})\va_+(0,c_{\ep})dc_{\ep}\\
%&\quad+\lim_{\ep_0\to 0+}\frac{1}{2\pi i}\int_{B^r_{\ep_0}}e^{-i\al tc_{\ep}}\nu_+(F)(c_{\ep})\va_+(0,c_{\ep})dc_{\ep}\\
%&=\frac{1}{2\pi i}\int_{W_-(1)}^{W_+(1)}e^{-i\al tc}\big(\nu_+^-(F)(c)-\nu_+^+(F)(c)\big)\va_+(0,c)dc.
\end{align*}
By Remark \ref{rmk: munu lim up rem}, we have
$\big|\nu_+^{\pm}(F)(c)\big|\leq C\|F\|_{L^{\infty}}\big(\big|\ln|c|\big|+1\big)\in L_c^1$ and
\beno
\big(\nu_+^-(F)(c)-\nu_+^+(F)(c)\big)\va_+(0,c)\in L_c^1,
\eeno
and the Riemann-Lebesgue Lemma implies that
$
\lim\limits_{t\to +\infty}\lim\limits_{\ep_0\rightarrow0+}J(c_{\ep_0})\rightarrow0.
$
From which,  it implies that
\beno
\widehat{\psi}(t,\al,0)\rightarrow
\frac{u'(0)}{b'(0)}\widehat{\phi}_0(\al,0)
, \quad \textrm{as} \quad t\rightarrow+\infty.
\eeno

\no{\bf{Proof \ of \ 2.}}  For the case of $0<y\leq 1$,  for any $0<\ep\leq\ep_0$, let
$$
K(t,\al,y)=\lim_{\ep_0\to 0+}\frac{1}{2\pi i}\int_{\pa\Omega_{\ep_0}}e^{-i\al tc_{\ep}}\frac{u(y)-c_{\ep}}{c_{\ep}}\Theta_+(y,c_{\ep})dc_{\ep},
$$
and then
$
\widehat{\psi}(t,\al,y)
=\frac{u(y)}{b(y)}\widehat{\phi}_0(\al,0)\chi(y)+K(t,\al,y).
$

We divide $K(t,\al,y)$ into 6 parts and let
\begin{align*}
{\color{green}K_1(t,\al,y)}
&=\lim_{\ep\to0+}\frac{1}{2\pi i}
\int_{W_+(y)}^{M_+}
e^{-i\al t(c-i\ep)}\frac{u(y)-c+i\ep}{c-i\ep}\Theta_+^0(y,(c-i\ep))dc\\
&\quad+\lim_{\ep\to0+}\frac{1}{2\pi i}\int_{M_+}^{W_+(y)}
e^{-i\al t(c+i\ep)}\frac{u(y)-c-i{\ep}}{c+i{\ep}}\Theta_+^0(y,c+i{\ep})dc\\
&\quad+\lim_{\ep\to0+}\frac{1}{2\pi i}\int_{\pa B_{\ep}^r}
e^{-i\al tc_{\ep}}\frac{u(y)-c_{\ep}}{c_{\ep}}\Theta_+(y,c_{\ep})dc_{\ep},\\
{\color{blue}K_2(t,\al,y)}
&=\lim_{\ep\to0+}\frac{1}{2\pi i}
\int_{W_-(y)}^{m_-}
e^{-i\al t(c+i{\ep})}\frac{u(y)-c-i{\ep}}{c+i{\ep}}\Theta_+^0(y,c+i{\ep})dc\\
&\quad+\lim_{\ep\to0+}\frac{1}{2\pi i}\int_{m_-}^{ W_-(y)}
e^{-i\al t(c-i{\ep})}\frac{u(y)-c+i{\ep}}{c-i{\ep}}\Theta_+^0(y,c-i{\ep})dc\\
&\quad+\lim_{\ep\to0+}\frac{1}{2\pi i}\int_{\pa B_{\ep}^l}
e^{-i\al tc_{\ep}}\frac{u(y)-c_{\ep}}{c_{\ep}}\Theta_+(y,c_{\ep})dc_{\ep},
\end{align*}
and
\begin{align*}
&\lim_{\ep\to0+}\frac{1}{2\pi i}
\int_{W_-(\frac{y}{2})}^{W_+(\frac{y}{2})}e^{-i\al t(c-i{\ep})}
\frac{u(y)-c+i{\ep}}{c-i{\ep}}\Theta_+^1(y,c-i{\ep})dc\\
&\quad+\lim_{\ep\to0+}\frac{1}{2\pi i}\int_{W_+(\frac{y}{2})}^{ W_-(\frac{y}{2})}e^{-i\al t(c+i{\ep})}
\frac{u(y)-c-i{\ep}}{c+i{\ep}}\Theta_+^1(y,c+i{\ep})dc\\
&=\lim_{\ep\to0+}\frac{1}{2\pi i}
\int_{W_-(\frac{y}{2})}^{W_+(\frac{y}{2})}e^{-i\al t(c-i{\ep})}
\frac{u(y)-c+i{\ep}}{c-i{\ep}}\int_1^y\frac{\va_+(y,c-i{\ep})\int_{y_{c_+}}^{y'}(F\va_+)(z,c-i{\ep})dz}
{\mathcal{H}(y',c-i{\ep})\va_+(y',c-i{\ep})^2}dy'dc\\
&\quad+\lim_{\ep\to0+}\frac{1}{2\pi i}\int_{W_+(\frac{y}{2})}^{ W_-(\frac{y}{2})}e^{-i\al t(c+i{\ep})}
\frac{u(y)-c-i{\ep}}{c+i{\ep}}\int_1^y\frac{\va_+(y,c+i{\ep})\int_{y_{c_+}}^{y'}(F\va_+)(z,c+i{\ep})dz}
{\mathcal{H}(y',c+i{\ep})\va_+(y',c+i{\ep})^2}dy'dc\\
&\quad+\lim_{\ep\to0+}\frac{1}{2\pi i}
\int_{W_-(\frac{y}{2})}^{W_+(\frac{y}{2})}e^{-i\al t(c-i{\ep})}
\frac{u(y)-c+i{\ep}}{c-i{\ep}}\int_{1}^y\frac{\mu_+(F)(c-i{\ep})\va_+(y,c-i{\ep})}
{(\mathcal{H}\va_+^2)(y',c-i{\ep})}dy'dc\\
&\quad+\lim_{\ep\to0+}\frac{1}{2\pi i}\int_{W_+(\frac{y}{2})}^{ W_-(\frac{y}{2})}e^{-i\al t(c+i{\ep})}
\frac{u(y)-c-i{\ep}}{c+i{\ep}}\int_{1}^y\frac{\mu_+(F)(c+i{\ep})\va_+(y,c+i{\ep})}
{(\mathcal{H}\va_+^2)(y',c+i{\ep})}dy'dc\\
&\eqdef{\color{red}K_3(t,\al,y)+K_4(t,\al,y)},
\end{align*}
and let
\begin{align*}
K_5(t,\al,y)&=\lim_{\ep\to0+}\frac{1}{2\pi i}
\int_{W_+(\frac{y}{2})}^{W_+(y)}
e^{-i\al t(c-i{\ep})}\frac{u(y)-c+i{\ep}}{c-i{\ep}}\Theta_+^1(y,c-i{\ep})dc\\
&\quad+\lim_{\ep\to0+}\frac{1}{2\pi i}
\int_{W_+(y)}^{ W_+(\frac{y}{2})}
e^{-i\al t(c+i{\ep})}\frac{u(y)-c-i{\ep}}{c+i{\ep}}\Theta_+^1(y,c+i{\ep})dc\\
K_6(t,\al,y)&=\lim_{\ep\to0+}\frac{1}{2\pi i}
\int_{W_-(\frac{y}{2})}^{W_-(y)}
e^{-i\al t(c+i{\ep})}\frac{u(y)-c-i{\ep}}{c+i{\ep}}\Theta_+^1(y,c+i{\ep})dc\\
&\quad+\lim_{\ep\to0+}\frac{1}{2\pi i}
\int_{W_-(y)}^{ W_-(\frac{y}{2})}
e^{-i\al t(c-i{\ep})}\frac{u(y)-c+i{\ep}}{c-i{\ep}}\Theta_+^1(y,c-i{\ep})dc,
\end{align*}
so that $K(t,\al,y)=\sum\limits_{i=1}^{6}K_i(t,\al,y)$.
And for convenience, we give a picture to show that how we depart the contour domain:\\
\begin{tikzpicture}[domain=0:4]
    \draw[thick] (-4,0) -- (4,0);
     \draw[thick] (-4,2) -- (4,2);
      \draw[thick] (-4,-2) -- (4,-2);
     \draw[blue,thick] (-4,2) -- (-2,2);
      \draw[blue,thick] (-4,-2) -- (-2,-2);
      \draw (-0.05,-0.2) node {$0$};
    \draw (-3.8,0.3) node {\small $m_-$};
    \draw (4.2,0.3) node {\small $M_+$};
 \draw[dashed, color=red] (-1,2) -- (-1,-2);
 \draw[thick,red] (-1,2) -- (1,2);
 \draw[->,dashed, color=red](-1,2) -- (-1,1);
  \draw[->,dashed,red] (-1,-2) -- (0,-2);
  \draw[red,thick] (-1,-2) -- (1,-2);
 \draw (-1,-0.3) node {\small $W_-(\frac{y}{2})$};
 \draw[dashed, color=red] (1,2) -- (1,-2);
  \draw[->,dashed, color=red] (1,0) -- (1,1);
  \draw[->,thick,red] (1,2) -- (0,2);
  \draw[->] (-1,2) -- (-1.5,2);
  \draw[->] (-2,-2) -- (-1.5,-2);
  \draw[->] (2,2) -- (1.5,2);
  \draw[->] (1,-2) -- (1.5,-2);
   \draw[->,blue] (-2,2) -- (-3,2);
  \draw[->,blue] (-4,-2) -- (-3,-2);
  \draw[->,green] (4,2) -- (3,2);
  \draw[->,green] (2,-2) -- (3,-2);
   \draw[thick,green] (2,-2) -- (4,-2);
  \draw[thick,green] (2,2) -- (4,2);
  \draw (1,-0.3) node {\small $W_+(\frac{y}{2})$};
 \draw[dashed] (-2,2) -- (-2,-2);
   \draw (-2.6,-0.3) node {\small $W_-(y)$};
 \draw[dashed] (2,2) -- (2,-2);
   \draw (2.6,-0.3) node {\small $W_+(y)$};
    \draw[green, thick]  (4, 2) arc [start angle = 90, end angle = -90,
    x radius = 12.6mm, y radius = 19.95mm];
    \draw[<-,green]  (4, 2) arc [start angle = 90, end angle = -90,
    x radius = 12.6mm, y radius = 19.95mm];
    \draw (5.8,0) node {\small $\pa B_{\epsilon}^r$};
    \draw[blue, thick]  (-4, 2) arc [start angle = 90, end angle = -90,
    x radius = -12.6mm, y radius = 19.95mm];
    \draw[->,blue]  (-4, 2) arc [start angle = 90, end angle = -30,
    x radius = -12.6mm, y radius = 19.95mm];
    \draw (-5.7,0) node {\small $\pa B_{\epsilon}^l$};
\draw (0,2.5) node {$\pa\Om_{\ep}$};
\draw (-0.7,0.7) node {\small \color{red}$\Gamma_{\ep}$};
\draw (-4.3,-2.2) node {\small $m_--i\epsilon$};
\draw (-4.3,2.2) node {\small$m_-+i\epsilon$};
\draw (4.3,-2.2) node {\small$M_+-i\epsilon$};
\draw (4.3,2.2) node {\small $M_++i\epsilon$};
\end{tikzpicture}

\no As for $K_1$, we have
\begin{align*}
&K_1(t,\al,y)\\
&=\lim_{\ep\to0+}\frac{1}{2\pi i}
\int_{W_+(y)}^{M_+}e^{-i\al t(c-i\ep)}\frac{u(y)-c+i\ep}{c-i\ep}
\Big\{\va_+(y,c-i\ep)\int_0^y\frac{\int_{y_{c_+}}^{y'}(F\va_+)(z,c-i\ep)dz}
{(\mathcal{H}\va_+^2)(y',c-i\ep)}dy'\\
&\quad +\mu_+(F)(c-i\ep)\va_+(y,c-i\ep)
\int_0^y\frac{1}{(\mathcal{H}\va_+^2)(y',c-i\ep)}dy'
+\nu_+(F)(c-i\ep)\va_+(y,c-i\ep)\Big\}dc\\
&\quad  -\lim_{\ep\to0+}\frac{1}{2\pi i}
\int_{W_+(y)}^{M_+}e^{-i\al t(c+i\ep)}
\frac{u(y)-c-i\ep}{c+i\ep}\Big\{\va_+(y,c+i\ep)
\int_0^y\frac{\int_{y_{c_+}}^{y'}(F\va_+)(z,c+i\ep)dz}
{(\mathcal{H}\va_+^2)(y',c+i\ep)}dy'\\
&\quad  +\mu_+(F)(c+i\ep)\va_+(y,c+i\ep)
\int_0^y\frac{1}{(\mathcal{H}\va_+^2)(y',c+i\ep)}dy'
+\nu_+(F)(c+i\ep)\va_+(y,c+i\ep)\Big\}dc\\
&\quad  -\lim_{\ep\to0+}\frac{\ep}{2\pi}
\int_{\frac{\pi}{2}}^{\frac{3\pi}{2}}e^{-i\al t\big(M_++\ep e^{i\theta}\big)}
\frac{u(y)-M_++\ep e^{i\theta}}{M_+-\ep e^{i\theta}}
\Theta_+\big(y,M_+-\ep e^{i\theta}\big) e^{i\theta}d\theta\nonumber\\
&=K_{11}(t,\al,y)+K_{12}(t,\al,y)+K_{13}(t,\al,y).
\end{align*}
Proposition \ref{prop: Br Bl Theta bdd} implies
$
K_{13}(t,\al,y)=0.
$\\
For $c\in \big[W_+(y),M_+\big]$ with $y\in (0,1]$ fixed, by Proposition \ref{prop: mu nu lim}, Proposition \ref{prop: F y bdd lim}, Remark \ref{rmk: D} and the Lebesgue's dominated convergence theorem, we obtain that
\begin{align*}
&K_1(t,\al,y)=K_{11}(t,\al,y)+K_{12}(t,\al,y)\\
&=\frac{1}{2\pi i}\int_{W_+(y)}^{M_+}e^{-i \al tc}\frac{u(y)-c}{c}
\Big\{\big(\mu_+^-(F)(c)-\mu_+^+(F)(c)\big)
\int_0^y\frac{\va_+(y,c)}{(\mathcal{H}\va_+^2)(y',c)}dy'\\
\nonumber&\quad +\big(\nu_+^-(F)(c)-\nu_+^+(F)(c)\big)\va_+(y,c)\Big\}dc\\
&= \frac{1}{\pi}\int_{W_+(y)}^{M_+}e^{-i \al tc}\frac{u(y)-c}{c}
\bigg(\frac{\mathcal{D}^{im}(c)\mathcal{U}^{re}_+(F)(c)-\mathcal{D}^{re}(c)
\mathcal{U}^{im}_+(F)(c)}
{\mathcal{D}^{re}(c)^2+\mathcal{D}^{im}(c)^2}
\int_0^y\frac{\va_+(y,c)}{(\mathcal{H}\va_+^2)(y',c)}dy'\\
&\quad
+\frac{\mathcal{D}^{im}(c)\mathcal{V}^{re}_+(F)(c)-\mathcal{D}^{re}(c)
\mathcal{V}^{im}_+(F)(c)}
{\mathcal{D}^{re}(c)^2+\mathcal{D}^{im}(c)^2}\va_+(y,c)\bigg)dc.
\end{align*}
Here for $c\in[W_+(y), M_+]$, by Remark \ref{rmk: D}, Lemma \ref{lem: limit up bdd}, Lemma \ref{lem: XYST bdd} and Proposition \ref{prop: F y bdd lim}, we have
\begin{align*}
&\Big|\frac{u(y)-c}{c}
\frac{\mathcal{D}^{im}(c)\mathcal{U}^{re}_+(F)(c)-\mathcal{D}^{re}(c)
\mathcal{U}^{im}_+(F)(c)}
{\mathcal{D}^{re}(c)^2+\mathcal{D}^{im}(c)^2}
\int_0^y\frac{\va_+(y,c)}{(\mathcal{H}\va_+^2)(y',c)}dy'\Big|\\
&\quad\leq C\|F\|_{L^{\infty}}\big(\big|\ln|y-y_{c_+}|\big|+1\big)\in L_c^1\big(W_+(y), M_+\big),\\
&\Big|\frac{u(y)-c}{c}\frac{\mathcal{D}^{im}(c)\mathcal{V}^{re}_+(F)(c)
-\mathcal{D}^{re}(c)\mathcal{V}^{im}_+(F)(c)}
{\mathcal{D}^{re}(c)^2+\mathcal{D}^{im}(c)^2}\va_+(y,c)\Big|\\
&\quad\leq C\|F\|_{L^{\infty}}\in L_c^1\big(W_+(y), M_+\big).
\end{align*}
And then the Riemann-Lebesgue lemma gives
\beno
\lim_{t\rightarrow+\infty}K_{11}(t,\al,y)+K_{12}(t,\al,y)=0.
\eeno
Thus we get
$\lim\limits_{t\to+\infty}K_1(t,\al,y)=0.$

By the same argument, we obtain
\begin{align*}
&K_2(t,\al,y)\\
&=\frac{1}{\pi}\int_{m_-}^{W_-(y)}e^{-i \al tc}
\frac{u(y)-c}{c}\Big\{\frac{\mathcal{D}^{im}(c)\mathcal{U}^{re}_+(F)(c)
-\mathcal{D}^{re}(c)\mathcal{U}^{im}_+(F)(c)}{\mathcal{D}^{re}(c)^2+\mathcal{D}^{im}(c)^2}
\int_0^y\frac{\va_+(y,c)}{(\mathcal{H}\va_+^2)(y',c)}dy'\\
&\quad
+\frac{\mathcal{D}^{im}(c)\mathcal{V}^{re}_+(F)(c)
-\mathcal{D}^{re}(c)\mathcal{V}^{im}_+(F)(c)}
{\mathcal{D}^{re}(c)^2+\mathcal{D}^{im}(c)^2}\va_+(y,c)\Big\}dc,
\end{align*}
and
\begin{align*}
&\Big|\frac{u(y)-c}{c}
\frac{\mathcal{D}^{im}(c)\mathcal{U}^{re}_+(F)(c)-\mathcal{D}^{re}(c)
\mathcal{U}^{im}_+(F)(c)}
{\mathcal{D}^{re}(c)^2+\mathcal{D}^{im}(c)^2}
\int_0^y\frac{\va_+(y,c)}{(\mathcal{H}\va_+^2)(y',c)}dy'\Big|\\
&\quad\leq C\|F\|_{L^{\infty}}\big(\big|\ln|y-y_{c_+}|\big|+1\big)\in L_c^1\big(m_-,W_-(y)\big),\\
&\Big|\frac{u(y)-c}{c}\frac{\mathcal{D}^{im}(c)\mathcal{V}^{re}_+(F)(c)
-\mathcal{D}^{re}(c)\mathcal{V}^{im}_+(F)(c)}
{\mathcal{D}^{re}(c)^2+\mathcal{D}^{im}(c)^2}\va_+(y,c)\Big|\\
&\quad\leq C\|F\|_{L^{\infty}}\in L_c^1\big(m_-,W_-(y)\big).
\end{align*}
Thus
$\lim\limits_{t\rightarrow+\infty}K_2(t,\al,y)=0.$

We rewrite $K_5$ as follows
\begin{align*}
&K_5(t,\al,y)\\
&=\lim_{\ep\rightarrow0^+}\frac{1}{2\pi i}
\int_{W_+(\frac{y}{2})}^{W_+(y)}e^{-i\al t(c-i\ep)}
\frac{u(y)-c+i\ep}{c-i\ep}
\Big\{\va_+(y,c-i\ep)\int_1^y\frac{\int_{y_{c_+}}^{y'}(F\va_+)(z,c-i\ep)dz}
{(\mathcal{H}\va_+^2)(y',c-i\ep)}dy'\\
&\quad +\mu_+(F)(c-i\ep)
\int_1^y\frac{\va_+(y,c-i\ep)}{(\mathcal{H}\va_+^2)(y',c-i\ep)}dy'\Big\}dc\\
&\quad -\lim_{\ep\rightarrow0^+}\frac{1}{2\pi i}
\int_{W_+(\frac{y}{2})}^{W_+(y)}e^{-i\al t(c+i\ep)}
\frac{u(y)-c-i\ep}{c+i\ep}
\Big\{\va_+(y,c+i\ep)\int_1^y\frac{\int_{y_{c_+}}^{y'}(F\va_+)(z,c+i\ep)dz}
{(\mathcal{H}\va_+^2)(y',c+i\ep)}dy'\\
&\quad +\mu_+(F)(c+i\ep)
\int_1^y\frac{\va_+(y,c+i\ep)}{(\mathcal{H}\va_+^2)(y',c+i\ep)}dy'\Big\}dc.
\end{align*}
For $c\in[W_+(\frac{y}{2}), W_+(y)]$, then $0<\frac{y}{2}\leq y_{c_+}\leq y\leq1$. Thus by Proposition \ref{prop: F y bdd lim}, Remark \ref{rmk: D} and the Lebesgue's dominated convergence theorem, we get
\begin{align*}
&K_5(t,\al,y)\\
&=\frac{1}{2\pi i}
\int_{W_+(\frac{y}{2})}^{W_+(y)}e^{-i\al tc}\frac{u(y)-c}{c}\big(\mu_+^-(F)(c)-\mu_+^+(F)(c)\big)
\int_1^y\frac{\va_+(y,c)}{(\mathcal{H}\va_+^2)(y',c)}dy'dc\\
&=\frac{1}{\pi}
\int_{W_+(\frac{y}{2})}^{W_+(y)}e^{-i\al tc}\frac{u(y)-c}{c}
\frac{\mathcal{D}^{im}(c)\mathcal{U}^{re}_+(F)(c)-\mathcal{D}^{re}(c)\mathcal{U}^{im}_+(F)(c)}
{\mathcal{D}^{re}(c)^2+\mathcal{D}^{im}(c)^2}
\int_1^y\frac{\va_+(y,c)}{(\mathcal{H}\va_+^2)(y',c)}dy'dc.
\end{align*}
And for $c\in[W_+(\frac{y}{2}), W_+(y)]$,  by Remark \ref{rmk: D} and Lemma \ref{lem: limit up bdd} and Lemma \ref{lem: XYST bdd}, we obtain
\begin{align*}
&\Big|\frac{u(y)-c}{c}
\frac{\mathcal{D}^{im}(c)\mathcal{U}^{re}_+(F)(c)
-\mathcal{D}^{re}(c)\mathcal{U}^{im}_+(F)(c)}
{\mathcal{D}^{re}(c)^2+\mathcal{D}^{im}(c)^2}
\int_1^y\frac{\va_+(y,c)}{(\mathcal{H}\va_+^2)(y',c)}dy'\Big|\\
&\leq C(y)\|F\|_{L^{\infty}}\big(\big|\ln|y-y_{c_+}|\big|+1\big)\in L_c^1\big(W_+(y/2), W_+(y)\big),
\end{align*}
and then the Riemann-Lebesgue lemma implies $\lim\limits_{t\rightarrow+\infty}K_5(t,\al,y)=0.$\\
By the same argument, we can also get
\begin{align*}
&K_6(t,\al,y)\\
&=\frac{1}{\pi}
\int^{W_-(\frac{y}{2})}_{W_-(y)}e^{-i\al tc}\frac{u(y)-c}{c}
\frac{\mathcal{D}^{im}(c)\mathcal{U}^{re}_+(F)(c)-\mathcal{D}^{re}(c)\mathcal{U}^{im}_+(F)(c)}
{\mathcal{D}^{re}(c)^2+\mathcal{D}^{im}(c)^2}
\int_1^y\frac{\va_+(y,c)}{(\mathcal{H}\va_+^2)(y',c)}dy'dc,
\end{align*}
and $\lim\limits_{t\rightarrow+\infty}K_6(t,\al,y)=0.$

In the following, we mainly calculate the term $K_3(t,\al,y)$ and $K_4(t,\al,y)$.
Recall that
\begin{align*}
K_4(t,\al,y)&=\lim_{\ep\to0+}\frac{1}{2\pi i}
\int_{W_-(\frac{y}{2})}^{W_+(\frac{y}{2})}e^{-i\al t(c-i{\ep})}
\frac{u(y)-c+i{\ep}}{c-i{\ep}}\int_{1}^y\frac{\mu_+(F)(c-i{\ep})\va_+(y,c-i{\ep})}
{(\mathcal{H}\va_+^2)(y',c-i{\ep})}dy'dc\\
&\quad+\lim_{\ep\to0+}\frac{1}{2\pi i}\int_{W_+(\frac{y}{2})}^{ W_-(\frac{y}{2})}e^{-i\al t(c+i{\ep})}
\frac{u(y)-c-i{\ep}}{c+i{\ep}}\int_{1}^y\frac{\mu_+(F)(c+i{\ep})\va_+(y,c+i{\ep})}
{(\mathcal{H}\va_+^2)(y',c+i{\ep})}dy'dc,
\end{align*}
By Proposition \ref{prop: mu nu lim} and by Proposition \ref{prop: F y bdd lim} and the Lebesgue's dominated convergence theorem, we obtain
\begin{align*}
K_4(t,\al,y)=\frac{1}{2\pi i}
\int_{W_-(\frac{y}{2})}^{W_+(\frac{y}{2})}e^{-i\al tc}
\frac{u(y)-c}{c}(\mu_+^-(F)(c)-\mu_+^+(F)(c))
\int_1^y\frac{\va_+(y,c)}
{(\mathcal{H}\va_+^2)(y',c)}dy'dc.
\end{align*}
And from Remark \ref{rmk: munu lim up rem} and Proposition \ref{prop: F y bdd lim}, we have for $c\in[W_-(\frac{y}{2}), W_+(\frac{y}{2})]$,
\begin{align*}
&\Big|\frac{u(y)-c}{c}\big(\mu_+^-(F)(c)-\mu_+^+(F)(c)\big)
\int_1^y\frac{\va_+(y,c)}
{(\mathcal{H}\va_+^2)(y',c)}dy'dc\Big|\\
&\ \leq \frac{C\|F\|_{L^{\infty}}\big(\big|\ln|y-y_{c_+}|\big|+1\big)}{|y|}\big(\big|\ln|c|\big|+1\big)
\in L_c^1\big(W_-(y/2), W_+(y/2)\big).
\end{align*}
And then the Riemann-Lebesgue lemma gives $
 \lim\limits_{t\rightarrow+\infty}K_4(t,\al,y)=0.$

Let
\beno
H(y,c_{\ep})=\int_1^y\frac{\va_+(y,c_{\ep})}
{(\mathcal{H}\va_+^2)(y',c_{\ep})}\int_{y_{c_+}}^{y'}(F\va_+)(z,c_{\ep})dz
dy',
\eeno
and $\Gamma_{\ep}$ be the boundary of $\big\{c:~W_-(\f{y}{2})\leq Re c\leq W_+(\f{y}{2}),\ |Im c|\leq \ep\big\}$. Then we have
\begin{align*}
&K_3(t,\al,y)\\
&=\lim_{\ep\to0+}\frac{1}{2\pi i}
\int_{W_-(\frac{y}{2})}^{W_+(\frac{y}{2})}e^{-i\al t(c-i{\ep})}
\frac{u(y)-c+i{\ep}}{c-i{\ep}}\big(H(y,c-i\ep)-H(y,0)\big)dc\\
&\quad+\lim_{\ep\to0+}\frac{1}{2\pi i}\int_{W_+(\frac{y}{2})}^{ W_-(\frac{y}{2})}e^{-i\al t(c+i{\ep})}
\frac{u(y)-c-i{\ep}}{c+i{\ep}}\big(H(y,c+i\ep)-H(y,0)\big)dc\\
&\quad+H(y,0)\lim_{\ep\to0+}\frac{1}{2\pi i}\int_{\Gamma_{\ep}}
e^{-i\al tc}\frac{u(y)-c}{c}dc\\
&\quad
-H(y,0)\lim_{\ep\to0+}\frac{1}{2\pi}
\int_{-\ep}^{\ep}e^{-i\al t\big(W_+(\frac{y}{2})+i\tau\big)}
\frac{u(y)-W_+(\frac{y}{2})-i\tau}{W_+(\frac{y}{2})+i\tau}d\tau\\
&\quad
-H(y,0)\lim_{\ep\rightarrow0^+}\frac{1}{2\pi }
\int_{\ep}^{-\ep}e^{-i\al t\big(W_-(\frac{y}{2})+i\tau\big)}
\frac{u(y)-W_-(\frac{y}{2})-i\tau}{W_-(\frac{y}{2})+i\tau}d\tau\\
&=K_{31}(t,\al,y)+K_{32}(t,\al,y)+K_{33}(t,\al,y)+K_{34}(t,\al,y)+K_{35}(t,\al,y),
\end{align*}
where we can easily get that
\beno
K_{33}(t,\al,y)=H(y,0)\lim_{\ep\rightarrow0+}\frac{1}{2\pi i}
\int_{\Gamma_{\ep}}e^{-i\al tc_{\ep}}\frac{u(y)-c_{\ep}}{c_{\ep}}dc_{\ep}=u(y)H(y,0),
\eeno
and $K_{34}(t,\al,y)=K_{35}(t,\al,y)=0$.

As for $K_{31}$ and $K_{32}$, we have for $c_{\ep}=c\pm i\ep$,
\begin{align*}
&H(y,c_{\ep})-H(y,0)\\
&=\big(\va_+(y,c_{\ep})-\va_+(y,0)\big)
\int_1^y\frac{1}{(\mathcal{H}\va_+^2)(y',c_{\ep})}
\int_{y_{c_+}}^{y'}(F\va_+)(z,c_{\ep})dzdy'\\
&\quad +\va_+(y,0)\int_1^y\Big(\frac{1}{(\mathcal{H}\va_+^2)(y',c_{\ep})}
-\frac{1}{(\mathcal{H}\va_+^2)(y',0)}\Big)
\int_{y_{c_+}}^{y'}(F\va_+)(z,c_{\ep})dzdy'\\
&\quad-\va_+(y,0)\int_1^y\frac{1}{(\mathcal{H}\va_+^2)(y',0)}
\int_0^{y_{c_+}}(F\va_+)(z,c_{\ep})dzdy'\\
&\quad +\va_+(y,0)\int_1^y\frac{1}{(\mathcal{H}\va_+^2)(y',0)}
\int_0^{y'}\big[(F\va_+)(z,c_{\ep})-(F\va_+)(z,0)\big]dzdy'.
\end{align*}
Then by Proposition \ref{prop: sol. hom. [0,1]} and Proposition \ref{prop: F y bdd lim}, we have
\beno
|H(y,c_{\ep})-H(y,0)|\leq C(y)|c_{\ep}|,
\eeno
and $\lim\limits_{\ep\to 0+}(H(y,c\pm i{\ep})-H(y,0))=H(y,c)-H(y,0)$
and then the Lebesgue's dominated convergence theorem gives
\beno
K_{31}(t,\al,y)=\frac{1}{2\pi i}
\int_{W_-(\frac{y}{2})}^{W_+(\frac{y}{2})}e^{-i\al tc}
\frac{u(y)-c}{c}\big(H(y,c)-H(y,0)\big)dc=-K_{32}(t,\al,y).
\eeno
Thus we obtain $K_3(t,\al,y)=u(y)H(y,0)$ with
\beno
H(y,0)=\va_+(y,0)\int_1^y\frac{\int_0^{y'}F(z,0)\va_+(z,0)dz}
{\big(u(y')^2-b(y')^2\big)\va_+(y',0)^2}dy'.
\eeno

Therefore we get for $y\in(0,1]$,
\begin{align*}
\widehat{\psi}(t,\al,y)
&=\frac{u(y)}{b(y)}\widehat{\phi}_0(\al,0)\chi(y)+K(t,\al,y)\\
&=\frac{u(y)}{b(y)}\widehat{\phi}_0(\al,0)\chi(y)+u(y)H(y,0)+R_1^+(t,\al,y)+R_2^+(t,\al,y)+R_3^+(t,\al,y),
\end{align*}
where
\begin{align*}
R_{1}^+(t,\al,y)&=\frac{1}{\pi}\int_{W_+(y)}^{M_+}e^{-i \al tc}\frac{u(y)-c}{c}
\bigg(\frac{\mathcal{D}^{im}(c)\mathcal{U}^{re}_+(F)(c)-\mathcal{D}^{re}(c)
\mathcal{U}^{im}_+(F)(c)}
{\mathcal{D}^{re}(c)^2+\mathcal{D}^{im}(c)^2}
\int_0^y\frac{\va_+(y,c)}{(\mathcal{H}\va_+^2)(y',c)}dy'\\
&\quad
+\frac{\mathcal{D}^{im}(c)\mathcal{V}^{re}_+(F)(c)-\mathcal{D}^{re}(c)
\mathcal{V}^{im}_+(F)(c)}
{\mathcal{D}^{re}(c)^2+\mathcal{D}^{im}(c)^2}\va_+(y,c)\bigg)dc,\\
R_2^+(t,\al,y)
&=\frac{1}{\pi}\int_{m_-}^{W_-(y)}e^{-i \al tc}
\frac{u(y)-c}{c}\bigg(\frac{\mathcal{D}^{im}(c)\mathcal{U}^{re}_+(F)(c)
-\mathcal{D}^{re}(c)\mathcal{U}^{im}_+(F)(c)}{\mathcal{D}^{re}(c)^2+\mathcal{D}^{im}(c)^2}
\int_0^y\frac{\va_+(y,c)}{(\mathcal{H}\va_+^2)(y',c)}dy'\\
&\quad
+\frac{\mathcal{D}^{im}(c)\mathcal{V}^{re}_+(F)(c)
-\mathcal{D}^{re}(c)\mathcal{V}^{im}_+(F)(c)}
{\mathcal{D}^{re}(c)^2+\mathcal{D}^{im}(c)^2}\va_+(y,c)\bigg)dc,\\
R_3^+(t,\al,y)
&=\frac{1}{\pi}
\int_{W_-(y)}^{W_+(y)}e^{-i\al tc}\frac{u(y)-c}{c}
\frac{\mathcal{D}^{im}(c)\mathcal{U}^{re}_+(F)(c)-\mathcal{D}^{re}(c)\mathcal{U}^{im}_+(F)(c)}
{\mathcal{D}^{re}(c)^2+\mathcal{D}^{im}(c)^2}
\int_1^y\frac{\va_+(y,c)}{(\mathcal{H}\va_+^2)(y',c)}dy'dc,
\end{align*}
and $R_i^+(t,\al,y)\to 0$ as $t\to +\infty$ for $i=1,2,3,$ and $y>0$.

Similarly,  we also get that for $0<y\leq 1$,
\begin{align*}
\widehat{\phi}(t,\al,y)
&=\widehat{\phi}_0(\al,0)\chi(y)+b(y)H(y,0)+R_4^+(t,\al,y)+R_5^+(t,\al,y)+R_6^+(t,\al,y),
\end{align*}
where
\begin{align*}
R_{4}^+(t,\al,y)&=\frac{1}{\pi}\int_{W_+(y)}^{M_+}e^{-i \al tc}\frac{b(y)}{c}
\bigg(\frac{\mathcal{D}^{im}(c)\mathcal{U}^{re}_+(F)(c)-\mathcal{D}^{re}(c)
\mathcal{U}^{im}_+(F)(c)}
{\mathcal{D}^{re}(c)^2+\mathcal{D}^{im}(c)^2}
\int_0^y\frac{\va_+(y,c)}{(\mathcal{H}\va_+^2)(y',c)}dy'\\
&\quad
+\frac{\mathcal{D}^{im}(c)\mathcal{V}^{re}_+(F)(c)-\mathcal{D}^{re}(c)
\mathcal{V}^{im}_+(F)(c)}
{\mathcal{D}^{re}(c)^2+\mathcal{D}^{im}(c)^2}\va_+(y,c)\bigg)dc,\\
R_5^+(t,\al,y)
&=\frac{1}{\pi}\int_{m_-}^{W_-(y)}e^{-i \al tc}
\frac{b(y)}{c}\bigg(\frac{\mathcal{D}^{im}(c)\mathcal{U}^{re}_+(F)(c)
-\mathcal{D}^{re}(c)\mathcal{U}^{im}_+(F)(c)}{\mathcal{D}^{re}(c)^2+\mathcal{D}^{im}(c)^2}
\int_0^y\frac{\va_+(y,c)}{(\mathcal{H}\va_+^2)(y',c)}dy'\\
&\quad
+\frac{\mathcal{D}^{im}(c)\mathcal{V}^{re}_+(F)(c)
-\mathcal{D}^{re}(c)\mathcal{V}^{im}_+(F)(c)}
{\mathcal{D}^{re}(c)^2+\mathcal{D}^{im}(c)^2}\va_+(y,c)\bigg)dc,\\
R_6^+(t,\al,y)
&=\frac{1}{\pi}
\int_{W_-(y)}^{W_+(y)}e^{-i\al tc}\frac{b(y)}{c}
\frac{\mathcal{D}^{im}(c)\mathcal{U}^{re}_+(F)(c)-\mathcal{D}^{re}(c)\mathcal{U}^{im}_+(F)(c)}
{\mathcal{D}^{re}(c)^2+\mathcal{D}^{im}(c)^2}
\int_1^y\frac{\va_+(y,c)}{(\mathcal{H}\va_+^2)(y',c)}dy'dc,
\end{align*}
and $R_i^+(t,\al,y)\to 0$ as $t\to +\infty$ for $i=4,5,6,$ and $y>0$.\\

\no{\bf{Proof \ of \ 3.}} For $y\in [-1,0)$, we can get the conclusion by the same method and we obtain that
\begin{align*}
\widehat{\psi}(t,\al,y)
&=\frac{u(y)}{b(y)}\widehat{\phi}_0(\al,0)\chi(y)+u(y)\widetilde{H}(y,0)-R_1^-(t,\al,y)-R_2^-(t,\al,y)-R_3^-(t,\al,y),
\end{align*}
and
\begin{align*}
\widehat{\phi}(t,\al,y)
&=\widehat{\phi}_0(\al,0)\chi(y)+b(y)\widetilde{H}(y,0)-R_4^-(t,\al,y)-R_5^-(t,\al,y)-R_6^-(t,\al,y),
\end{align*}
where
\beno
\widetilde{H}(y,0)=\va_-(y,0)\int_{-1}^y\frac{1}{\big(u(y')^2-b(y')^2\big)\va_-(y',0)^2}
\int_0^{y'}F(z,0)\va_-(z,0)dzdy'.
\eeno
and
\begin{align*}
R_{1}^-(t,\al,y)&=\frac{1}{\pi}\int_{W_+(y)}^{m_-}e^{-i \al tc}\frac{u(y)-c}{c}
\bigg(\frac{\mathcal{D}^{im}(c)\mathcal{U}^{re}_-(F)(c)-\mathcal{D}^{re}(c)
\mathcal{U}^{im}_-(F)(c)}
{\mathcal{D}^{re}(c)^2+\mathcal{D}^{im}(c)^2}
\int_0^y\frac{\va_-(y,c)}{(\mathcal{H}\va_-^2)(y',c)}dy'\\
&\quad
+\frac{\mathcal{D}^{im}(c)\mathcal{V}^{re}_-(F)(c)-\mathcal{D}^{re}(c)
\mathcal{V}^{im}_-(F)(c)}
{\mathcal{D}^{re}(c)^2+\mathcal{D}^{im}(c)^2}\va_-(y,c)\bigg)dc,\\
R_2^-(t,\al,y)
&=\frac{1}{\pi}\int_{M_+}^{W_-(y)}e^{-i \al tc}
\frac{u(y)-c}{c}\bigg(\frac{\mathcal{D}^{im}(c)\mathcal{U}^{re}_-(F)(c)
-\mathcal{D}^{re}(c)\mathcal{U}^{im}_-(F)(c)}{\mathcal{D}^{re}(c)^2+\mathcal{D}^{im}(c)^2}
\int_0^y\frac{\va_-(y,c)}{(\mathcal{H}\va_-^2)(y',c)}dy'\\
&\quad
+\frac{\mathcal{D}^{im}(c)\mathcal{V}^{re}_-(F)(c)
-\mathcal{D}^{re}(c)\mathcal{V}^{im}_-(F)(c)}
{\mathcal{D}^{re}(c)^2+\mathcal{D}^{im}(c)^2}\va_-(y,c)\bigg)dc,\\
R_3^-(t,\al,y)
&=\frac{1}{\pi}
\int_{W_-(y)}^{W_+(y)}e^{-i\al tc}\frac{u(y)-c}{c}
\frac{\mathcal{D}^{im}(c)\mathcal{U}^{re}_-(F)(c)-\mathcal{D}^{re}(c)\mathcal{U}^{im}_-(F)(c)}
{\mathcal{D}^{re}(c)^2+\mathcal{D}^{im}(c)^2}
\int_{-1}^y\frac{\va_-(y,c)}{(\mathcal{H}\va_-^2)(y',c)}dy'dc,\\
R_{4}^-(t,\al,y)&=\frac{1}{\pi}\int_{W_+(y)}^{m_-}e^{-i \al tc}\frac{b(y)}{c}
\bigg(\frac{\mathcal{D}^{im}(c)\mathcal{U}^{re}_-(F)(c)-\mathcal{D}^{re}(c)
\mathcal{U}^{im}_-(F)(c)}
{\mathcal{D}^{re}(c)^2+\mathcal{D}^{im}(c)^2}
\int_0^y\frac{\va_-(y,c)}{(\mathcal{H}\va_-^2)(y',c)}dy'\\
&\quad
+\frac{\mathcal{D}^{im}(c)\mathcal{V}^{re}_-(F)(c)-\mathcal{D}^{re}(c)
\mathcal{V}^{im}_-(F)(c)}
{\mathcal{D}^{re}(c)^2+\mathcal{D}^{im}(c)^2}\va_-(y,c)\bigg)dc,\\
R_5^-(t,\al,y)
&=\frac{1}{\pi}\int_{M_+}^{W_-(y)}e^{-i \al tc}
\frac{b(y)}{c}\bigg(\frac{\mathcal{D}^{im}(c)\mathcal{U}^{re}_-(F)(c)
-\mathcal{D}^{re}(c)\mathcal{U}^{im}_-(F)(c)}{\mathcal{D}^{re}(c)^2+\mathcal{D}^{im}(c)^2}
\int_0^y\frac{\va_-(y,c)}{(\mathcal{H}\va_-^2)(y',c)}dy'\\
&\quad
+\frac{\mathcal{D}^{im}(c)\mathcal{V}^{re}_-(F)(c)
-\mathcal{D}^{re}(c)\mathcal{V}^{im}_-(F)(c)}
{\mathcal{D}^{re}(c)^2+\mathcal{D}^{im}(c)^2}\va_-(y,c)\bigg)dc,\\
R_6^-(t,\al,y)
&=\frac{1}{\pi}
\int_{W_-(y)}^{W_+(y)}e^{-i\al tc}\frac{b(y)}{c}
\frac{\mathcal{D}^{im}(c)\mathcal{U}^{re}_-(F)(c)-\mathcal{D}^{re}(c)\mathcal{U}^{im}_-(F)(c)}
{\mathcal{D}^{re}(c)^2+\mathcal{D}^{im}(c)^2}
\int_{-1}^y\frac{\va_-(y,c)}{(\mathcal{H}\va_-^2)(y',c)}dy'dc,
\end{align*}
and $R_i^-(t,\al,y)\to 0$ as $t\to +\infty$ for $i=1,2,...,6,$ and $y<0$.

At last, we give the calculation of the term $H(y,0)$.

Firstly, due to the fact that
$$
F(y,0)=-\widehat{\phi}_0(\al,0)
\left(\pa_y\Big[\big(u(y)^2-b(y)^2\big)\pa_y\Big(\frac{\chi(y)}{b(y)}\Big)\Big]-\al^2 \big(u(y)^2-b(y)^2\big)\frac{\chi(y)}{b(y)}\right),
$$
we have by using  the integration by parts,
\begin{align*}
&\int_0^{y'}(F\va_+)(z,0)dz\\
&=\widehat{\phi}_0(\al,0)\Big\{\int_0^{y'} \big(u(z)^2-b(z)^2\big)\pa_z\Big(\frac{\chi(z)}{b(z)}\Big)\pa_z\va_+(z,0)dz
- \big(u(z)^2-b(z)^2\big)\pa_z\Big(\frac{\chi(z)}{b(z)}\Big)\va_+(z,0)\Big|_0^{y'}\\
&\ \ +\int_0^{y'}\al^2 \big(u(z)^2-b(z)^2\big)\frac{\chi(z)}{b(z)}\va_+(z,0)dz\Big\}\\
&=-\widehat{\phi}_0(\al,0)\Big\{\int_0^{y'} \pa_z\Big(\big(u(z)^2-b(z)^2\big)\pa_z\va_+(z,0)\Big)\frac{\chi(z)}{b(z)}dz
-\big(u(z)^2-b(z)^2\big)\pa_z\va_+(z,0)\frac{\chi(z)}{b(z)}\Big|_0^{y'}\\
&\ \
+\big(u(z)^2-b(z)^2\big)\pa_z\Big(\frac{\chi(z)}{b(z)}\Big)\va_+(z,0)\Big|_0^{y'}
-\int_0^{y'}\al^2 \big(u(z)^2-b(z)^2\big)\frac{\chi(z)}{b(z)}\va_+(z,0)dz\Big\}\\
&=\widehat{\phi}_0(\al,0)\Big\{\big(u(z)^2-b(z)^2\big)
\pa_z\va_+(z,0)\frac{\chi(z)}{b(z)}\Big|_0^{y'}
- \big(u(z)^2-b(z)^2\big)\pa_z\Big(\frac{\chi(z)}{b(z)}\Big)\va_+(z,0)\Big|_0^{y'}\Big\}\\
&=-\widehat{\phi}_0(\al,0)b'(0)\frac{u'(0)^2-b'(0)^2}{b'(0)^2}-\widehat{\phi}_0(\al,0)(u(y')^2-b(y')^2)\pa_{y'}\big(\f{\chi}{b}\big)(y')\va_+(y',0)\\
&\quad+\widehat{\phi}_0(\al,0)(u(y')^2-b(y')^2)\f{\chi(y')}{b(y')}\pa_{y'}\va_+(y',0).
\end{align*}
Thus we obtain for $y>0$,
\beno
H(y,0)=-\widehat{\phi}_0(\al,0)\frac{u'(0)^2-b'(0)^2}{b'(0)}
\va_+(y,0)\int_1^y\frac{1}{\big(u(y')^2-b(y')^2\big)\va_+(y',0)^2}dy'-\f{\chi(y)\widehat{\phi}_0(\al,0)}{b(y)},
\eeno
and then for $0<y\leq 1$, we get
\begin{align*}
&\f{u(y)}{b(y)}\widehat{\phi}_0(\al,0)\chi(y)+u(y)H(y,0)=
-\frac{u'(0)^2-b'(0)^2}{b'(0)}\widehat{\phi}_0(\al,0)
\int_1^y\frac{u(y)\va_+(y,0)}{\big(u(y')^2-b(y')^2\big)\va_+(y',0)^2}dy',\\
&\widehat{\phi}_0(\al,0)\chi(y)+b(y)H(y,0)=
-\frac{u'(0)^2-b'(0)^2}{b'(0)}\widehat{\phi}_0(\al,0)
\int_1^y\frac{b(y)\va_+(y,0)}{\big(u(y')^2-b(y')^2\big)\va_+(y',0)^2}dy'.
\end{align*}

Similar as the case of $y\in(0,1]$, we have
\begin{align*}
&\f{u(y)}{b(y)}\widehat{\phi}_0(\al,0)\chi(y)+u(y)\widetilde{H}(y,0)=
-\frac{u'(0)^2-b'(0)^2}{b'(0)}\widehat{\phi}_0(\al,0)
\int_{-1}^y\frac{u(y)\va_+(y,0)}{\big(u(y')^2-b(y')^2\big)\va_-(y',0)^2}dy',\\
&\widehat{\phi}_0(\al,0)\chi(y)+b(y)\widetilde{H}(y,0)=
-\frac{u'(0)^2-b'(0)^2}{b'(0)}\widehat{\phi}_0(\al,0)
\int_{-1}^y\frac{b(y)\va_+(y,0)}{\big(u(y')^2-b(y')^2\big)\va_-(y',0)^2}dy'.
\end{align*}
Thus we complete the proof of Theorem \ref{main thm}.
\end{proof}

\section*{Acknowledgement}
C. Zhai is partially supported by China Postdoctoral Science Foundation under Grant 2018M630014. Z. Zhang is partially supported by NSF of China under Grant 11425103.

\end{CJK*}

\begin{thebibliography}{99}

\bibitem{AZ17} H. Abidi and P. Zhang, On the global solution of a 3-D MHD system with initial data near equilibrium, Comm. Pure Appl. Math. 70 (2017), no. 8, 1509-1561.

\bibitem{BSS88} C. Bardos, C. Sulem, and P. L. Sulem, Longtime dynamics of a conductive fluid in the presence of a strong magnetic field, Trans. Amer. Math. Soc., 305 (1988), pp. 175-191.

\bibitem{BM15} J. Bedrossian and N. Masmoudi, Inviscid damping and the asymptotic stability of planar shear flows in the 2D Euler equations, Publ. Math. Inst. Haut. \'{E}tud. Sci., 122 (2015), pp. 195-300.

\bibitem{Biskamp} D. Biskamp, Magnetic Reconnection in Plasmas, (Cambridge Monographs on Plasma Physics) 2005

\bibitem{BM10} F. Bouchet and H. Morita, Large time behavior and asymptotic stability of the 2D Euler and linearized Euler equations, Phys. D, 239 (2010), pp. 948-966.


\bibitem{CL18} Y. Cai and Z. Lei, Global well-posedness of the incompressible magnetohydrodynamics,  Arch. Ration. Mech. Anal. 228 (2018), no. 3, 969-993.

\bibitem{KMC60} K. M. Case, Stability of inviscid plane Couette flow, Phys. Fluids, 3 (1960), pp. 143-148.

\bibitem{CMRR16} J. Y. Chemin, D. S. McCormick, J. C. Robinson, and J. L. Rodrigob, Local existence for the non-resistive MHD equations in Besov spaces, Adv. Math., 286 (2016), pp. 1-31.

\bibitem{PAD01} P. A. Davidson, An Introduction to Magnetohydrodynamics, Cambridge University Press. 2001.

\bibitem{DZ17} W. Deng and P. Zhang, Large Time Behavior of Solutions to 3-D MHD System with Initial Data Near Equilibrium, Archive for Rational Mechanics and Analysis, 2018, Page 1.

\bibitem{FMRR14} C. L. Fefferman, D. S. McCormick, J. C. Robinson, and J. L. Rodrigo, Higher order commutator estimates and local existence for the non-resistive MHD equations and related models, J. Funct. Anal., 267 (2014), pp. 1035-1056.

\bibitem{GT72} W. Grossmann and J. Tataronis, The excitation of waves and resonances in high-beta plasmas, in Proceedings of the 2nd Topical Conference on Pulsed High-Beta Plasmas, IPP-Report 1/127, 1972, Paper B-6.

\bibitem{Hameiri1983} E. Hameiri, {\it The equilibrium and stability of rotating plasmas.} Physics of Fluids, 1983, 26(1): 230-237.

\bibitem{HXY18} L. He, L. Xu, and P. Yu, On global dynamics of three dimensional magnetohydrodynamics: Nonlinear stability of Alfv\'{e}n waves, Annals of PDE, 2018, Volume 4, Number 1, Page 1

\bibitem{HP83} J. Heyvaerts and E. R. Priest, Coronal heating by phase-mixed shear Alfv\'{e}n waves, Astron. Astrophys., 117 (1983), pp. 200-234.

\bibitem{HTY05} M. Hirota, T. Tatsuno, and Z. Yoshida, Resonance between continuous spectra: Secular behavior of Alfv\'{e}n waves in a flowing plasma, Phys. Plas., 12 (2005), 012107.

\bibitem{LL46} L. Landau, On the vibration of the electronic plasma, Acad. Sci. USSR J. Phys., 10 (1946), pp. 25-34.

\bibitem{AL89} A. Lifschitz, Magnetohydrodynamics and Spectral Theory, Kluwer Academic Publishers, Dordrecht, The Netherlands, 1989.

\bibitem{LZ11} Z. Lin and C. Zeng, Inviscid dynamic structures near Couette flow, Arch. Ration. Mech. Anal., 200 (2011), pp. 1075-1097.

\bibitem{MV11} C. Mouhot and C. Villani, On Landau damping, Acta Math., 207 (2011), pp. 29-201

\bibitem{WO07} W. Orr, Stability and instability of steady motions of a perfect liquid, Proc. Ir. Acad. Sect. A: Math Astron. Phys. Sci., 27 (1907), pp. 9-66.

\bibitem{RZ17} S. Ren and W. Zhao, Linear damping of Alfv\'{e}n waves by phase mixing. SIAM J. Math. Anal. 49 (2017), no. 3, 2101-2137.

\bibitem{RWXZ14} X. Ren, J. Wu, Z. Xiang, and Z. Zhang, Global existence and decay of smooth solution for the 2-D MHD equations without magnetic diffusion, J. Funct. Anal., 267 (2014), pp. 503-541.

\bibitem{RS66} S. I. Rosencrans and D. H. Sattinger, On the spectrum of an operator occurring in the theory of Hydrodynamics stability, J. Math. Phys., 45 (1966), pp. 289-300.

\bibitem{ShiTok2010}J. Shiraishi, S. Tokuda. {\it Numerical Matching Scheme for Stability Analysis of Flowing Plasmas}. IEEE Transactions on Plasma Science, 2010, 38(9): 2169-2176.

\bibitem{SAS95} S. A. Stepin, Nonself-adjoint Friedrichs model in hydrodynamic stability, Funct. Anal. Appl., 29 (1995), pp. 91-101.

\bibitem{Stern} M. E. Stern, {\it Joint Instability of Hydromagnetic Fields which are Separately Stable.} Physics of Fluids, 1963, 6(5): 636-642.

\bibitem{TG72} J. Tataronis and W. Grossmann, On the spectrum of ideal MHD, in Proceedings of the 2nd Topical Conference on Pulsed High-Beta Plasmas, IPP-Report 1/127, 1972, Paper B-5.

\bibitem{TG73} J. Tataronis and W. Grossmann, Decay of MHD wave by phase mixing I. The sheet-pinch in plane geometry, Z. Physik, 261 (1973), pp. 203-216.

\bibitem{CU72} C. Uberoi, Alfv\'{e}n waves in inhomogeneous magnetic fields, Phys. Fluids, 2 (1972), pp. 1673-1675.


\bibitem{RW16} R. Wan, On the uniqueness for the 2D MHD equations without magnetic diffusion, Nonlinear Anal. Real World Appl., 30 (2016), pp. 32-40.

\bibitem{WZ17} D. Wei and Z. Zhang, Global well-posedness of the MHD equations in a homogeneous magnetic field, Anal. PDE 10 (2017), no. 6, 1361-1406.

\bibitem{WZZ18} D. Wei, Z. Zhang, W. Zhao, Linear inviscid damping for a class of monotone shear flow in Sobolev spaces. Comm. Pure Appl. Math. 71 (2018), no. 4, 617-687.

\bibitem{WZZ1} D. Wei, Z. Zhang, W. Zhao, Linear Inviscid Damping and Vorticity Depletion for Shear Flows, arXiv: 1704.00428v1.

\bibitem{WZZ2} D. Wei, Z. Zhang, and W. Zhao, Linear Inviscid Damping and Enhanced Dissipation for the Kolmogorov Flow, arXiv: 1711.01822v1.

\bibitem{TZ16} T. Zhang, Global solutions to the 2D viscous, non-resistive MHD system with large background magnetic field, J. Differential Equations, 260 (2016), pp. 5450-5480.

\bibitem{WZ17} W. Zhao, Local well-posedness in Sobolev space for the inhomogeneous non-resistive MHD equations on general domain, Commun. Contemp. Math., 19, 1650055 (2017).

\bibitem{CZ17} C. Zillinger, Linear Inviscid Damping for Monotone Shear Flows, Trans. Amer. Math. Soc. 369 (2017), no. 12, 8799-8855.

\bibitem{HZ15} H. Zohm, Magnetohydrodynamic Stability of Tokamaks, Wiley-VCH, New York, 2015.

\end{thebibliography}
\end{document}